\numberwithin{equation}{section}
\numberwithin{figure}{section}
\theoremstyle{plain}
\newtheorem{thm}{\protect\theoremname}[section]
  \theoremstyle{definition}
  \newtheorem{example}[thm]{\protect\examplename}
  \theoremstyle{plain}
  \newtheorem{prop}[thm]{\protect\propositionname}
  \theoremstyle{plain}
  \newtheorem{cor}[thm]{\protect\corollaryname}
  \theoremstyle{definition}
  \newtheorem{rem}[thm]{\protect\remarkname}
  \theoremstyle{definition}
  \newtheorem{defn}[thm]{\protect\definitionname}
  \theoremstyle{plain}
  \newtheorem{lem}[thm]{\protect\lemmaname}
   \providecommand{\fg}{\ifdim\lastskip>\z@\unskip\fi~\frqq}
  \providecommand{\corollaryname}{Corollary}
  \providecommand{\definitionname}{Definition}
  \providecommand{\examplename}{Example}
  \providecommand{\lemmaname}{Lemma}
  \providecommand{\propositionname}{Proposition}
  \providecommand{\remarkname}{Remark}
\providecommand{\theoremname}{Theorem}
\begin{document}

\title{Symplectic embeddings of $4$-dimensional ellipsoids into cubes}

\author{David Frenkel and Dorothee M\"uller}

\subjclass[2000]{53D05, 14B05, 32S05, 11A55}
\keywords{Symplectic embeddings, Pell numbers}
\thanks{Partially supported by SNF grant 200020-132000.}

\begin{abstract}
Recently, McDuff and Schlenk determined in \cite{MS} the function
$c_{EB}(a)$ whose value at $a$ is the infimum of the size of a $4$-ball
into which the ellipsoid $E(1,a)$ symplectically embeds (here, $a\geqslant1$
is the ratio of the area of the large axis to that of the smaller
axis of the ellipsoid). In this paper we look at embeddings into four-dimensional
cubes instead, and determine the function $c_{EC}(a)$ whose value
at $a$ is the infimum of the size of a $4$-cube $C^{4}(A)=D^{2}(A)\times D^{2}(A)$
into which the ellipsoid $E(1,a)$ symplectically embeds (where $D^{2}(A)$
denotes the disc in $\mathbb{R}^{2}$ of area $A$). As in the case
of embeddings into balls, the structure of the graph of $c_{EC}(a)$
is very rich: for $a$ less than the square~$\sigma^{2}$ of the
silver ratio $\sigma:=1+\sqrt{2}$, the function $c_{EC}(a)$ turns
out to be piecewise linear, with an infinite staircase converging
to $(\sigma^{2},\sqrt{\sigma^{2}/2})$. This staircase is determined
by Pell numbers. On the interval $\left[\sigma^{2},7\frac{1}{32}\right]$,
the function $c_{EC}(a)$ coincides with the volume constraint $\sqrt{\frac{a}{2}}$
except on seven disjoint intervals, where $c$ is piecewise linear.
Finally, for $a\geqslant7\frac{1}{32}$, the functions $c_{EC}(a)$
and $\sqrt{\frac{a}{2}}$ are equal.

For the proof, we first translate the embedding problem $E(1,a)\hookrightarrow C^{4}(A)$
to a certain ball packing problem of the ball $B^{4}(2A)$. This embedding
problem is then solved by adapting the method from \cite{MS}, which
finds all exceptional spheres in blow-ups of the complex projective
plane that provide an embedding obstruction.

We also prove that the ellipsoid $E(1,a)$ symplectically embeds into
the cube $C^{4}(A)$ if and only if $E(1,a)$ symplectically embeds
into the elllipsoid $E(A,2A)$. Our embedding function $c_{EC}(a)$
thus also describes the smallest dilate of $E(1,2)$ into which $E(1,a)$
symplectically embeds.
\end{abstract}
\maketitle
\tableofcontents{}

\section{Introduction}

\subsection{Statement of the result}

Let $\left(\mathbb{R}^{4},\omega\right)$ be the Euclidean $4$-dimensional
space endowed with the canonical symplectic form $\omega=dx_{1}\wedge dy_{1}+dx_{2}\wedge dy_{2}$.
Any open subset of $\mathbb{R}^{4}$ is also endowed with $\omega$.
Simple examples are the symplectic cylinders $Z(a):=D^{2}(a)\times\mathbb{R}^{2}$
(where $D^{2}(a)$ is the open disc of area $a$), the open symplectic
ellipsoids
\[
E(a_{1},a_{2})\,=\,\left\{ \left(x_{1},y_{1},x_{2},y_{2}\right)\in\mathbb{R}^{4}\,:\,\frac{\pi\left(x_{1}^{2}+y_{1}^{2}\right)}{a_{1}}+\frac{\pi\left(x_{2}^{2}+y_{2}^{2}\right)}{a_{2}}<1\right\} ,
\]
and the open polydiscs $P\left(a_{1},a_{2}\right):=D^{2}\left(a_{1}\right)\times D^{2}\left(a_{2}\right)$.
We denote the open ball $E(a,a)$ (of radius $\sqrt{a/\pi}$) by $B(a)$
and the open cube $P(a,a)$ by~$C(a)$. Since $D^{2}(a)$ is symplectomorphic
to an open square, $D^{2}(a)\times D^{2}(a)$ is indeed symplectomorphic
to a cube.

Given two open subsets $U$ and $V$, we say that a smooth embedding
$\varphi\colon U\hookrightarrow V$ is a \emph{symplectic embedding}
if $\varphi$ preserves $\omega$, that is, if $\varphi^{*}\omega=\omega$.
In the sequel, we will write $\varphi\colon U\overset{s}{\hookrightarrow}V$
for such an embedding. Since symplectic embeddings are volume preserving,
a necessary condition for the existence of a symplectic embedding
$U\overset{s}{\hookrightarrow}V$ is, of course, $\textrm{Vol}(U)\leqslant\textrm{Vol}(V)$,
where $\textrm{Vol}(U):=\frac{1}{2}\intop_{U}\omega\wedge\omega$.
For volume preserving embeddings, this is the only condition (see
e.g.~\cite{S1}). For symplectic embeddings, however, the situation
is very different, as was detected by Gromov in~\cite{G}. Among
many other things, he proved
\begin{example}
(Gromov's nonsqueezing Theorem) There exists a symplectic embedding
of the ball $B(a)$ into the cylinder $Z(A)$ if and only if $a\leqslant A$.
\end{example}
Notice that the volume of the cylinder $Z(A)$ is infinite, and that
for any $a$ the ball $B(a)$ embeds by a linear volume preserving
embedding into $Z(A)$. Similarly, we also have
\begin{example}
There exists a symplectic embedding of the ball $B(a)$ into the cube
$C(A)$ if and only if $a\leqslant A$.
\end{example}
The above results show that symplectic embeddings are much more special
and in some sense {}``more rigid'' than volume preserving embeddings.
A next step was to understand this rigidity better. One way of doing
this is to fix a domain $V\subset\mathbb{R}^{4}$ of finite volume,
and to try to determine for each $k\in\mathbb{N}$ the $k$\emph{-th
packing number}
\[
p_{k}(V)\,:=\,\sup\left\{ \frac{k\,\textrm{Vol}\left(B(a)\right)}{\textrm{Vol}(V)}\,:\,\bigsqcup_{k}B(a)\;\overset{s}{\hookrightarrow}\; V\right\} .
\]
Here, $\bigsqcup_{k}B(a)$ is the disjoint union of $k$ equal balls
$B(a)$. It follows from Darboux's Theorem that always $p_{k}(V)>0$.
If $p_{k}(V)=1$, one says that $V$ admits a \emph{full packing}
by $k$ balls, and if $p_{k}(V)<1$, one says that there is a \emph{packing
obstruction}. Again, it is known that if we would consider volume
preserving embeddings instead, then all packing numbers would always
be~1.

In imporant work by Gromov~\cite{G}, McDuff-Polterovich~\cite{MP}
and Biran~\cite{B} all the packing numbers of the $4$-ball $B$
and the $4$-cube $C$ were determined. The result for $C$ is

\begin{center} \renewcommand{\arraystretch}{1.5} \begin{tabular}{ccccccccc} \hline  $k$ & $1$ & $2$ & $3$ & $4$ & $5$ & $6$ & $7$ & $\geqslant8$\tabularnewline \hline  \noalign{\vskip1bp} $p_{k}$ & $\frac{1}{2}$ & $1$ & $\frac{2}{3}$ & $\frac{8}{9}$ & $\frac{9}{10}$ & $\frac{48}{49}$ & $\frac{224}{225}$ & $1$\tabularnewline[1bp] \hline  \end{tabular}\medskip{}
\par\end{center}%

\noindent This result shows that, while there is symplectic rigidity
for many small $k$, there is no rigidity at all for large $k$.

In order to better understand these numbers, we look at a problem
that interpolates the above problem of packing by $k$ equal balls.
For $0<a_{1}\leqslant a_{2}$, consider the ellipsoid $E\left(a_{1},a_{2}\right)$
defined above, and look for the smallest cube~$C(A)$ into which
$E\left(a_{1},a_{2}\right)$ symplectically embeds. Since $E(a_{1},a_{2})\overset{s}{\hookrightarrow}C(A)$
if and only if $E(\lambda a_{1},\lambda a_{2})\overset{s}{\hookrightarrow}C(\lambda A)$,
we can always assume that $a_{1}=1$, and therefore study the \emph{embedding
capacity function}
\[
c_{EC}(a)\,:=\,\inf\left\{ A\,:\, E(1,a)\;\overset{s}{\hookrightarrow}\; C(A)\right\} 
\]
on the interval $\left[1,\infty\right[$. It is clear that $c$ is
a continuous and nondecreasing function. Since symplectic embeddings
are volume preserving and the volumes of $E(1,a)$ and $C(A)$ are
$\frac{1}{2}a$ and $A^{2}$ respectively, we must have the lower
bound
\[
\sqrt{\frac{a}{2}}\leqslant c(a).
\]
It is not hard to see that $\bigsqcup_{k}B(1)\overset{s}{\hookrightarrow}E(1,k)$.
Therefore, $\bigsqcup_{k}B(1)\overset{s}{\hookrightarrow}C(A)$ whenever
$E(1,k)\overset{s}{\hookrightarrow}C(A)$. In~\cite{M2}, McDuff
has shown that the converse is also true! Our ellipsoid embedding
problem therefore indeed interpolates the problem of packing by $k$
equal balls, and we get
\[
p_{k}(C)=\frac{\textrm{Vol}\left(E(1,k)\right)}{\textrm{Vol}\left(C\left(c_{EC}(k)\right)\right)}=\frac{\frac{k}{2}}{\left(c_{EC}(k)\right)^{2}}.
\]

First upper estimates for the function $c_{EC}(a)$ were obtained
in Chapter~4.4 of~\cite{S2} by explicit embeddings of ellipsoids
into a cube. These upper estimates also suggested that symplectic
rigidity for the problem $E(1,a)\overset{s}{\hookrightarrow}C(A)$
should disappear for large $a$.

In this paper, we completely determine the function $c(a):=c_{EC}(a)$.
In order to state our main theorem, we introduce two sequences of
integers: the \emph{Pell numbers} $P_{n}$ and the \emph{half companion
Pell numbers} $H_{n}$, which are defined by the recurrence relations
\[
\begin{alignedat}{5}P_{0} & = & \:0,\, & P_{1} & = & \:1,\quad & P_{n} & = & 2P_{n-1}+P_{n-2},\\
H_{0} & = & \:1,\, & H_{1} & = & \:1,\quad & H_{n} & = & \:2H_{n-1}+H_{n-2}.
\end{alignedat}
\]
Thus, $P_{2}=2,$ $P_{3}=5$, $P_{4}=12$, $P_{5}=29,\ldots$ and
$H_{2}=3$, $H_{3}=7$, $H_{4}=17$, $H_{5}=41,\ldots$. The two sequences
$\left(\alpha_{n}\right)_{n\geqslant0}$ and $\left(\beta_{n}\right)_{n\geqslant0}$
are then defined by
\begin{flalign*}
\alpha_{n}:=\left\{ \begin{aligned}\frac{2P_{n+1}^{2}}{H_{n}^{2}} & \qquad\textrm{if }n\textrm{ is even},\\
\frac{H_{n+1}^{2}}{2P_{n}^{2}} & \qquad\textrm{if }n\textrm{ is odd};
\end{aligned}
\right. & \qquad\beta_{n}:=\left\{ \begin{aligned}\frac{H_{n+2}}{H_{n}} & \qquad\textrm{if }n\textrm{ is even},\\
\frac{P_{n+2}}{P_{n}} & \qquad\textrm{if }n\textrm{ is odd}.
\end{aligned}
\right.
\end{flalign*}
The first terms in these sequences are
\[
\alpha_{0}=2<\beta_{0}=3<\alpha_{1}=\frac{9}{2}<\beta_{1}=5<\alpha_{2}=\frac{50}{9}<\beta_{2}=\frac{17}{3}<\ldots.
\]
More generally, for all $n\geqslant0$,
\[
\ldots<\alpha_{n}<\beta_{n}<\alpha_{n+1}<\beta_{n+1}<\ldots,
\]
and both sequences converge to $\sigma^{2}=3+2\sqrt{2}\cong5.83$,
which is the square of the silver ration $\sigma:=1+\sqrt{2}$.
\begin{thm}
\label{thm:statement of the result}\renewcommand{\labelenumi}{(\roman{enumi})}
\begin{enumerate} \item On the interval $\left[1,\sigma^{2}\right]$, \[ c(a)=\left\{ \begin{aligned}1\quad\; & \qquad\textrm{if }a\in\left[1,2\right],\\ \frac{1}{\sqrt{2\alpha_{n}}}\, a & \qquad\textrm{if }a\in\left[\alpha_{n},\beta_{n}\right],\\ \sqrt{\frac{\alpha_{n+1}}{2}} & \qquad\textrm{if }a\in\left[\beta_{n},\alpha_{n+1}\right], \end{aligned} \right. \]  for all $n\geqslant0$ (see Figure \ref{fig:c on [1,sigma^2]}).\medskip{}
\item On the interval $\left[\sigma^{2},7\frac{1}{32}\right]$ we have $c(a)=\sqrt{\frac{a}{2}}$ except on seven disjoint intervals, where $c$ is piecewise linear (see Figure \ref{fig:c on [sigma^2,7+1/32]}).\medskip{}
\item For $a\geqslant7\frac{1}{32}$ we have $c(a)=\sqrt{\frac{a}{2}}$.\end{enumerate}%

\end{thm}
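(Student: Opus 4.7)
The plan is to adapt the approach of McDuff and Schlenk~\cite{MS} for ellipsoid embeddings into balls to the present cube target. First, I would translate the problem $E(1,a) \overset{s}{\hookrightarrow} C(A)$ into a symplectic ball packing problem in $B(2A)$: by McDuff's ellipsoid theorem~\cite{M2}, combined with a suitable weight/folding decomposition relating the cube $C(A)$ to the ellipsoid $E(A,2A)$, the question becomes
\[
E(1,a) \overset{s}{\hookrightarrow} C(A) \iff B(A) \,\sqcup\, \bigsqcup_i B(w_i) \overset{s}{\hookrightarrow} B(2A),
\]
where $(w_i) = \mathbf{w}(1,a)$ is the weight expansion determined by the continued fraction of $a$. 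The same reduction applied to the target $E(A,2A)$ yields the equivalence announced in the abstract, so the two embedding problems coincide.

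Next, I would invoke the Li--Liu / McDuff--Polterovich characterization of ball packings via exceptional classes in blow-ups of $\mathbb{CP}^2$. The multi-ball packing above exists precisely when, for every homology class $E = dL - \mu_0 E_0 - \sum_{i\geqslant 1} \mu_i E_i$ in the blow-up $X_{N+1}$ represented by an embedded symplectic $(-1)$-sphere (equivalently, satisfying $E\cdot E = -1$, $c_1(E)=1$, and realized by a $J$-holomorphic curve for some tame $J$), one has
\[
2Ad \;\geqslant\; A\,\mu_0 + \sum_i w_i\,\mu_i.
\]
Thus $c(a)$ is the maximum of the volume bound $\sqrt{a/2}$ and the supremum over all such exceptional classes of the obstruction $\bigl(\mu_0 + \sum_i w_i\,\mu_i\bigr)/(2d)$, and the problem becomes purely Diophantine.

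The heart of the proof is then a careful enumeration of the obstructing exceptional classes. I would construct, for each $n \geqslant 0$, a distinguished exceptional class whose coefficients are built from the Pell numbers $P_n$ and the half-companion Pell numbers $H_n$; the Pell recurrence should emerge naturally from a Cremona-type reflection acting on classes, whose fixed ratio is exactly the silver ratio $\sigma$. Matching these linear obstructions with explicit embeddings (constructed by inverting the weight algorithm and using symplectic folding inside the cube) would prove~(i). The main obstacle, exactly as in~\cite{MS}, is the converse direction: showing that \emph{no other} exceptional class gives a stronger obstruction on $[1,\sigma^2]$. This will require a delicate induction on the degree $d$, controlling every solution of the Diophantine system defining exceptional classes, together with a reduction procedure that successively applies Cremona moves until one reaches a class already in the Pell family.

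Finally, once $a \geqslant \sigma^2$ the Pell staircase drops below the volume curve $\sqrt{a/2}$, so only exceptional classes of bounded degree $d$ can still obstruct. I would bound $d$ explicitly in terms of $a$ using the reverse inequality \emph{obstruction $>$ volume}, reducing the problem to a finite Diophantine search. That search produces the seven piecewise-linear intervals of~(ii); beyond $a = 7\tfrac{1}{32}$, the same degree bound rules out every non-volume obstruction, yielding~(iii).
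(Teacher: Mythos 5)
Your overall plan tracks the paper's architecture closely: reduce the cube target to a ball packing problem via McDuff's decomposition, characterize the packing by exceptional classes (Li--Liu, McDuff--Polterovich), exhibit a Pell-indexed family of obstructive classes, show no other class dominates on $[1,\sigma^2]$, and then use a degree bound to reduce $[\sigma^2, 7\tfrac{1}{32}]$ to a finite Diophantine search. That is essentially what the paper does. However, there is a concrete error in your reduction step that would derail all the subsequent computations.

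Your claimed equivalence is
\[
E(1,a) \overset{s}{\hookrightarrow} C(A) \iff B(A)\,\sqcup\,\bigsqcup_i B(w_i)\;\overset{s}{\hookrightarrow}\;B(2A),
\]
with a \emph{single} ball of capacity $A$ on the right. The correct reduction (Proposition~\ref{prop:E(a,b) -> P(c,d) iff B(a,b)... -> B(c+d)} specialized to $P(A,A)$, or Lemma~\ref{lem:E(1,a) embeds into C(A) iff balls embed in balls}) has \emph{two} such balls:
\[
E(1,a)\overset{s}{\hookrightarrow} C(A)\iff B(A)\sqcup B(A)\sqcup\bigsqcup_i B(w_i)\overset{s}{\hookrightarrow} B(2A).
\]
This is not a typographical nicety: with only one $B(A)$ the volume obstruction comes out as $A^2 + a \leqslant 4A^2$, i.e.\ $A\geqslant\sqrt{a/3}$, which contradicts the $\sqrt{a/2}$ you correctly state later. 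The two copies of $B(A)$ arise because $E(A,2A)$ (equivalently the cube $C(A)$) is obtained from $B(2A)$ by removing two disjoint balls of capacity $A$, not one. All of your Diophantine conditions and obstruction formulas inherit this error: the constraint should read $2Ad\geqslant (\mu_1+\mu_2)A + \sum_i w_i\mu_{i+2}$, with two $m$-coefficients multiplying $A$.

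Two further remarks on divergence from the paper. First, the paper applies a homology base change to $S^2\times S^2$ so that the obstruction becomes $(d+e)A \geqslant \langle m, w(a)\rangle$ with $A$ appearing only once; this is noted explicitly as a way to avoid an analysis that is otherwise ``rather awkward'' because $A$ appears on both sides of the inequality. You would likely rediscover the need for such a change of basis. Second, the sharpness of the Pell obstructions in the paper is established via the notion of a \emph{perfect class} and positivity of intersections (Lemma~\ref{lem:perfect elements give the constraint}), not by constructing explicit folding embeddings; your proposed route through explicit embeddings is plausible but is not what the paper does, and would require substantial additional work.
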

The proof of (i) is given in Corollary \ref{cor:c(a) on [1,sigma^2]},
a more detailed statement as well as the proof of (ii) are given in
Theorem \ref{thm:c(a) on [sigma^2,7+1/32]}, while the proof of (iii)
is given in Lemma \ref{lem:c(a)=00003Dsqrt(a/2) for a>8} and Proposition
\ref{prop:c(a)=00003Dsqrt(a/2) for a>7+1/32}.

A similar result has been previously obtained by McDuff-Schlenk in~\cite{MS}
for the embedding problem $E(1,a)\overset{s}{\hookrightarrow}B(A)$.
These two results show that the structure of symplectic rigidity can
be very rich.

\selectlanguage{french}%
\begin{figure}
\begin{centering}
\includegraphics{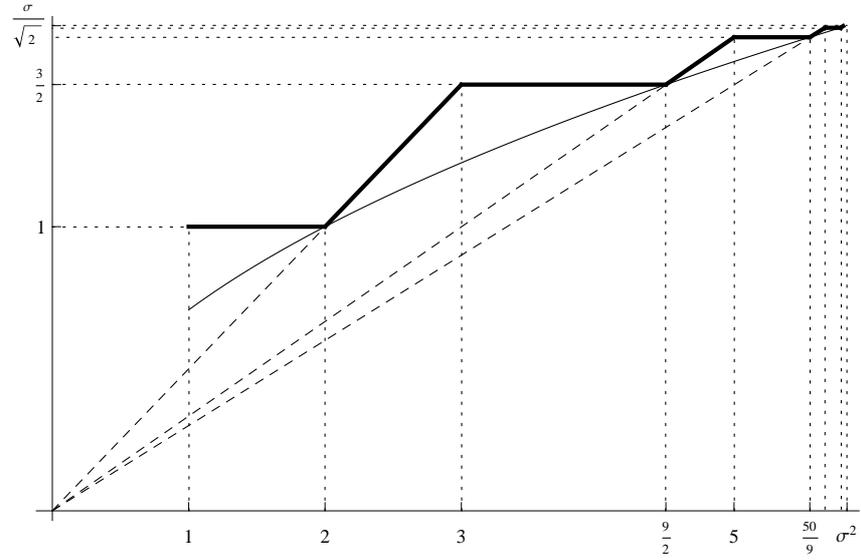}
\par\end{centering}

\caption{\label{fig:c on [1,sigma^2]}The graph of $c$ on the interval $\left[1,\sigma^{2}\right]$}
\end{figure}

\begin{figure}
\begin{centering}
\includegraphics[scale=0.8]{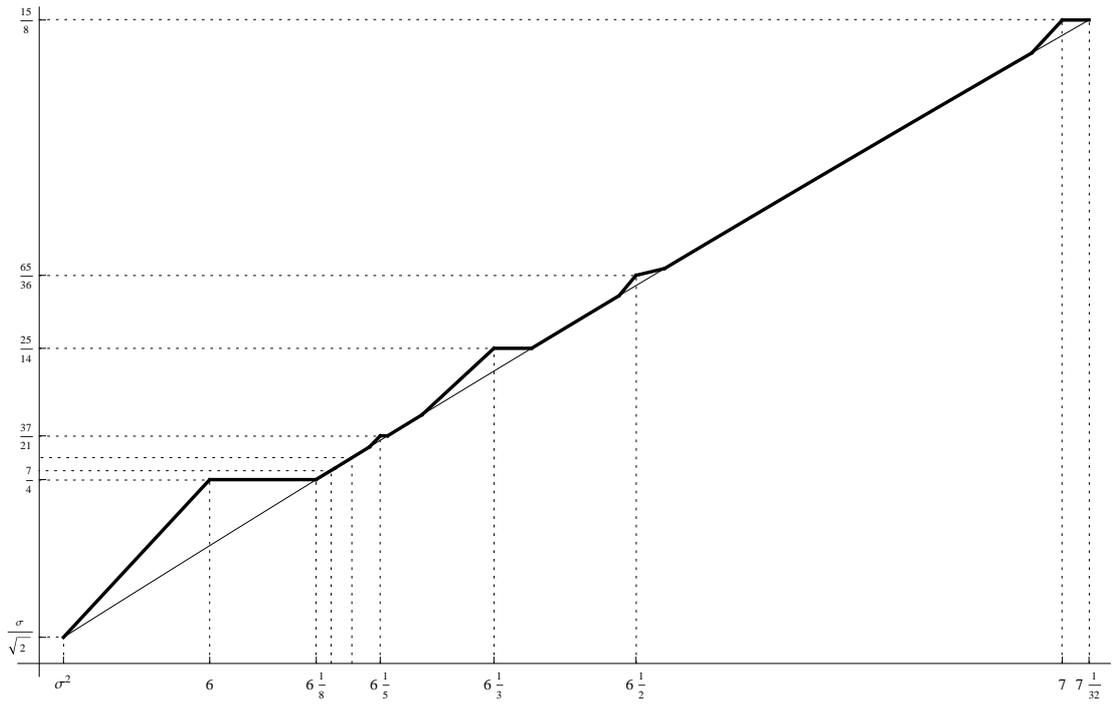}
\par\end{centering}

\caption{\label{fig:c on [sigma^2,7+1/32]}The graph of $c$ on the interval
$\left[\sigma^{2},7\frac{1}{32}\right]$}
\end{figure}

\selectlanguage{english}%

\subsection{Relations to ECH-capacities}

There is a more combinatorial (but non-explicit) way of describing
the embedding function $c_{EC}(a)$. Indeed, in~\cite{H1}, Hutchings
used his embedded contact homology to construct for each domain $U\subset\mathbb{R}^{4}$
a sequence of symplectic capacities $c_{ECH}^{k}(U)$, which for the
ellipsoid $E(a,b)$ and the polydisc $P(a,b)$ are as follows.

Form the sequence $N_{E}(a,b)$ by arranging all numbers of the form
$ma+nb$ with $m,n\geqslant0$, in nondecreasing order (with multiplicities).
Then for $k=1,2,\ldots$, the $k$-th ECH-capacity $c_{ECH}^{k}\left(E(a,b)\right)$
is the $k$-th entry of $N_{E}(a,b)$. For instance, $c_{ECH}\left(E(1,1)\right)=\left\{ 1,1,2,2,2,3,3,3,3,4,\ldots\right\} $.

Moreover, for polydiscs,
\[
c_{ECH}^{k}\left(P(a,b)\right)\,=\,\min\left\{ am+bn\,:\, m,n\in\mathbb{N},\,(m+1)(n+1)\geqslant k+1\right\} .
\]

There exists a canonical way to decompose an ellipsoid $E(a,b)$ with
$\frac{a}{b}$ rational into a finite disjoint union of balls $B(a,b):=\sqcup_{i}B\left(w_{i}\right)$
with weights~$w_{i}$ related to the continued fraction expansion
of $\frac{a}{b}$. We shall explain this decomposition in more detail
and prove the following proposition in the next section.
\begin{prop}
\label{prop:E(a,b) -> P(c,d) iff B(a,b)... -> B(c+d)}Let $a,b,c,d>0$
with $\frac{a}{b}$ rational. Then there exists a symplectic embedding
$E(a,b)\,\hookrightarrow\, P(c,d)$ if and only if there exists a
symplectic embedding
\[
B(a,b)\bigsqcup B(c)\bigsqcup B(d)\;\hookrightarrow\; B(c+d).
\]

\end{prop}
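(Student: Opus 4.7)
The plan is to identify both sides of the equivalence as the existence of a symplectic form on the same underlying smooth blow-up of $\mathbb{C}P^2$, and to conclude via the McDuff--Polterovich correspondence between symplectic ball embeddings and symplectic blow-ups.

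First I would apply McDuff's ellipsoid-to-balls reduction theorem (valid for $a/b$ rational): $E(a,b) \overset{s}{\hookrightarrow} U$ is equivalent to $B(a,b) := \sqcup_i B(w_i) \overset{s}{\hookrightarrow} U$ for any open $U \subset \mathbb{R}^4$. Taking $U = P(c,d)$ replaces the left-hand side of the proposition by a ball packing problem. Next I would compactify: $P(c,d)$ sits as the complement of a codimension-$2$ divisor (two symplectic spheres meeting at a point) in $S^2(c) \times S^2(d)$, while $B(c+d)$ sits as the complement of a line in $\mathbb{C}P^2(c+d)$. Since finitely many symplectic balls can always be displaced away from any codimension-$2$ subcomplex, both embedding problems pass unchanged to these closed compactifications.

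Via the McDuff--Polterovich correspondence each side then becomes a question about the existence of a symplectic blow-up. On the left: does the $k$-point blow-up of $S^2(c) \times S^2(d)$ with exceptional areas $w_i$ admit a symplectic form? On the right: does the $(k+2)$-point blow-up of $\mathbb{C}P^2(c+d)$ with exceptional areas $w_i, c, d$ admit one? Both blow-ups produce the same smooth $4$-manifold, the $(k+2)$-point blow-up of $\mathbb{C}P^2$, and the basis change
\[
A = L - F_1, \qquad B = L - F_2, \qquad C = L - F_1 - F_2, \qquad E_i = E_i
\]
identifies their cohomology lattices; it sends the right-hand symplectic class to one with $\omega \cdot A = d$, $\omega \cdot B = c$, and $\omega \cdot C = (c+d) - c - d = 0$, so the left-hand class is precisely the right-hand class restricted to the wall $\{\omega \cdot C = 0\}$ of the Kähler cone.

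The main obstacle is this wall-of-the-cone issue, which I would handle by a limiting argument: existence of a symplectic form with $\omega \cdot C = 0$ is equivalent to existence with $\omega \cdot C = \epsilon$ for all sufficiently small $\epsilon > 0$. One direction is a straightforward $\epsilon \to 0$ limit; the other uses the openness of the Kähler cone together with the fact that any open ball packing can be augmented by an additional sufficiently small open ball. Running the McDuff--Polterovich correspondence in reverse and letting $\epsilon \to 0$ matches the $\mathbb{C}P^2$-side problem with the $S^2 \times S^2$-side problem, and a second application of McDuff's ellipsoid-to-balls reduction restores the ellipsoid formulation, closing the equivalence.
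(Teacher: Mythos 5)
Your proposal takes a genuinely different route from the paper. The paper proves this proposition directly with moment-map geometry: it represents $B$, $P$, and $\mathbb{C}P^{2}$ via products and Delzant polytopes, reads the forward implication off an explicit picture, and for the converse uses McDuff's connectedness theorem for ball embeddings to isotope $\overline{B}(c-\varepsilon)$ and $\overline{B}(d-\varepsilon)$ into standard position in $\mathbb{C}P^{2}(c+d)$. You instead route through the McDuff--Polterovich correspondence and a change of basis of the cohomology lattice --- essentially the machinery the paper sets up later (Remark 3.7 and the proof of Proposition 3.9) to translate embedding \emph{constraints}, but does not use to prove Proposition 1.3 itself.

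There are, however, two concrete problems. First, the $k$-point blow-up of $S^{2}\times S^{2}$ is the $(k+1)$-point blow-up of $\mathbb{C}P^{2}$, not the $(k+2)$-point blow-up: $S^{2}\times S^{2}$ is minimal, and only its \emph{one}-point blow-up agrees with the two-point blow-up of $\mathbb{C}P^{2}$. So the manifold realizing the left-hand side ($k$ balls into $P(c,d)\subset S^{2}\times S^{2}$) and the manifold realizing the right-hand side ($k+2$ balls into $B(c+d)\subset\mathbb{C}P^{2}$) are \emph{not} the same smooth $4$-manifold; the right has one more exceptional class. Consequently one cannot simply ``restrict the right-hand class to the wall $\{\omega\cdot C=0\}$'' and land on the left-hand class: the exceptional class $C=L-E_{1}-E_{2}$ has no counterpart on the left, and what is needed is a symplectic blow-down of $C$ relating the interior of the cone on the $(k+1)$-blow-up to a one-sided neighborhood of the wall on the $(k+2)$-blow-up.

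Second, ``existence of a symplectic form with $\omega\cdot C=0$'' is never satisfied --- $C$ is itself an exceptional class, and by the Li--Li/Li--Liu description of the cone (Proposition 3.5) the cone is open and lies in $\{\omega\cdot E>0\}$ for every $E\in\mathcal{E}$ --- so the equivalence you assert is vacuous as written. What you presumably mean is that the left-hand packing corresponds to the condition $\omega\cdot C=\varepsilon$ on the right for all small $\varepsilon>0$. Making this precise requires (a) identifying exceptional classes of the $(k+1)$-blow-up with exceptional classes of the $(k+2)$-blow-up having zero $C$-coefficient, and (b) carrying out the ``augment by a small ball'' step in a way that is compatible with the base change --- the small ball must occupy the $F_{1}=C$ slot, which needs an isotopy of exceptional divisors. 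Neither is carried out. The paper's geometric proof sidesteps all of this by producing the embeddings directly from the moment-map pictures, never comparing cohomology lattices of different blow-ups.
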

Hutchings showed in Corollary 11 of~\cite{H2} how Proposition \ref{prop:E(a,b) -> P(c,d) iff B(a,b)... -> B(c+d)}
implies that ECH-capacities form a complete set of invariants for
the problem of symplectically embedding an ellipsoid into a polydisc:
\begin{cor}
\label{cor:ECH capacities are sharp for EP}There exists a symplectic
embedding $E(a,b)\,\hookrightarrow\, P(c,d)$ if and only if $c_{ECH}^{k}\left(E(a,b)\right)\leqslant c_{ECH}^{k}\left(P(c,d)\right)$
for all $k\geqslant1$.
\end{cor}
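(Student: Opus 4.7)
The ``only if'' direction is immediate from the monotonicity of ECH capacities under symplectic embeddings, established in \cite{H1}: any $\varphi\colon E(a,b)\overset{s}{\hookrightarrow}P(c,d)$ forces $c_{ECH}^{k}(E(a,b))\leqslant c_{ECH}^{k}(P(c,d))$ for every $k\geqslant1$.

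For the converse, the plan is first to reduce to the case where $a/b$ is rational. If $a/b$ is irrational, I would approximate $E(a,b)$ from inside by ellipsoids with rational ratio, exploit continuity of the ECH capacities in the parameters (which preserves the standing inequality), and run the standard exhaustion argument for open symplectic embeddings. With $a/b\in\mathbb{Q}$, Proposition~\ref{prop:E(a,b) -> P(c,d) iff B(a,b)... -> B(c+d)} translates the embedding question into the ball packing problem
\[
B(a,b)\bigsqcup B(c)\bigsqcup B(d)\;\hookrightarrow\; B(c+d),
\]
so it remains to show that the ECH capacity hypothesis implies the existence of this packing.

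The central step is then to invoke McDuff's criterion (see \cite{M2,MS}), which characterises the existence of a symplectic ball packing $\bigsqcup_{i}B(w_{i})\hookrightarrow B(A)$ by a family of linear constraints $\sum_{i}m_{i}w_{i}\leqslant dA$, one for each exceptional class $(d;m_{1},\ldots,m_{n})$ appearing in a suitable blow-up of $\mathbb{CP}^{2}$. What remains---and this is where I expect the main obstacle to lie---is a combinatorial translation: one must show that the family of ECH inequalities $c_{ECH}^{k}(E(a,b))\leqslant c_{ECH}^{k}(P(c,d))$ indexed by $k\geqslant1$ is equivalent to the family of exceptional-class inequalities for the ball collection $B(a,b)\sqcup B(c)\sqcup B(d)$ packed into $B(c+d)$. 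The key ingredients for this translation are the disjoint-union formula $c_{ECH}^{k}(U\sqcup V)=\max_{i+j=k}(c_{ECH}^{i}(U)+c_{ECH}^{j}(V))$, the identity $c_{ECH}^{k}(B(a,b))=c_{ECH}^{k}(E(a,b))$ (the weight decomposition of an ellipsoid is invisible to ECH capacities), and the explicit formula $c_{ECH}^{k}(P(c,d))=\min\{cm+dn:(m+1)(n+1)\geqslant k+1\}$ recalled above. Matching exceptional classes to ECH capacity indices term by term is the combinatorial heart of \cite[Cor.~11]{H2} and is the step I expect to require the greatest care.
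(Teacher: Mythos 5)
Your outline is correct and matches the approach the paper takes: the paper does not reprove this corollary but defers to Hutchings~\cite[Cor.~11]{H2}, whose argument is exactly Proposition~\ref{prop:E(a,b) -> P(c,d) iff B(a,b)... -> B(c+d)} combined with McDuff's sharpness of ECH capacities for ball packings and the combinatorial translation you describe. The one thing worth noting is that Hutchings carries out the combinatorial step entirely in ECH language---using the disjoint-union formula and the identity $c_{ECH}^{k}(B(a,b))=c_{ECH}^{k}(E(a,b))$ to pass from the inequalities for $E(a,b)\hookrightarrow P(c,d)$ to those for $B(a,b)\sqcup B(c)\sqcup B(d)\hookrightarrow B(c+d)$, after which McDuff's ball-packing theorem closes the loop---rather than by matching ECH indices to exceptional classes term by term; your framing via exceptional classes is equivalent (by Theorem~\ref{thm:McDuff-Polterovich} and Proposition~\ref{prop:Li-Li}) but interposes an extra layer that the cited argument avoids.
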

\noindent It seems to be hard to derive Theorem~\ref{thm:statement of the result}
from Corollary~\ref{cor:ECH capacities are sharp for EP} or vice-versa.\medskip{}

As a further corollary we obtain
\begin{cor}
The ellipsoid $E(1,a)$ symplectically embeds into the cube $C(A)$
if and only if $E(1,a)$ symplectically embeds into the ellipsoid
$E(A,2A)$.\end{cor}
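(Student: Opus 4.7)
The plan is to reduce both embedding problems to the same ball-packing problem. The key observation is that the weight decomposition of $E(A, 2A)$ is $B(A, 2A) = B(A) \sqcup B(A)$: the ratio $2A/A = 2$ has trivial continued-fraction expansion, so the decomposition algorithm strips off two equal balls of capacity $A$ and terminates.

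For the cube side, Proposition~\ref{prop:E(a,b) -> P(c,d) iff B(a,b)... -> B(c+d)} applied to $C(A) = P(A, A)$ gives, for $a$ rational,
$$E(1, a) \overset{s}{\hookrightarrow} C(A) \iff B(1, a) \sqcup B(A) \sqcup B(A) \overset{s}{\hookrightarrow} B(2A).$$
For the ellipsoid target I would invoke McDuff's analogous reduction theorem for ellipsoid-into-ellipsoid embeddings (proved by the same $\mathbb{C}P^{2}$ blow-up technology, and which underlies the fact that ECH-capacities are complete for $E \hookrightarrow E'$): if $(w_1, \ldots, w_m)$ is the weight decomposition of $E(c, d)$, then
$$E(a, b) \overset{s}{\hookrightarrow} E(c, d) \iff B(a, b) \sqcup B(w_1) \sqcup \cdots \sqcup B(w_m) \overset{s}{\hookrightarrow} B(c + d).$$
Specializing to $(c, d) = (A, 2A)$ with $(w_1, w_2) = (A, A)$ produces the identical condition
$$E(1, a) \overset{s}{\hookrightarrow} E(A, 2A) \iff B(1, a) \sqcup B(A) \sqcup B(A) \overset{s}{\hookrightarrow} B(2A).$$
Thus both embedding problems are equivalent for every rational $a$, and the general case follows from continuity of the embedding functions in $a$ together with the density of $\mathbb{Q}$ in $[1, \infty[$.

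The main obstacle is simply invoking the right auxiliary result; the combinatorial heart of the argument --- that the weight decomposition of $E(A, 2A)$ is precisely the pair of balls appearing as the ``complement'' in Proposition~\ref{prop:E(a,b) -> P(c,d) iff B(a,b)... -> B(c+d)} for $P(A, A)$ --- is a one-line check. A more self-contained alternative would go through Corollary~\ref{cor:ECH capacities are sharp for EP} together with McDuff's completeness of ECH-capacities for ellipsoid-into-ellipsoid embeddings, reducing the problem to the identity $c_{ECH}^{k}(E(A, 2A)) = c_{ECH}^{k}(C(A))$ for all $k \geqslant 1$; this is immediate once one notes that both sides equal $A$ times the smallest $s$ for which $\#\{(m, n) \in \mathbb{Z}_{\geqslant 0}^{2} : m + 2n \leqslant s\} \geqslant k + 1$, a comparison that amounts to computing $\max_{m + n = s} (m+1)(n+1)$ in the even and odd cases.
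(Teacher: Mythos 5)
Your second, ``self-contained alternative'' is in fact exactly the paper's proof (Corollary \ref{cor:ECH capacities are sharp for EP} for the cube side, McDuff's solution of the Hofer conjecture \cite{M3} for the ellipsoid side, then the ECH-capacity identity $c_{ECH}^{k}(E(1,2))=c_{ECH}^{k}(C(1))$, which the paper verifies in Section~\ref{sub:Proof-of-equalities} by the same $\left\lfloor \frac{d+1}{2}\right\rfloor\left\lceil\frac{d+1}{2}\right\rceil$ comparison you sketch). That route is correct.

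Your primary route, however, has a real gap. The theorem you invoke --- that $E(a,b)\overset{s}{\hookrightarrow}E(c,d)$ if and only if $B(a,b)\sqcup B(w_{1})\sqcup\cdots\sqcup B(w_{m})\overset{s}{\hookrightarrow}B(c+d)$, where $(w_{i})$ is the weight decomposition of the \emph{target} $E(c,d)$ --- is not a theorem, and in fact is false. Take $E(a,b)=B(3/2)$ and $E(c,d)=E(1,2)$: by nonsqueezing $B(3/2)\not\hookrightarrow E(1,2)$, yet the packing $B(3/2)\sqcup B(1)\sqcup B(1)\hookrightarrow B(3)$ is unobstructed (for three balls the only constraints from $\mathcal{E}_{3}$ are $\mu>w_{i}+w_{j}$ and the volume inequality, all of which hold with room to spare). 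The reason the proposition you are modelling on (Proposition~\ref{prop:E(a,b) -> P(c,d) iff B(a,b)... -> B(c+d)}) works for polydiscs is that the moment rectangle of $P(c,d)$ is literally the complement in the $B(c+d)$-triangle of two \emph{disjoint} corner triangles. The moment triangle of $E(A,2A)$ can also be obtained from that of $B(2A)$ by cutting off two copies of $B(A)$, but those two cuts form a \emph{chain}: the second ball sits on the exceptional divisor created by the first, so they are not disjoint and the argument of Section~\ref{sub:Proof-of-Proposition} breaks down. McDuff's proof that ECH capacities are complete for $E\hookrightarrow E'$ works precisely because it handles this chain structure; it does not reduce to a disjoint ball packing into $B(c+d)$. (Separately, even granting your statement as written you would land in $B(c+d)=B(3A)$, not $B(2A)$, so the two sides would not reduce to the same packing problem; the $B(2A)$ in your display is an arithmetic slip.) So the ``one-line check'' is not where the difficulty lies; the difficulty is that the ellipsoid-target reduction you want does not exist in that form, which is why the paper routes the argument through ECH capacities.
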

\begin{proof}
By Corollary \ref{cor:ECH capacities are sharp for EP}, $E(1,a)$
symplectically embeds into $C(A)$ if and only if $c_{ECH}^{k}\left(E(1,a)\right)\leqslant c_{ECH}^{k}\left(C(A)\right)$
for all $k\geqslant1$. By McDuff's proof of the Hofer Conjecture~\cite{M3},
$E(1,a)$ symplectically embeds into $E(A,2A)$ if and only if $c_{ECH}^{k}\left(E(1,a)\right)\leqslant c_{ECH}^{k}\left(E(A,2A)\right)$
for all $k\geqslant1$. The corollary now follows from the remark
on page 8098 in~\cite{H2}, that says that for all $k\geqslant1$
\begin{equation}
c_{ECH}^{k}\left(E(1,2)\right)=c_{ECH}^{k}\left(C(1)\right).\label{eq:identity ECH}
\end{equation}
For the easy proof, we refer to Section \ref{sec:proof of proposition}.\end{proof}
\begin{rem}
Recall that the ECH capacities of $B(1)$ and $C(1)$ (or $E(1,2)$)
are
\[
\begin{aligned}c_{ECH}\left(B(1)\right) & =\left(0^{\times1},1^{\times2},2^{\times3},3^{\times4},4^{\times5},5^{\times6},6^{\times7},7^{\times8},8^{\times9},9^{\times10},\ldots\right),\\
c_{ECH}\left(C(1)\right) & =\left(0^{\times1},1^{\times1},2^{\times2},3^{\times2},4^{\times3},5^{\times3},6^{\times4},7^{\times4},8^{\times5},9^{\times5},\ldots\right).
\end{aligned}
\]
One sees that the sequence $c_{ECH}\left(C(1)\right)$ is obtained
from $c_{ECH}\left(B(1)\right)$ by some sort of doubling. This is
reminiscent to the doubling in the denition of the Pell numbers: The
Fibonacci and Pell numbers are defined recursively by
\[
F_{n+1}=F_{n}+F_{n-1},\qquad P_{n+1}=2P_{n}+P_{n-1},
\]
and while the Fibonacci numbers determine the infinite stairs of the
function $c_{EB}(a)$ for $a\leqslant\tau^{4}$ (with $\tau$ the
golden ratio, see~\cite{MS}), the Pell numbers determine the infinite
stairs of the function $c_{EC}(a)$ for $a\leqslant\sigma^{2}$. This
reminiscence may, however, be a coincidence. Indeed, for the ellipsoid
$E(1,3)$ the sequence
\[
c_{ECH}\left(E(1,3\right)=\left(0^{\times1},1^{\times1},2^{\times1},3^{\times2},4^{\times2},5^{\times2},6^{\times3},7^{\times3},8^{\times3},9^{\times4},\ldots\right)
\]
is obtained from $c_{ECH}(B(1))$ by some sort of trippling, but the
beginning of the function describing the embedding problem $E(1,a)\overset{s}{\hookrightarrow}E(A,3A)$
seems not to be given in terms of numbers defined by $G_{n+1}=3G_{n}+G_{n-1}$.
\end{rem}
\noindent \textbf{Acknowledgments.} We wish to sincerely thank Felix
Schlenk for his precious help and encouragement during the whole project
and R\'egis Straubhaar for his help with all the computer issues.

\section{Proof of Proposition \ref{prop:E(a,b) -> P(c,d) iff B(a,b)... -> B(c+d)}
and equalities (\ref{eq:identity ECH})\label{sec:proof of proposition}}

\noindent In Section \ref{sub:Decomposing-an-ellipsoid}, we explain
the canonical decomposition of $E(1,a)$ with $a\in\mathbb{Q}$ into
a disjoint union of balls. We then prove Proposition \ref{prop:E(a,b) -> P(c,d) iff B(a,b)... -> B(c+d)}
in Sections \ref{sub:Representations-of-balls} and \ref{sub:Proof-of-Proposition},
and in Section \ref{sub:Proof-of-equalities} we prove equalities
(\ref{eq:identity ECH}).

\subsection{Decomposing an ellipsoid into a disjoint union of balls\label{sub:Decomposing-an-ellipsoid}}

In \cite{M2}, McDuff showed the following theorem.
\begin{thm}
\emph{(McDuff \cite{M2}) }Let $a,b>0$ be two rational numbers. Then,
there exists a finite sequence $\left(w_{1},\ldots,w_{M}\right)$
of rational numbers such that the closed ellipsoid $\overline{E}(a,b)$
symplectically embeds into the ball $B(A)$ if and only if the disjoint
union of balls $\sqcup_{i}\overline{B}\left(w_{i}\right)$ symplectically
embed into $B(A)$.
\end{thm}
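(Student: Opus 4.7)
The plan is to construct the weights explicitly via the Euclidean algorithm applied to $(a,b)$, to prove the embedding $\sqcup_i \overline{B}(w_i)\overset{s}{\hookrightarrow}\overline{E}(a,b)$ by a toric decomposition, and to deduce the converse from the theory of symplectic blow-ups.

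Assume without loss of generality $a\leqslant b$. Writing $b=q_0 a+r_0$ with $0\leqslant r_0<a$, I declare the weight sequence of $\overline{E}(a,b)$ to consist of $q_0$ copies of $a$ followed by the weight sequence of $\overline{E}(r_0,a)$, with the recursion terminating once one side vanishes. Since $a,b\in\mathbb{Q}$, this terminates after finitely many steps and produces a finite list $(w_1,\ldots,w_M)$ of rationals; an induction on the length of the algorithm gives the volume identity $\sum_i w_i^2=ab$, keeping things in balance with any eventual embedding obstruction.

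For the easy direction, I would first prove the geometric lemma $\overline{B}(a)\sqcup\overline{E}(a,b-a)\overset{s}{\hookrightarrow}\overline{E}(a,b)$ for $a\leqslant b$. This is visible from the moment image of $E(a,b)$: chopping off a square of side $a$ from the right triangle of legs $a$ and $b$ leaves a triangle affinely equivalent to the moment image of $E(a,b-a)$, and Traynor-style toric embedding constructions realise the decomposition symplectically. Iterating this step $q_0$ times and then recursing on $\overline{E}(r_0,a)$ yields $\sqcup_i \overline{B}(w_i)\overset{s}{\hookrightarrow}\overline{E}(a,b)$, and composing with any embedding $\overline{E}(a,b)\overset{s}{\hookrightarrow}B(A)$ gives one of the two implications.

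The converse is where the real difficulty lies. My plan is to exploit the correspondence between symplectic embeddings and cohomology classes on iterated blow-ups of $\mathbb{CP}^2$: a packing $\sqcup_i\overline{B}(w_i)\hookrightarrow B(A)$ exists iff the $M$-fold blow-up of $\mathbb{CP}^2$ admits a symplectic form in the class $Ah-\sum_i w_i e_i$, where $h$ and the $e_i$ denote the hyperplane and exceptional classes. The goal is to show that the ellipsoid embedding problem $\overline{E}(a,b)\hookrightarrow B(A)$ is controlled by the same cohomological condition; passing from a packing back to an ellipsoid embedding is then carried out by inflating along a suitable $J$-holomorphic exceptional configuration, whose existence is guaranteed by Taubes--Seiberg--Witten theory, and then blowing down. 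Keeping track of the existence of these exceptional classes and of the positivity of their intersection pairings is the heart of the argument and the main technical obstacle.
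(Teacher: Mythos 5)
First, a small caveat: the paper you are reading does not prove this theorem itself---it states it with a citation to McDuff \cite{M2}---so I am comparing your sketch to McDuff's argument rather than to anything in this paper.

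Your weight recursion is right (it produces the same $(w_1,\ldots,w_M)$ as the paper's definition of the weight expansion, and the volume identity $\sum w_i^2 = ab$ is Lemma~\ref{lem:weight expansion}\,(ii) after normalization). Your easy direction is also essentially the standard argument, but the ``geometric lemma'' $\overline{B}(a)\sqcup\overline{E}(a,b-a)\overset{s}{\hookrightarrow}\overline{E}(a,b)$ is not literally true for \emph{closed} domains: in the moment picture the two sub-polytopes share an edge, so the closed preimages intersect along the torus bundle over that edge. This is a standard nuisance and is handled by working with open domains and then rescaling slightly (as in McDuff's Theorem 1.5 and its use in Section~\ref{sub:Proof-of-Proposition} above), but you should say so.

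The genuine gap is in the hard direction. You correctly invoke the McDuff--Polterovich criterion: $\sqcup_i\overline{B}(w_i)\hookrightarrow B(A)$ iff $Ah-\sum w_i e_i$ lies in the symplectic cone of $X_M$. But then you write that ``the goal is to show that the ellipsoid embedding problem is controlled by the same cohomological condition'' and propose to achieve this by ``inflating along a suitable $J$-holomorphic exceptional configuration''---this is a restatement of the theorem, not a step toward it. What is missing is the actual geometric content that makes the argument go: (a) the relevant configuration is not a collection of disjoint exceptional spheres but a \emph{chain} of spheres with prescribed negative self-intersections coming from the continued fraction expansion of $b/a$ (the Hirzebruch--Jung chain), obtained by blowing up the weight balls when they are nested in the canonical staircase position; (b) the central geometric lemma, which your sketch never states, is that the complement in $X_M$ of a suitable neighbourhood of this chain together with a line, after blowing down part of the configuration, is (up to an arbitrarily small loss) the ellipsoid $E(a,b)$; (c) to pass from an \emph{arbitrary} packing to one in this staircase position one inflates along spheres of nonnegative self-intersection---not exceptional spheres---and one must check that the chain persists under this inflation; this uses Taubes--Seiberg--Witten existence theory and positivity of intersections, but it is not a push-button corollary of the existence of exceptional curves. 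Without (a)--(c) your converse direction is a plan in name only, and it mislocates where the difficulty actually sits (in the chain structure and the relative inflation, not in the existence of exceptional classes).
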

The disjoint union $\sqcup_{i}\overline{B}\left(w_{i}\right)$ is
then denoted by $\overline{B}(a,b)$. Following \cite{MS}, we will
now explain one way to compute the weights $w_{1},\ldots,w_{M}$ in
this decomposition. Notice that in~\cite{M2}, the weights of the
balls $B\left(w_{i}\right)$ are defined in a slightly different way.
The proof that these weights agree with the weight expansion of $a$
defined now can be found in the Appendix of \cite{MS}.
\begin{defn}
Let $a=\frac{p}{q}\geqslant1$ be a rational number written in lowest
terms. The \emph{weight expansion} of $a$ is the finite sequence
$w(a):=\left(w_{1},\ldots,w_{M}\right)$ defined recursively by\end{defn}
\begin{itemize}
\item $w_{1}=1$, and $w_{n}\geqslant w_{n+1}>0$ for all $n$;
\item if $w_{i}>w_{i+1}=\ldots=w_{n}$ (where we set $w_{0}:=a$), then
\[
w_{n+1}=\left\{ \begin{array}{cl}
w_{n} & \textrm{if }w_{i+1}+\ldots+w_{n+1}=(n-i+1)w_{i+1}\leqslant w_{i},\\
w_{i}-(n-i)w_{i+1} & \textrm{otherwise};
\end{array}\right.
\]

\item the sequence stops at $w_{n}$ if the above formula gives $w_{n+1}=0$.\end{itemize}
\begin{rem}
If we regard this weight expansion as consisting of $N+1$ blocks
on which the $w_{i}$ are constant, that is
\[
w(a)=(\underset{l_{0}}{\underbrace{1,\ldots,1},\,}\underset{l_{1}}{\underbrace{x_{1},\ldots,x_{1}}},\ldots,\underset{l_{N}}{\underbrace{x_{N},\ldots,x_{N}}})=\left(1^{\times l_{0}},x_{1}^{\times l_{1}},\ldots,x_{N}^{\times l_{N}}\right),
\]
then $x_{1}=a-l_{0}<1$, and if we set $x_{0}=1$, then for all $2\leqslant i\leqslant N$,
$x_{i}=x_{i-2}-l_{i-1}x_{i-1}$. Moreover, the lengths of the blocks
give the continued fraction of $a$ since
\[
a=l_{0}+\frac{1}{l_{1}+\frac{{\textstyle 1}}{{\textstyle l_{2}+\frac{{\textstyle 1}}{{\textstyle \ddots+\frac{{\textstyle 1}}{{\textstyle l_{N}}}}}}}}=:\left[l_{0};l_{1},\ldots,l_{N}\right].
\]
\end{rem}
\begin{example}
The weight expansion of $\frac{25}{9}$ is $\left(1^{\times2},\frac{7}{9},\frac{2}{9}^{\times3},\frac{1}{9}^{\times2}\right)$.
The continued fraction expansion of $\frac{25}{9}$ is thus $\left[2;1,3,2\right]$.
Notice that we also have
\[
\frac{25}{9}=2\cdot1^{2}+\left(\frac{7}{9}\right)^{2}+3\cdot\left(\frac{2}{9}\right)^{2}+2\cdot\left(\frac{1}{9}\right)^{2}.
\]
This is no accident and is best explained geometrically as in Figure
\ref{fig:weight expansion}. The general result is stated in the next
lemma.

\selectlanguage{french}%
\begin{figure}
\begin{centering}
\includegraphics[scale=0.3]{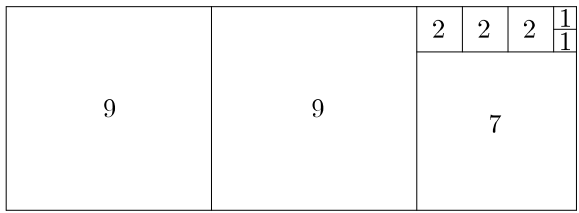}
\par\end{centering}

\caption{\label{fig:weight expansion}The weight expansion of $\frac{25}{9}$.}
\end{figure}
\end{example}
\begin{lem}
\label{lem:weight expansion} \emph{(McDuff-Schlenk \cite{MS}, Lemma
1.2.6)}\renewcommand{\labelenumi}{(\roman{enumi})}
Let $a=\frac{p}{q}\geqslant1$ be a rational number with $p,q$ relatively prime, and let $w:=w(a)=\left(w_{1},...,w_{M}\right)$ be its weight expansion. Then
\begin{enumerate}
\item $w_{M}=\frac{1}{q}$;
\item $\sum w_{i}^{2}=\left\langle w,w\right\rangle =a$;\smallskip{}
\item $\sum w_{i}=a+1-\frac{1}{q}$.
\end{enumerate} %

\end{lem}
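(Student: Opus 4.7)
My plan is to prove all three claims simultaneously by induction on the length $N+1$ of the continued fraction expansion $a=[l_{0};l_{1},\ldots,l_{N}]$, driven by a self-similar recursion for the weight expansion itself.

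The first step is to unpack the defining recurrence and show that
\[
w(a) \,=\, \bigl(\,1^{\times l_{0}},\; x_{1}\, w(a')\,\bigr),
\]
where $x_{1}:=a-l_{0}\in(0,1]$, $a':=1/x_{1}$, and $x_{1}w(a')$ denotes the sequence obtained from $w(a')$ by multiplying every entry by $x_{1}$. Indeed, starting from $w_{0}=a$ and $w_{1}=1$, the defining rule produces $w_{2}=\cdots=w_{l_{0}}=1$ for as long as $n\cdot 1\leqslant w_{0}=a$; the block terminates at $n=l_{0}=\lfloor a\rfloor$, and the next value is $w_{l_{0}+1}=a-l_{0}=x_{1}$. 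From that position on, the recursion is identical to the one governing the weight expansion of a $1\times x_{1}$ rectangle, which by the evident scale-invariance of the rule is $x_{1}$ times $w(1/x_{1})$.

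Now write $a=p/q$ in lowest terms and set $r:=p-l_{0}q$, so that $x_{1}=r/q$ and $a'=q/r$. Since $\gcd(p,q)=1$ forces $\gcd(q,r)=1$, the fraction $a'=q/r$ is already in lowest terms, and its continued fraction $[l_{1};l_{2},\ldots,l_{N}]$ has length $N$. The base case $N=0$ has $a=l_{0}\in\mathbb{N}$ and $q=1$, so $w(a)=(1^{\times l_{0}})$, making (i)--(iii) immediate.

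For the inductive step, assume (i)--(iii) hold for $a'$. Then (i) gives $w_{M}=x_{1}\cdot(1/r)=(r/q)(1/r)=1/q$; for (ii), $\sum w_{i}^{2}=l_{0}\cdot 1+x_{1}^{2}\,a'=l_{0}+x_{1}=a$; and for (iii), $\sum w_{i}=l_{0}+x_{1}(a'+1-1/r)=l_{0}+1+x_{1}-x_{1}/r=a+1-1/q$. The main (and really only) obstacle is the opening reduction: one has to read the piecewise formula in the definition carefully enough to see that it truly produces the self-similar decomposition $(1^{\times l_{0}},x_{1}w(a'))$. Once this scale-invariance is in hand, the rest is bookkeeping, and the three formulas fall out of the induction in a single line each.
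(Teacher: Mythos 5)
Your argument is correct. The paper does not prove this lemma --- it cites McDuff-Schlenk, Lemma 1.2.6 --- so there is no in-paper proof to compare against, but your approach is the natural one and it goes through. The Remark that follows the definition of the weight expansion in the paper already records the block structure $w(a)=\left(1^{\times l_0},x_1^{\times l_1},\ldots,x_N^{\times l_N}\right)$ with $x_0=1$, $x_1=a-l_0$ and the recurrence $x_i=x_{i-2}-l_{i-1}x_{i-1}$; this is exactly the rescaling you use, since with $a'=1/x_1=[l_1;l_2,\ldots,l_N]$ and $x'_j$ the analogous sequence for $a'$, one verifies $x_1\,x'_j=x_{j+1}$ by induction, giving your decomposition $w(a)=\left(1^{\times l_0},x_1\,w(a')\right)$. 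The base case ($a$ an integer, $q=1$) and all three inductive computations are correct, and the coprimality check $\gcd(q,r)=\gcd(q,p-l_0q)=\gcd(q,p)=1$ needed for (i) is exactly right. One small imprecision in your write-up: you say the first block of ones persists ``for as long as $n\cdot1\leqslant w_0=a$,'' but the defining condition for $w_{n+1}=1$ is actually $(n+1)\cdot1\leqslant a$; this does not affect the conclusion that the block has length $l_0=\lfloor a\rfloor$ and that $w_{l_0+1}=a-l_0$. A complementary, more pictorial proof of (ii) reads off Figure \ref{fig:weight expansion}: the weights are the side lengths of the squares in the Euclidean-algorithm tiling of the $1\times a$ rectangle, and (ii) expresses that these squares exhaust the rectangle's area.
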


\subsection{Representations of balls and polydiscs\label{sub:Representations-of-balls}}

In the proof of Proposition \ref{prop:E(a,b) -> P(c,d) iff B(a,b)... -> B(c+d)},
we shall use certain ways of representing open and closed balls and
open polydiscs. Recall that $B(a)$ is the open ball in~$\mathbb{R}^{4}$
of capacity $a=\pi r^{2}$, and that $P(a,b)=D^{2}(a)\times D^{2}(b)$,
where $D^{2}(a)$ is the open disc in~$\mathbb{R}^{2}$ of area~$a$.

\subsubsection{Representations as products\label{sub:Representations-as-products}}

Denote by $\square(a,b)$ the open square $\left]0,a\right[\times\left]0,b\right[$
in $\mathbb{R}^{2}$. Since $D^{2}(a)$ is symplectomorphic to the
open square $\left]0,a\right[\times\left]0,1\right[$, the polydisc
$P(a,b)$ is symplectomorphic to
\[
\square(a,b)\times\square(1,1)\,\subset\,\mathbb{R}^{2}(x)\times\mathbb{R}^{2}(y).
\]
Next, consider the simplex
\[
\triangle(a)\,:=\,\left\{ (x_{1},x_{2})\in\mathbb{R}^{2}(x)\;:\;0<x_{1},x_{2}\;,\; x_{1}+x_{2}<a\right\} .
\]
Then $B(a)$ is symplectomorphic to the product
\[
\triangle(a)\times\square(1,1)\,\subset\,\mathbb{R}^{2}(x)\times\mathbb{R}^{2}(y),
\]
see~\cite{T} and Remark 9.3.1 of~\cite{S2}.

\subsubsection{Representations by the Delzant polytope\label{sub:Representations-Delzant}}

As before, denote by $\omega_{SF}$ the Study-Fubini form on the complex
projective plane~$\mathbb{C}P^{2}$, normalized by $\int_{\mathbb{C}P^{1}}\omega_{SF}=1$.
We write $\mathbb{C}P^{2}(a)$ for $\left(\mathbb{C}P^{2},a\,\omega_{SF}\right)$.
Its affine part $\mathbb{C}P^{2}\setminus\mathbb{C}P^{1}$ is symplectorphic
to the open ball~$B(a)$. (Indeed, for $a=\pi$, the embedding
\[
z=(z_{1},z_{2})\,\mapsto\,\left[z_{1}:z_{2}:\sqrt{1-|z|^{2}}\right]
\]
is symplectic.)

The image of the moment map of the usual $T^{2}$-action on $\mathbb{C}P^{2}(a)$
is the closed triangle~$\overline{\triangle(a)}$. For $b<a$, the
preimage of $\overline{\triangle(b)}\subset\overline{\triangle(a)}$
is symplectomorphic to $\overline{B}(b)$. By precomposing the torus
action with suitable linear torus automorphisms, one sees that also
the closed triangles based at the other two corners of $\overline{\triangle(a)}$
correspond to closed balls in~$\mathbb{C}P^{2}(a)$. We refer to~\cite{K}
for details.

The image of the moment map of the usual $T^{2}$-action on~$\mathbb{C}^{2}$
maps the polydisc $P(c,d)$ to the rectangle $\left[0,c\right[\times\left[0,d\right[\subset\mathbb{R}^{2}(x)$.

\selectlanguage{french}%
\begin{figure}
\begin{centering}
\includegraphics[scale=0.4]{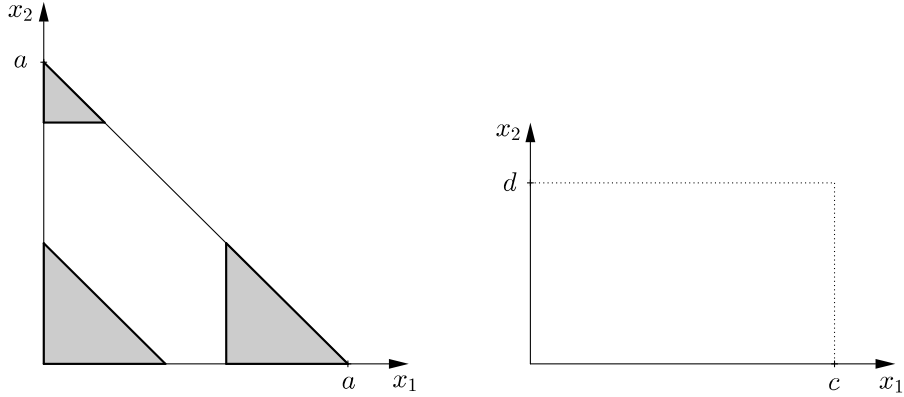}
\par\end{centering}

\caption{\label{fig:0}Closed balls in $\mathbb{C}P^{2}(a)$ and the moment
image of $P(c,d)$}
\end{figure}

\selectlanguage{english}%

\subsection{Proof of Proposition \ref{prop:E(a,b) -> P(c,d) iff B(a,b)... -> B(c+d)}\label{sub:Proof-of-Proposition}}

Let now $a,b,c,d>0$ with $\frac{a}{b}$ rational. We need to show
that
\[
E(a,b)\;\overset{s}{\hookrightarrow}\; P(c,d)\;\Longleftrightarrow\, B(a,b)\sqcup B(c)\sqcup B(d)\;\overset{s}{\hookrightarrow}\; B(c+d).
\]

\noindent ''$\Longrightarrow$'': By decomposing $E(a,b)$ into
balls as before, we find that $B(a,b)\,\overset{s}{\hookrightarrow}\, P(c,d)$,
(see also~\cite{M2}). Fix $\varepsilon>0$. Then also $(1-\varepsilon)\overline{B}(a,b)\,\overset{s}{\hookrightarrow}\, P(c,d)$.
Now represent the open balls $B(c),B(d),B(c+d)$ and the polydisc
as in Section~\ref{sub:Representations-as-products} above. We then
read off from Figure \ref{fig:1} that
\[
(1-\varepsilon)\overline{B}(a,b)\sqcup B(c)\sqcup B(d)\;\overset{s}{\hookrightarrow}\; B(c+d).
\]
This holds for every $\varepsilon>0$. In view of~\cite{M1} we then
also find a symplectic embedding $B(a,b)\sqcup B(c)\sqcup B(d)\,\overset{s}{\hookrightarrow}\, B(c+d)$.\foreignlanguage{french}{}
\begin{figure}
\selectlanguage{french}%
\begin{centering}
\includegraphics[scale=0.5]{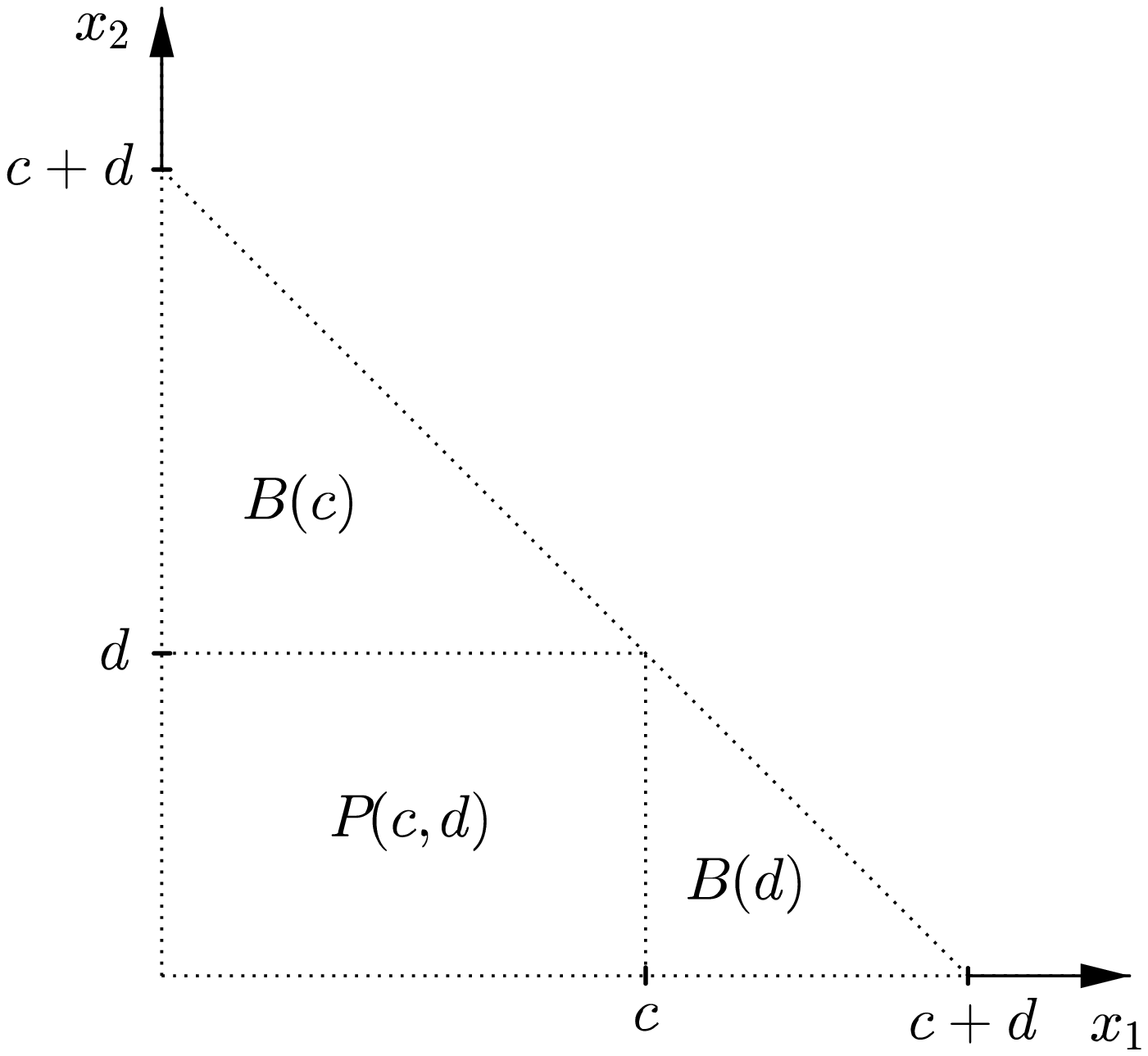}
\par\end{centering}

\caption{\label{fig:1}\foreignlanguage{english}{$(1-\varepsilon)\overline{B}(a,b)\sqcup B(c)\sqcup B(d)\;\protect\overset{s}{\hookrightarrow}\; B(c+d)$}}
\selectlanguage{english}
\end{figure}

{}``$\Longleftarrow":$ Assume now that $B(a,b)\sqcup B(c)\sqcup B(d)\,\overset{s}{\hookrightarrow}\, B(c+d)$.
Fix $\varepsilon>0$. Then
\[
(1-\varepsilon)\overline{B}(a,b)\sqcup\overline{B}(c-\varepsilon)\sqcup\overline{B}(d-\varepsilon)\;\overset{s}{\hookrightarrow}\;\mathbb{C}P^{2}(c+d).
\]
According to~\cite{M1}, the space of symplectic embeddings of $\overline{B}(c-\varepsilon)\sqcup\overline{B}(d-\varepsilon)$
into~$\mathbb{C}P^{2}(c+d)$ is connected. Any such isotopy extends
to an ambient symplectic isotopy of~$\mathbb{C}P^{2}(c+d)$. In view
of this and by Section~\ref{sub:Representations-Delzant} we can
thus assume that the balls $\overline{B}(c-\varepsilon)$ and $\overline{B}(d-\varepsilon)$
lie in $\mathbb{C}P^{2}(c+d)$ as shown in Figure \ref{fig:2}.

\selectlanguage{french}%
\begin{figure}
\begin{centering}
\includegraphics[scale=0.6]{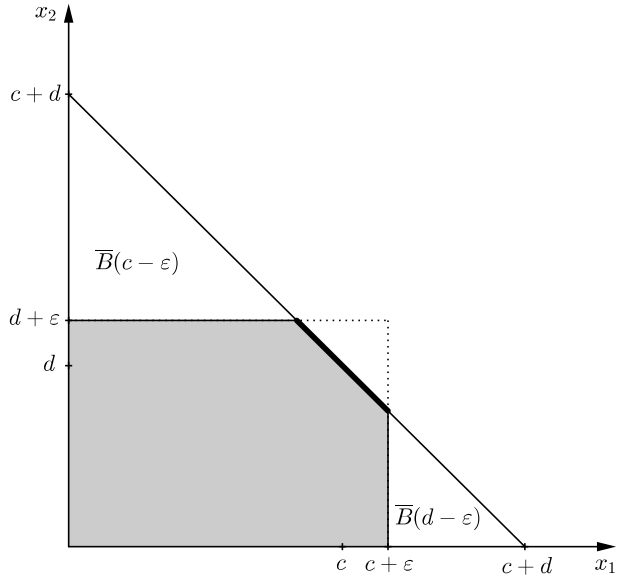}
\par\end{centering}

\caption{\label{fig:2}\foreignlanguage{english}{How $\overline{B}(c-\varepsilon)$
and $\overline{B}(d-\varepsilon)$ lie in $\mathbb{C}P^{2}(c+d)$}}
\end{figure}
\foreignlanguage{english}{The image of the balls $(1-\varepsilon)\overline{B}(a,b)$
must then lie over the gray shaded closed region. However, since the
balls $\overline{B}(c-\varepsilon)$ and $\overline{B}(d-\varepsilon)$
are closed, the image of $(1-\varepsilon)\overline{B}(a,b)$ cannot
touch the upper horizontal or the right vertical boundary of the gray
shaded region. Moreover, according to Remark 2.1.E of~\cite{MP}
we can assume that this image lies in the affine part of $\mathbb{C}P^{2}(c+d)$,
i.e., the image of the balls $(1-\varepsilon)\overline{B}(a,b)$ lies
over the gray region deprived from the dark segment, and hence, by
Section~\ref{sub:Representations-Delzant}, in $P(c+\varepsilon,d+\varepsilon)$.
We may suppose from the start that $c,d\geqslant1$. Then $P(c+\varepsilon,d+\varepsilon)\subset(1+\varepsilon)P(c,d)$.
We have thus found a symplectic embedding $(1-\varepsilon)\overline{B}(a,b)\,\overset{s}{\hookrightarrow}\,(1+\varepsilon)P(c,d)$.
It is shown in Theorem 1.5 of~\cite{M2} that then also $(1-\varepsilon)\overline{E}(a,b)\,\overset{s}{\hookrightarrow}\,(1+\varepsilon)P(c,d)$.
Hence
\[
\frac{1-\varepsilon}{1+\varepsilon}\,\overline{E}(a,b)\;\overset{s}{\hookrightarrow}\; P(c,d).
\]
It now follows again from~\cite{M2} that $E(a,b)\,\overset{s}{\hookrightarrow}\, P(c,d)$.
(To be precise,~\cite{M2} considers embeddings of ellipsoids into
open balls; however, the same arguments work for embeddings of ellipsoids
into polydiscs.)\hfill{}$\square$}

\selectlanguage{english}%

\subsection{Proof of equalities (\ref{eq:identity ECH})\label{sub:Proof-of-equalities}}
\begin{lem}
For all $k\geqslant1$,
\[
c_{ECH}^{k}\left(E(1,2)\right)=c_{ECH}^{k}\left(C(1)\right).
\]
\end{lem}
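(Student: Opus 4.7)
The plan is to reduce both sides of the identity to the same lattice-point count in $\mathbb{Z}_{\geq 0}^2$ and then read off the conclusion.

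On the ellipsoid side, by construction, the sequence $N_E(1,2)$ is the nondecreasing rearrangement (with multiplicity) of $\{m+2n : (m,n)\in\mathbb{Z}_{\geq 0}^2\}$. Hence, for every $N\in\mathbb{Z}_{\geq 0}$, the number of entries $\leqslant N$ in this sequence is exactly
\[
f(N)\,:=\,\#\bigl\{(m,n)\in\mathbb{Z}_{\geq 0}^2 \,:\, m+2n\leqslant N\bigr\}.
\]
A one-line sum ($\sum_{n=0}^{\lfloor N/2\rfloor}(N-2n+1)$) gives $f(2s)=(s+1)^{2}$ and $f(2s+1)=(s+1)(s+2)$.

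On the cube side, the polydisc formula says that $c_{ECH}^{k}(C(1))\leqslant N$ if and only if there exists $(m,n)\in\mathbb{Z}_{\geq 0}^2$ with $m+n\leqslant N$ and $(m+1)(n+1)\geqslant k+1$. Since $(m+1)(n+1)$ is strictly increasing in each variable, its maximum over $\{m+n\leqslant N\}$ is attained on the segment $m+n=N$, where $h(m):=(m+1)(N-m+1)$ is a downward parabola. Optimizing $h$ over integers $m\in\{0,\ldots,N\}$ (at $m=\lfloor N/2\rfloor$) yields
\[
g(N)\,:=\,\max_{m+n\leqslant N}(m+1)(n+1)\,=\,\begin{cases}(s+1)^{2} & \text{if }N=2s,\\ (s+1)(s+2) & \text{if }N=2s+1,\end{cases}
\]
so the number of indices $k$ with $c_{ECH}^{k}(C(1))\leqslant N$ is $g(N)$ as well (with the indexing that places the entry $0$ at the head of the sequence).

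Since $f(N)=g(N)$ for every $N\geqslant 0$, the two nondecreasing sequences $c_{ECH}(E(1,2))$ and $c_{ECH}(C(1))$ have identical sublevel counts and must coincide term-by-term; equivalently, each integer $N$ appears with the common multiplicity $\lfloor N/2\rfloor+1$ on both sides (as the number of representations $N=m+2n$ on the left, and as the jump $g(N)-g(N-1)$ on the right). There is no genuine obstacle: the only computation needed is the elementary closed-form evaluation of $f(N)$ and $g(N)$, which is why the authors advertise this as the \emph{easy} proof.
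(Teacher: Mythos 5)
Your proof is correct and takes essentially the same route as the paper: both sides are reduced to the count $\lfloor\frac{N+2}{2}\rfloor\lceil\frac{N+2}{2}\rceil$ of lattice points, and the cube-side reduction hinges on the same observation that the optimal $(m,n)$ is balanced (you phrase it via the concavity of $(m+1)(N-m+1)$; the paper phrases it as an exchange argument replacing $(m_0,n_0)$ by $(m_0-1,n_0+1)$ when $m_0-n_0\geqslant 2$). The only difference in packaging is that you dualize and compare sublevel counts of the two nondecreasing sequences, whereas the paper pins down the common value as the unique integer $d$ in an explicit window; these are logically equivalent. Do be aware that your parenthetical about indexing is doing real work: as written in the paper, the polydisc formula $\min\{m+n:(m+1)(n+1)\geqslant k+1\}$ gives $1$ at $k=1$ while the first entry of $N_E(1,2)$ is $0$, so one must consistently start both indexings at $k=0$ (as Hutchings does) or shift the polydisc formula by one; your sublevel-count argument is clean precisely when this is made explicit.
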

\begin{proof}
We will prove that $c_{ECH}^{k}\left(E(1,2)\right)$ and $c_{ECH}^{k}\left(C(1)\right)$
are both equal to the unique integer $d$ such that
\[
\left\lfloor \frac{d+1}{2}\right\rfloor \left\lceil \frac{d+1}{2}\right\rceil <k\leqslant\left\lfloor \frac{d+2}{2}\right\rfloor \left\lceil \frac{d+2}{2}\right\rceil .
\]
For $c_{ECH}^{k}\left(E(1,2)\right)$, this follows from the fact
that the number
\[
\sharp\left\{ (m,n)\in\mathbb{N}_{0}^{2}:m+2n\leqslant d\right\} 
\]
of pairs of nonnegative integers $(m,n)$ such that $m+2n\leqslant d$
is equal to $\left\lfloor \frac{d+2}{2}\right\rfloor \left\lceil \frac{d+2}{2}\right\rceil $.
This, in turn, can easily be deduced from the identities 
\[
\sharp\left\{ (m,n)\in\mathbb{N}_{0}^{2}:m+2n=2l\right\} =\sharp\left\{ (m,n)\in\mathbb{N}_{0}^{2}:m+2n=2l+1\right\} =l+1.
\]
On the other hand, we have by definition that
\[
c_{ECH}^{k}\left(C(1)\right)=c_{ECH}^{k}\left(P(1,1)\right)=\min\left\{ m+n:(m+1)(n+1)\geqslant k+1\right\} .
\]
Fix a nonnegative integer $k$. Let $m_{0},n_{0}\in\mathbb{N}_{0}$
be two nonnegative integers such that
\[
m_{0}+n_{0}=\min\left\{ m+n:(m+1)(n+1)\geqslant k+1\right\} .
\]
Without loss of generality, $m_{0}\geqslant n_{0}$. Moreover, we
can always take $m_{0},n_{0}$ such that $m_{0}-n_{0}\in\left\{ 0,1\right\} $.
Indeed, assume that $m_{0}=n_{0}+c$ with $c\geqslant2$. Then for
$m_{0}'=m_{0}-1$ and $n_{0}'=n_{0}+1$, we get
\[
\begin{aligned}\left(m_{0}'+1\right)\left(n_{0}'+1\right) & =m_{0}\left(n_{0}+2\right)=\left(n_{0}+c\right)\left(n_{0}+2\right)=n_{0}^{2}+(c+2)n_{0}+2c\\
 & >n_{0}^{2}+(c+2)n_{0}+c+1=\left(n_{0}+c+1\right)\left(n_{0}+1\right)\\
 & =\left(m_{0}+1\right)\left(n_{0}+1\right)\geqslant k+1.
\end{aligned}
\]
Thus $\left(m_{0}',n_{0}'\right)$ also realizes the minimum. Now,
if $m_{0}+n_{0}$ is even, then $m_{0}=n_{0}$ and we have to show
that
\[
\begin{aligned}\left\lfloor \frac{2m_{0}+1}{2}\right\rfloor \left\lceil \frac{2m_{0}+1}{2}\right\rceil  & =m_{0}\left(m_{0}+1\right)<k\\
 & \leqslant\left(m_{0}+1\right)^{2}=\left\lfloor \frac{2m_{0}+2}{2}\right\rfloor \left\lceil \frac{2m_{0}+2}{2}\right\rceil .
\end{aligned}
\]
The first inequality follows from the minimality of $m_{0}+n_{0}$
while the second one follows from the fact that $\left(m_{0}+1\right)\left(n_{0}+1\right)\geqslant k+1$.
The case $m_{0}+n_{0}$ odd is treated similarly.
\end{proof}

\section{Reduction to a constraint function given by exceptional spheres\label{sec:Method-of-proof}}

\noindent In this section we explain how the function $c(a)$ can
be described by the volume constraint $\sqrt{\frac{a}{2}}$ and the
constraints coming from certain exceptional spheres in blow-ups of
$\mathbb{C}P^{2}$. Since the function $c$ is continuous, it suffices
to determine $c$ for each rational $a\geqslant1$. The starting point
is the following lemma, which is a special case of Proposition \ref{prop:E(a,b) -> P(c,d) iff B(a,b)... -> B(c+d)}.
\begin{lem}
\label{lem:E(1,a) embeds into C(A) iff balls embed in balls}Let $a\geqslant1$
be a rational number with weight expansion $w(a)=\left(w_{1},\ldots,w_{M}\right)$
and $A>0$. Then the ellipsoid $E(1,a)$ embeds symplectically into
the cube $C(A)$ if and only if there is a symplectic embedding
\[
B(A)\sqcup B(A)\sqcup_{i}B\left(w_{i}\right)\overset{s}{\hookrightarrow}B(2A).
\]

\end{lem}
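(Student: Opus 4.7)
The plan is to apply Proposition \ref{prop:E(a,b) -> P(c,d) iff B(a,b)... -> B(c+d)} directly with a careful choice of parameters; the lemma is essentially a formal specialization, so very little new work is needed.

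First I would rewrite the target: by definition $C(A) = P(A,A)$, so the embedding question $E(1,a) \overset{s}{\hookrightarrow} C(A)$ is literally $E(1,a) \overset{s}{\hookrightarrow} P(A,A)$. Next I would invoke Proposition \ref{prop:E(a,b) -> P(c,d) iff B(a,b)... -> B(c+d)} with its four parameters set to $(1,a,A,A)$. The hypothesis of that proposition requires the ratio of the first two parameters to be rational; here that ratio is $1/a$, which is rational because $a$ is a positive rational by assumption. Applying the proposition gives
\[
E(1,a)\,\overset{s}{\hookrightarrow}\,P(A,A) \;\Longleftrightarrow\; B(1,a)\sqcup B(A)\sqcup B(A)\,\overset{s}{\hookrightarrow}\,B(2A).
\]

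Finally I would identify $B(1,a)$ with $\sqcup_i B(w_i)$. This is exactly the content of McDuff's theorem recalled at the start of Section \ref{sub:Decomposing-an-ellipsoid}: for a rational ellipsoid $E(1,a)$ the weights of its canonical ball decomposition $B(1,a)$ are given by the weight expansion $w(a)=(w_1,\dots,w_M)$ of $a$ (up to the equivalence between the two definitions of weights, proved in the Appendix of \cite{MS}). Substituting this identification into the equivalence above and noting that $A+A = 2A$ gives exactly the statement of the lemma.

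There is no real obstacle: the argument is bookkeeping. The only point requiring any attention is verifying that the rationality hypothesis of Proposition \ref{prop:E(a,b) -> P(c,d) iff B(a,b)... -> B(c+d)} holds in the specialization (rationality of $1/a$ rather than $a/1$), and matching the notation $B(1,a) = \sqcup_i B(w_i)$ from the previous section with the enumeration used in the statement of the lemma.
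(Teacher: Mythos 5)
Your proposal is exactly the paper's own proof: the lemma is stated in the text as ``a special case of Proposition \ref{prop:E(a,b) -> P(c,d) iff B(a,b)... -> B(c+d)},'' and your specialization $(a,b,c,d)\mapsto(1,a,A,A)$ together with the identification $B(1,a)=\bigsqcup_i B(w_i)$ from Section \ref{sub:Decomposing-an-ellipsoid} is precisely the intended argument. The bookkeeping you note about the rationality hypothesis and the weight convention is handled the same way.
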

With this lemma, we have converted the problem of embedding an ellipsoid
into a cube to the problem of embedding a disjoint union of balls
into a ball. In \cite{MP}, the problem of embedding $k$ disjoint
balls into a ball was reduced to the question of understanding the
symplectic cone of the $k$-fold blow-up $X_{k}$ of~$\mathbb{C}P^{2}$.
Let $L:=\left[\mathbb{C}P^{1}\right]\in H_{2}\left(X_{k},\mathbb{Z}\right)$
be the class of a line, let $E_{1},\ldots,E_{k}\in H_{2}\left(X_{k},\mathbb{Z}\right)$
be the homology classes of the exceptional divisors, and denote by
$l,e_{1},\ldots,e_{k}\in H^{2}\left(X_{k},\mathbb{Z}\right)$ their
Poincar\'e duals. Let $-K:=3L-\sum E_{i}$ be the anti-canonical divisor
of $X_{k}$, and define the corresponding \emph{symplectic cone} $\mathcal{C}_{K}\left(X_{k}\right)\subset H^{2}\left(X_{k},\mathbb{Z}\right)$
as the set of classes represented by symplectic forms~$\omega$ with
first Chern class $c_{1}\left(M,\omega\right)=-K$.
\begin{thm}
\emph{\label{thm:McDuff-Polterovich}(McDuff-Polterovich \cite{MP})}
The union $\sqcup_{i=1}^{k}\overline{B}\left(w_{i}\right)$ embeds
into the ball $B(\mu)$ or into $\mathbb{C}P^{2}(\mu)$ if and only
if $\mu\;\! l-\sum w_{i}e_{i}\in\mathcal{C}_{K}\left(X_{k}\right)$.
\end{thm}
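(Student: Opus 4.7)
The plan is to establish the three-way equivalence (embedding into $B(\mu)$, embedding into $\mathbb{C}P^{2}(\mu)$, class lying in $\mathcal{C}_{K}(X_{k})$) with the symplectic blow-up construction as the principal bridge.

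First I would dispose of the equivalence between embeddings into $B(\mu)$ and into $\mathbb{C}P^{2}(\mu)$. Any embedding $\bigsqcup_{i}\overline{B}(w_{i})\hookrightarrow B(\mu)$ extends to one into $\mathbb{C}P^{2}(\mu)$ since $B(\mu)$ is symplectomorphic to $\mathbb{C}P^{2}(\mu)\setminus\mathbb{C}P^{1}$. Conversely, by Remark 2.1.E of McDuff--Polterovich (already invoked in Section~\ref{sub:Proof-of-Proposition}), the image of a symplectic embedding of closed balls into $\mathbb{C}P^{2}(\mu)$ can be isotoped to miss a hyperplane and hence to lie in the affine part.

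For the ``only if'' direction I would apply symplectic blow-up to an embedding $\bigsqcup\overline{B}(w_{i})\hookrightarrow\mathbb{C}P^{2}(\mu)$: removing the interior of each ball and collapsing the boundary $S^{3}$ along the Hopf fibration produces a symplectic form on $X_{k}$ in which the $i$-th exceptional divisor has area $w_{i}$. The cohomology class of this form is $\mu\,l-\sum w_{i}e_{i}$, and because blow-up changes $c_{1}$ precisely by adding the Poincar\'e duals of the new exceptional divisors, one finds $c_{1}=3l-\sum e_{i}=-K$. Hence the class lies in $\mathcal{C}_{K}(X_{k})$.

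For the ``if'' direction, given a symplectic form $\omega$ on $X_{k}$ with $[\omega]=\mu\,l-\sum w_{i}e_{i}$ and $c_{1}(\omega)=-K$, the task is to realize each exceptional class $E_{i}$ by an $\omega$-symplectic embedded $(-1)$-sphere so that the $k$ spheres are mutually disjoint. Once this is achieved, simultaneously blowing them down yields a symplectic manifold diffeomorphic to $\mathbb{C}P^{2}$ carrying $k$ disjoint symplectically embedded balls of sizes $w_{i}$; by Taubes' uniqueness of the symplectic form on $\mathbb{C}P^{2}$ in a given cohomology class, this manifold is symplectomorphic to $\mathbb{C}P^{2}(\mu)$, yielding the desired embedding. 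The main obstacle is precisely the existence and disjointness of these exceptional representatives: the conditions $E_{i}\cdot E_{i}=-1$, $c_{1}(\omega)\cdot E_{i}=1$, and $[\omega]\cdot E_{i}=w_{i}>0$ force the Gromov invariant of each $E_{i}$ to be nonzero via Taubes' $\mathrm{SW}=\mathrm{Gr}$ correspondence, producing embedded representatives, and positivity of intersections together with a standard perturbation argument (carried out in detail by McDuff--Polterovich) arranges them disjointly. After that the blow-down is routine.
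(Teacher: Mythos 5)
This statement is cited by the paper from McDuff--Polterovich \cite{MP} and is not proved there, so there is no ``paper's proof'' to compare against; I will evaluate your sketch on its own merits.

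Your overall strategy is the right one: the equivalence of the two target manifolds via Remark 2.1.E of \cite{MP}, the ``only if'' direction via symplectic blow-up (correctly tracking both $[\omega]$ and $c_{1}$), and the ``if'' direction via blow-down. However, the central technical step of the ``if'' direction --- representing each exceptional class $E_{i}$ by a symplectically embedded, pairwise-disjoint $(-1)$-sphere --- is where your sketch is too glib. You invoke Taubes' $\mathrm{SW}=\mathrm{Gr}$ and assert that the three numerical conditions ``force the Gromov invariant of each $E_{i}$ to be nonzero.'' But $X_{k}$ has $b^{+}=1$, so Seiberg--Witten invariants are chamber-dependent and the nonvanishing requires the wall-crossing formula together with the blow-up formula; one must verify that the chamber determined by $\omega$ is the one in which the relevant invariant survives. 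Moreover, what this chain of reasoning actually delivers, after being carried out carefully, is precisely the theorem of Li (and Li--Liu, cf.\ the paper's Proposition \ref{prop:Li-Li}) that every class in $\mathcal{E}_{k}$ with positive $\omega$-area has an $\omega$-symplectic $(-1)$-sphere representative --- a result that historically \emph{postdates} the McDuff--Polterovich theorem. The original \cite{MP} argument avoids gauge theory entirely, using $J$-holomorphic curve deformation and Gromov compactness to transport the exceptional divisor of the standard blown-up form to the given one. A second, more minor point: the uniqueness of the symplectic form on $\mathbb{C}P^{2}$ in a fixed cohomology class is due to Gromov and McDuff (via pseudoholomorphic curves), not to Taubes; Taubes' contribution was the uniqueness of the symplectic \emph{canonical class}. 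So while your outline is correct and the theorem can indeed be proved along these lines with modern tools, the step producing the disjoint $(-1)$-spheres is a genuine theorem that needs to be cited or proved, not dismissed as a routine consequence of $\mathrm{SW}=\mathrm{Gr}$.
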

To understand $\mathcal{C}_{K}\left(X_{k}\right)$, we define as in
\cite{MS} the following set $\mathcal{E}_{k}\subset H_{2}\left(X_{k}\right)$.
\begin{defn}
$\mathcal{E}_{k}$ is the set consisting of $(0;-1,0,\ldots,0)$ and
of all tuples $(d;m):=\left(d;m_{1},\ldots,m_{k}\right)$ with $d\geqslant0$
and $m_{1}\geqslant\ldots\geqslant m_{k}\geqslant0$ such that the
class $E_{(d;m)}:=dL-\sum m_{i}E_{i}\in H_{2}\left(X_{k}\right)$
is represented in $X_{k}$ by a symplectically embedded sphere of
self-intersection $-1$.
\end{defn}
We will often write $\mathcal{E}$ instead of $\mathcal{E}_{k}$ if
there is no danger of confusion. We then have the following description
of $\mathcal{C}_{K}\left(X_{k}\right)$.
\begin{prop}
\emph{\label{prop:Li-Li}(Li-Li \cite{LiLi}, Li-Liu \cite{LiLiu})}
\[
\mathcal{C}_{K}\left(X_{k}\right)=\left\{ \alpha\in H^{2}\left(X_{k}\right):\alpha^{2}>0,\,\alpha\left(E\right)>0,\forall E\in\mathcal{E}_{k}\right\} .
\]

\end{prop}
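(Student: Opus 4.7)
The plan is to establish the two inclusions in the claimed equality separately. The inclusion $\mathcal{C}_K(X_k) \subseteq \{\alpha : \alpha^2 > 0,\ \alpha(E) > 0\ \forall E \in \mathcal{E}_k\}$ is the straightforward direction and follows from positivity of symplectic area. If $\omega$ is a symplectic form on $X_k$ with $c_1(\omega) = -K$ and $[\omega] = \alpha$, then $\alpha^2 = \int_{X_k}\omega\wedge\omega$ equals twice the symplectic volume, hence is strictly positive. For any $E \in \mathcal{E}_k$, the class is represented by a symplectically embedded $(-1)$-sphere $\Sigma$, so $\alpha(E) = \int_\Sigma \omega > 0$ since $\omega|_\Sigma$ is an area form. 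A subtle point is that $\mathcal{E}_k$ is defined in terms of the existence of symplectic spheres, which a priori could depend on the choice of $\omega$; I would address this by invoking Taubes' SW = Gr theorem to show that $\mathcal{E}_k$ is actually independent of the particular form chosen in $\mathcal{C}_K(X_k)$.

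For the reverse inclusion, my approach would be first to reduce to rational classes $\alpha$ by openness of both sides in $H^2(X_k;\mathbb{R})$. Since $X_k$ is a blow-up of $\mathbb{C}P^2$, it carries K\"ahler structures, and a Nakai--Moishezon-type criterion for rational surfaces characterizes the K\"ahler cone of each such structure as precisely the set of classes with $\alpha^2 > 0$ pairing positively against every irreducible $(-1)$-curve, i.e.\ against every element of $\mathcal{E}_k$ realized by that complex structure. One then passes from K\"ahler to symplectic via a deformation/inflation argument: given a candidate class $\alpha$ satisfying the positivity conditions and a reference symplectic form $\omega_0$ in a K\"ahler class, one produces pseudoholomorphic representatives of the classes $E \in \mathcal{E}_k$ and uses McDuff's inflation procedure along them to deform $\omega_0$ until its cohomology class reaches $\alpha$, all the while preserving $c_1 = -K$.

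The main obstacle is precisely this K\"ahler-to-symplectic step: one must produce a symplectic form in an arbitrary cohomology class satisfying the numerical conditions, not merely verify that a given K\"ahler class happens to be symplectic. The deepest input is Taubes' identification of Seiberg--Witten basic classes with Gromov invariants, which guarantees that each $E \in \mathcal{E}_k$ with $\alpha(E) > 0$ admits a pseudoholomorphic (and hence symplectically embedded) representative suitable for inflation. When $k \geq 9$ the set $\mathcal{E}_k$ becomes infinite, and controlling the inflation procedure uniformly over these infinitely many classes is the technical heart of the matter; this is what Li--Li and Li--Liu carry out in their original proofs via a detailed analysis of the wall-and-chamber structure that $\mathcal{E}_k$ imposes on the positive cone.
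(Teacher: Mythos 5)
The paper does not prove this proposition: it is stated as a black-box citation to Li--Li and Li--Liu, and no proof appears in the text. So there is no proof of the paper's to compare against; what you have written is a high-level reconstruction of the argument in the cited references. As such a reconstruction it is accurate in outline: the easy inclusion is indeed positivity of symplectic area on embedded symplectic $(-1)$-spheres, and the hard inclusion in the literature does go through a characterization of the K\"ahler cone of a rational surface together with Taubes' Seiberg--Witten theory and McDuff-style inflation to realize arbitrary positive classes symplectically. You also correctly flag the subtle well-definedness issue in the definition of $\mathcal{E}_k$: a priori the set of classes representable by embedded symplectic $(-1)$-spheres could depend on the choice of form in $\mathcal{C}_K$, and its independence is itself one of the main theorems of Li--Liu, proved via Seiberg--Witten wall-crossing for $b^+=1$ manifolds.

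However, as a \emph{proof} rather than a survey, the proposal has no content beyond naming the tools. The entire technical weight rests on three unargued black boxes: (a) Taubes' $\mathrm{SW}=\mathrm{Gr}$ in the $b^+=1$ setting, which requires the chamber-dependent version of the Seiberg--Witten invariant and the Li--Liu wall-crossing formula, not the closed-form statement used when $b^+>1$; (b) the Nakai--Moishezon-type criterion for the K\"ahler cone of a rational surface, which you invoke without stating precisely which version or why the exceptional $(-1)$-curves suffice as test classes; and (c) the inflation step, where one must justify that inflating along the (typically infinitely many, for $k\geqslant 9$) exceptional spheres actually reaches every cohomology class in the positive cone while keeping the canonical class fixed --- you acknowledge this is ``the technical heart of the matter'' but do not address it. For a result the paper itself treats as a citation, reproducing the proof architecture is reasonable, but it should be presented as such rather than as a proof sketch that could be filled in routinely; each of (a)--(c) is a substantial theorem in its own right.
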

In order to give a characterization of the set $\mathcal{E}_{k}$,
we need the following definition as in \cite{MS}.
\begin{defn}
\label{def: Cremona move}A tuple $(d;m):=\left(d;m_{1},\ldots m_{k}\right)$
is said to be \emph{ordered} if the~$m_{i}$ are in nonincreasing
order. The \emph{Cremona transform} of an ordered tuple $(d;m)$ is
\[
\left(2d-m_{1}-m_{2}-m_{3};d-m_{2}-m_{3},d-m_{1}-m_{3},d-m_{1}-m_{2},m_{4},\ldots,m_{k}\right).
\]
A \emph{Cremona move} of a tuple $(d;m)$ is the composition of the
Cremona transform of $(d;m)$ with any permutation of the new obtained
vector $m$.\end{defn}
\begin{prop}
\emph{\label{prop:characterization of E_k in MS}(McDuff-Schlenk \cite{MS},
Proposition 1.2.12 and Remark 3.3.1)}

\renewcommand{\labelenumi}{(\roman{enumi})}
\begin{enumerate}
\item All $(d;m)\in\mathcal{E}_{k}$ satisfy the two Diophantine equations \[ \begin{aligned}\sum m_{i} & =3d-1,\\ \sum m_{i}^{2} & =d^{2}+1. \end{aligned} \]
\item For all distinct $\left(d;m\right),(d';m')\in\mathcal{E}_{k}$ we have \[ \sum m_{i}m_{i}'\leqslant dd'. \]
\item A tuple $(d;m)$ belongs to $\mathcal{E}_{k}$ if and only if $(d;m)$ satisfies the Diophantine equations in (i) and $(d;m)$ can be reduced to $(0;-1,0,\ldots,0)$ by repeated Cremona moves.
\end{enumerate} %
\end{prop}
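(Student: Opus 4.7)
The three parts require different techniques, so I would treat them in turn, with part (iii) containing the real content.

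Part (i) is a direct computation on the blow-up $X_k$. Writing $E_{(d;m)} = dL - \sum m_i E_i$ and using the intersection form $L^2 = 1$, $E_i^2 = -1$, $L \cdot E_i = 0$, $E_i \cdot E_j = 0$ for $i \neq j$, the self-intersection $E_{(d;m)}^2 = -1$ reads
\[
d^2 - \sum m_i^2 = -1.
\]
Combining $-K = 3L - \sum E_i$ with the adjunction formula $-K \cdot E = 2 - 2g + E \cdot E$ applied to the embedded sphere (so $g = 0$) gives $3d - \sum m_i = 2 - 1 = 1$.

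Part (ii) follows from positivity of intersections. Choose a generic $\omega$-compatible almost complex structure $J$ on $X_k$; by Gromov compactness and the fact that classes in $\mathcal{E}_k$ are exceptional, the classes $E, E' \in \mathcal{E}_k$ are each represented by simple $J$-holomorphic spheres. Distinct simple $J$-holomorphic curves have non-negative homological intersection, so $E \cdot E' = dd' - \sum m_i m_i' \geq 0$. The case involving the special class $(0;-1,0,\ldots,0)$, which represents the exceptional divisor $E_1$, is handled by noting that $E_1 \cdot E_{(d';m')} = m_1' \geq 0$ for any tuple in ordered form.

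Part (iii) is the heart. For the ``if'' direction I would argue that Cremona transformations and permutations of the $E_i$ are realized by diffeomorphisms of $X_k$ that preserve $-K$ and hence preserve the subset of $(-1)$-sphere classes. Since $(0;-1,0,\ldots,0) \in \mathcal{E}_k$ is represented by the exceptional divisor $E_1$, any tuple obtainable from it by Cremona moves remains in $\mathcal{E}_k$. For the converse, given $(d;m) \in \mathcal{E}_k$ with $d \geq 1$, ordered so that $m_1 \geq m_2 \geq \ldots \geq m_k \geq 0$, I would perform one Cremona move and track the new degree $d' = 2d - m_1 - m_2 - m_3$. The goal is a strict descent $d' < d$, which amounts to the combinatorial inequality
\[
m_1 + m_2 + m_3 \geq d + 1.
\]
Once this is established, iterating the procedure terminates (since $d \in \mathbb{Z}_{\geq 0}$), and the only tuples with $d = 0$ satisfying the Diophantine relations of (i) with nonnegative $m_i$ reduce to the special class $(0;-1,0,\ldots,0)$.

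The main obstacle is precisely this combinatorial inequality $m_1 + m_2 + m_3 \geq d + 1$. I would prove it by contradiction: assuming $m_1 + m_2 + m_3 \leq d$, one splits $\sum m_i^2 = m_1^2 + m_2^2 + m_3^2 + \sum_{i \geq 4} m_i^2$ and uses $m_i \leq m_3$ for $i \geq 4$ together with $\sum_{i \geq 4} m_i = 3d - 1 - (m_1 + m_2 + m_3)$, combined with the bounds $m_1, m_2, m_3 \leq d$ obtained from the ordering. Plugging into $\sum m_i^2 = d^2 + 1$ should yield a numerical contradiction. This estimate is delicate because it is tight on the boundary examples, and would be the only step demanding genuine case analysis rather than clean geometric input.
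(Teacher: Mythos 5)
The paper itself does not prove this proposition; it cites it directly from McDuff--Schlenk (Proposition 1.2.12 and Remark 3.3.1), so there is no in-paper proof to compare against. Judged on its own terms, your treatment of (i) via adjunction and the intersection form is correct, and (ii) via positivity of intersections for simple $J$-holomorphic representatives is the standard argument. The gaps are in (iii).

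First, the combinatorial inequality $m_1+m_2+m_3\geqslant d+1$ is not obtainable the way you sketch it. The bounds $m_1,m_2,m_3\leqslant d$ are far too weak: feeding them into $\sum m_i^2 \leqslant m_1^2+m_2^2+m_3^2+m_3\sum_{i\geqslant 4}m_i$ would only give $\sum m_i^2 \lesssim 3d^2$, nowhere near a contradiction with $d^2+1$. What one actually needs is (a) the observation that $m_3\geqslant 1$ when $d\geqslant 1$, because $\sum_{i\geqslant 4}m_i = 3d-1-(m_1+m_2+m_3)\geqslant 2d-1\geqslant 1$ while $m_i\leqslant m_3$ for $i\geqslant 4$; and (b) a genuine constrained maximization of $m_1^2+m_2^2+m_3^2+m_3(3d-1-m_1-m_2-m_3)$ over $m_1\geqslant m_2\geqslant m_3\geqslant 1$ with $m_1+m_2+m_3\leqslant d$, which peaks at $m_1=m_2=m_3=d/3$ and yields the bound $\sum m_i^2\leqslant d^2-d/3<d^2+1$. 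Without using $m_1+m_2+m_3\leqslant d$ to its full strength (not merely $m_i\leqslant d$ termwise), the contradiction does not come out for $d\geqslant 3$.

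Second, and independently, the iteration is not self-sustaining as written. After one Cremona transform the new entries are $d-m_i-m_j$, and you need these to remain nonnegative (and $d'\geqslant 0$) so that the ordered nonnegative form — which your combinatorial inequality presupposes — persists. This is exactly $d\geqslant m_i+m_j$ for all $i\neq j$, which is $E\cdot(L-E_i-E_j)\geqslant 0$, i.e. positivity of intersections against the $(-1)$-classes $L-E_i-E_j\in\mathcal{E}_k$ (or, equivalently, apply your part (ii) to the Cremona-transformed class against each $E_j$). You have all the tools from part (ii) to close this, but as written the descent could, for all the argument says, produce a negative entry or a negative degree and stall. Finally, a small slip at the end: there are no tuples with $d=0$, nonnegative $m_i$, and $\sum m_i=-1$; when $d$ hits $0$ the linear equation forces a $-1$ entry, which is precisely why the terminal tuple is $(0;-1,0,\ldots,0)$ and not something with $m_i\geqslant 0$.
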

\begin{rem}
\label{rmk:change of basis}Working directly with Lemma \ref{lem:E(1,a) embeds into C(A) iff balls embed in balls},
Theorem \ref{thm:McDuff-Polterovich} and Proposition \ref{prop:Li-Li}
we find, as in \cite{MS}, that the only constraints for an embedding
$E(1,a)\,\overset{s}{\hookrightarrow}\, C(A)$ are $A\geqslant\sqrt{\frac{a}{2}}$
and, for each class $(d;m)\in\mathcal{E}_{k}$,
\begin{equation}
2Ad\;\geqslant\;\left(m_{1}+m_{2}\right)\, A+\left\langle \left(m_{3},\ldots,m_{k}\right),\: w(a)\right\rangle .\label{eq:AA}
\end{equation}
One can start from here and use Proposition \ref{prop:characterization of E_k in MS}
to prove Theorem \ref{thm:statement of the result}. The analysis
becomes, however, rather awkward, since the unknown $A$ appears on
both sides of (\ref{eq:AA}).

To improve the situation, we shall apply a base change of $H_{2}\left(X_{k}\right)$,
and express the elements of $\mathcal{E}$ in a new basis. Consider
the product $S^{2}\times S^{2}$ (whose affine part is a cube), and
form the $M$-fold (topological) blow-up $X_{M}\left(S^{2}\times S^{2}\right)$.
A basis of $H_{2}\left(X_{M}\left(S^{2}\times S^{2}\right)\right)$
is given by $S_{1},S_{2},F_{1},\ldots,F_{M}$, where $S_{1}:=\left[S^{2}\times\left\{ \textrm{point}\right\} \right]$,
$S_{2}:=\left[\left\{ \textrm{point}\right\} \times S^{2}\right]$
and $F_{1},\ldots,F_{M}$ are the classes of the exceptional divisors.

Notice that there is a diffeomorphism $\varphi\colon X_{M}\left(S^{2}\times S^{2}\right)\rightarrow X_{M+1}\left(\mathbb{C}P^{2}\right)$
such that the induced map in homology is
\[
\begin{array}{cccc}
\varphi_{*}\colon & H_{2}\left(X_{M}\left(S^{2}\times S^{2}\right)\right) & \longrightarrow & H_{2}\left(X_{M+1}\left(\mathbb{C}P^{2}\right)\right)\\
 & \begin{alignedat}{1}S_{1}\vphantom{E_{1}^{1^{1^{1^{1^{1}}}}}}\\
S_{2}\\
F_{1}\\
F_{i}
\end{alignedat}
 & \begin{alignedat}{1}\vphantom{E_{1}^{1^{1^{1^{1^{1}}}}}}\longmapsto\vphantom{E_{1}}\\
\vphantom{E_{1}}\longmapsto\vphantom{E_{1}}\\
\vphantom{E_{1}}\longmapsto\vphantom{E_{1}}\\
\vphantom{E_{1}}\longmapsto\vphantom{E_{1}}
\end{alignedat}
 & \begin{alignedat}{2}L & -E_{1} &  & \vphantom{E_{1}^{1^{1^{1^{1^{1}}}}}}\\
L &  & -E_{2}\\
L & -E_{1} & -E_{2}\\
 &  &  & E_{i+1}.
\end{alignedat}
\end{array}
\]
The existence of such a $\varphi$ is clear from a moment map picture
such as Figure~\ref{fig:2} above. With respect to the new basis
$S_{1},S_{2},F_{1},\ldots,F_{M}$ we write an element of $H_{2}\left(X_{M}\left(S^{2}\times S^{2}\right)\right)$
as $\left(d,e;m_{1},\ldots,m_{M}\right)$. Then
\[
\varphi_{*}(d,e;m)\;=\;\left(d+e-m_{1};d-m_{1},f-m_{1},m_{2},\ldots,m_{M}\right).
\]
In the new basis, the constraint given by a class in $\mathcal{E}$
can be written in a more useful form:\end{rem}
\begin{prop}
\label{prop:characterization of E_M}

\renewcommand{\labelenumi}{(\roman{enumi})}
\begin{enumerate} \item All $(d,e;m)\in\mathcal{E}_{M}$ satisfy the two Diophantine equations \[ \begin{aligned}\sum m_{i} & =2(d+e)-1,\\ \sum m_{i}^{2} & =2de+1. \end{aligned} \]
\item For all distinct $(d,e;m),(d',e';m')\in\mathcal{E}_{M}$, we have \[ \sum m_{i}m_{i}'\leqslant de'+d'e. \]
\item A tuple $(d,e;m)$ belongs to $\mathcal{E}_{M}$ if and only if $(d,e;m)$ satisfies the Diophantine equations of (i) and its image under $\varphi_{*}$ can be reduced to $(0;-1,0,\ldots,0)$ by repeated Cremona moves.
\end{enumerate}%
\end{prop}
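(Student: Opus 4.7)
The plan is to deduce each part of Proposition \ref{prop:characterization of E_M} from the corresponding part of Proposition \ref{prop:characterization of E_k in MS} by transferring it along the diffeomorphism $\varphi$ of Remark \ref{rmk:change of basis}. Since $\varphi$ is a diffeomorphism of closed oriented $4$-manifolds, it carries classes represented by symplectically embedded spheres of self-intersection $-1$ to classes of the same type; after permuting entries to restore the non-increasing order convention, $\varphi_{*}$ thus induces a bijection $\mathcal{E}_{M}\leftrightarrow\mathcal{E}_{M+1}$.

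For (i), we substitute the explicit formula
\[
\varphi_{*}(d,e;m)\;=\;(d+e-m_{1};\,d-m_{1},\,e-m_{1},\,m_{2},\ldots,m_{M})
\]
into the identities $\sum M_{i}=3D-1$ and $\sum M_{i}^{2}=D^{2}+1$ supplied by Proposition \ref{prop:characterization of E_k in MS}(i) and expand. A short algebraic simplification yields exactly $\sum m_{i}=2(d+e)-1$ and $\sum m_{i}^{2}=2de+1$. Equivalently, these two identities read off the adjunction formula $(-K)\cdot E=E\cdot E+2=1$ and the self-intersection condition $E\cdot E=-1$ in the basis $S_{1},S_{2},F_{1},\ldots,F_{M}$, using $-K_{X_{M}}=2S_{1}+2S_{2}-\sum F_{i}$.

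Part (ii) is handled in the same spirit: substituting $\varphi_{*}$ into $\sum M_{i}M_{i}'\leqslant DD'$ from Proposition \ref{prop:characterization of E_k in MS}(ii), the cross terms involving $m_{1}$, $m_{1}'$, $d$, $d'$, $e$, $e'$ cancel on both sides, leaving $\sum m_{i}m_{i}'\leqslant de'+d'e$. This is precisely positivity of intersection $E\cdot E'\geqslant0$ for two distinct $J$-holomorphic embedded spheres in $X_{M}(S^{2}\times S^{2})$, since $E\cdot E'=de'+d'e-\sum m_{i}m_{i}'$ in the $S_{1},S_{2},F_{i}$ basis.

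For (iii), we combine the bijection $\varphi_{*}\colon\mathcal{E}_{M}\to\mathcal{E}_{M+1}$ with Proposition \ref{prop:characterization of E_k in MS}(iii). If $(d,e;m)\in\mathcal{E}_{M}$, then $\varphi_{*}(d,e;m)\in\mathcal{E}_{M+1}$, hence reduces to $(0;-1,0,\ldots,0)$ by repeated Cremona moves, and the Diophantine equations hold by (i). Conversely, assume $(d,e;m)$ satisfies the two Diophantine equations and that $\varphi_{*}(d,e;m)$ Cremona-reduces to $(0;-1,0,\ldots,0)$; by (i) the image also satisfies the $\mathbb{C}P^{2}$ Diophantine equations, so Proposition \ref{prop:characterization of E_k in MS}(iii) places $\varphi_{*}(d,e;m)$ in $\mathcal{E}_{M+1}$, and $\varphi^{-1}$ places $(d,e;m)$ in $\mathcal{E}_{M}$. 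The main point requiring care is that $\varphi_{*}(d,e;m)$ need not be in non-increasing order, but one can freely permute its last $M$ entries (this merely relabels the exceptional divisors $E_{2},\ldots,E_{M+1}$ and leaves the class and its representability unchanged) before invoking the previous proposition; this relabeling can also be absorbed into the permutation part of the first Cremona move.
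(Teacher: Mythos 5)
Your proof is correct and follows essentially the same route as the paper's: the two Diophantine identities are the expressions of $c_{1}(E)=1$ and $E\cdot E=-1$ in the $S_{1},S_{2},F_{i}$ basis (equivalently, a substitution of $\varphi_{*}$ into the identities of Proposition \ref{prop:characterization of E_k in MS}, which is the same computation carried out explicitly), (ii) is positivity of intersection in the new basis, and (iii) holds because $\varphi_{*}$ is a base change. Your added remark about reordering the image of $\varphi_{*}$ before invoking Cremona reduction is a correct and harmless amplification, since reordering is already built into the definition of a Cremona move.
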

\begin{proof}
Let $E\in\mathcal{E}$. The two identities in Proposition \ref{prop:characterization of E_k in MS}\,(i)
correspond to $c_{1}(E)=1$ and $E\cdot E=-1$. For $E=d\, S_{1}+e\, S_{2}-\sum m_{i}F_{i}$
these identities become
\[
\begin{aligned}c_{1}\left(E\right) & =2d+2e-\sum m_{i}=1,\\
E\cdot E & =-\sum m_{i}^{2}+2de=-1,
\end{aligned}
\]
proving (i). Assertion (ii) of Proposition \ref{prop:characterization of E_k in MS}
corresponds to positivity of intersection of $J$-holomorphic spheres
representing $E,E'\in\mathcal{E}$. For distinct elements $E=(d,e;m)$
and $E'=(d',e';m')$ in $\mathcal{E}$ we thus have
\[
E\cdot E'=de'+ed'-\sum m_{i}m_{i}'\geqslant0,
\]
proving (ii). Assertion (iii) holds since $\varphi_{*}$ is a base
change.
\end{proof}
In the sequel, given two vectors $m$ and $w$ of length $M$, we
will denote by $\left\langle m,w\right\rangle =\sum_{i=1}^{M}m_{i}w_{i}$
the Euclidean scalar product in $\mathbb{R}^{M}$. Notice that we
will also use this notation for vectors $m$ and $w$ of different
lengths, meaning the Euclidean scalar product of the two vectors after
adding enough zeros at the end of the shorter one.
\begin{prop}
\label{prop:characterization of c(a)}Let $a\geqslant1$ be a rational
number with weight expansion $w(a)=\left(w_{1},\ldots,w_{M}\right)$.
For $(d,e;m)\in\mathcal{E}$, define the constraint 
\[
\mu(d,e;m)(a):=\frac{\left\langle m,w(a)\right\rangle }{d+e}.
\]
Then
\[
c(a)=\sup_{(d,e;m)\in\mathcal{E}}\left\{ \sqrt{\frac{a}{2}},\mu(d,e;m)(a)\right\} .
\]
\end{prop}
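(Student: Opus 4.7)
The plan is to combine the ball-packing reformulation of Lemma~\ref{lem:E(1,a) embeds into C(A) iff balls embed in balls} with the symplectic-cone description of packings (Theorem~\ref{thm:McDuff-Polterovich}, Proposition~\ref{prop:Li-Li}), and then simplify the resulting constraints via the base change of Remark~\ref{rmk:change of basis}. Fix a rational $a\geqslant1$ with weight expansion $w(a)=(w_{1},\ldots,w_{M})$. By Lemma~\ref{lem:E(1,a) embeds into C(A) iff balls embed in balls}, $E(1,a)\overset{s}{\hookrightarrow}C(A)$ iff $B(A)\sqcup B(A)\sqcup_{i}B(w_{i})\overset{s}{\hookrightarrow}B(2A)$, and by Theorem~\ref{thm:McDuff-Polterovich} together with Proposition~\ref{prop:Li-Li} this packing exists iff the class
\[
\alpha\;:=\;2A\,l-A\,e_{1}-A\,e_{2}-\sum_{i=1}^{M}w_{i}\,e_{i+2}\;\in\;H^{2}(X_{M+2})
\]
lies in the symplectic cone, i.e.\ $\alpha^{2}>0$ and $\alpha(E)>0$ for every $E\in\mathcal{E}_{M+2}$.

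The first condition is the volume bound: a direct expansion using $l^{2}=1$ and $e_{i}^{2}=-1$, together with $\sum w_{i}^{2}=a$ from Lemma~\ref{lem:weight expansion}(ii), yields $\alpha^{2}=2A^{2}-a$, so $\alpha^{2}>0$ iff $A>\sqrt{a/2}$. The second family of conditions $\alpha(E)>0$, written in the $\mathbb{C}P^{2}$ basis, is exactly the inequality~(\ref{eq:AA}) of Remark~\ref{rmk:change of basis}, in which the unknown $A$ inconveniently appears on both sides.

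To clean up this inequality, I would apply the base change $\varphi_{*}$ of Remark~\ref{rmk:change of basis}, which identifies $\mathcal{E}_{M+2}$ with $\mathcal{E}$ for $X_{M+1}(S^{2}\times S^{2})$. Substituting the formula $\varphi_{*}(d,e;m)=(d+e-m_{1};\,d-m_{1},\,e-m_{1},\,m_{2},\ldots)$ into~(\ref{eq:AA}), the $A(d+e)$-contributions cancel in such a way that the coordinate $m_{1}$ disappears from both sides, and with the reindexing and padding convention fixed in Section~\ref{sec:Method-of-proof} the inequality reduces to
\[
A(d+e)\;>\;\langle m,w(a)\rangle,
\]
i.e.\ $A>\mu(d,e;m)(a)$.

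Putting the two families of constraints together, $E(1,a)\overset{s}{\hookrightarrow}C(A)$ holds iff $A$ strictly exceeds each of $\sqrt{a/2}$ and $\mu(d,e;m)(a)$ for every $(d,e;m)\in\mathcal{E}$. Taking the infimum over admissible $A$ gives the stated formula for rational $a$, and continuity of $c$ extends it to all $a\geqslant1$. The step I expect to require the most care is this base-change computation: one has to verify that, after substituting $\varphi_{*}(d,e;m)$ into~(\ref{eq:AA}), no residual $A$-term and no spurious $m_{1}w_{i}$ term survives, so that the right-hand side is genuinely $\langle m,w(a)\rangle$ in the pairing convention of Section~\ref{sec:Method-of-proof}, and then that $d+e>0$ on every relevant class so that $\mu(d,e;m)(a)$ is well-defined.
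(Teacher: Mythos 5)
Your argument is correct and is essentially the paper's proof: the paper applies the same base change, but on the cohomology side, rewriting $\alpha$ via $\varphi^{*}$ as $A\,s_{1}+A\,s_{2}-\sum w_{i}f_{i}$ and then pairing directly with $E=(d,e;m)\in\mathcal{E}$, which is dual to your substitution of $\varphi_{*}(d,e;m)$ into~(\ref{eq:AA}). The cancellations you anticipate do occur, and the degenerate class $(0,0;-1)$ is harmless since for it $\alpha(E)=1>0$ automatically.
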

\begin{proof}
By Lemma \ref{lem:E(1,a) embeds into C(A) iff balls embed in balls},
$E(1,a)\overset{s}{\hookrightarrow}C(A)$ if and only if
\[
B(A)\sqcup B(A)\sqcup_{i}B\left(w_{i}\right)\overset{s}{\hookrightarrow}B(2A).
\]
By Theorem \ref{thm:McDuff-Polterovich}, this is true if and only
if
\begin{equation}
\left(2A\right)l-A\;\! e_{1}-A\;\! e_{2}-\sum_{i=1}^{M}w_{i}\;\! e_{i+2}\;\in\;\mathcal{C}_{K}.\label{eq:AAA}
\end{equation}
Denote by $s_{1},s_{2},f_{1},\ldots,f_{M}$ the Poincar\'e duals of
$S_{1},S_{2},F_{1},\ldots,F_{M}$. The base change in cohomology is
then
\[
\begin{array}{cccc}
\varphi^{*}\colon & H^{2}\left(X_{M+1}\left(\mathbb{C}P^{2}\right)\right) & \longrightarrow & H^{2}\left(X_{M}\left(S^{2}\times S^{2}\right)\right)\\
 & \begin{alignedat}{1}l\vphantom{f_{1}^{1^{1^{1^{1^{1}}}}}}\\
e_{1}\\
e_{2}\\
e_{i}
\end{alignedat}
 & \begin{alignedat}{1}\vphantom{f_{1}^{1^{1^{1^{1^{1}}}}}}\longmapsto\vphantom{f_{1}}\\
\vphantom{f_{1}}\longmapsto\vphantom{f_{1}}\\
\vphantom{f_{1}}\longmapsto\vphantom{f_{1}}\\
\vphantom{f_{1}}\longmapsto\vphantom{f_{1}}
\end{alignedat}
 & \begin{alignedat}{2}s_{1}+ & s_{2} & -f_{1} & \vphantom{f_{1}^{1^{1^{1^{1^{1}}}}}}\\
 & s_{2} & -f_{1}\\
s_{1}\hphantom{+} &  & -f_{1}\\
 &  &  & f_{i-1}.
\end{alignedat}
\end{array}
\]
In this new basis of $H^{2}\left(X_{M}\left(S^{2}\times S^{2}\right)\right)$,
(\ref{eq:AAA}) therefore becomes 
\begin{equation}
A\:\! s_{1}+A\:\! s_{2}-\sum_{i=1}^{M}w_{i}\:\! f_{i+1}\;\in\;\mathcal{C}_{K}.\label{eq:A}
\end{equation}
In view of Proposition \ref{prop:Li-Li}, (\ref{eq:A}) translates
to the conditions that for all $E:=(d,e;m)\in\mathcal{E}$, we have
$2A^{2}-\sum w_{i}^{2}>0$ and 
\[
A\:\! s_{1}(E)+A\:\! s_{2}(E)-\sum w_{i}\:\! f_{i}(E)\;=\;(d+e)A-\sum m_{i}w_{i}\;>\;0.
\]
Recall from Lemma \ref{lem:weight expansion}\,(ii) that $\sum w_{i}^{2}=a$.
We conclude that $E(1,a)\overset{s}{\hookrightarrow}C(A)$ if and
only if $A>\sqrt{\frac{a}{2}}$ and for all $(d,e;m)\in\mathcal{E}$
\[
A\;>\;\frac{\sum m_{i}w_{i}}{d+e}=\frac{\left\langle m,w(a)\right\rangle }{d+e}.
\]
This proves the proposition.\end{proof}
\begin{rem}
By the symmetry between $d$ and $e$ in the formula for $\mu(d,e;m)(a)$,
we can assume that all elements $(d,e;m)\in\mathcal{E}$ have $d\geqslant e$.
We will use this convention troughout the paper.
\end{rem}
The rest of this paper is devoted to the analysis of the constraints
given in Proposition \ref{prop:characterization of c(a)}. This analysis
follows the one in \cite{MS}. However, several modifications are
necessary.

\section{Basic observations}
\begin{lem}
\label{lem:c(a)=00003Dsqrt(a/2) for a>8}For all $a\geqslant8$, $c(a)=\sqrt{\frac{a}{2}}$.\end{lem}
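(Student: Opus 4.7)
By Proposition~\ref{prop:characterization of c(a)}, the lemma reduces to the following: for every $(d,e;m) \in \mathcal{E}$ with $d+e \geq 1$ and every rational $a \geq 8$, one has $\mu(d,e;m)(a) \leq \sqrt{a/2}$. The plan is to prove the uniform bound $\mu(d,e;m)(a) < 2$, which suffices since $a \geq 8$ forces $\sqrt{a/2} \geq 2$.

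The first step, and the crux of the argument, is to show that every $(d,e;m) \in \mathcal{E}$ with $d+e \geq 1$ has $m_i \geq 0$ for each $i$. I will apply the positivity of intersection in Proposition~\ref{prop:characterization of E_M}(ii) against the class $F_i$ of the $i$-th exceptional divisor of $X_M(S^2 \times S^2)$, whose coordinates in the basis of Remark~\ref{rmk:change of basis} are $(0,0;\,0,\ldots,0,-1,0,\ldots)$ with $-1$ in position $i$. A short computation using the formula for $\varphi_*$ shows $F_i \in \mathcal{E}$: indeed $\varphi_*(F_1) = (1;1,1,0,\ldots) \in \mathcal{E}_{M+1}$ is the class $L-E_1-E_2$ of a line through two blown-up points, while for $i \geq 2$ the image $\varphi_*(F_i)$ is, up to permutation, the distinguished class $(0;-1,0,\ldots)$. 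Since $d + e \geq 1$ makes $(d,e;m)$ distinct from each $F_i$, the intersection inequality $\sum_k m_k m_k' \leq de' + ed'$ specializes to $-m_i \leq 0$, i.e.\ $m_i \geq 0$.

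The second step combines this nonnegativity with the universal bound $w_i \leq w_1 = 1$ (immediate from the definition of the weight expansion) and the first Diophantine identity $\sum_i m_i = 2(d+e)-1$ of Proposition~\ref{prop:characterization of E_M}(i). Together these yield
\[
\langle m, w(a)\rangle \;=\; \sum_i m_i\, w_i \;\leq\; \sum_i m_i \;=\; 2(d+e)-1,
\]
and dividing by $d+e$ gives $\mu(d,e;m)(a) \leq 2 - \frac{1}{d+e} < 2 \leq \sqrt{a/2}$, as desired.

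The only delicate point in this plan is verifying in the first step that each $F_i$ actually belongs to $\mathcal{E}$; after that is in hand the rest is pure bookkeeping, and notably no case analysis on $|d-e|$ (as a naive Cauchy--Schwarz attempt would require, since $\sqrt{(2de+1)a}/(d+e) \leq \sqrt{a/2}$ only for $|d-e|\geq 2$) is necessary.
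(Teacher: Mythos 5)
Your proof is correct and follows essentially the same route as the paper's: both use the Diophantine identity $\sum m_i = 2(d+e)-1$ from Proposition~\ref{prop:characterization of E_M}\,(i), together with $w_i\leqslant 1$, to bound $\mu(d,e;m)(a)=\langle m,w(a)\rangle/(d+e) < 2 \leqslant \sqrt{a/2}$. The one thing you do that the paper leaves implicit is to justify the inequality $\langle m,w(a)\rangle\leqslant\sum m_i$ by first proving $m_i\geqslant 0$ via positivity of intersections against the classes $F_i\in\mathcal{E}$ --- the paper simply asserts this step, so your proposal is a small but genuine tightening of the argument rather than a different method.
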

\begin{proof}
For all $(d,e;m)\in\mathcal{E}$, we have by definition of $\mu$
and Proposition \ref{prop:characterization of E_M}\,(i) 
\[
\mu(d,e;m)(a):=\frac{\left\langle m,w(a)\right\rangle }{d+e}\leqslant\frac{\sum m_{i}}{d+e}=\frac{2(d+e)-1}{d+e}<2\leqslant\sqrt{\frac{a}{2}}
\]
for all $a\geqslant8$. Therefore, by Proposition \ref{prop:characterization of c(a)},
$c(a)=\sqrt{\frac{a}{2}}$ for all $a\geqslant8$.\end{proof}
\begin{lem}
\label{lem:scaling property of c}The function $c$ has the following
scaling property: for all $\lambda\geqslant1$,
\[
\frac{c(\lambda a)}{\lambda a}\leqslant\frac{c(a)}{a}.
\]
\end{lem}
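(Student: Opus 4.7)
The plan is to reduce the scaling inequality to an elementary inclusion of ellipsoids combined with the linear scaling of symplectic embeddings. Rewriting the desired inequality as $c(\lambda a)\leqslant\lambda\,c(a)$, it suffices to show that for every $A>c(a)$ one has $E(1,\lambda a)\overset{s}{\hookrightarrow}C(\lambda A)$; taking the infimum over such $A$ then yields the bound.

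First I would observe the chain of inclusions/embeddings
\[
E(1,\lambda a)\;\subseteq\;E(\lambda,\lambda a)\;=\;\lambda\,E(1,a),
\]
where the first inclusion uses $\lambda\geqslant1$ (the defining inequality for $E(1,\lambda a)$ is strictly stronger than that for $E(\lambda,\lambda a)$), and the equality is the standard linear rescaling of ellipsoids under the conformal symplectic map $(x,y)\mapsto(\sqrt{\lambda}\,x,\sqrt{\lambda}\,y)$ which takes $E(1,a)$ to $E(\lambda,\lambda a)$.

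Next, given any $A>c(a)$, by definition of $c(a)$ there exists a symplectic embedding $\varphi\colon E(1,a)\overset{s}{\hookrightarrow}C(A)$. Applying the same rescaling $(x,y)\mapsto(\sqrt{\lambda}\,x,\sqrt{\lambda}\,y)$ (which is a symplectic conjugation after scaling the form) produces an embedding $\lambda\,E(1,a)\overset{s}{\hookrightarrow}\lambda\,C(A)=C(\lambda A)$. Composing with the inclusion from the previous step gives
\[
E(1,\lambda a)\;\overset{s}{\hookrightarrow}\;\lambda\,E(1,a)\;\overset{s}{\hookrightarrow}\;C(\lambda A),
\]
so $c(\lambda a)\leqslant\lambda A$. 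Letting $A\searrow c(a)$ yields $c(\lambda a)\leqslant\lambda\,c(a)$, which upon dividing by $\lambda a$ is the claim.

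There is no real obstacle here; the only thing to be careful about is that the ``rescaling'' sending $E(1,a)$ to $\lambda\,E(1,a)=E(\lambda,\lambda a)$ and $C(A)$ to $C(\lambda A)$ is the standard conformal map on $\mathbb{R}^4$, which conjugates a symplectic embedding for the form $\omega$ on the small domains into a symplectic embedding for the form $\omega$ on the enlarged domains (since both sides scale by the same factor $\lambda$). The inclusion $E(1,\lambda a)\subseteq E(\lambda,\lambda a)$ requires only $\lambda\geqslant1$, which is exactly the hypothesis.
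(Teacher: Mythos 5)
Your proof is correct and essentially identical to the paper's: both use the conformal rescaling to pass from $E(1,a)\overset{s}{\hookrightarrow}C(A)$ to $E(\lambda,\lambda a)\overset{s}{\hookrightarrow}C(\lambda A)$, combined with the inclusion $E(1,\lambda a)\subseteq E(\lambda,\lambda a)$ valid for $\lambda\geqslant1$. The only cosmetic difference is that you work with arbitrary $A>c(a)$ and take an infimum, while the paper writes $A=c(a)+\varepsilon$ and lets $\varepsilon\to0$.
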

\begin{proof}
By definition of $c$, $E(1,a)\overset{s}{\hookrightarrow}C\left(c(a)+\varepsilon\right)$
for all $\varepsilon>0$. Since $E(1,a)$ symplectically embeds into
$C(A)$ if and only if $E(\lambda,\lambda a)$ symplectically embeds
into $C(\lambda A)$, this is equivalent to $E(\lambda,\lambda a)\overset{s}{\hookrightarrow}C\left(\lambda c(a)+\varepsilon\right)$
for all $\varepsilon>0$. Since $E(1,\lambda a)\subset E(\lambda,\lambda a)$
when $\lambda>1$, this implies that 
\[
E(1,\lambda a)\overset{s}{\hookrightarrow}C\left(\lambda c(a)+\varepsilon\right)
\]
for all $\varepsilon>0$. Thus
\[
c(\lambda a):=\inf\left\{ A:E(1,\lambda a)\overset{s}{\hookrightarrow}C(A)\right\} \leqslant\lambda c(a)=\lambda a\frac{c(a)}{a},
\]
as claimed.\end{proof}
\begin{lem}
\label{lem:finite list of elements in E_M for M leq 7}For $M\leqslant7$,
the sets $\mathcal{E}_{M}$ are finite and the only elements are
\[
\begin{array}{ccccc}
\left(0,0;-1\right), & \left(1,0;1\right), & \left(1,1;1^{\times3}\right), & \left(2,1;1^{\times5}\right), & \left(2,2;2,1^{\times5}\right),\\
\left(3,1;1^{\times7}\right), & \left(3,2;2^{\times2},1^{\times5}\right), & \left(3,3;2^{\times4},1^{\times3}\right), & \left(4,3;2^{\times6},1\right), & \left(4,4;3,2^{\times6}\right).
\end{array}
\]
\end{lem}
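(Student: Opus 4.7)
The plan is to combine the Diophantine equations of Proposition~\ref{prop:characterization of E_M}(i) with a Cauchy--Schwarz estimate to bound the parameters $(d,e)$, enumerate the finitely many candidates, and then verify Cremona reducibility via Proposition~\ref{prop:characterization of E_M}(iii). Throughout, apart from the exceptional element $(0,0;-1)$, we restrict attention to tuples with $m_{1}\geqslant\cdots\geqslant m_{M}\geqslant 0$.

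First I would apply Cauchy--Schwarz to $(m_{1},\ldots,m_{M})$ against the constant vector $(1,\ldots,1)$ to get
\[
(2(d+e)-1)^{2} \;=\; \Bigl(\sum_{i=1}^{M} m_{i}\Bigr)^{2} \;\leqslant\; M\sum_{i=1}^{M} m_{i}^{2} \;=\; M(2de+1).
\]
Setting $s=d+e$ and using the AM--GM inequality $de\leqslant s^{2}/4$, this rearranges to $(8-M)s^{2}\leqslant 8s+2M-2$. For $M\leqslant 7$ the leading coefficient is positive, so $s$ is bounded; for $M=7$ one obtains $s\leqslant 9$. Running through the admissible values of $s$ and imposing the sharper lower bound $de\geqslant\lceil((2s-1)^{2}-M)/(2M)\rceil$ leaves exactly the nine pairs $(d,e)\in\{(1,0),(1,1),(2,1),(2,2),(3,1),(3,2),(3,3),(4,3),(4,4)\}$.

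Next I would enumerate, for each such $(d,e)$, the nonincreasing tuples $m\in\mathbb{Z}_{\geqslant 0}^{7}$ solving the pair $\sum m_{i}=2(d+e)-1$ and $\sum m_{i}^{2}=2de+1$. Parameterising $m$ by its multiplicities of the values $0,1,2,3$, the system becomes a small linear system whose nonnegativity constraints single out a unique solution in each case---precisely the nine tuples listed in the statement. Adjoining the special element $(0,0;-1)$ (forced by $m_{1}=-1$ together with the two Diophantine equations) yields the ten candidates of the lemma.

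The last and most laborious step is to confirm that each candidate actually lies in $\mathcal{E}_{M}$. By Proposition~\ref{prop:characterization of E_M}(iii) this amounts to verifying that $\varphi_{*}(d,e;m)$, expressed in the $\mathbb{C}P^{2}$-basis, reduces to $(0;-1,0,\ldots,0)$ under repeated Cremona moves. Each reduction is short---even the extremal case $(4,4;3,2^{\times 6})$ collapses in only a handful of transforms---so no new obstruction appears. The main difficulty here is purely administrative: after each Cremona transform one must reorder the new vector so that the next move acts on its three largest entries, and monitor that the process terminates at the desired class. Once this case-by-case check is carried out, the lemma follows.
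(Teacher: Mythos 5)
Your proof is correct and follows essentially the same route as the paper: Cauchy--Schwarz combined with the two Diophantine conditions to bound the parameters, then explicit enumeration of the finitely many candidate tuples and a Cremona-reduction check. The only cosmetic difference is that the paper completes the square in $d,e$ (using $2de\leqslant d^{2}+e^{2}$) to get $(d-2)^{2}+(e-2)^{2}\leqslant 14$ and hence $d,e\leqslant 5$, whereas you bound $d+e\leqslant 9$ via AM--GM and then filter by the lower bound on $de$; both routes isolate the same finite list.
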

\begin{proof}
By Proposition \ref{prop:characterization of E_M}\,(i), we have
for a class $(d,e;m)\in\mathcal{E}$ with $l(m)=k$,
\[
\left(2(d+e)-1\right)^{2}=\left(\sum_{i=1}^{k}m_{i}\right)^{2}\leqslant k\sum_{i=1}^{k}m_{i}^{2}=k(2de+1),
\]
which is equivalent to
\[
4\left(d^{2}+e^{2}\right)+8de-4(d+e)+1\leqslant2kde+k,
\]
and to
\[
d^{2}+e^{2}\leqslant\frac{k-4}{2}de+d+e+\frac{k-1}{4}.
\]
Now, if $k\leqslant7$, then
\[
d^{2}+e^{2}\leqslant\frac{3}{2}de+d+e+\frac{3}{2}\leqslant\frac{3}{4}\left(d^{2}+e^{2}\right)+d+e+\frac{3}{2},
\]
using the fact that $2de\leqslant d^{2}+e^{2}$. This last inequality
is equivalent to
\[
d^{2}-4d+e^{2}-4e\leqslant6
\]
and finally to
\[
(d-2)^{2}+(e-2)^{2}\leqslant14,
\]
which shows that $d,e\leqslant5$. This shows that the sets $\mathcal{E}_{M}$
are finite for $M\leqslant7$. To find the list of classes given above,
it suffices to compute the solutions to the Diophantine equations
of Proposition \ref{prop:characterization of E_M}\,(i) having $l(m)\leqslant7$
and $d,e\leqslant5$, and to show that they reduce to $(0,-1)$ by
Cremona moves, which is the case.\end{proof}
\begin{defn}
A class $(d,e;m)\in\mathcal{E}$ is said to be \emph{obstructive}
if there exists a rational number $a\geqslant1$ such that $\mu(d,e;m)(a)>\sqrt{\frac{a}{2}}$.\end{defn}
\begin{lem}
\label{lem:e=00003Dd or e=00003Dd-1}Let $(d,e;m)\in\mathcal{E}$
be an obstructive class. Then either $e=d$, or $e=d-1$.\end{lem}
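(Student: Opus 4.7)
The plan is to use the Cauchy--Schwarz inequality together with the two Diophantine identities of Proposition~\ref{prop:characterization of E_M}\,(i) and the volume identity $\sum w_i^2 = a$ from Lemma~\ref{lem:weight expansion}\,(ii) to bound $\mu(d,e;m)(a)$ from above by something controlled purely in terms of $d+e$ and $de$, and then compare with $\sqrt{a/2}$.

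First I would apply Cauchy--Schwarz in $\mathbb{R}^{\max(k,M)}$ (padding whichever vector is shorter with zeros, as in the convention after Proposition~\ref{prop:characterization of c(a)}) to get
\[
\langle m, w(a) \rangle \;\leqslant\; \sqrt{\sum m_i^2}\,\sqrt{\sum w_i^2}.
\]
By Proposition~\ref{prop:characterization of E_M}\,(i), $\sum m_i^2 = 2de+1$, and by Lemma~\ref{lem:weight expansion}\,(ii), $\sum w_i^2 = a$. Hence
\[
\mu(d,e;m)(a) \;=\; \frac{\langle m, w(a) \rangle}{d+e} \;\leqslant\; \frac{\sqrt{(2de+1)\,a}}{d+e}.
\]

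Next I would use the hypothesis that $(d,e;m)$ is obstructive: by definition there is some $a \geqslant 1$ with $\mu(d,e;m)(a) > \sqrt{a/2}$. Combining this with the bound above and squaring,
\[
\frac{(2de+1)\,a}{(d+e)^2} \;>\; \frac{a}{2},
\]
so $2(2de+1) > (d+e)^2$. Expanding the right-hand side yields $4de + 2 > d^2 + 2de + e^2$, i.e.
\[
(d-e)^2 \;<\; 2.
\]
Since $d,e$ are integers this forces $|d-e| \leqslant 1$, and with the standing convention $d \geqslant e$ we conclude $e = d$ or $e = d-1$.

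The argument is essentially a one-line Cauchy--Schwarz computation once the two Diophantine identities and the weight-expansion identity $\sum w_i^2 = a$ are in hand; there is no genuine obstacle. The only small point of care is that $w(a)$ and $m$ need not have the same length, but this is harmless since Cauchy--Schwarz is insensitive to padding by zeros, which is exactly how the scalar product $\langle m,w(a)\rangle$ was defined.
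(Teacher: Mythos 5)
Your proof is correct and is essentially the same as the paper's: both apply Cauchy--Schwarz together with $\sum m_i^2 = 2de+1$ and $\|w(a)\|^2 = a$ to bound $\mu(d,e;m)(a)$, then compare with $\sqrt{a/2}$. The only difference is cosmetic: the paper argues by contradiction after substituting $d=e+k$ with $k\geqslant 2$, whereas you keep $d,e$ and land directly on $(d-e)^2<2$, which is perhaps a touch cleaner.
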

\begin{proof}
Suppose by contradiction that there exists a class $(d,e;m)\in\mathcal{E}$
obstructive at some point $a>1$ such that $d=e+k$ with $k\geqslant2$.
Then, using Proposition \ref{prop:characterization of E_M}\,(i)
and Lemma \ref{lem:weight expansion}\,(ii), we obtain
\[
\begin{aligned}\sqrt{\frac{a}{2}} & <\mu(d,e;m)(a)=\frac{\left\langle m,w(a)\right\rangle }{d+e}\leqslant\frac{\left\Vert m\right\Vert \left\Vert w(a)\right\Vert }{d+e}=\frac{\sqrt{2de+1}\sqrt{a}}{d+e}\\
 & =\frac{\sqrt{2(e+k)e+1}\sqrt{a}}{2e+k}=\frac{\sqrt{4e^{2}+4ke+2}\sqrt{a}}{\sqrt{2}(2e+k)}<\frac{\sqrt{4e^{2}+4ke+k^{2}}\sqrt{a}}{\sqrt{2}(2e+k)}\\
 & =\sqrt{\frac{a}{2}},
\end{aligned}
\]
 which is a contradiction.\end{proof}
\begin{rem}
This lemma will be very useful in the sequel, because whenever we
will have to prove some properties of obstructive classes, it will
be sufficient to prove them for classes of the form $(d,d;m)$ or
$(d+\frac{1}{2},d-\frac{1}{2};m)$ only. Since this will happen many
times, we will not explicitly refer to this lemma each time.\end{rem}
\begin{defn}
We define the \emph{error vector} of a class $(d,e;m)$ at a point
$a$ as the vector $\varepsilon:=\varepsilon\left((d,e;m),a\right)$
defined by the equation
\[
m=\frac{d+e}{\sqrt{2a}}\, w(a)+\varepsilon.
\]
\end{defn}
\begin{lem}
\label{lem:first properties of mu}\renewcommand{\labelenumi}{(\roman{enumi})}
Let $a=\frac{p}{q}\geqslant1$ be a rational number with weight expansion $w(a)$ and let $(d,e;m)\in\mathcal{E}$. Then
\begin{enumerate}
\item $\mu(d,e;m)(a)\leqslant\frac{\sqrt{2de+1}\sqrt{a}}{d+e}$. In particular, \[ \mu(d,d;m)(a)\leqslant{\textstyle \sqrt{1+\frac{1}{2d^{2}}}\sqrt{\frac{a}{2}}}\quad\textrm{and}\quad\mu(d+{\textstyle \frac{1}{2}},d-{\textstyle \frac{1}{2}};m)(a)\leqslant{\textstyle \sqrt{1+\frac{1}{4d^{2}}}\sqrt{\frac{a}{2}}}, \]
\item $\mu(d,e;m)(a)>\sqrt{\frac{a}{2}}$ if and only if $\left\langle \varepsilon,w(a)\right\rangle >0$,
\item If $\mu(d,d;m)(a)>\sqrt{\frac{a}{2}}$ (resp. $\mu(d+\frac{1}{2},d-\frac{1}{2};m)(a)>\sqrt{\frac{a}{2}}$), then $\left\langle \varepsilon,\varepsilon\right\rangle <1$ (resp. $\left\langle \varepsilon,\varepsilon\right\rangle <\frac{1}{2}$),
\item $-\sum_{i=1}^{M}\varepsilon_{i}=\frac{d+e}{\sqrt{2a}}\left(y(a)-\frac{1}{q}\right)+1$, where $y(a):=a+1-2\sqrt{2a}$.
\end{enumerate}%
\end{lem}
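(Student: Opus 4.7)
\medskip

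The plan is to prove all four parts by pure computation, using only two inputs: the Diophantine identities
$\sum m_i = 2(d+e)-1$ and $\sum m_i^2 = 2de+1$ from Proposition~\ref{prop:characterization of E_M}(i),
together with $\sum w_i^2 = a$ and $\sum w_i = a+1-\tfrac1q$ from Lemma~\ref{lem:weight expansion}(ii)--(iii).

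For (i), I would apply the Cauchy--Schwarz inequality:
\[
\langle m,w(a)\rangle \;\leqslant\; \|m\|\,\|w(a)\| \;=\;\sqrt{2de+1}\,\sqrt{a},
\]
and divide by $d+e$. The two specializations are just algebraic cleanup: when $e=d$,
$\tfrac{\sqrt{2d^2+1}}{2d} = \sqrt{\tfrac12}\sqrt{1+\tfrac{1}{2d^2}}$, and when $(d,e)=(d+\tfrac12,d-\tfrac12)$ we have $2de+1 = 2d^2+\tfrac12$ and $d+e = 2d$, so the bound becomes $\sqrt{\tfrac12}\sqrt{1+\tfrac{1}{4d^2}}\,\sqrt{a}$.

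For (ii), I would take the scalar product of the defining equation $m = \tfrac{d+e}{\sqrt{2a}}w(a)+\varepsilon$ with $w(a)$ and use $\langle w(a),w(a)\rangle = a$; dividing by $d+e$ gives the clean identity
\[
\mu(d,e;m)(a) \;=\; \sqrt{\tfrac{a}{2}} \;+\; \frac{\langle \varepsilon,w(a)\rangle}{d+e},
\]
from which the equivalence is immediate. For (iii), I would instead compute $\langle m,m\rangle$ from the defining equation,
\[
2de+1 \;=\; \frac{(d+e)^2}{2} \;+\; (d+e)\sqrt{\tfrac{2}{a}}\,\langle \varepsilon,w(a)\rangle \;+\; \langle \varepsilon,\varepsilon\rangle.
\]
Specializing to $e=d$ reduces the constant term to $1$ and to $e = d-\tfrac12$ (i.e.\ replacing $(d,e)$ by $(d+\tfrac12,d-\tfrac12)$) reduces it to $\tfrac12$. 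In both cases, under the hypothesis the middle term is strictly positive by (ii), which gives the announced bound on $\langle\varepsilon,\varepsilon\rangle$.

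For (iv), I would simply sum the coordinates of $m = \tfrac{d+e}{\sqrt{2a}}w(a)+\varepsilon$, so that
\[
-\sum_{i=1}^{M}\varepsilon_i \;=\; \frac{d+e}{\sqrt{2a}}\sum_{i=1}^{M} w_i - \sum_{i=1}^{M} m_i \;=\; \frac{d+e}{\sqrt{2a}}\!\left(a+1-\tfrac1q\right) - \bigl(2(d+e)-1\bigr),
\]
and then factor out $\tfrac{d+e}{\sqrt{2a}}$ using $2(d+e) = \tfrac{d+e}{\sqrt{2a}}\cdot 2\sqrt{2a}$ to recognize $y(a) = a+1-2\sqrt{2a}$ inside the parentheses. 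There is no real obstacle here beyond bookkeeping of the arithmetic; the whole lemma is essentially the translation of the two Diophantine identities for $\mathcal{E}_M$ into the $\varepsilon$-language that will be used in the sequel.
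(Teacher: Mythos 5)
Your proposal is correct and follows essentially the same route as the paper: Cauchy--Schwarz for (i), expanding $\langle \varepsilon, w(a)\rangle$ for (ii), expanding $\langle m,m\rangle$ and invoking (ii) for (iii), and summing coordinates for (iv). The only cosmetic difference is that you write the norm-squared identity in (iii) with the general term $\tfrac{(d+e)^2}{2}$ before specializing, whereas the paper substitutes $e=d$ (resp.\ $e=d-\tfrac12$ after shifting) from the start; both give the same two cases.
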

\begin{proof}
(i) By the Cauchy-Schwarz inequality, Proposition \ref{prop:characterization of E_M}\,(i)
and Lemma~\ref{lem:weight expansion}, we have
\[
(d+e)\mu(d,e;m)(a)=\left\langle m,w(a)\right\rangle \leqslant\left\Vert m\right\Vert \left\Vert w(a)\right\Vert =\sqrt{2de+1}\sqrt{a}.
\]
In the case of a class $(d,d;m)$ we find that
\[
\mu(d,d;m)(a)\leqslant\frac{\sqrt{2d^{2}+1}\sqrt{a}}{2d}=\sqrt{\frac{2d^{2}+1}{2d^{2}}}\sqrt{\frac{a}{2}}=\sqrt{1+\frac{1}{2d^{2}}}\sqrt{\frac{a}{2}},
\]
and in the case of a class $(d+\frac{1}{2},d-\frac{1}{2};m)$ that
\[
\mu(d+\frac{1}{2},d-\frac{1}{2};m)(a)\leqslant\frac{\sqrt{2d^{2}+\frac{1}{2}}\sqrt{a}}{2d}=\sqrt{1+\frac{1}{4d^{2}}}\sqrt{\frac{a}{2}}.
\]

(ii) Since
\[
\begin{aligned}\left\langle \varepsilon,w(a)\right\rangle  & =\left\langle m-\frac{d+e}{\sqrt{2a}}w(a),w(a)\right\rangle =\left\langle m,w(a)\right\rangle -\frac{d+e}{\sqrt{2a}}\left\Vert w(a)\right\Vert ^{2}\\
 & =\left\langle m,w(a)\right\rangle -\left(d+e\right)\sqrt{\frac{a}{2}},
\end{aligned}
\]
 we see that $\left\langle \varepsilon,w(a)\right\rangle >0$ if and
only if $\mu(d,e;m)(a)=\frac{\left\langle m,w(a)\right\rangle }{d+e}>\sqrt{\frac{a}{2}}$.\medskip{}

(iii) For a class $(d,d;m)$,
\[
\begin{aligned}2d^{2}+1 & =\left\langle m,m\right\rangle =\left\langle \sqrt{\frac{2}{a}}\, dw(a)+\varepsilon,\sqrt{\frac{2}{a}}\, dw(a)+\varepsilon\right\rangle \\
 & =2d^{2}+\frac{2\sqrt{2}}{\sqrt{a}}\, d\left\langle w(a),\varepsilon\right\rangle +\left\langle \varepsilon,\varepsilon\right\rangle 
\end{aligned}
\]
shows that if $\mu(d,d;m)(a)>\sqrt{\frac{a}{2}}$, then by (ii) $\left\langle \varepsilon,\varepsilon\right\rangle <1$.
Similarly, for a class $(d+\frac{1}{2},d-\frac{1}{2};m)$,
\[
2d^{2}+\frac{1}{2}=2d^{2}+\frac{2\sqrt{2}}{\sqrt{a}}d\left\langle w(a),\varepsilon\right\rangle +\left\langle \varepsilon,\varepsilon\right\rangle 
\]
shows that if $\mu(d+\frac{1}{2},d-\frac{1}{2};m)(a)>\sqrt{\frac{a}{2}}$,
then $\left\langle \varepsilon,\varepsilon\right\rangle <\frac{1}{2}$.\medskip{}

(iv) By Proposition \ref{prop:characterization of E_M}\,(i) and
Lemma \ref{lem:weight expansion}, we see that
\[
2(d+e)-1=\sum_{i=1}^{M}m_{i}=\frac{d+e}{\sqrt{2a}}\left(a+1-\frac{1}{q}\right)+\sum_{i=1}^{M}\varepsilon_{i}.
\]
Thus
\[
-\sum_{i=1}^{M}\varepsilon_{i}=\frac{d+e}{\sqrt{2a}}\left(a+1-2\sqrt{2a}\right)-\frac{d+e}{q\sqrt{2a}}+1,
\]
from which the result follows.\end{proof}
\begin{cor}
\label{cor:set of mu st mu=00003Dc is finite}\renewcommand{\labelenumi}{(\roman{enumi})}
Suppose that $c(a)>\sqrt{\frac{a}{2}}$ for some rational $a\geqslant1$. Then
\begin{enumerate}
\item There exist classes $(d,e;m),(d',e';m')\in\mathcal{E}$ (possibly equal) and $\varepsilon>0$ such that \[ c(z)=\left\{ \begin{array}{ll} \mu(d,e;m) & \textrm{if }z\in\left]a-\varepsilon,a\right],\\ \mu(d',e';m') & \textrm{if }z\in\left[a,a+\varepsilon\right[. \end{array}\right. \]
\item The set of classes $(d,e;m)\in\mathcal{E}$ such that $\mu(d,e;m)(a)=c(a)$ is finite.
\item For each of the intervals of (i), there exist rational coefficients $\alpha,\beta\geqslant0$ such that $c(z)=\alpha+\beta z$.
\end{enumerate}%
\end{cor}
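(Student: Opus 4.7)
The plan is to prove (ii) first via a Cauchy--Schwarz bound on the degree of any obstructive maximizer, then deduce (i) and (iii) by combining this finiteness with continuity of $c$ and the piecewise-linear structure of the weight expansion $z \mapsto w(z)$.

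For (ii), suppose $(d,e;m) \in \mathcal{E}$ realizes $\mu(d,e;m)(a) = c(a) > \sqrt{a/2}$, so $(d,e;m)$ is obstructive. Lemma~\ref{lem:e=00003Dd or e=00003Dd-1} forces $e \in \{d, d-1\}$, and Lemma~\ref{lem:first properties of mu}(i) then yields
\[
c(a) = \mu(d,e;m)(a) \leq \sqrt{1 + \tfrac{1}{2d^2}}\,\sqrt{\tfrac{a}{2}}.
\]
Since the ratio $c(a)/\sqrt{a/2}$ is a fixed number strictly greater than $1$, this forces $d \leq D(a)$ for some explicit bound. The Diophantine identities $\sum m_i = 2(d+e) - 1$ and $\sum m_i^2 = 2de + 1$ from Proposition~\ref{prop:characterization of E_M}(i) then constrain both the length and the entries of $m$ as a tuple of nonnegative integers, leaving only finitely many possibilities. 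This proves (ii).

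For (i) and (iii), continuity of $c$ guarantees that $c(z) > \sqrt{z/2}$ with a uniform margin on some open interval $I \ni a$, so the bound in Step~1 applies uniformly on $I$ and produces a single finite subset $\mathcal{F} \subset \mathcal{E}$ with
\[
c(z) = \max_{(d,e;m) \in \mathcal{F}} \mu(d,e;m)(z) \qquad \text{for all } z \in I.
\]
The key remaining input is that the weight expansion $z \mapsto w(z)$ is piecewise linear in $z$ with rational breakpoints and rational coefficients --- this follows by induction from its recursive definition, in which each weight is a rational affine combination of $z$ and previously defined weights. Consequently every $\mu(d,e;m)(z) = \langle m, w(z)\rangle/(d+e)$ is piecewise linear with rational coefficients on $I$. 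After shrinking $I$ so that no breakpoint of any $\mu(d,e;m)$ with $(d,e;m) \in \mathcal{F}$ lies in $I \setminus \{a\}$, the function $c$ itself is linear with rational coefficients on each component of $I \setminus \{a\}$, realized by at least one class of $\mathcal{F}$ on each side. This proves (i) and the rationality assertion in (iii); the slope inequality $\beta \geq 0$ is forced by $c$ being nondecreasing, and $\alpha \geq 0$ is obtained by tracking how the constant entries of $w(z)$ (equal to $1$) and the affine entries of the form $w_i(z) = \alpha_i + \beta_i z$ combine under the monotonicity convention $m_1 \geq m_2 \geq \cdots$.

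The main obstacle is the clean verification that $w(z)$ is piecewise linear with rational data near $a$; although this is a direct consequence of the recursion, bookkeeping the continued-fraction transitions requires some care, as does checking that the linear pieces of $c$ have nonnegative intercept. The finiteness step itself, by contrast, is immediate once Lemmas~\ref{lem:e=00003Dd or e=00003Dd-1} and~\ref{lem:first properties of mu}(i) are in hand.
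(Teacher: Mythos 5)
Your proposal matches the paper's proof for (i), (ii), and the piecewise-linearity/rationality claims in (iii): obtain a uniform margin $c(z)>\sqrt{1+\tfrac{1}{D^2}}\sqrt{z/2}$ near $a$ by continuity, deduce $d\leqslant D$ from Lemma~\ref{lem:first properties of mu}(i), note that only finitely many $(d,e;m)$ remain, and use piecewise linearity of $z\mapsto w(z)$ to get piecewise linearity of each $\mu(d,e;m)$ and hence of $c$.

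The gap is in the final claim $\alpha\geqslant 0$. The proposed "tracking how the constant entries of $w(z)$ and the affine entries $w_i(z)=\alpha_i+\beta_i z$ combine under $m_1\geqslant m_2\geqslant\cdots$" cannot yield $\alpha\geqslant 0$, because the intercept of an individual $\mu(d,e;m)$ can genuinely be negative for classes in $\mathcal{E}$: the paper's own table following the statement of Theorem~\ref{thm:c(a) on [sigma^2,7+1/32]} lists $\left(28,28;16^{\times5},15,4^{\times4}\right)$ with $(A,B)=(-1,16)$, and two further classes with $A=-1$, so $(d+e)\mu(d,e;m)(z)=-1+16z$ on the left-hand linear piece. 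Nonnegativity of $\alpha$ is therefore not a structural property of the $\mu$-functions but a property of the envelope $c$, and the paper derives it from the scaling property of Lemma~\ref{lem:scaling property of c}: since $a\mapsto c(a)/a$ is nonincreasing and $c(z)=\alpha+\beta z$ locally, the map $z\mapsto\alpha/z+\beta$ is nonincreasing, which forces $\alpha\geqslant 0$. You should replace the bookkeeping argument with this scaling argument.
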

\begin{proof}
Since $c(a)>\sqrt{\frac{a}{2}}$, there exists $D\in\mathbb{N}$ such
that $c(a)>\sqrt{1+\frac{1}{D^{2}}}\sqrt{\frac{a}{2}}$. Since $c$
is continuous, there exists $\varepsilon>0$ such that $c(z)>\sqrt{1+\frac{1}{D^{2}}}\sqrt{\frac{z}{2}}$
for all $z\in\left]a-\varepsilon,a+\varepsilon\right[$. Now, if $\mu(d,e;m)(z)>\sqrt{1+\frac{1}{D^{2}}}\sqrt{\frac{z}{2}}$,
the inequalities of Lemma \ref{lem:first properties of mu}\,(i)
imply that $d\leqslant D$. There are thus only finitely many classes
with $\mu(d,e;m)(z)>\sqrt{1+\frac{1}{D^{2}}}\sqrt{\frac{z}{2}}$,
and for all $z\in\left]a-\varepsilon,a+\varepsilon\right[$, $c(z)$
is the supremum of $\mu(d,e;m)(z)$ taken over finitely many classes.
This proves (i) and~(ii). To prove (iii), notice first that the constraints
$\mu(d,e;m)$ are piecewise linear functions. Indeed, let $w(a)=\left(w_{1}(a),w_{2}(a),\ldots\right)$
be the weight expansion of $a$, where the $w_{i}$ are seen as functions
of $a$. Then the $w_{i}$ are piecewise linear functions and so is
$\mu(d,e;m)=\frac{\left\langle m,w\right\rangle }{d+e}$. We can thus
write $c(z)=\alpha+\beta z$ for $z$ belonging to one of the intervals
of (i). Now, since $c$ is nondecreasing, $\beta\geqslant0$, and
by the scaling property of Lemma \ref{lem:scaling property of c},
$\alpha\geqslant0$.\end{proof}
\begin{defn}
A class $(d,e;m)\in\mathcal{E}$ is called perfect if there exists
$b>1$ and $\kappa>0$ such that $m=\kappa\, w(b)$, that is, such
that the vector $m$ is a multiple of the weight expansion of $b$.\end{defn}
\begin{lem}
\label{lem:perfect elements give the constraint}Let $(d,e;m)\in\mathcal{E}$
be a perfect class for some $b>1$ with $d=e$ or $d=e+1$. Then $c(b)=\mu(d,e;m)(b)>\sqrt{\frac{b}{2}}$
and $(d,e;m)$ is the only class such that $\mu(d,e;m)(b)=c(b)$.\end{lem}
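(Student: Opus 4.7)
My plan is to exploit the hypothesis $m=\kappa\,w(b)$ together with Proposition \ref{prop:characterization of E_M} to reduce every other constraint $\mu(d',e';m')(b)$ to a quantity bounded purely in terms of $d,e,d',e'$, and then compare algebraically.

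\medskip

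First I would pin down $\kappa$. Taking the squared norm of $m=\kappa\,w(b)$ and using Proposition \ref{prop:characterization of E_M}\,(i) together with Lemma \ref{lem:weight expansion}\,(ii) gives $2de+1=\kappa^{2}b$, hence $\kappa=\sqrt{(2de+1)/b}$. Then
\[
\mu(d,e;m)(b)=\frac{\langle m,w(b)\rangle}{d+e}=\frac{\kappa\,\|w(b)\|^{2}}{d+e}=\frac{\sqrt{b(2de+1)}}{d+e}.
\]
To verify $\mu(d,e;m)(b)>\sqrt{b/2}$, I would simply check the two cases allowed by Lemma \ref{lem:e=00003Dd or e=00003Dd-1}: for $d=e$ this is $\sqrt{(2d^{2}+1)/(2d^{2})}>1$, and for $d=e+1$ a direct expansion gives $(2d^{2}-2d+1)/(2d-1)^{2}>1/2$ (both reduce to trivial inequalities). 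This establishes the first claim and also shows $c(b)\geqslant\mu(d,e;m)(b)>\sqrt{b/2}$.

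\medskip

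For the uniqueness and maximality, let $(d',e';m')\in\mathcal{E}$ be distinct from $(d,e;m)$ (and nontrivial, i.e.\ not $(0,0;-1)$). Using $w(b)=m/\kappa$ and Proposition \ref{prop:characterization of E_M}\,(ii),
\[
\mu(d',e';m')(b)=\frac{\langle m',m\rangle}{\kappa\,(d'+e')}\leqslant\frac{de'+d'e}{\kappa\,(d'+e')}.
\]
Comparing this with $\mu(d,e;m)(b)=\kappa b/(d+e)=(2de+1)/(\kappa(d+e))$, the desired inequality $\mu(d',e';m')(b)\leqslant\mu(d,e;m)(b)$ amounts to
\[
(de'+d'e)(d+e)\leqslant(2de+1)(d'+e').
\]
I would then check this by substituting $e=d$ and $e=d-1$ (the only possibilities from Lemma \ref{lem:e=00003Dd or e=00003Dd-1}): the case $e=d$ reduces instantly to $0\leqslant d'+e'$, and the case $e=d-1$ reduces, after expansion and using $d'\geqslant e'$, to $d(e'-d')\leqslant e'$, which is obvious. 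In both cases equality forces $d'=e'=0$, so the only potential competitor is the class $(0,0;-1)$, which contributes no positive constraint (the corresponding $\mu$ is not part of the supremum in Proposition \ref{prop:characterization of c(a)}, or equivalently gives $\langle m',w(b)\rangle\leqslant 0<\mu(d,e;m)(b)$). Thus every other class satisfies a strict inequality.

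\medskip

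The main obstacle is the case analysis for $e=d-1$: one has to check carefully that the ``extra'' $+1$ in the Diophantine identity $\sum m_i^2=2de+1$ is exactly what forces strict inequality and rules out a tie with any other class $(d',e';m')\neq(d,e;m)$. Once this algebraic comparison is in hand, combining the strict inequality with Proposition \ref{prop:characterization of c(a)} yields both $c(b)=\mu(d,e;m)(b)$ and the uniqueness of the maximizing class.
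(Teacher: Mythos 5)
Your proof is correct and follows essentially the same route as the paper: both use the Diophantine identity $\sum m_i^2 = 2de+1$ together with $\|w(b)\|^2 = b$ to pin down $\kappa$, and both invoke positivity of intersections (Proposition \ref{prop:characterization of E_M}\,(ii)) to bound the constraint from any other class. The only cosmetic difference is in the final comparison: the paper observes that for a competitor $(d',e';m')$ one gets $\mu(d',e';m')(b)\leqslant d/\kappa$ and that $d/\kappa<\sqrt{b/2}$ (so every other class is already killed by the volume constraint), while you compare $\mu(d',e';m')(b)$ directly against $\mu(d,e;m)(b)$ via the polynomial inequality $(de'+d'e)(d+e)\leqslant(2de+1)(d'+e')$; both reductions are correct and equivalent in substance.
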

\begin{proof}
We first treat the case $d=e$. Let $(d,d;m)\in\mathcal{E}$ be a
perfect class: $m=\kappa\, w(b)$ for some $b>1$. By Proposition
\ref{prop:characterization of E_M}\,(i),
\[
2d^{2}<2d^{2}+1=\left\langle m,m\right\rangle =\kappa^{2}\left\langle w(b),w(b)\right\rangle =\kappa^{2}b,
\]
from which we deduce that $d<\kappa\sqrt{\frac{b}{2}}$. Then
\[
\mu(d,d;m)(b)=\frac{\left\langle m,w(b)\right\rangle }{2d}=\frac{\kappa b}{2d}>\sqrt{\frac{b}{2}}.
\]
This shows that $(d,d;m)$ is obstructive at $b$. But then $(d,d;m)$
is the only obstructive class at $b$. Indeed, if $(d',e';m')\in\mathcal{E}$
is a class different of $(d,d;m)$, by positivity of intersections
(Proposition \ref{prop:characterization of E_M}\,(ii)),
\[
\kappa\left\langle m',w(b)\right\rangle =\sum m_{i}m_{i}'\leqslant d(d'+e').
\]
Thus
\[
\mu(d',e';m')(b)=\frac{\left\langle m',w(b)\right\rangle }{d'+e'}\leqslant\frac{d\left\langle m',w(b)\right\rangle }{\kappa\left\langle m',w(b)\right\rangle }=\frac{d}{\kappa}<\sqrt{\frac{b}{2}}.
\]
Consider now a class of the form $\left(d+\frac{1}{2},d-\frac{1}{2};m\right)$.
By Proposition \ref{prop:characterization of E_M}\,(i), we have
\[
2d^{2}<2d^{2}+\frac{1}{2}=\left\langle m,m\right\rangle =\kappa^{2}\left\langle w(b),w(b)\right\rangle =\kappa^{2}b,
\]
and thus $d<\kappa\sqrt{\frac{b}{2}}$ as in the case of a class $(d,d;m)$.
The rest of the proof is then identical.\end{proof}
\begin{defn}
Define the \emph{length of a vector} $m$, denoted by $l(m)$, as
the number of positive entries in $m$, and denote by $l(a)$ the
\emph{length of the weight expansion} $w(a)$ of $a$.\end{defn}
\begin{lem}
\label{lem:there exists a unique a_0 center of the class}Let $(d,e;m)\in\mathcal{E}$
be an obstructive class. Let $I$ be a maximal nonempty open interval
on which $\mu(d,e;m)(a)>\sqrt{\frac{a}{2}}$. Then there exists a
unique $a_{0}\in I$ such that $l\left(a_{0}\right)=l(m)$. Moreover
for all $a\in I$, $l(a)\geqslant l\left(a_{0}\right)$.\end{lem}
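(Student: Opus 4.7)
My plan is to establish the length bound $l(a)\geqslant l(m)$ on $I$ from the error-vector estimate, then to identify $a_0$ as a maximum of $\mu(d,e;m)(a)-\sqrt{a/2}$ on $I$, and finally to prove its uniqueness by showing every such maximum is strict. Write $M:=l(m)$ throughout.

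For the length bound, Lemma~\ref{lem:e=00003Dd or e=00003Dd-1} forces $e\in\{d,d-1\}$, so Lemma~\ref{lem:first properties of mu}(iii) yields $\langle\varepsilon,\varepsilon\rangle<1$ on all of $I$. Suppose by contradiction that $l(a)<M$ for some $a\in I$, and pick any index $i$ with $l(a)<i\leqslant M$. Then $w_i(a)=0$ by the padding convention, while $m_i\geqslant m_M\geqslant 1$ since $m$ is a nonincreasing sequence of positive integers up to position $M$. Hence $\varepsilon_i=m_i\geqslant 1$, so $\langle\varepsilon,\varepsilon\rangle\geqslant 1$, a contradiction.

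For existence of $a_0$, observe that $\mu(d,e;m)(a)=\langle m,w(a)\rangle/(d+e)$ is continuous and piecewise linear in $a$, with breakpoints at the rational values where the continued fraction (equivalently the block structure of $w(a)$) changes. Since $\mu-\sqrt{\cdot/2}$ is strictly positive on the open interval $I$ and vanishes at its endpoints, it attains its maximum on $\overline{I}$ at some interior breakpoint $a_0\in I$. I would then show $l(a_0)=M$: if instead $l(a_0)>M$, then one of the two one-sided perturbations of $a$ from $a_0$ leaves the first $M$ entries of $w(a)$ unchanged (only entries with index $>M$, where $m_i=0$, are moved), so $\mu$ is locally constant in that direction while $\sqrt{a/2}$ strictly varies; choosing the direction that decreases $\sqrt{a/2}$ contradicts the maximality of $a_0$.

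The main obstacle is uniqueness. The goal is to prove that every $a^{*}\in I$ with $l(a^{*})=M$ is a \emph{strict} local maximum of $\mu-\sqrt{\cdot/2}$, via a slope comparison. To the right of $a^{*}$, the additional entries that appear in $w(a)$ lie in positions $i>M$ with $m_i=0$, so $\mu$ has slope $0$ while $\sqrt{\cdot/2}$ has the strictly positive slope $\frac{1}{2\sqrt{2a^{*}}}$. To the left of $a^{*}$, the last block of $w(a)$ shortens linearly in $a$, and an explicit computation using the positivity of the relevant $m_i$ shows the slope of $\mu$ strictly exceeds $\frac{1}{2\sqrt{2a^{*}}}$. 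A second such point $a_1\in I$ would therefore produce two strict local maxima, forcing $\mu-\sqrt{\cdot/2}$ to vanish somewhere between them, contradicting the maximality of the interval $I$. The technical core is verifying these slope inequalities uniformly for every continued-fraction structure of total length $M$.
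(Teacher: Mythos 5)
Your length bound is correct and even cleaner than the paper's: the paper uses Cauchy--Schwarz on the truncated vector to show $\mu\leqslant\sqrt{a/2}$ when $l(a)<l(m)$, whereas you note directly that any index $i$ with $l(a)<i\leqslant M$ gives $\varepsilon_i=m_i\geqslant 1$, violating $\langle\varepsilon,\varepsilon\rangle<1$ from Lemma \ref{lem:first properties of mu}(iii). Your existence argument is also essentially sound, though the phrasing is off: moving $a$ away from $a_0$ does not leave $w_1,\dots,w_M$ literally unchanged, and $\mu$ is \emph{not} locally constant anywhere -- the weights are continuous, nonconstant, piecewise linear functions of $a$. What is true (and is what the paper uses) is that $w_i$ is \emph{linear} on intervals not containing a point $a'$ with $l(a')\leqslant i$; hence the breakpoints of $\mu=\sum_{i\leqslant M}m_iw_i/(d+e)$ are exactly the points $a'$ with $l(a')\leqslant M$. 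Combined with piecewise strict convexity of $\mu-\sqrt{\cdot/2}$ and the length bound $l\geqslant M$ on $I$, this salvages your maximum argument.

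The uniqueness argument, however, has a genuine gap. Your key claim -- that to the right of $a^*$ the slope of $\mu$ is zero because ``the additional entries lie in positions $i>M$ with $m_i=0$'' -- is simply false. When $a$ moves past $a^*$, the block structure of $w(a)$ reorganizes so that the first $M$ entries all continue to vary; only their \emph{slopes} change. Proposition \ref{prop:structure of the obstructions mu} records this precisely: the right-hand slope is $(B-m_Mq)/(d+e)$, and this need not vanish. A concrete counterexample is the class $\left(28,28;16^{\times6},3,2^{\times6}\right)$ from Theorem \ref{thm:c(a) on [sigma^2,7+1/32]}, whose center is $a^*=6\frac{1}{7}$ and whose right-hand slope is $B'/(d+e)=1/56\neq 0$; several other classes in that table ($B'=1$) also fail your claim. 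Even setting that aside, your last step does not follow: two strict local maxima of a continuous positive function do not force it to vanish between them -- they only force a positive local minimum. The paper avoids all of this with a purely combinatorial observation about weight expansions: if $a<b$ and $l(a)=l(b)$, then some $c\in\left]a,b\right[$ has $l(c)<l(a)$. Combined with the already-established bound $l\geqslant l(m)$ on $I$, this immediately gives uniqueness. You would need either this number-theoretic fact or a correct replacement for the slope-zero claim; the analytic route you sketched does not close.
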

\begin{proof}
Let us first prove that for all $a\in I$, $l(a)\geqslant l(m)$.
If $l(a)<l(m)$, then, by Proposition \ref{prop:characterization of E_M}\,(i),
\[
\sum_{i=1}^{l(a)}m_{i}^{2}<2de+1.
\]
Thus
\[
\mu(d,e;m)(a)=\frac{\left\langle m,w(a)\right\rangle }{d+e}\leqslant\frac{\sqrt{\sum_{i=1}^{l(a)}m_{i}^{2}}\left\Vert w(a)\right\Vert }{d+e}\leqslant\frac{\sqrt{4de}}{d+e}\sqrt{\frac{a}{2}}\leqslant\sqrt{\frac{a}{2}},
\]
and so $a\notin I$. Let us now prove the existence of an $a_{0}$
with $l\left(a_{0}\right)=l(m)$. Let $w(a)=\left(w_{1}(a),w_{2}(a),\ldots\right)$
be the weight expansion of $a$, where the $w_{i}$ are again seen
as functions of $a$. The $w_{i}$ are piecewise linear functions
and are linear on intervals that do not contain elements $a'$ with
$l\left(a'\right)\leqslant i$. Hence if all $a\in I$ would have
$l(a)>l(m)$, then the $l(m)$ first $w_{i}$ would be linear, and
$\mu(d,e;m)$ also. But this is impossible since $\sqrt{\frac{a}{2}}$
is concave. Thus there exists $a{}_{0}\in I$ with $l\left(a_{0}\right)=l(m)$.
The proof of uniqueness of $a_{0}$ follows from the fact that if
$a<b$ and $l(a)=l(b)$, then there exists $c\in\left]a,b\right[$
such that $l(c)<l(a)$.\end{proof}
\begin{lem}
\label{lem:parallel blocks}Let $(d,e;m)\in\mathcal{E}$ be such that
$\mu(d,e;m)(a)>\sqrt{\frac{a}{2}}$. Let $J=\left\{ k,...,k+s-1\right\} $
be a block of $s\geqslant2$ consecutive integers such that the $w_{i}(a)$
are equal for all $i\in J$. Then we have the three following possibilities
\[
\begin{array}{ll}
1. & m_{k}=\ldots=m_{k+s-1}\\
2. & m_{k}-1=m_{k+1}=\ldots=m_{k+s-1}\\
3. & m_{k}=\ldots=m_{k+s-2}=m_{k+s-1}+1.
\end{array}
\]
Moreover, there is at most one block of length $s\geqslant2$ where
the $m_{i}$ are not all equal, and if such a block $J$ exists, then
$\sum_{i\in J}\varepsilon_{i}^{2}\geqslant\frac{s-1}{s}$.
\end{lem}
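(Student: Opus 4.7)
The plan is to convert everything into constraints on the error vector $\varepsilon$, and then to exploit the fact that on a block $J$ of equal weights $w_i(a)$ the coordinates $\varepsilon_i$ pairwise differ by integers. The tension will come from the tight bound $\langle\varepsilon,\varepsilon\rangle<1$ provided by Lemma~\ref{lem:first properties of mu}\,(iii), which is available because Lemma~\ref{lem:e=00003Dd or e=00003Dd-1} reduces any obstructive class to $e\in\{d,d-1\}$.

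First, I would set $w:=w_k(a)=\ldots=w_{k+s-1}(a)$ and $\tau:=\frac{d+e}{\sqrt{2a}}\,w$, so that $\varepsilon_i=m_i-\tau$ for all $i\in J$; in particular $\varepsilon_i-\varepsilon_j=m_i-m_j\in\mathbb{Z}$ whenever $i,j\in J$. If any pair satisfied $|m_i-m_j|\geqslant 2$, then $\varepsilon_i^2+\varepsilon_j^2\geqslant\frac{1}{2}(\varepsilon_i-\varepsilon_j)^2\geqslant 2$, contradicting $\langle\varepsilon,\varepsilon\rangle<1$. Hence on $J$ the entries $m_i$ take at most two consecutive integer values.

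Next, I would assume (as is the convention for elements of $\mathcal{E}$) that the $m_i$ are non-increasing; permuting coordinates of $m$ inside $J$ preserves $\langle m,w(a)\rangle$ and the Diophantine identities of Proposition~\ref{prop:characterization of E_M}\,(i) and merely relabels equal-weight blow-ups, so this costs no generality. Let $l$ be the number of entries of $m|_J$ equal to the larger value $M$, the remaining $s-l$ entries being $M-1$. A direct minimisation over $\tau$ of $l(M-\tau)^2+(s-l)(M-1-\tau)^2$ gives
\[
\sum_{i\in J}\varepsilon_i^2\;\geqslant\;\frac{l(s-l)}{s},
\]
so combined with $\langle\varepsilon,\varepsilon\rangle<1$ we obtain $l(s-l)<s$, i.e.\ $l\in\{0,1,s-1,s\}$. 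These four possibilities correspond exactly to case~(1) (for $l\in\{0,s\}$), case~(2) (for $l=1$, the unique larger entry at position $k$), and case~(3) (for $l=s-1$, the unique smaller entry at position $k+s-1$).

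For the \emph{moreover} part, I would apply the same inequality to each non-constant block $J'$ of length $s'$: such a block necessarily has $l\in\{1,s'-1\}$, so $\sum_{i\in J'}\varepsilon_i^2\geqslant\frac{s'-1}{s'}\geqslant\frac{1}{2}$. Two disjoint non-constant blocks would then contribute at least $1$ to $\langle\varepsilon,\varepsilon\rangle$, again contradicting Lemma~\ref{lem:first properties of mu}\,(iii). The one nontrivial point I anticipate is justifying the within-block reordering of $m$; I expect this to follow at once from the observation that the exceptional divisors corresponding to equal-weight balls are interchangeable, so a permutation of $m|_J$ produces a diffeomorphic, hence still exceptional, class.
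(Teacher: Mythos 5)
Your proof is correct and follows the same route the paper intends (it defers to Lemma~2.1.7 of \cite{MS}, which uses exactly this kind of error-vector argument): on a block of equal weights the $\varepsilon_i$ differ by integers, so the $\ell^2$-bound $\langle\varepsilon,\varepsilon\rangle<1$ from Lemma~\ref{lem:first properties of mu}\,(iii) forces the $m_i|_J$ to take at most two adjacent integer values, and the variance bound $\sum_{i\in J}\varepsilon_i^2\geqslant\frac{l(s-l)}{s}$ then pins down the split and gives the ``moreover'' clause. One small remark: the appeal to within-block permutations is unnecessary, since by the definition of $\mathcal{E}$ the vector $m$ is already non-increasing, which is also what makes the positions of the larger/smaller entries in cases~(2) and~(3) come out as stated; you half-acknowledge this, but the closing worry about ``interchangeable exceptional divisors'' is a red herring you can simply drop.
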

The proof is similar to the proof of Lemma 2.1.7 in \cite{MS}.
\begin{cor}
\label{cor:an obstructive class (d,d-1;m) has the same blcoks}If
a class of the form $(d+\frac{1}{2},d-\frac{1}{2};m)$ is obstructive,
then the $m_{i}$ are constant on each block.\end{cor}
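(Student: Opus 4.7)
The plan is to combine the two inputs that are already at hand, namely the bound on $\langle \varepsilon,\varepsilon\rangle$ from Lemma \ref{lem:first properties of mu}\,(iii) applied to the case $\left(d+\tfrac{1}{2},d-\tfrac{1}{2};m\right)$, and the lower bound on the partial sum $\sum_{i\in J}\varepsilon_i^2$ over a non-constant block from Lemma \ref{lem:parallel blocks}. The two bounds will clash because the threshold $\tfrac{1}{2}$ appears on both sides.

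I would argue by contradiction: assume $(d+\tfrac{1}{2},d-\tfrac{1}{2};m)$ is obstructive at some rational $a\geqslant 1$, and assume there exists a block $J=\{k,\ldots,k+s-1\}$ of length $s\geqslant 2$ on which the $w_i(a)$ are equal but the $m_i$ are not all equal. By Lemma \ref{lem:parallel blocks}, such a block must satisfy
\[
\sum_{i\in J}\varepsilon_i^{\,2}\;\geqslant\;\frac{s-1}{s}\;\geqslant\;\frac{1}{2},
\]
the last inequality using $s\geqslant 2$. On the other hand, obstructiveness at $a$ means $\mu\!\left(d+\tfrac{1}{2},d-\tfrac{1}{2};m\right)(a)>\sqrt{a/2}$, so Lemma \ref{lem:first properties of mu}\,(iii) gives $\langle\varepsilon,\varepsilon\rangle<\tfrac{1}{2}$. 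Since $\langle\varepsilon,\varepsilon\rangle=\sum_{i=1}^M\varepsilon_i^{\,2}\geqslant\sum_{i\in J}\varepsilon_i^{\,2}$, the two bounds are incompatible, a contradiction.

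There is no real obstacle here: the corollary is an immediate consequence of the fact that for classes of the type $(d+\tfrac12,d-\tfrac12;m)$ the bound on $\langle\varepsilon,\varepsilon\rangle$ in Lemma \ref{lem:first properties of mu}\,(iii) is $\tfrac12$ rather than $1$, and this $\tfrac12$ is exactly the minimal value of $\tfrac{s-1}{s}$ over $s\geqslant 2$ appearing in Lemma \ref{lem:parallel blocks}. The only thing to watch is that the second inequality in Lemma \ref{lem:first properties of mu}\,(iii) is strict while the block estimate is non-strict, so the incompatibility is genuine. Note that, by comparison, for classes of the form $(d,d;m)$ the available bound is only $\langle\varepsilon,\varepsilon\rangle<1$, which is why such constancy on blocks is not forced in that case and has to be analysed block by block.
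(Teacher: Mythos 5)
Your proof is correct and is essentially identical to the paper's: apply Lemma~\ref{lem:parallel blocks} to the non-constant block to get $\sum_{i\in J}\varepsilon_i^2\geqslant\frac{s-1}{s}\geqslant\frac{1}{2}$, then contradict Lemma~\ref{lem:first properties of mu}\,(iii), which gives $\langle\varepsilon,\varepsilon\rangle<\frac{1}{2}$ for obstructive classes of the form $(d+\frac{1}{2},d-\frac{1}{2};m)$. The paper states exactly this in two lines; your write-up is just a slightly more explicit version of the same argument, correctly noting the strict-vs-nonstrict interplay that makes the contradiction genuine.
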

\begin{proof}
Suppose there exists a block $J$ of length $s\geqslant2$ on which
the $w_{i}(a)$ are not all equal. Then by Lemma \ref{lem:parallel blocks},
$\sum_{i\in J}\varepsilon_{i}^{2}\geqslant\frac{s-1}{s}\geqslant\frac{1}{2}$,
which contradicts Lemma~\ref{lem:first properties of mu}\,(iii),
which states that $\sum\varepsilon_{i}^{2}<\frac{1}{2}$.\end{proof}
\begin{lem}
\label{lem:parallel blocks 2}\renewcommand{\labelenumi}{(\roman{enumi})}
Let $(d,e;m)\in\mathcal{E}$ be an obstructive class at some $a\in\mathbb{Q}$ with $l(a)=l(m)$. Let $w_{k+1},\ldots,w_{k+s}$ be a block which is not the first block of $w(a)$.
\begin{enumerate}
\item If the block is not the last one, then \[ \left|m_{k}-\left(m_{k+1}+\ldots+m_{k+s+1}\right)\right|<\sqrt{s+2}. \] If the block is the last one, then \[ \left|m_{k}-\left(m_{k+1}+\ldots+m_{k+s}\right)\right|<\sqrt{s+1}. \]
\item It is always the case that \[ m_{k}-\sum_{i=k+1}^{M}m_{i}<\sqrt{M-k+1}. \]
\end{enumerate} %

\end{lem}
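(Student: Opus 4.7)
The backbone of the proof is the error-vector identity $m=\tfrac{d+e}{\sqrt{2a}}\,w(a)+\varepsilon$ together with a recursion read off directly from the definition of the weight expansion: if $w_{k+1},\ldots,w_{k+s}$ is a complete block (so $w_k>w_{k+1}=\cdots=w_{k+s}$), then
\[
w_k \,=\, s\,w_{k+1}+w_{k+s+1},
\]
with the convention $w_{k+s+1}:=0$ when the block is the last. Crucially, since $(d,e;m)$ is obstructive, Lemma~\ref{lem:e=00003Dd or e=00003Dd-1} forces $e\in\{d,d-1\}$ and then Lemma~\ref{lem:first properties of mu}(iii) gives $\sum_i\varepsilon_i^2<1$; all of the Cauchy--Schwarz bounds below will use this.

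For part~(i), substituting the error decomposition into $m_k-(m_{k+1}+\cdots+m_{k+s+1})$ and invoking the recursion makes the $\tfrac{d+e}{\sqrt{2a}}$-piece cancel, reducing the quantity to $\varepsilon_k-\sum_{i=k+1}^{k+s+1}\varepsilon_i$. Cauchy--Schwarz against the coefficient vector $(1,-1,\ldots,-1)$ of length $s+2$ yields
\[
\Bigl|\varepsilon_k-\sum_{i=k+1}^{k+s+1}\varepsilon_i\Bigr| \,\leqslant\, \sqrt{s+2}\,\Bigl(\sum_{i=k}^{k+s+1}\varepsilon_i^{2}\Bigr)^{1/2} \,<\, \sqrt{s+2}.
\]
The last-block case is identical but with $w_{k+s+1}=0$ and a coefficient vector of length $s+1$, producing the bound $\sqrt{s+1}$.

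For part~(ii), the same substitution leaves
\[
m_k-\sum_{i=k+1}^M m_i \,=\, \tfrac{d+e}{\sqrt{2a}}\Bigl(w_k-\sum_{i=k+1}^M w_i\Bigr) + \Bigl(\varepsilon_k-\sum_{i=k+1}^M\varepsilon_i\Bigr),
\]
where the $\varepsilon$-bracket is bounded in absolute value by $\sqrt{M-k+1}\,(\sum_i\varepsilon_i^2)^{1/2}<\sqrt{M-k+1}$ as before. It therefore suffices to show that the weight bracket is $\leqslant 0$, i.e.\ $w_k\leqslant\sum_{i=k+1}^M w_i$. Writing $w(a)=(x_0^{\times l_0},\ldots,x_N^{\times l_N})$ in block form, the hypothesis that the block $w_{k+1},\ldots,w_{k+s}$ is not the first means $w_k=x_{j-1}$ for some $j\geqslant 1$ and $\sum_{i=k+1}^M w_i=\sum_{r=j}^N l_r x_r$. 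From $x_{j-1}=l_j x_j+x_{j+1}$ (with $x_{N+1}:=0$) and $l_{j+1}\geqslant 1$ one reads off
\[
\sum_{r=j}^N l_r x_r \,\geqslant\, l_j x_j + l_{j+1}x_{j+1} \,\geqslant\, l_j x_j + x_{j+1} \,=\, x_{j-1},
\]
which completes the argument. The only place where one must be a little careful is the bookkeeping of block indices needed to turn the definition of the weight expansion into the identity $w_k=s\,w_{k+1}+w_{k+s+1}$ and into the auxiliary inequality above; beyond this, the proof is pure Cauchy--Schwarz, with no substantial obstacle.
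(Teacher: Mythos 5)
Your proof is correct and takes essentially the same route as Lemma~2.1.8 of \cite{MS}, to which the paper defers: substitute the error-vector decomposition $m=\tfrac{d+e}{\sqrt{2a}}\,w(a)+\varepsilon$, use the block identity $w_k=s\,w_{k+1}+w_{k+s+1}$ to make the weight contribution cancel in part~(i) and to bound it by zero in part~(ii), and finish with Cauchy--Schwarz combined with the bound $\langle\varepsilon,\varepsilon\rangle<1$ from Lemma~\ref{lem:first properties of mu}\,(iii) (valid since obstructive classes satisfy $e\in\{d,d-1\}$ by Lemma~\ref{lem:e=00003Dd or e=00003Dd-1}).
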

The proof is similar to the one of Lemma 2.1.8 in \cite{MS}.
\begin{prop}
\label{prop:structure of the obstructions mu}Let $(d,e;m)\in\mathcal{E}$
be an obstructive class at a point $a=:\frac{p}{q}\in\mathbb{Q}$
written in lowest terms with $l(a)=l(m)$. Let $m_{M}$ be the last
nonzero entry of the vector $m$ and let $I$ be the maximal open
interval containing $a$ such that $\mu(d,e;m)(a)>\sqrt{\frac{a}{2}}$.
Then there exist integers $A<p$ and $B<(m_{M}+1)q$ such that
\[
(d+e)\mu(d,e;m)(z)=\left\{ \begin{array}{cl}
A+Bz & \textrm{if }z\leqslant a,z\in I,\\
(A+m_{M}p)+(B-m_{M}q)z & \textrm{if }z\geqslant a,z\in I.
\end{array}\right.
\]

\end{prop}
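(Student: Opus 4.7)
The plan is to show that $f(z):=(d+e)\,\mu(d,e;m)(z)=\langle m,w(z)\rangle$ is affine on each side of $a$ within $I$ and to compute the slope and intercept jumps at $z=a$. The starting point is the block-value recursion for the weight expansion. If the continued fraction of $a$ is $[l_{0};l_{1},\ldots,l_{N}]$, then $w(a)=(x_{0}^{\times l_{0}},\ldots,x_{N}^{\times l_{N}})$ where the block values are the affine functions defined by $x_{0}\equiv 1$, $x_{1}(z)=z-l_{0}$, and $x_{j}(z)=x_{j-2}(z)-l_{j-1}x_{j-1}(z)$ for $j\geqslant 2$. Writing $x_{j}(z)=p_{j}+q_{j}z$, the integer coefficients satisfy the same recursion; the $q_{j}$ are (up to sign) the denominators of the convergents of $a$, and in particular $|q_{N+1}|=q$ with $q_{N+1}=(-1)^{N}q$.

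For $z$ in a small one-sided neighborhood of $a$ the relevant block lengths are locally constant, so each $w_{i}(z)$ with $i\leqslant M$ is affine in $z$ and hence $f(z)$ is affine on each side. The block structure of $w(z)$ on one side of $a$ preserves the first $N+1$ block lengths $l_{0},\ldots,l_{N}$ of $w(a)$, while on the other side the $N$-th block shortens to $l_{N}-1$ copies of $x_{N}(z)$ and a new block of first entry $x_{N-1}(z)-(l_{N}-1)x_{N}(z)=x_{N+1}(z)+x_{N}(z)$ appears at position $M$; which side is which depends on the parity of $N$. The affine forms of $w_{1}(z),\ldots,w_{M-1}(z)$ agree on both sides, since these entries always lie inside blocks $x_{0},\ldots,x_{N-1}$ or among the first $l_{N}-1$ copies of $x_{N}$, whose affine coefficients depend only on $l_{0},\ldots,l_{N-1}$. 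Only the entry $w_{M}(z)$ differs: it equals $x_{N}(z)$ on the ``preserved'' side and $x_{N}(z)+x_{N+1}(z)$ on the ``shortened'' side, the two being equal at $z=a$ because $x_{N+1}(a)=0$ (encoding the fact that the weight expansion of $a$ terminates at position $M$).

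Consequently, write $f(z)=A_{-}+B_{-}z$ for $z\in I$, $z\leqslant a$, and $f(z)=A_{+}+B_{+}z$ for $z\in I$, $z\geqslant a$. A case check on the parity of $N$ shows that the slope of $w_{M}(z)$ jumps by exactly $-q$ across $z=a$ in either case, so the slope of $f$ jumps by $m_{M}\cdot(-q)=-m_{M}q$ from left to right. By continuity of $f$ at $z=a$ the intercept jumps by $(B_{-}-B_{+})a=m_{M}q\cdot(p/q)=m_{M}p$. Setting $A:=A_{-}$ and $B:=B_{-}$ one obtains the two displayed formulas. Integrality of $A$ and $B$ is clear from the resulting expressions $A=\sum_{j}p_{j}S_{j}$ and $B=\sum_{j}q_{j}S_{j}$, where $S_{j}:=\sum_{i\in\text{block }j}m_{i}\in\mathbb{Z}$.

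It remains to prove the two strict inequalities. The bound $B<(m_{M}+1)q$ is equivalent to $B_{+}<q$; it can be obtained by running the same analysis on the right of $a$ and controlling the block sums via Proposition~\ref{prop:characterization of E_M}\,(i) together with the elementary estimate $|q_{j}|\leqslant q$ for all $j\leqslant N+1$. The bound $A<p$ then follows from the identity $A+Ba=f(a)=\langle m,w(a)\rangle$, the corresponding identity on the right, and the bound just established for $B$. I expect the main obstacle to be the sign-tracking in the slope-jump computation (which requires splitting into the two parity cases for $N$) together with the clean derivation of the strict inequalities; the argument is closely parallel to Section~2.2 of \cite{MS}, but adapted to the $(d,e)$ structure in place of a single $d$.
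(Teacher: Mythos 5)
Your sketch captures the right structure: the paper itself defers entirely to Proposition 2.3.2 of \cite{MS}, and your reconstruction of that argument is correct in its main computational content -- the block recursion $x_{j}=x_{j-2}-l_{j-1}x_{j-1}$, the identification that $w_{1},\ldots,w_{M-1}$ agree as affine functions across $z=a$ while only $w_{M}$ differs, the slope jump of exactly $-q$ for $w_{M}$ in both parity cases of $N$ (via $q_{N+1}=(-1)^{N}q$ and $x_{N+1}(a)=0$), the continuity argument giving the intercept jump $m_{M}p$, and the integrality via $A=\sum_{j}p_{j}S_{j}$, $B=\sum_{j}q_{j}S_{j}$.

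However, your treatment of the two strict inequalities is a genuine gap, not merely a matter of bookkeeping. The derivation of $A<p$ does not go through as written: the ``corresponding identity on the right'' $A_{+}+B_{+}a=f(a)$ collapses to the same equation $A+Ba=f(a)$ once $qa=p$ is substituted, and combining $A=f(a)-Ba$ with the \emph{upper} bound $B<(m_{M}+1)q$ yields only the \emph{lower} bound $A>f(a)-(m_{M}+1)p$, not an upper bound. Similarly, the assertion that $B_{+}<q$ follows from Proposition~\ref{prop:characterization of E_M}\,(i) together with $|q_{j}|\leqslant q$ is too compressed to stand: $B_{+}$ is an alternating sum $\sum_{j}S_{j}q_{j}$ whose terms grow in magnitude up to $q$, so the crude bound together with $\sum m_{i}=2(d+e)-1$ does not by itself force $B_{+}<q$. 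You need to exploit the alternation of signs in the $q_{j}$ and the control on the block sums $S_{j}$ provided by Lemma~\ref{lem:parallel blocks 2}, as MS do, and you need a genuinely separate argument for $A<p$.
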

Again, the proof is similar to the one of Proposition 2.3.2 in \cite{MS}.

\section{The interval $\left[1,\sigma^{2}\right]$}

\noindent The goal of this section is to prove part (i) of Theorem
\ref{thm:statement of the result}.

\subsection{Preliminaries}

Let us first recall that the \emph{Pell numbers} $P_{n}$ and the
\emph{half companion Pell numbers}~$H_{n}$ are defined by the recurrence
relations
\[
\begin{alignedat}{5}P_{0} & = & \:0,\; & P_{1} & = & \:1,\quad & P_{n} & = & 2P_{n-1}+P_{n-2},\\
H_{0} & = & \:1,\; & H_{1} & = & \:1,\quad & H_{n} & = & \:2H_{n-1}+H_{n-2},
\end{alignedat}
\]
respectively. It is then easy to see that
\[
H_{n}=P_{n}+P_{n-1}.
\]
Using this, we define the sequence $\left(\alpha_{n}\right)_{n\geqslant0}$
by
\[
\alpha_{n}:=\left\{ \begin{aligned}\frac{2P_{n+1}^{2}}{H_{n}^{2}}=:\frac{p_{n}}{q_{n}} & \qquad\textrm{if }n\textrm{ is even},\\
\frac{H_{n+1}^{2}}{2P_{n}^{2}}=:\frac{p_{n}}{q_{n}} & \qquad\textrm{if }n\textrm{ is odd}.
\end{aligned}
\right.
\]
Set $W'\left(\alpha_{n}\right)=q_{n}w\left(\alpha_{n}\right)$ with
adding an extra $1$ at the end. Define the classes $E\left(\alpha_{n}\right)$
by
\[
E\left(\alpha_{n}\right):=\left\{ \begin{aligned}\left(P_{n+1}H_{n},P_{n+1}H_{n};W'\left(\alpha_{n}\right)\right) & \qquad\textrm{if }n\textrm{ is even},\\
\left(P_{n}H_{n+1},P_{n}H_{n+1};W'\left(\alpha_{n}\right)\right) & \qquad\textrm{if }n\textrm{ is odd}.
\end{aligned}
\right.
\]
For instance,
\[
\begin{aligned}E\left(\alpha_{0}\right) & =\left(1,1;1^{\times3}\right),\\
E\left(\alpha_{1}\right) & =\left(3,3;2^{\times4},1^{\times3}\right),\\
E\left(\alpha_{2}\right) & =\left(15,15;9^{\times5},5,4,1^{\times5}\right),\\
E\left(\alpha_{3}\right) & =\left(85,85;50^{\times5},39,11^{\times3},6,5,1^{\times6}\right).
\end{aligned}
\]
Moreover, we define the sequence $\left(\beta_{n}\right)_{n\geqslant0}$
by
\[
\beta_{n}:=\left\{ \begin{aligned}\frac{H_{n+2}}{H_{n}}=:\frac{p_{n}}{q_{n}} & \qquad\textrm{if }n\textrm{ is even},\\
\frac{P_{n+2}}{P_{n}}=:\frac{p_{n}}{q_{n}} & \qquad\textrm{if }n\textrm{ is odd}.
\end{aligned}
\right.
\]
Set $W\left(\beta_{n}\right)=q_{n}w\left(\beta_{n}\right)$. Then
the classes $E\left(\beta_{n}\right)$ are defined by
\[
E\left(\beta_{n}\right):=\left\{ \begin{aligned}\left(\frac{1}{4}\left(H_{n}+H_{n+2}\right),\frac{1}{4}\left(H_{n}+H_{n+2}\right);W\left(\beta_{n}\right)\right) & \qquad\textrm{if }n\textrm{ is even},\\
\left(\frac{1}{4}\left(P_{n}+P_{n+2}\right)+\frac{1}{2},\frac{1}{4}\left(P_{n}+P_{n+2}\right)-\frac{1}{2};W\left(\beta_{n}\right)\right) & \qquad\textrm{if }n\textrm{ is odd}.
\end{aligned}
\right.
\]
For instance,
\[
\begin{aligned}E\left(\beta_{0}\right) & =\left(1,1;1^{\times3}\right),\\
E\left(\beta_{1}\right) & =\left(2,1;1^{\times5}\right),\\
E\left(\beta_{2}\right) & =\left(5,5;3^{\times5},2,1^{\times2}\right),\\
E\left(\beta_{3}\right) & =\left(9,8;5^{\times5},4,1^{\times4}\right).
\end{aligned}
\]

\begin{thm}
\label{thm:the classes E(alpha_n), E(beta_n) belong to E}For all
$n\geqslant0$, $E\left(\alpha_{n}\right),E\left(\beta_{n}\right)\in\mathcal{E}$.
\end{thm}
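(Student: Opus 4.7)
The plan is to apply Proposition~\ref{prop:characterization of E_M}(iii): for each of $E(\alpha_{n})$ and $E(\beta_{n})$ I would verify that it satisfies the two Diophantine equations of part~(i) of that proposition, and then show that its image under $\varphi_{*}$ reduces to $(0;-1,0,\ldots,0)$ by repeated Cremona moves.

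For the Diophantine equations I would first write down the weight expansions $w(\alpha_{n})$ and $w(\beta_{n})$ explicitly. Their block structure is dictated by the continued fraction expansions of $\alpha_{n}$ and $\beta_{n}$, whose partial quotients are governed by the Pell recurrence $P_{n+1}=2P_{n}+P_{n-1}$. Once $W'(\alpha_{n})$ and $W(\beta_{n})$ are in hand, the two identities $\sum m_{i}=2(d+e)-1$ and $\sum m_{i}^{2}=2de+1$ reduce, via parts~(ii)--(iii) of Lemma~\ref{lem:weight expansion}, to arithmetic identities among Pell numbers. The standard relations $H_{n}=P_{n+1}-P_{n}$, the Pell equation $H_{n}^{2}-2P_{n}^{2}=(-1)^{n}$, and the Cassini-type identity $P_{n+1}^{2}-P_{n}P_{n+2}=(-1)^{n}$ collapse both Diophantine equations to true statements that can be checked mechanically by induction on~$n$.

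For the Cremona reduction I would argue by induction on~$n$. The base cases $E(\alpha_{0})=E(\beta_{0})=(1,1;1^{\times3})$, $E(\beta_{1})=(2,1;1^{\times5})$, $E(\alpha_{1})=(3,3;2^{\times4},1^{\times3})$ and $E(\beta_{2})=(5,5;3^{\times5},2,1^{\times2})$ all appear in the list of Lemma~\ref{lem:finite list of elements in E_M for M leq 7}, and hence lie in $\mathcal{E}$. For the inductive step, I would exhibit an explicit short sequence of Cremona moves whose net effect is to transform $\varphi_{*}(E(\alpha_{n+1}))$ (respectively $\varphi_{*}(E(\beta_{n+1}))$) into $\varphi_{*}(E(\beta_{n}))$ or $\varphi_{*}(E(\alpha_{n}))$, up to reordering and trailing zeros. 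Each Cremona transform acts only on the three largest entries, and the Pell recurrence on the block lengths of the weight expansion ensures that after one complete round the leftover vector is again of the standard form with index shifted by one. This is the natural Pell analogue of the Fibonacci treatment carried out in~\cite{MS}.

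The main obstacle is the combinatorial bookkeeping of these Cremona rounds: one must describe the block structure of $W'(\alpha_{n})$ and $W(\beta_{n})$ uniformly in~$n$, keep track of how a sequence of three-entry Cremona transforms rearranges the leading blocks, and verify that after reordering the tuple is again one of our standard forms with index shifted by one. Once such a uniform description is in place, each step of the induction reduces to the same Pell identities used for the Diophantine check, and the induction closes.
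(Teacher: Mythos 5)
Your high-level plan --- verify the two Diophantine conditions via Pell identities, then reduce $\varphi_{*}$ of the class to $(0;-1,0,\ldots,0)$ by Cremona moves --- is exactly the strategy of the paper, and the Diophantine part of your sketch is correct. The gap lies in the inductive structure you propose for the Cremona reduction.

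For the $E(\beta_{n})$ something close to your idea does work: the paper identifies $\beta_{2k}=b_{k}(0)$ and $\beta_{2k+1}=b_{k}(1)$ and then shows that $\varphi_{*}\bigl(E(b_{k}(i))\bigr)$ reduces to $\varphi_{*}\bigl(E(b_{k-1}(i))\bigr)$ in exactly four Cremona moves, i.e.\ an induction on $n$ in steps of two (Lemmas~\ref{lem:E(b_k(2j)) reduce} and~\ref{lem:E(b_k(2j+1)) reduce}). Note the step is $n\to n-2$, not $n\to n-1$ as you write, and the intermediate classes live entirely inside the $\beta$ family; there is no crossover to the $\alpha$ family.

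For the $E(\alpha_{n})$ your proposed one-round reduction from $\varphi_{*}\bigl(E(\alpha_{n+1})\bigr)$ to $\varphi_{*}\bigl(E(\alpha_{n})\bigr)$ or $\varphi_{*}\bigl(E(\beta_{n})\bigr)$ does not exist, and the paper does something genuinely more elaborate. The continued fraction of $\alpha_{2m}$ is the palindrome $[5;\{1,4\}^{\times(m-1)},1,1,3,1,\{4,1\}^{\times(m-1)}]$ and $W'(\alpha_{2m})$ is $q_{2m}\,w(\alpha_{2m})$ with an extra $1$ appended (so $E(\alpha_{n})$ is not even a perfect class, unlike $E(\beta_{n})$). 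The paper's reduction splits into two phases: a \emph{head} phase $V_{m}^{m}\to V_{m-1}^{m}\to\cdots\to V_{2}^{m}$ ($4$ moves per step) that eats through the leading $u_{j}$-blocks, landing on an intermediate class $V^{m}$, and then a \emph{tail} phase $V^{m}\to V^{m-1}\to\cdots\to V^{2}$ ($5$ moves per step). The crucial point is that the intermediate classes $V_{k}^{m}$ and $V^{m}$ depend on \emph{both} indices and are not of the form $\varphi_{*}\bigl(E(\alpha_{j})\bigr)$ or $\varphi_{*}\bigl(E(\beta_{j})\bigr)$ for any $j$ (for instance $V^{2}=(17;11,6^{\times3},5^{\times2},1^{\times11})$ is neither). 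So the ``standard form with index shifted by one'' you hope to land on after a Cremona round simply is not there; the induction has to be run on the auxiliary families $V_{k}^{m}$ and $V^{m}$ rather than on $n$ directly. As written, your inductive step for the $\alpha$ classes is the missing idea.
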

The proof that $E\left(\alpha_{n}\right)\in\mathcal{E}$ is given
in the next subsection, while the proof that $E\left(\beta_{n}\right)\in\mathcal{E}$
is given in Corollary \ref{cor:the classes E(beta_n) belong to E}.
Theorem \ref{thm:the classes E(alpha_n), E(beta_n) belong to E} implies
part (i) of Theorem \ref{thm:statement of the result}:
\begin{cor}
\label{cor:c(a) on [1,sigma^2]}On the interval $\left[1,\sigma^{2}\right]$,
\[
c(a)=\left\{ \begin{aligned}1\quad\; & \qquad\textrm{if }a\in\left[1,2\right],\\
\frac{1}{\sqrt{2\alpha_{n}}}\, a & \qquad\textrm{if }a\in\left[\alpha_{n},\beta_{n}\right],\\
\sqrt{\frac{\alpha_{n+1}}{2}} & \qquad\textrm{if }a\in\left[\beta_{n},\alpha_{n+1}\right],
\end{aligned}
\right.
\]
for all $n\geqslant0$.\end{cor}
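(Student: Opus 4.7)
The strategy combines Theorem~\ref{thm:the classes E(alpha_n), E(beta_n) belong to E} (existence of $E(\alpha_n)$ and $E(\beta_n)$ in~$\mathcal{E}$) with the sup-characterization of~$c$ in Proposition~\ref{prop:characterization of c(a)}, the perfect-class lemma (Lemma~\ref{lem:perfect elements give the constraint}), positivity of intersection (Proposition~\ref{prop:characterization of E_M}(ii)), and the monotonicity and scaling of~$c$ (Lemma~\ref{lem:scaling property of c}). The rough idea is: use $E(\beta_n)$ as a perfect class to nail down $c(\beta_n)$ exactly, use positivity of intersection against $E(\alpha_n)$ to nail down $c(\alpha_n)$ exactly, and then fill in the two types of sub-intervals by monotonicity and scaling.

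The first step would be to compute $\mu(E(\alpha_n))(a)$ explicitly on $[\alpha_n,\alpha_{n+1}]$. Since the $m$-part of $E(\alpha_n)$ is $W'(\alpha_n) = q_n\, w(\alpha_n)$ padded with a trailing~$1$, the identity $\sum w_i(\alpha_n)^2 = \alpha_n$ (Lemma~\ref{lem:weight expansion}(ii)) together with the Pell-number formulas for $d_n = e_n$ gives $\mu(E(\alpha_n))(\alpha_n) = \sqrt{\alpha_n/2}$. Tracking how $w(a)$ evolves piecewise linearly as $a$ moves through $[\alpha_n,\alpha_{n+1}]$, the inner product $\langle W'(\alpha_n),w(a)\rangle$ is affine in~$a$ on $[\alpha_n,\beta_n]$; for $a\in[\beta_n,\alpha_{n+1}]$ the first $l(W'(\alpha_n))$ entries of $w(a)$ stabilize and the inner product becomes constant. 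Using the Pell identity $\beta_n^2 = \alpha_n\alpha_{n+1}$, this pins down
\[
\mu(E(\alpha_n))(a)= \frac{a}{\sqrt{2\alpha_n}} \;\text{ on }[\alpha_n,\beta_n], \qquad \mu(E(\alpha_n))(a)= \sqrt{\tfrac{\alpha_{n+1}}{2}} \;\text{ on }[\beta_n,\alpha_{n+1}],
\]
giving the desired lower bounds on $c(a)$ via Proposition~\ref{prop:characterization of c(a)}.

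Second, I would pin down $c$ exactly at the breakpoints. At $\beta_n$, the class $E(\beta_n)$ satisfies $m = q_n w(\beta_n)$ with no trailing entry and has $d=e$ (even $n$) or $d=e+1$ (odd $n$), hence is perfect in the sense of Lemma~\ref{lem:perfect elements give the constraint}; the lemma yields $c(\beta_n) = \mu(E(\beta_n))(\beta_n) = \sqrt{\alpha_{n+1}/2}$. At $\alpha_n$, positivity of intersection between $E(\alpha_n) = (D,D;W'(\alpha_n))$ and any other class $(d',e';m')\in\mathcal{E}$ gives $\sum W'_i m'_i \leq D(d'+e')$, which using the structure $W'(\alpha_n) = q_n w(\alpha_n) + (0,\ldots,0,1)$ rearranges to
\[
\mu(d',e';m')(\alpha_n) \;\leq\; \frac{D}{q_n} \;=\; \sqrt{\tfrac{\alpha_n}{2}}.
\]
Together with the already-established equality $\mu(E(\alpha_n))(\alpha_n) = \sqrt{\alpha_n/2}$, Proposition~\ref{prop:characterization of c(a)} then forces $c(\alpha_n) = \sqrt{\alpha_n/2}$.

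The final step is to interpolate. On $[\beta_{n-1},\alpha_n]$, monotonicity of~$c$ sandwiches $\sqrt{\alpha_n/2} = c(\beta_{n-1}) \leq c(a) \leq c(\alpha_n) = \sqrt{\alpha_n/2}$, so $c \equiv \sqrt{\alpha_n/2}$ there. On $[\alpha_n,\beta_n]$, the lower bound $c(a) \geq a/\sqrt{2\alpha_n}$ from the first step is matched by the scaling bound $c(a)/a \leq c(\alpha_n)/\alpha_n = 1/\sqrt{2\alpha_n}$ from Lemma~\ref{lem:scaling property of c}, yielding $c(a) = a/\sqrt{2\alpha_n}$. The base interval $[1,2]$ is handled similarly using the class $(1,0;1)\in\mathcal{E}$ from Lemma~\ref{lem:finite list of elements in E_M for M leq 7}, which yields $\mu\equiv 1$ and hence $c(a) \geq 1$, combined with monotonicity and the value $c(2) = c(\alpha_0) = 1$ from the previous step. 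The main obstacle is the explicit computation in step one: one must track how the entries of $w(a)$ deform as $a$ moves through $[\alpha_n,\alpha_{n+1}]$ and verify the linear-then-constant behaviour of $\langle W'(\alpha_n),w(a)\rangle$. This requires a careful analysis of the continued-fraction expansion of $\alpha_n$ and $\beta_n$ in terms of Pell numbers and of how $w(a)$ reorganizes when $a$ crosses $\beta_n$ (where the length of $w(a)$ changes).
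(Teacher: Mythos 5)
Your proposal is essentially correct and matches the paper's strategy: establish $c(\beta_n)=\sqrt{\alpha_{n+1}/2}$ via the perfect-class Lemma~\ref{lem:perfect elements give the constraint}, establish $c(\alpha_n)=\sqrt{\alpha_n/2}$ via positivity of intersection against $E(\alpha_n)$ (Proposition~\ref{prop:characterization of E_M}(ii)), use monotonicity of $c$ to fill in $[\beta_n,\alpha_{n+1}]$, and use the scaling property of Lemma~\ref{lem:scaling property of c} to fill in $[\alpha_n,\beta_n]$.

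However, your first step --- explicitly computing $\mu(E(\alpha_n))(a)$ as a piecewise-linear function on the whole interval $[\alpha_n,\alpha_{n+1}]$, which you rightly flag as ``the main obstacle'' --- is unnecessary. The scaling lemma says $a\mapsto c(a)/a$ is nonincreasing, so once you know $c(\alpha_n)/\alpha_n = c(\beta_n)/\beta_n = 1/\sqrt{2\alpha_n}$ (a single Pell identity, equivalent to $\beta_n^2 = \alpha_n\alpha_{n+1}$), the ratio $c(a)/a$ is sandwiched between two equal values on $[\alpha_n,\beta_n]$, hence constant there, forcing $c(a)=a/\sqrt{2\alpha_n}$ directly. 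This is precisely how the paper argues. It never needs a lower bound from $\mu(E(\alpha_n))$ on the open interval, and thus never needs to track how $w(a)$ reorganizes across $\beta_n$; the only computations at $a$-values other than $\alpha_n,\beta_n$ are handled wholesale by monotonicity of $c$ and of $c(a)/a$. So your proof would go through, but the detour through the explicit piecewise description of $\mu(E(\alpha_n))$ is extra work you can eliminate.
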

\begin{proof}
Since for all $n\geqslant0$, $E\left(\beta_{n}\right)$ is a perfect
class, we know by Lemma \ref{lem:perfect elements give the constraint}
that $c\left(\beta_{n}\right)=\mu\left(E\left(\beta_{n}\right)\right)\left(\beta_{n}\right)$.
Hence
\[
c\left(\beta_{n}\right)=\sqrt{\frac{\alpha_{n+1}}{2}}
\]
for all $n\geqslant0$. Indeed, for $n$ even, we compute
\[
\begin{aligned}c\left(\beta_{n}\right) & =\frac{2H_{n}\left\langle w\left(\beta_{n}\right),w\left(\beta_{n}\right)\right\rangle }{H_{n+2}+H_{n}}=\frac{2H_{n}\beta_{n}}{H_{n+2}+H_{n}}=\frac{2H_{n+2}}{H_{n+2}+H_{n}}\\
 & =\frac{2\left(P_{n+2}+P_{n+1}\right)}{P_{n+2}+P_{n+1}+P_{n}+P_{n-1}}=\frac{2\left(P_{n+2}+P_{n+1}\right)}{4P_{n+1}}=\frac{H_{n+2}}{2P_{n+1}}=\sqrt{\frac{\alpha_{n+1}}{2}},
\end{aligned}
\]
and for $n$ odd,
\[
\begin{aligned}c\left(\beta_{n}\right) & =\frac{2P_{n}\left\langle w\left(\beta_{n}\right),w\left(\beta_{n}\right)\right\rangle }{P_{n+2}+P_{n}}=\frac{2P_{n}\beta_{n}}{P_{n+2}+P_{n}}=\frac{2P_{n+2}}{P_{n+2}+P_{n}}\\
 & =\frac{P_{n+2}}{\frac{1}{2}\left(P_{n+2}-P_{n}\right)+P_{n}}=\frac{P_{n+2}}{P_{n+1}+P_{n}}=\frac{P_{n+2}}{H_{n}}=\sqrt{\frac{\alpha_{n+1}}{2}}.
\end{aligned}
\]
Furthermore, $c\left(\alpha_{n}\right)=\sqrt{\frac{\alpha_{n}}{2}}$
for all $n\geqslant0$. Indeed, for $n$ even, we have
\[
\mu\left(E\left(\alpha_{n}\right)\right)\left(\alpha_{n}\right)=\frac{H_{n}^{2}\alpha_{n}}{2P_{n+1}H_{n}}=\frac{H_{n}\alpha_{n}}{2P_{n+1}}=\frac{P_{n+1}}{H_{n}}=\sqrt{\frac{\alpha_{n}}{2}}.
\]
Thus for all $(d,e;m)\in\mathcal{E}$ distinct from $E\left(\alpha_{n}\right)$,
we get by Proposition \ref{prop:characterization of E_M}\,(ii) that
\[
P_{n+1}H_{n}(d+e)\geqslant\left\langle m,W'\left(\alpha_{n}\right)\right\rangle \geqslant H_{n}^{2}\left\langle m,w\left(\alpha_{n}\right)\right\rangle ,
\]
and hence
\[
\mu(d,e;m)\left(\alpha_{n}\right)=\frac{\left\langle m,w\left(\alpha_{n}\right)\right\rangle }{d+e}\leqslant\frac{P_{n+1}H_{n}}{H_{n}^{2}}=\frac{P_{n+1}}{H_{n}}=\sqrt{\frac{\alpha_{n}}{2}}.
\]
Next, for $n$ odd, we have
\[
\mu\left(E\left(\alpha_{n}\right)\right)\left(\alpha_{n}\right)=\frac{2P_{n}^{2}\alpha_{n}}{2P_{n}H_{n+1}}=\frac{P_{n}\alpha_{n}}{H_{n+1}}=\frac{H_{n+1}}{2P_{n}}=\sqrt{\frac{\alpha_{n}}{2}}.
\]
Thus for all $(d,e;m)\in\mathcal{E}$ distinct from $E\left(\alpha_{n}\right)$,
we get
\[
P_{n}H_{n+1}(d+e)\geqslant\left\langle m,W'\left(\alpha_{n}\right)\right\rangle \geqslant2P_{n}^{2}\left\langle m,w\left(\alpha_{n}\right)\right\rangle ,
\]
and hence
\[
\mu(d,e;m)\left(\alpha_{n}\right)=\frac{\left\langle m,w\left(\alpha_{n}\right)\right\rangle }{d+e}\leqslant\frac{P_{n}H_{n+1}}{2P_{n}^{2}}=\frac{H_{n+1}}{2P_{n}}=\sqrt{\frac{\alpha_{n}}{2}}.
\]
Thus, by Proposition \ref{prop:characterization of c(a)}, we have
$c\left(\alpha_{n}\right)=\sqrt{\frac{\alpha_{n}}{2}}$ for all $n\geqslant0$
as required. Since $c$ is nondecreasing, we get that $c(a)=\sqrt{\frac{\alpha_{n+1}}{2}}$
for $a\in\left[\beta_{n},\alpha_{n+1}\right]$. Moreover, we have
that for all $n\geqslant0$
\[
\frac{c\left(\beta_{n}\right)}{\beta_{n}}=\frac{c\left(\alpha_{n}\right)}{\alpha_{n}}.
\]
Indeed, for $n$ even,
\[
\frac{c\left(\beta_{n}\right)}{\beta_{n}}=\frac{\sqrt{\frac{\alpha_{n+1}}{2}}}{\beta_{n}}=\frac{H_{n}}{2P_{n+1}}=\frac{\sqrt{\frac{\alpha_{n}}{2}}}{\alpha_{n}}=\frac{c\left(\alpha_{n}\right)}{\alpha_{n}},
\]
and for $n$ odd,
\[
\frac{c\left(\beta_{n}\right)}{\beta_{n}}=\frac{\sqrt{\frac{\alpha_{n+1}}{2}}}{\beta_{n}}=\frac{P_{n}}{H_{n+1}}=\frac{\sqrt{\frac{\alpha_{n}}{2}}}{\alpha_{n}}=\frac{c\left(\alpha_{n}\right)}{\alpha_{n}}.
\]
Hence, by the scaling property of Lemma \ref{lem:scaling property of c},
the function $c$ has to be linear on $\left[\alpha_{n},\beta_{n}\right]$
and thus $c(a)=\frac{1}{\sqrt{2\alpha_{n}}}a$ for $a\in\left[\alpha_{n},\beta_{n}\right]$.
\end{proof}

\subsection{The classes $E\left(\alpha_{n}\right)$ belong to $\mathcal{E}$}
\begin{lem}
The classes $E\left(\alpha_{n}\right)$ satisfy the Diophantine conditions
of Proposition \ref{prop:characterization of E_M}\,(i).\end{lem}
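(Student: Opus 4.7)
The plan is to verify the two Diophantine identities of Proposition \ref{prop:characterization of E_M}(i) by direct computation, using Lemma \ref{lem:weight expansion}(ii)--(iii) to convert sums over the weight expansion of $\alpha_n = p_n/q_n$ into arithmetic expressions in $p_n$, $q_n$, and combining this with the classical Pell identity
\[
H_n^2 - 2P_n^2 = (-1)^n,
\]
itself proved by a one-line induction from the recursions for $P_n$ and $H_n$. In both parities, the class $E(\alpha_n)$ has $d=e$, so the target identities read $\sum m_i = 4d-1$ and $\sum m_i^2 = 2d^2 + 1$.

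The second identity is immediate. Since $W'(\alpha_n) = q_n w(\alpha_n)$ with an extra entry $1$ appended, Lemma \ref{lem:weight expansion}(ii) gives
\[
\sum m_i^2 \;=\; q_n^2\,\langle w(\alpha_n), w(\alpha_n)\rangle + 1 \;=\; q_n^2 \alpha_n + 1 \;=\; p_n q_n + 1.
\]
It therefore suffices to check $p_n q_n = 2d^2$ case by case; for $n$ even this is $2P_{n+1}^2 \cdot H_n^2 = 2(P_{n+1}H_n)^2$, and for $n$ odd it is $H_{n+1}^2 \cdot 2P_n^2 = 2(P_n H_{n+1})^2$, both trivially true.

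For the first identity, Lemma \ref{lem:weight expansion}(iii) gives
\[
\sum m_i \;=\; q_n\bigl(\alpha_n + 1 - \tfrac{1}{q_n}\bigr) + 1 \;=\; p_n + q_n,
\]
so the task reduces to $p_n + q_n + 1 = 4d$. For $n$ even this unfolds to $2P_{n+1}^2 + H_n^2 + 1 = 4 P_{n+1} H_n$; completing the square via the elementary identity $P_{n+1} - H_n = P_n$ (a one-line check from the defining recursions) rewrites this as $2P_n^2 = H_n^2 - 1$, i.e.\ the Pell identity with $n$ even. For $n$ odd the analogous manipulation starting from $H_{n+1}^2 + 2P_n^2 + 1 = 4P_n H_{n+1}$ uses $H_{n+1} - 2P_n = H_n$ to yield $H_n^2 = 2P_n^2 - 1$, which is the Pell identity with $n$ odd.

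There is no real obstacle: once the three auxiliary identities $P_{n+1}-H_n = P_n$, $H_{n+1} - 2P_n = H_n$, and $H_n^2 - 2P_n^2 = (-1)^n$ are isolated, everything is bookkeeping. The only care required is to treat the two parity branches of the definitions of $p_n$, $q_n$, and $d$ in parallel, so that the same Pell identity emerges at the end of each computation.
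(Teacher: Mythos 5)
Your proof is correct and takes essentially the same approach as the paper: both reduce $\sum m_i$ and $\sum m_i^2$ to expressions in $p_n, q_n$ via Lemma \ref{lem:weight expansion} and then invoke the Pell identity $H_n^2 - 2P_n^2 = (-1)^n$ (the paper writes it in the equivalent form $-P_{2m}^2 + 2P_{2m}P_{2m-1} + P_{2m-1}^2 = 1$). Your completing-the-square bookkeeping via $P_{n+1}-H_n = P_n$ and $H_{n+1}-2P_n = H_n$ is a cleaner presentation of the same expansion the paper carries out directly.
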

\begin{proof}
We will prove this separately for $n$ even and odd. In both cases,
we will use Lemma \ref{lem:weight expansion} and the relation
\[
-P_{2m}^{2}+2P_{2m}P_{2m-1}+P_{2m-1}^{2}=1
\]
which can be easily deduced from the following identity 
\[
P_{2m-k}=(-1)^{k+1}\left(P_{k}H_{2m}-H_{k}P_{2m}\right),
\]
given in Corollary \ref{cor:properties of u_k(j), v_k(j)}\,(v).
For $n=2m$, we obtain
\[
\begin{aligned}\sum m_{i} & =H_{2m}^{2}\sum w_{i}+1=H_{2m}^{2}\left(\frac{2P_{2m+1}^{2}}{H_{2m}^{2}}+1-\frac{1}{H_{2m}^{2}}\right)+1\\
 & =2\left(2P_{2m}+P_{2m-1}\right)^{2}+\left(P_{2m}+P_{2m-1}\right)^{2}\\
 & =9P_{2m}^{2}+10P_{2m}P_{2m-1}+3P_{2m-1}^{2}\\
 & =8P_{2m}^{2}+12P_{2m}P_{2m-1}+4P_{2m-1}^{2}-\left(-P_{2m}^{2}+2P_{2m}P_{2m-1}+P_{2m-1}^{2}\right)\\
 & =4P_{2m+1}H_{2m}-1=2(d+e)-1;\\
\\
\sum m_{i}^{2} & =H_{2m}^{4}\sum w_{i}^{2}+1=H_{2m}^{4}\frac{2P_{2m+1}^{2}}{H_{2m}^{2}}+1\\
 & =2P_{2m+1}^{2}H_{2m}^{2}+1=2de+1.
\end{aligned}
\]
Moreover, for $n=2m-1$,
\[
\begin{aligned}\sum m_{i} & =2P_{2m-1}^{2}\sum w_{i}+1=2P_{2m-1}^{2}\left(\frac{H_{2m}^{2}}{2P_{2m-1}^{2}}+1-\frac{1}{2P_{2m-1}^{2}}\right)+1\\
 & =\left(P_{2m}+P_{2m-1}\right)^{2}+2P_{2m-1}^{2}\\
 & =P_{2m}^{2}+2P_{2m}P_{2m-1}+3P_{2m-1}^{2}\\
 & =4P_{2m}P_{2m-1}+4P_{2m-1}^{2}-\left(-P_{2m}^{2}+2P_{2m}P_{2m-1}+P_{2m-1}^{2}\right)\\
 & =4P_{2m-1}H_{2m}-1=2(d+e)-1;\\
\\
\sum m_{i}^{2} & =4P_{2m-1}^{4}\sum w_{i}^{2}+1=4P_{2m-1}^{4}\frac{H_{2m}^{2}}{2P_{2m-1}^{2}}+1\\
 & =2P_{2m-1}^{2}H_{2m}^{2}+1=2de+1.
\end{aligned}
\]
This proves the lemma.
\end{proof}
We will now prove separately for $n$ even and $n$ odd that the classes
$E\left(\alpha_{n}\right)$ reduce to $(0;-1)$ by standard Cremona
moves.

\subsubsection{The classes $E\left(\alpha_{2m}\right)$ reduce to $(0;-1)$}

One readily checks that the classes $E\left(\alpha_{2m}\right)$ reduce
to $(0;-1)$ for $m=0,1,2$. In the following, we reduce the classes
$E\left(\alpha_{2m}\right)$ for $m\geqslant3$.
\begin{lem}
The continued fraction expansion of $\alpha_{2m}$ is
\[
\left[5;\left\{ 1,4\right\} ^{\times(m-1)},1,1,3,1,\left\{ 4,1\right\} ^{\times(m-1)}\right].
\]
Moreover, with $u_{j}:=\left(2H_{j}-P_{j}\right)H_{2m}+H_{j}P_{2m}$,
\[
\begin{aligned}E\left(\alpha_{2m}\right)= & \left(P_{2m+1}H_{2m},P_{2m+1}H_{2m};\right.\\
 & \left(\frac{1}{2}u_{2m}\right)^{\times5},u_{2m-1},\left(\frac{1}{2}u_{2m-2}\right)^{\times4},\ldots,u_{3},\left(\frac{1}{2}u_{2}\right)^{\times4},\\
 & u_{1},H_{2m}+\frac{1}{2}P_{2m},\left(\frac{1}{2}P_{2m}\right)^{\times3},P_{2m-1},\\
 & \left.\left(\frac{1}{2}P_{2m-2}\right)^{\times4},P_{2m-3},\ldots,\left(\frac{1}{2}P_{2}\right)^{\times4},P_{1},1\right).
\end{aligned}
\]
 \end{lem}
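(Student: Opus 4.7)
The plan is to prove both assertions simultaneously, since the block lengths in $w(\alpha_{2m})$ equal the partial quotients of $\alpha_{2m}$, and the block heights are the successive values $x_{i}$ defined by $x_{0}=1$ and $x_{i}=x_{i-2}-l_{i-1}x_{i-1}$ (see the Remark following the definition of the weight expansion). Concretely, I would run the Euclidean algorithm on the pair $(p_{2m},q_{2m})=(2P_{2m+1}^{2},H_{2m}^{2})$ and identify each successive remainder with one of the $u_{j}$ or $\tfrac{1}{2}P_{j}$ appearing in the statement. Once this identification is made, each subsequent quotient and remainder follow from the recursion $u_{j}=2u_{j-1}+u_{j-2}$ inherited from the Pell recurrence, so the whole computation reduces to a bounded list of Pell identities together with inductions on the block index.

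The opening step is the identity
\[
2P_{2m+1}^{2}-5H_{2m}^{2}=3P_{2m}^{2}-2P_{2m}P_{2m-1}-3P_{2m-1}^{2}=u_{2m-1},
\]
where the second equality follows from Corollary~\ref{cor:properties of u_k(j), v_k(j)}\,(v). This gives $l_{0}=5$ and identifies the first two block heights as $\tfrac{1}{2}u_{2m}=H_{2m}^{2}=q_{2m}$ and $u_{2m-1}$. Using $\tfrac{1}{2}u_{2m}-u_{2m-1}=\tfrac{1}{2}u_{2m-2}$, which is immediate from $u_{2m}=2u_{2m-1}+u_{2m-2}$, I obtain $l_{1}=1$, and similarly $u_{2m-1}-2u_{2m-2}=u_{2m-3}$ gives $l_{2}=4$. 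An induction on $k$ then walks down the pattern $\{1,4\}^{\times(m-1)}$ and produces the block heights $u_{2m-1},\tfrac{1}{2}u_{2m-2},u_{2m-3},\ldots,u_{1}$. The central partial quotients $1,1,3,1$ are verified by a direct computation using $u_{1}=H_{2m}+P_{2m}$ together with the Pell recurrence, and yield the four transitional block heights $H_{2m}+\tfrac{1}{2}P_{2m}$, three copies of $\tfrac{1}{2}P_{2m}$, and $P_{2m-1}$. From here the palindromic tail $\{4,1\}^{\times(m-1)}$ is handled by a symmetric induction with $(P_{j})$ in place of $(u_{j})$, using the Pell recursion $P_{j}=2P_{j-1}+P_{j-2}$; the final appended $1$ in $W'(\alpha_{2m})$ reflects termination of the Euclidean algorithm with remainder $1$, i.e.\ $\gcd(p_{2m},q_{2m})=1$.

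The main difficulty is purely bookkeeping: keeping track of which identity governs each step of the alternation, and ensuring that all intermediate remainders are positive integers strictly smaller than the preceding one, so that the quotients really are those claimed. The subtlest point will be the central transition through the partial quotients $1,1,3,1$, which breaks the otherwise periodic $\{1,4\}$ pattern and must be verified by hand before the tail induction can begin; organising the Pell identities near this transition so that the passage from $(u_{j})$-valued to $(P_{j})$-valued block heights is transparent is where most of the technical effort will lie.
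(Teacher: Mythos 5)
Your proposal is correct and follows essentially the same route as the paper: compute the weight expansion of $\alpha_{2m}=2P_{2m+1}^{2}/H_{2m}^{2}$ via the Euclidean algorithm, recognise the successive remainders as $\tfrac12 u_{2j}$ (in blocks of $5$ then $4$) and $u_{2j-1}$ (singletons), using the recurrence $u_{j}=2u_{j-1}+u_{j-2}$ together with the ordering $u_{2k+1}>\tfrac12 u_{2k}>u_{2k-1}$, check the central $1,1,3,1$ transition by hand starting from $u_{1}=H_{2m}+P_{2m}$, and then run the symmetric induction with $P_{j}$ in place of $u_{j}$ for the tail. The only small slip is in the last sentence: the final entry $1$ of $W'(\alpha_{2m})$ is not the terminal remainder of the Euclidean algorithm (that gives $P_{1}=1$, the penultimate entry) but is an extra $1$ appended by the definition of $W'$, exactly as the paper notes.
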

\begin{proof}
Since $(\alpha_{n})$ is an increasing sequence converging to $\sigma^{2}<6$
and $\alpha_{2}=\frac{50}{9}>5$, the first term of $W'\left(\alpha_{2m}\right)$
is 
\[
\left(q_{2m}\right)^{\times5}=\left(H_{2m}^{2}\right)^{\times5}=\left(\frac{1}{2}u_{2m}\right)^{\times5}.
\]
To determine the next terms, we will first prove that for all $k\geqslant1$,
$u_{2k+1}>\frac{1}{2}u_{2k}>u_{2k-1}$. Indeed
\[
\begin{aligned}u_{2k+1} & =\left(2H_{2k+1}-P_{2k+1}\right)H_{2m}+H_{2k+1}P_{2m}\\
 & =\left(4P_{2k}+P_{2k-1}\right)H_{2m}+\left(2H_{2k}+H_{2k-1}\right)P_{2m}\\
 & >\left(\frac{1}{2}P_{2k}+P_{2k-1}\right)H_{2m}+\frac{1}{2}H_{2k}P_{2m}\\
 & =\frac{1}{2}\left(2H_{2k}-P_{2k}\right)H_{2m}+\frac{1}{2}H_{2k}P_{2m}\\
 & =\frac{1}{2}u_{2k}\\
 & =\left(P_{2k-1}+\frac{5}{2}P_{2k-2}+P_{2k-3}\right)H_{2m}+\left(H_{2k-1}+\frac{1}{2}H_{2k-2}\right)P_{2m}\\
 & >\left(P_{2k-1}+2P_{2k-2}\right)H_{2m}+H_{2k-1}P_{2m}\\
 & =\left(2H_{2k-1}-P_{2k-1}\right)H_{2m}+H_{2k-1}P_{2m}\\
 & =u_{2k-1}.
\end{aligned}
\]
The second term of $W'\left(\alpha_{2m}\right)$ is
\[
\begin{aligned}2P_{2m+1}^{2}-5H_{2m}^{2} & =3P_{2m}^{2}-2P_{2m}P_{2m-1}-3P_{2m-1}^{2}\\
 & =\left(2H_{2m-1}-P_{2m-1}\right)H_{2m}+H_{2m-1}P_{2m}\\
 & =u_{2m-1}<\frac{1}{2}u_{2m}.
\end{aligned}
\]
Now for all $k\geqslant1$, we have
\[
\begin{aligned}u_{2k+1}-4\left(\frac{1}{2}u_{2k}\right) & =\left(2H_{2k+1}-P_{2k+1}-4H_{2k}+2P_{2k}\right)H_{2m}\\
 & \qquad+\left(H_{2k+1}-2H_{2k}\right)P_{2m}\\
 & =\left(2H_{2k-1}-P_{2k-1}\right)H_{2m}+H_{2k-1}P_{2m}=u_{2k-1}<\frac{1}{2}u_{2k};\\
\\
\frac{1}{2}u_{2k}-u_{2k-1} & =\left(H_{2k}-\frac{1}{2}P_{2k}-2H_{2k-1}+P_{2k-1}\right)H_{2m}\\
 & \qquad+\left(\frac{1}{2}H_{2k}-H_{2k-1}\right)P_{2m}\\
 & =\frac{1}{2}\left(2H_{2k-2}-P_{2k-2}\right)H_{2m}+\frac{1}{2}H_{2k-2}P_{2m}=\frac{1}{2}u_{2k-2}\\
 & <u_{2k-1}.
\end{aligned}
\]
Thus the first terms of $W'\left(\alpha_{2m}\right)$ are
\[
\left(\frac{1}{2}u_{2m}\right)^{\times5},u_{2m-1},\left(\frac{1}{2}u_{2m-2}\right)^{\times4},\ldots,u_{3},\left(\frac{1}{2}u_{2}\right)^{\times4},u_{1}.
\]
The next terms are $H_{2m}+\frac{1}{2}P_{2m},\left(\frac{1}{2}P_{2m}\right)^{\times3},P_{2m-1}$.
Indeed
\[
\begin{aligned}\frac{1}{2}u_{2}-u_{1} & =H_{2m}+\frac{1}{2}P_{2m}<H_{2m}+P_{2m}=u_{1};\\
\\
u_{1}-\left(H_{2m}+\frac{1}{2}P_{2m}\right) & =\frac{1}{2}P_{2m}<H_{2m}+\frac{1}{2}P_{2m};\\
\\
H_{2m}+\frac{1}{2}P_{2m}-3\left(\frac{1}{2}P_{2m}\right) & =P_{2m-1}<\frac{1}{2}P_{2m}.
\end{aligned}
\]
Notice that for all $k\geqslant1$,
\[
P_{k+1}>\frac{1}{2}P_{k}>P_{k-1},
\]
and thus
\[
\begin{aligned}P_{2k+1}-4\left(\frac{1}{2}P_{2k}\right) & =P_{2k-1}<\frac{1}{2}P_{2k};\\
\\
\frac{1}{2}P_{2k}-P_{2k-1} & =\frac{1}{2}P_{2k-2}<P_{2k-1}.
\end{aligned}
\]
This proves that the last terms of $W'\left(\alpha_{2m}\right)$ are
$\left(\frac{1}{2}P_{2m-2}\right)^{\times4},P_{2m-3},\ldots,\left(\frac{1}{2}P_{2}\right)^{\times4},P_{1},1$,
with the last $1$ added by definition of $W'\left(\alpha_{2m}\right)$.
\end{proof}
Let us introduce now some notations in order to simplify the expressions
of the classes.
\begin{defn}
Set
\[
\begin{aligned}A_{k}^{m}:= & \left(\left(\frac{1}{2}u_{2k}\right)^{\times4},u_{2k-1},\left(P_{2k}H_{2m}\right)^{\times2},\left(\frac{1}{2}u_{2k-2}\right)^{\times4},u_{2k-3},\ldots,\left(\frac{1}{2}u_{2}\right)^{\times4},u_{1},\right.\\
 & \qquad\left.H_{2m}+\frac{1}{2}P_{2m},\left(\frac{1}{2}P_{2m}\right)^{\times3},P_{2m-1}\right),\\
B_{k}^{m}:= & \left(\left(\frac{1}{2}P_{2m-2}\right)^{\times4},P_{2m-3},\ldots,\left(\frac{1}{2}P_{2m-2k+2}\right)^{\times4},P_{2m-2k+1},\right.\\
 & \qquad\left.\left(\frac{1}{2}P_{2m-2k}\right)^{\times8},\left(P_{2m-2k-1}\right)^{\times2},\ldots,\left(\frac{1}{2}P_{2}\right)^{\times8},\left(P_{1}\right)^{\times2},1\right),\\
V_{k}^{m}:= & \left(P_{2k+1}H_{2m}+H_{2k}P_{2m};A_{k}^{m},B_{k}^{m}\right).
\end{aligned}
\]
Thus $A_{k}^{m}$ has the structure $\left[4,1,2,\left\{ 4,1\right\} ^{\times(k-1)},1,3,1\right]$
and $B_{k}^{m}$ has the structure $\left[\left\{ 4,1\right\} ^{\times(k-1)},\left\{ 8,2\right\} ^{\times(m-k)},1\right]$.
We use here the convention that if $k=m$, $B_{m}^{m}$ has the structure
$\left[\left\{ 4,1\right\} ^{\times(m-1)},1\right]$ and that if $k=1$,
$B_{1}^{m}$ has the structure $\left[\left\{ 8,2\right\} ^{\times(m-1)},1\right]$.
\end{defn}
The structure of the reduction process of a class $E\left(\alpha_{2m}\right)$
will be the following. First, we compute in Lemma \ref{lem:1st_red_of_E(alpha_2m)}
that the image of $E\left(\alpha_{2m}\right)$ under $\varphi_{*}$
is $V_{m}^{m}=\left(P_{2m+1}H_{2m}+H_{2m}P_{2m};A_{m}^{m},B_{m}^{m}\right)$.
Then, we reduce $V_{m}^{m}$ in Lemma \ref{lem:2nd_red_of_E(alpha_2m)}
and Lemma \ref{lem:3rd_red_of_E(alpha_2m)} to $V^{m}:=\left(H_{2m};H_{2m}-\frac{1}{2}P_{2m},\left(\frac{1}{2}P_{2m}\right)^{\times3},\left(P_{2m-1}\right)^{\times2},B_{1}^{m}\right)$
in $4(m-2)+8$ Cremona moves. Finally, we show in Lemma \ref{lem:4th_red_of_E(alpha_2m)}
and Lemma \ref{lem:5th_red_of_E(alpha_2m)} that $V^{m}$ reduces
to $(0;-1)$ in $5(m-2)+8$ moves.
\begin{rem}
Since the Cremona transform of a class $(d;m)$ only modifies the
first $3$ entries of the vector $m$, we will write some of the first
entries of the classes and will abbreviate the other terms by $(*)$.
It is important to notice that the terms denoted by $(*)$ will always
be left invariant during the reduction process. Then, each time after
applying the Cremona transform, we will reorder the entries of $m$.
We will not always reorder the entries in decreasing order, but this
will have no consequence on the reduction because we will reorder
them in a way such that the first $3$ entries of the vector $m$
will always be the $3$ biggest entries in decreasing order. We will
write down each step of the reduction, that is the class obtained
after applying the Cremona transform and reordering. But sometimes
when the reordering will not be obvious, we will write the intermediate
step before reordering and denote by $\rightarrow$ the Cremona transform
of a class and by $\rightsquigarrow$ the reordering of a class. We
will freely use the three relations
\[
P_{n}=2P_{n-1}+P_{n-2},\quad H_{n}=P_{n}+P_{n-1},
\]
\[
P_{2m-k}=(-1)^{k+1}\left(P_{k}H_{2m}-H_{k}P_{2m}\right).
\]
\end{rem}
\begin{lem}
\label{lem:1st_red_of_E(alpha_2m)}The image of $E\left(\alpha_{2m}\right)$
by $\varphi_{*}$ is the class 
\[
\varphi_{*}\left(E\left(\alpha_{2m}\right)\right)=\left(P_{2m+1}H_{2m}+H_{2m}P_{2m};A_{m}^{m},B_{m}^{m}\right)=V_{m}^{m}.
\]
\end{lem}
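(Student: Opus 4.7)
The plan is to apply the explicit base-change formula for $\varphi_*$ from Remark~\ref{rmk:change of basis} directly to $E(\alpha_{2m}) = (P_{2m+1}H_{2m},P_{2m+1}H_{2m};W'(\alpha_{2m}))$. With $d=e=P_{2m+1}H_{2m}$ and leading entry $m_1 = \tfrac{1}{2}u_{2m} = H_{2m}^{2}$, the formula produces new first coordinate $d+e-m_1 = 2P_{2m+1}H_{2m}-H_{2m}^{2}$, two new initial entries $d-m_1 = e-m_1 = P_{2m+1}H_{2m}-H_{2m}^{2}$, and inherits the entries $m_2,\ldots,m_M$ from $W'(\alpha_{2m})$. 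Using the Pell identities $H_{2m}=P_{2m}+P_{2m-1}$ and $P_{2m+1}=2P_{2m}+P_{2m-1}$ one gets $P_{2m+1}-H_{2m}=P_{2m}$, hence $d-m_1 = H_{2m}P_{2m}$ and $d+e-m_1 = H_{2m}(P_{2m+1}+P_{2m}) = P_{2m+1}H_{2m}+P_{2m}H_{2m}$, which already matches the first coordinate of $V_m^m$.

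The remaining task is to reorder the new vector in decreasing fashion and recognize it as $(A_m^m,B_m^m)$. Effectively, one copy of $\tfrac{1}{2}u_{2m}$ is removed from the head of $W'(\alpha_{2m})$ and replaced by two copies of $H_{2m}P_{2m}$. The preceding lemma already gives $\tfrac{1}{2}u_{2m}=H_{2m}^2 > u_{2m-1} > \tfrac{1}{2}u_{2m-2}$, so the four surviving $H_{2m}^2$'s stay at the front, followed by $u_{2m-1}$; I then slot the two $H_{2m}P_{2m}$'s in between $u_{2m-1}$ and the block $(\tfrac{1}{2}u_{2m-2})^{\times 4}$ using the intercalation inequalities $u_{2m-1} > H_{2m}P_{2m} > \tfrac{1}{2}u_{2m-2}$. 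The rest of the tail of $W'(\alpha_{2m})$ is inherited unchanged, so the resulting vector coincides precisely with $A_m^m$ (whose leading block is indeed $(H_{2m}^2)^{\times 4}, u_{2m-1}, (H_{2m}P_{2m})^{\times 2}, \ldots$) concatenated with $B_m^m$.

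The main (and essentially only) obstacle is the pair of intercalation inequalities $u_{2m-1}>H_{2m}P_{2m}>\tfrac{1}{2}u_{2m-2}$. Both reduce, after expanding $u_j=(2H_j-P_j)H_{2m}+H_jP_{2m}$, to elementary inequalities among Pell numbers that follow from $H_k-P_k=P_{k-1}$ and the recursion $P_k=2P_{k-1}+P_{k-2}$ (one rewrites the differences as nonnegative combinations of $P_{2m-2}$ and $P_{2m-3}$). Everything else is a routine application of the $\varphi_*$-formula together with the continued-fraction data for $\alpha_{2m}$ established in the preceding lemma.
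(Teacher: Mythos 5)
Your proposal is correct and takes essentially the same route as the paper: apply the explicit $\varphi_*$ formula to the leading block $\tfrac{1}{2}u_{2m}=H_{2m}^2$, simplify the new first coordinate and the two new entries $d-m_1=e-m_1=P_{2m}H_{2m}$ using $P_{2m+1}-H_{2m}=P_{2m}$ and $H_{2m}=P_{2m}+P_{2m-1}$, then reorder. The paper's proof is in fact terser than yours: it ends with ``After reordering this class, we see that we indeed obtain $V_m^m$,'' without spelling out where $P_{2m}H_{2m}$ goes. You are right to identify the intercalation inequalities $u_{2m-1}>P_{2m}H_{2m}>\tfrac{1}{2}u_{2m-2}$ as the only genuinely new content needed to justify the reordering, and they do hold on the relevant range $m\geqslant 2$ (for $m=1$ the first one actually fails, but the paper treats $m\leqslant 2$ by direct computation, so this causes no issue). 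One small remark: for the reduction process the paper's convention only really requires that the three leading entries be the three largest, which is already immediate from $\tfrac{1}{2}u_{2m}=H_{2m}^2>\max\{u_{2m-1},P_{2m}H_{2m}\}$; the finer ordering you check is what makes the literal equality with $V_m^m$ precise.
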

\begin{proof}
The first terms of $E\left(\alpha_{2m}\right)$ are $\left(P_{2m+1}H_{2m},P_{2m+1}H_{2m};\frac{1}{2}u_{2m},(*)\right)$.
Since $\frac{1}{2}u_{2m}=H_{2m}^{2}$, we get
\[
\begin{aligned}\varphi_{*}\left(E\left(\alpha_{2m}\right)\right) & =\left(2P_{2m+1}H_{2m}-H_{2m}^{2};\left(P_{2m+1}H_{2m}-H_{2m}^{2}\right)^{\times2},(*)\right)\\
 & =\left(P_{2m+1}H_{2m}+H_{2m}P_{2m};\left(P_{2m}H_{2m}\right)^{\times2},(*)\right).
\end{aligned}
\]
After reordering this class, we see that we indeed obtain $V_{m}^{m}$
as required.\end{proof}
\begin{lem}
\label{lem:2nd_red_of_E(alpha_2m)}For all $3\leqslant k\leqslant m$,
$V_{k}^{m}$ reduces to $V_{k-1}^{m}$ in 4 Cremona moves.\end{lem}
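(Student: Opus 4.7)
The plan is to carry out the four Cremona moves explicitly on $V_k^m$, tracking the entries at each step and verifying that the reordered vector matches the expected intermediate state, and that after the fourth move the result is $V_{k-1}^m$.

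The first move acts on the three leading copies of $\tfrac{1}{2}u_{2k}$. Writing $d_k := P_{2k+1}H_{2m}+H_{2k}P_{2m}$ for the leading coefficient of $V_k^m$, each of the three leading entries is replaced by $d_k - u_{2k}$. Using $H_{2k+1} = P_{2k+1}+P_{2k}$ and $H_{2k+1} = 2H_{2k}+H_{2k-1}$, this simplifies cleanly to $H_{2m}H_{2k-1}$. The new leading coefficient is $d_k' = 2d_k - \tfrac{3}{2}u_{2k}$. One then checks by short Pell-recursion manipulations that $\tfrac{1}{2}u_{2k} > u_{2k-1} > P_{2k}H_{2m} > H_{2m}H_{2k-1}$ (the middle comparison follows from $P_{2k}-H_{2k-1}=P_{2k-1}>0$, and the last one is $\tfrac{1}{2}u_{2k-2}>0$ up to a positive combination of $H_{2m}$ and $P_{2m}$). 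Hence, after reordering, the three new leading entries are $\tfrac{1}{2}u_{2k}, u_{2k-1}, P_{2k}H_{2m}$.

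The second, third, and fourth moves proceed analogously: at each step one reads off the three leading entries, applies the Cremona transform, simplifies the three resulting values using only $P_n=2P_{n-1}+P_{n-2}$ and $H_n=P_n+P_{n-1}$, and then reorders. After moves 2 and 3 the three leading entries are no longer equal, so the Cremona formula produces three distinct new values; in particular, the simplifications produce many small entries (notably $1$'s) that slot into the tail. After the fourth move one verifies two final equalities: the new leading coefficient equals $d_{k-1} = P_{2k-1}H_{2m}+H_{2k-2}P_{2m}$, and the four leading entries are four copies of $\tfrac{1}{2}u_{2k-2}$. Since the Cremona moves only touch the first three entries of the vector and lengths are preserved (a direct count gives $|V_k^m|=|V_{k-1}^m|$), the remainder of the tail is forced to be $A_{k-1}^m$ followed by $B_{k-1}^m$.

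The principal obstacle is bookkeeping rather than depth: one must track how many new copies of various quantities (in particular, how many new $1$'s) are produced by each move, and verify that they slot correctly into the unchanged portion of the vector, which contains blocks of $\tfrac{1}{2}u_{2k-2}, u_{2k-3}, \ldots$ from $A_k^m$ and then $B_k^m$. All arithmetic stays in $\mathbb{Z}[H_{2m},P_{2m}]$ with Pell-number coefficients, and the only nontrivial identity needed beyond the defining recurrences is $P_{2m-k} = (-1)^{k+1}(P_kH_{2m}-H_kP_{2m})$, already recorded in the excerpt. A small numerical sanity check (e.g. $m=k=3$, where $V_3^3\to V_2^3$ reduces via the leading-coefficient sequence $23661\to 17919 \to 10990 \to 6932 \to 4061$) confirms that the four-step reduction lands exactly on the expected class.
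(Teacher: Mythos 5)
Your approach is the same as the paper's: apply the four Cremona moves explicitly, simplify at each stage using $P_n=2P_{n-1}+P_{n-2}$, $H_n=P_n+P_{n-1}$, and the identity $P_{2m-j}=(-1)^{j+1}(P_jH_{2m}-H_jP_{2m})$, then reorder and compare with $V_{k-1}^m$. The computed first move (three copies of $\tfrac12 u_{2k}\mapsto H_{2k-1}H_{2m}$, new head $2d_k-\tfrac32 u_{2k}$, new leading triple $\tfrac12 u_{2k},u_{2k-1},P_{2k}H_{2m}$ after reordering) and the leading-coefficient chain $23661\to17919\to10990\to6932\to4061$ for $k=m=3$ are correct and match the paper.

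Two small inaccuracies in your description are worth flagging. First, the reduction does \emph{not} produce new $1$'s for general $k$; the genuinely new small entries created by the four moves are four copies of $\tfrac12 P_{2m-(2k-2)}$ and one copy of $P_{2m-(2k-1)}$, which slot into the start of the $B$-part and which equal $1$ only in the edge case $k=m$. Second, the closing remark that length preservation plus ``only the first three entries are touched'' forces the tail to equal $A_{k-1}^m$ followed by $B_{k-1}^m$ is not by itself an argument: across the four moves and the intervening reorderings, more than three positions change, and the untouched block $(*)$ is a \emph{proper} sub-tail. One still has to compute each new entry, verify the order comparisons that justify the reordering, and match the result against the explicit description of $V_{k-1}^m$, exactly as the paper does. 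Your plan covers the right steps, but those verifications are the substance of the proof and cannot be replaced by a counting argument.
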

\begin{proof}
We have\medskip{}

$\begin{aligned}V_{k}^{m}= & \left(P_{2k+1}H_{2m}+H_{2k}P_{2m};\frac{1}{2}\left(\left(2H_{2k}-P_{2k}\right)H_{2m}+H_{2k}P_{2m}\right)^{\times4},\right.\\
 & \left.\left(2H_{2k-1}-P_{2k-1}\right)H_{2m}+H_{2k-1}P_{2m},\left(P_{2k}H_{2m}\right)^{\times2},(*)\right)\quad\rightarrow
\end{aligned}
$

\medskip{}

$\left(\left(-H_{2k}+\frac{7}{2}P_{2k}\right)H_{2m}+\frac{1}{2}H_{2k}P_{2m};\left(H_{2k-1}H_{2m}\right)^{\times3},\right.$

$\hphantom{\qquad}\frac{1}{2}\left(\left(2H_{2k}-P_{2k}\right)H_{2m}+H_{2k}P_{2m}\right),\left(2H_{2k-1}-P_{2k-1}\right)H_{2m}+H_{2k-1}P_{2m},$

$\hphantom{\qquad}\left.\left(P_{2k}H_{2m}\right)^{\times2},(*)\right)\quad\rightsquigarrow$

\medskip{}

$\left(\left(-H_{2k}+\frac{7}{2}P_{2k}\right)H_{2m}+\frac{1}{2}H_{2k}P_{2m};\frac{1}{2}\left(\left(2H_{2k}-P_{2k}\right)H_{2m}+H_{2k}P_{2m}\right),\right.$

$\hphantom{\qquad}\left(2H_{2k-1}-P_{2k-1}\right)H_{2m}+H_{2k-1}P_{2m},\left(P_{2k}H_{2m}\right)^{\times2},$

$\hphantom{\qquad}\left.\left(H_{2k-1}H_{2m}\right)^{\times3},(*)\right)\quad\rightarrow$

\medskip{}

$\left(\frac{3}{2}P_{2k}H_{2m}+\frac{1}{2}H_{2k-2}P_{2m};\left(2H_{2k}-\frac{5}{2}P_{2k}\right)H_{2m}+\frac{1}{2}H_{2k-2}P_{2m},P_{2k-2}H_{2m},\right.$

$\hphantom{\qquad}\left.P_{2k-1}H_{2m}-H_{2k-1}P_{2m},P_{2k}H_{2m},\left(H_{2k-1}H_{2m}\right)^{\times3},(*)\right)\quad\rightsquigarrow$

\medskip{}

$\left(\frac{3}{2}P_{2k}H_{2m}+\frac{1}{2}H_{2k-2}P_{2m};P_{2k}H_{2m},\left(H_{2k-1}H_{2m}\right)^{\times3},\right.$

$\hphantom{\qquad}\left.\left(2H_{2k}-\frac{5}{2}P_{2k}\right)H_{2m}+\frac{1}{2}H_{2k-2}P_{2m},P_{2k-2}H_{2m},P_{2m-(2k-1)},(*)\right)\quad\rightarrow$

\medskip{}

$\left(2P_{2k-1}H_{2m}+H_{2k-2}P_{2m};\left(2H_{2k}-\frac{5}{2}P_{2k}\right)H_{2m}+\frac{1}{2}H_{2k-2}P_{2m},\right.$

$\hphantom{\qquad}\left(-\frac{1}{2}P_{2k-2}H_{2m}+\frac{1}{2}H_{2k-2}P_{2m}\right)^{\times2},H_{2k-1}H_{2m},$

$\hphantom{\qquad}\left.\left(2H_{2k}-\frac{5}{2}P_{2k}\right)H_{2m}+\frac{1}{2}H_{2k-2}P_{2m},P_{2k-2}H_{2m},P_{2m-(2k-1)},(*)\right)\quad\rightsquigarrow$

\medskip{}

$\left(2P_{2k-1}H_{2m}+H_{2k-2}P_{2m};H_{2k-1}H_{2m},\left(\left(2H_{2k}-\frac{5}{2}P_{2k}\right)H_{2m}+\frac{1}{2}H_{2k-2}P_{2m}\right)^{\times2},\right.$

$\hphantom{\qquad}\left.P_{2k-2}H_{2m},\left(\frac{1}{2}P_{2m-(2k-2)}\right)^{\times2},P_{2m-(2k-1)},(*)\right)\quad\rightarrow$

\medskip{}

$\left(P_{2k-1}H_{2m}+H_{2k-2}P_{2m};P_{2k-2}H_{2m},\left(-\frac{1}{2}P_{2k-2}H_{2m}+\frac{1}{2}H_{2k-2}P_{2m}\right)^{\times2},\right.$

$\hphantom{\qquad}\left.P_{2k-2}H_{2m},\left(\frac{1}{2}P_{2m-(2k-2)}\right)^{\times2},P_{2m-(2k-1)},(*)\right)\quad\rightsquigarrow$

\medskip{}

$\left(P_{2k-1}H_{2m}+H_{2k-2}P_{2m};\left(P_{2k-2}H_{2m}\right)^{\times2},\left(\frac{1}{2}P_{2m-(2k-2)}\right)^{\times4},P_{2m-(2k-1)},(*)\right).$

\medskip{}

\noindent Now, after reordering this last class, we obtain $V_{k-1}^{m}$
as required.\end{proof}
\begin{lem}
\label{lem:3rd_red_of_E(alpha_2m)}$V_{2}^{m}$ reduces in 8 Cremona
moves to the class
\[
V^{m}:=\left(H_{2m};H_{2m}-\frac{1}{2}P_{2m},\left(\frac{1}{2}P_{2m}\right)^{\times3},\left(P_{2m-1}\right)^{\times2},B_{1}^{m}\right).
\]
\end{lem}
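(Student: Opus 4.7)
The plan is to split the eight Cremona moves into two batches of four. For the first batch, I would apply verbatim the sequence of four Cremona moves used in the proof of Lemma \ref{lem:2nd_red_of_E(alpha_2m)} with $k=2$. Although that lemma is formally stated for $k\geqslant 3$, each of its four transforms only acts on the three largest entries of the intermediate classes, and every intermediate formula in its proof remains valid (and the implicit orderings still hold) when $k=2$. The one point requiring care is that, after these four moves, the reduction produces a fresh block $(\frac{1}{2}P_{2m-2})^{\times 4}, P_{2m-3}$ at the top of the $(*)$-part of the output; using the Pell identity $-H_{2m}+\frac{3}{2}P_{2m}=\frac{1}{2}P_{2m-2}$, which is a special case of Corollary \ref{cor:properties of u_k(j), v_k(j)}(v), this freshly produced block merges upon reordering with the leading $(\frac{1}{2}P_{2m-2})^{\times 4}, P_{2m-3}$ already present in $B_{2}^{m}$, forming precisely the leading $(\frac{1}{2}P_{2m-2})^{\times 8}, (P_{2m-3})^{\times 2}$ of $B_{1}^{m}$. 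Consequently, after reordering, the first four moves transform $V_{2}^{m}$ into $V_{1}^{m}$.

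For the second batch, I would work entirely with the leading portion of $V_{1}^{m}$, since the block $B_{1}^{m}$ lies well below every entry touched by the next four moves and is unaffected. Written in decreasing order, the head of $V_{1}^{m}$ is $(5H_{2m}+3P_{2m};\,(2H_{2m}+\tfrac{3}{2}P_{2m})^{\times 4},\,(2H_{2m})^{\times 2},\,H_{2m}+P_{2m},\,H_{2m}+\tfrac{1}{2}P_{2m},\,(\tfrac{1}{2}P_{2m})^{\times 3},\,P_{2m-1})$. I then apply four Cremona transforms in succession. The first, acting on the three leading copies of $2H_{2m}+\tfrac{3}{2}P_{2m}$, drops the leading coefficient to $4H_{2m}+\tfrac{3}{2}P_{2m}$ and produces three new entries equal to $H_{2m}$. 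The second, acting on the new top three entries $2H_{2m}+\tfrac{3}{2}P_{2m}$ and $(2H_{2m})^{\times 2}$, drops the leading coefficient to $2H_{2m}+\tfrac{3}{2}P_{2m}$ and creates a $\tfrac{3}{2}P_{2m}$ and two zeros. The third, acting on $H_{2m}+P_{2m}$, $H_{2m}+\tfrac{1}{2}P_{2m}$, and $\tfrac{3}{2}P_{2m}$, gives leading coefficient $2H_{2m}$ together with the entries $H_{2m}-\tfrac{1}{2}P_{2m}$, an extra $P_{2m-1}$, and one more zero. The fourth, applied to the three equal copies of $H_{2m}$ that are then at the top, yields leading coefficient $H_{2m}$ and annihilates those three entries. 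After reordering, the result is exactly $V^{m}$.

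The main obstacle will be the bookkeeping: at each step I must verify the (elementary but numerous) orderings among the active entries $2H_{2m}+\tfrac{3}{2}P_{2m}$, $2H_{2m}$, $H_{2m}+P_{2m}$, $H_{2m}+\tfrac{1}{2}P_{2m}$, $\tfrac{3}{2}P_{2m}$, $H_{2m}$, $H_{2m}-\tfrac{1}{2}P_{2m}$, $\tfrac{1}{2}P_{2m}$, and $P_{2m-1}$, ensuring that the three largest are always identified correctly before invoking the Cremona transform. All of these inequalities reduce to the elementary Pell facts $H_{2m}=P_{2m}+P_{2m-1}$, $P_{2m}>P_{2m-1}$, and $P_{2m}>2P_{2m-1}$. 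Once the orderings are checked, the Cremona arithmetic itself is a routine verification.
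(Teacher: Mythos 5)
Your eight Cremona moves coincide move-for-move with the eight explicit transforms in the paper's proof of this lemma; the only difference is purely organizational, namely your observation that the first four moves agree with those of Lemma \ref{lem:2nd_red_of_E(alpha_2m)} specialized to $k=2$ (note, though, that the intermediate class you call ``$V_1^m$'' is not formally defined in the paper, since the literal $A_1^m$ would have $u_1$ misplaced before $(P_2 H_{2m})^{\times 2}$; you sidestep this by writing the sorted class out explicitly, which is fine). The second batch of four moves, acting successively on $\bigl(2H_{2m}+\tfrac{3}{2}P_{2m}\bigr)^{\times 3}$, then $\bigl(2H_{2m}+\tfrac{3}{2}P_{2m},(2H_{2m})^{\times 2}\bigr)$, then $\bigl(H_{2m}+P_{2m},H_{2m}+\tfrac{1}{2}P_{2m},\tfrac{3}{2}P_{2m}\bigr)$, then $\bigl(H_{2m}\bigr)^{\times 3}$, is exactly the paper's steps 5--8 and the required orderings check out.
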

\begin{proof}
We have

\medskip{}

$\begin{aligned}V_{2}^{m}= & \left(29H_{2m}+17P_{2m};\left(11H_{2m}+\frac{17}{2}P_{2m}\right)^{\times4},9H_{2m}+7P_{2m},\left(12H_{2m}\right)^{\times2},\right.\\
 & \left.\left(2H_{2m}+\frac{3}{2}P_{2m}\right)^{\times4},H_{2m}+P_{2m},H_{2m}+\frac{1}{2}P_{2m},(*)\right);
\end{aligned}
$

\medskip{}

$\left(25H_{2m}+\frac{17}{2}P_{2m};11H_{2m}+\frac{17}{2}P_{2m},9H_{2m}+7P_{2m},\left(12H_{2m}\right)^{\times2},\left(7H_{2m}\right)^{\times3},\right.$

$\hphantom{\qquad}\left.\left(2H_{2m}+\frac{3}{2}P_{2m}\right)^{\times4},H_{2m}+P_{2m},H_{2m}+\frac{1}{2}P_{2m},(*)\right);$

\medskip{}

$\left(18H_{2m}+\frac{3}{2}P_{2m};12H_{2m},\left(7H_{2m}\right)^{\times3},4H_{2m}+\frac{3}{2}P_{2m},\left(2H_{2m}+\frac{3}{2}P_{2m}\right)^{\times4},\right.$

$\hphantom{\qquad}\left.2H_{2m},H_{2m}+P_{2m},H_{2m}+\frac{1}{2}P_{2m},P_{2m-3},(*)\right);$

\medskip{}

$\left(10H_{2m}+3P_{2m};7H_{2m},\left(4H_{2m}+\frac{3}{2}P_{2m}\right)^{\times2},\left(2H_{2m}+\frac{3}{2}P_{2m}\right)^{\times4},\right.$

$\hphantom{\qquad}\left.2H_{2m},H_{2m}+P_{2m},H_{2m}+\frac{1}{2}P_{2m},\left(\frac{1}{2}P_{2m-2}\right)^{\times2},P_{2m-3},(*)\right);$

\medskip{}

$\left(5H_{2m}+3P_{2m};\left(2H_{2m}+\frac{3}{2}P_{2m}\right)^{\times4},\left(2H_{2m}\right)^{\times2},H_{2m}+P_{2m},H_{2m}+\frac{1}{2}P_{2m},\right.$

$\hphantom{\qquad}\left.\left(\frac{1}{2}P_{2m-2}\right)^{\times2},P_{2m-3},(*)\right);$

\medskip{}

$\left(4H_{2m}+\frac{3}{2}P_{2m};2H_{2m}+\frac{3}{2}P_{2m},\left(2H_{2m}\right)^{\times2},H_{2m}+P_{2m},H_{2m}+\frac{1}{2}P_{2m},\right.$

$\hphantom{\qquad}\left.\left(H_{2m}\right)^{\times3},\left(\frac{1}{2}P_{2m-2}\right)^{\times4},P_{2m-3},(*)\right);$

\medskip{}

$\left(2H_{2m}+\frac{3}{2}P_{2m};H_{2m}+P_{2m},H_{2m}+\frac{1}{2}P_{2m},\frac{3}{2}P_{2m},\left(H_{2m}\right)^{\times3},\left(\frac{1}{2}P_{2m-2}\right)^{\times4},\right.$

$\hphantom{\qquad}\left.P_{2m-3},(*)\right);$

\medskip{}

$\left(2H_{2m};\left(H_{2m}\right)^{\times3},H_{2m}-\frac{1}{2}P_{2m},P_{2m-1},\left(\frac{1}{2}P_{2m-2}\right)^{\times4},P_{2m-3},(*)\right);$

\medskip{}

$\left(H_{2m};H_{2m}-\frac{1}{2}P_{2m},P_{2m-1},\left(\frac{1}{2}P_{2m-2}\right)^{\times4},P_{2m-3},(*)\right).$

\medskip{}

\noindent After reordering this last class, we obtain $V^{m}$ as
required.\end{proof}
\begin{lem}
\label{lem:4th_red_of_E(alpha_2m)}For all $m\geqslant3$, $V^{m}$
reduces in 5 Cremona moves to $V^{m-1}$.\end{lem}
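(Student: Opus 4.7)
The plan is to reduce $V^m$ to $V^{m-1}$ by carrying out five explicit Cremona moves, one after the other, verifying each time that the three leading entries are as claimed and that the new class obtained is correctly ordered. The computations will rest on the Pell identities
\[
P_{2m} = 2P_{2m-1} + P_{2m-2}, \qquad H_{2m} = P_{2m} + P_{2m-1},
\]
and their immediate consequences $\tfrac{1}{2}P_{2m} = P_{2m-1} + \tfrac{1}{2}P_{2m-2}$, $H_{2m} - \tfrac{1}{2}P_{2m} = \tfrac{1}{2}P_{2m} + P_{2m-1}$, and $P_{2m-1} - P_{2m-2} = H_{2m-2}$.

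Writing $V^m$ with its leading block explicitly visible as $\bigl(H_{2m};\, H_{2m} - \tfrac12 P_{2m},\,(\tfrac12 P_{2m})^{\times 3},\,(P_{2m-1})^{\times 2},\,(\tfrac12 P_{2m-2})^{\times 8},\,(P_{2m-3})^{\times 2},\,\ast\bigr)$, where $\ast$ denotes the tail $(\tfrac12 P_{2m-4})^{\times 8},(P_{2m-5})^{\times 2},\ldots$ already shared with $V^{m-1}$, I will apply Cremona moves whose top three entries are, in order,
\[
[H_{2m} - \tfrac12 P_{2m},\, \tfrac12 P_{2m},\, \tfrac12 P_{2m}], \quad [\tfrac12 P_{2m},\, P_{2m-1},\, P_{2m-1}], \quad [P_{2m-1},\, \tfrac12 P_{2m-2},\, \tfrac12 P_{2m-2}],
\]
\[
[P_{2m-1} - \tfrac12 P_{2m-2},\, \tfrac12 P_{2m-2},\, \tfrac12 P_{2m-2}], \quad [H_{2m-2},\, \tfrac12 P_{2m-2},\, \tfrac12 P_{2m-2}].
\]
At each step the transform produces exactly one new nonzero entry together with two zeros, and the new degree equals the previous leading entry. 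A direct computation with the identities above shows that the new entries successively produced are $P_{2m-1}$, $\tfrac12 P_{2m-2}$, $P_{2m-1} - \tfrac12 P_{2m-2}$, $H_{2m-2}$, and finally $H_{2m-2} - \tfrac12 P_{2m-2}$; at the end the degree is $H_{2m-2}$, yielding $V^{m-1}$ after reordering.

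The main obstacle is the bookkeeping of the orderings after moves 3, 4, and 5. Specifically, after each Cremona the newly produced entry (one of $\tfrac12 P_{2m-2}$, $P_{2m-1} - \tfrac12 P_{2m-2}$, or $H_{2m-2}$) must be compared both to the surviving $\tfrac12 P_{2m-2}$'s from the block of eight in $B_1^m$ and to the current degree, to confirm that it indeed becomes the new leading entry. These comparisons all reduce to the Pell recurrence, as does the count of remaining $\tfrac12 P_{2m-2}$'s: starting from the eight in $B_1^m$, the count evolves as $8 \to 8 \to 9 \to 7 \to 5 \to 3$, and the terminal value $3$ is precisely the multiplicity of $\tfrac12 P_{2m-2}$ in $V^{m-1}$. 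Everything else is routine arithmetic, following exactly the pattern of the preceding two lemmas in the section, and may be laid out in the same $\rightarrow$/$\rightsquigarrow$ format.
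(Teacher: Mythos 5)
Your proposal carries out the same five Cremona moves as the paper, in the same order and with the same intermediate classes; the bookkeeping of the $\tfrac12 P_{2m-2}$ multiplicities ($8\to8\to9\to7\to5\to3$) and the identification of the final class with $V^{m-1}$ are all correct. This is essentially identical to the paper's proof.
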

\begin{proof}
We have

\medskip{}

$V^{m}=\left(H_{2m};H_{2m}-\frac{1}{2}P_{2m},\left(\frac{1}{2}P_{2m}\right)^{\times3},\left(P_{2m-1}\right)^{\times2},\left(\frac{1}{2}P_{2m-2}\right)^{\times8},(*)\right);$

\medskip{}

$\left(H_{2m}-\frac{1}{2}P_{2m};\frac{1}{2}P_{2m},\left(P_{2m-1}\right)^{\times3},\left(\frac{1}{2}P_{2m-2}\right)^{\times8},(*)\right);$

\medskip{}

$\left(\frac{1}{2}P_{2m};P_{2m-1},\left(\frac{1}{2}P_{2m-2}\right)^{\times9},(*)\right);$\medskip{}

$\left(P_{2m-1};P_{2m-1}-\frac{1}{2}P_{2m-2},\left(\frac{1}{2}P_{2m-2}\right)^{\times7},(*)\right);$

\medskip{}

$\left(P_{2m-1}-\frac{1}{2}P_{2m-2};H_{2m-2},\left(\frac{1}{2}P_{2m-2}\right)^{\times5},(*)\right);$

\medskip{}

$\left(H_{2m-2};H_{2m-2}-\frac{1}{2}P_{2m-2},\left(\frac{1}{2}P_{2m-2}\right)^{\times3},(*)\right).$

\medskip{}

\noindent After reordering this class, we obtain $V^{m-1}$ as required.\end{proof}
\begin{lem}
\label{lem:5th_red_of_E(alpha_2m)}$V^{2}$ reduces in 8 Cremona moves
to $(0;-1)$.\end{lem}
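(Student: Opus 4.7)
Proof plan for Lemma \ref{lem:5th_red_of_E(alpha_2m)}. This is a finite base case at the bottom of an induction: having reduced $V^m$ to $V^{m-1}$ in 5 moves repeatedly (Lemma \ref{lem:4th_red_of_E(alpha_2m)}), we are left with verifying by direct computation that the smallest case $V^2$ reaches $(0;-1)$ in exactly 8 Cremona steps. So the plan is simply to write out $V^2$ explicitly and apply the Cremona transform eight times, reordering after each step.

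First I would evaluate the parameters $H_{4}=17$, $P_{4}=12$, $P_{3}=5$, $P_{2}=2$, $P_{1}=1$. Since $B_{1}^{2}$ has structure $[\{8,2\}^{\times 1},1]$, all of its entries equal $1$, so
\[
V^{2}=\bigl(17;\,11,6,6,6,5,5,1^{\times 11}\bigr).
\]
As a sanity check, $\sum m_{i}=50=3\cdot 17-1$ and $\sum m_{i}^{2}=290=17^{2}+1$, so the class satisfies the Diophantine conditions of Proposition \ref{prop:characterization of E_k in MS}\,(i), and hence belongs to $\mathcal{E}$ provided Cremona reduction succeeds.

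Then I would perform the eight Cremona moves one after another, each time (i) computing $2d-m_{1}-m_{2}-m_{3}$ for the new degree, (ii) replacing the first three entries by $d-m_{2}-m_{3},\,d-m_{1}-m_{3},\,d-m_{1}-m_{2}$, and (iii) reordering into nonincreasing order. A direct computation gives the chain
\[
(17;11,6,6,6,5,5,1^{\times 11})\;\rightsquigarrow\;(11;6,5,5,5,1^{\times 11},0^{\times 2})\;\rightsquigarrow\;(6;5,1^{\times 12},0^{\times 4})
\]
\[
\rightsquigarrow\;(5;4,1^{\times 10},0^{\times 6})\;\rightsquigarrow\;(4;3,1^{\times 8},0^{\times 8})\;\rightsquigarrow\;(3;2,1^{\times 6},0^{\times 10})
\]
\[
\rightsquigarrow\;(2;1^{\times 5},0^{\times 12})\;\rightsquigarrow\;(1;1,1,0^{\times 15})\;\rightsquigarrow\;(0;-1,0^{\times 16}),
\]
where the eighth Cremona transform on $(1;1,1,0,\ldots)$ produces $(0;0,0,-1,0,\ldots)$ which, up to permutation, is $(0;-1,0,\ldots,0)$. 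This is exactly 8 moves, establishing the lemma.

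The computation is entirely routine arithmetic; there is no conceptual obstacle. The only thing to be careful about is the bookkeeping of the reordering step, in particular tracking how each new zero, produced whenever $m_{2}=m_{3}$ and $m_{1}+m_{i}=d$, accumulates at the tail so that the leading three entries at the next step are indeed the three largest. Once this is done the final class $(0;-1,0,\ldots,0)$ appears, completing both Lemma \ref{lem:5th_red_of_E(alpha_2m)} and, together with Lemmas \ref{lem:1st_red_of_E(alpha_2m)}--\ref{lem:4th_red_of_E(alpha_2m)}, the proof that every $E(\alpha_{2m})$ belongs to $\mathcal{E}$.
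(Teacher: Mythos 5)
Correct, and essentially identical to the paper's own verification: you write out $V^{2}=(17;11,6^{\times3},5^{\times2},1^{\times11})$ explicitly (the paper lists the same eight intermediate classes, merely suppressing the trailing zeros that you carry along). The added Diophantine sanity check $\sum m_{i}=50=3\cdot17-1$, $\sum m_{i}^{2}=290=17^{2}+1$ is harmless and not in the paper, but the reduction chain itself coincides move for move.
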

\begin{proof}
We have

\medskip{}

$V^{2}=\left(17;11,6^{\times3},5^{\times2},1^{\times11}\right);$

\medskip{}

$\left(11;6,5^{\times3},1^{\times11}\right);$\medskip{}

$\left(6;5,1^{\times12}\right);$\medskip{}

$\left(5;4,1^{\times10}\right);$\medskip{}

$\left(4;3,1^{\times8}\right);$\medskip{}

$\left(3;2,1^{\times6}\right);$\medskip{}

$\left(2;1^{\times5}\right);$\medskip{}

$\left(1;1^{\times2}\right);$\medskip{}

$\left(0;-1\right).$
\end{proof}

\subsubsection{The classes $E\left(\alpha_{2m-1}\right)$ reduce to $(0;-1)$}

One readily checks that the classes $E\left(\alpha_{2m-1}\right)$
reduce to $(0;-1)$ for $m=1,2$. In the following, we reduce the
classes $E\left(\alpha_{2m-1}\right)$ for $m\geqslant3$.
\begin{lem}
The continued fraction expansion of $\alpha_{2m-1}$ is
\[
\left[5;\left\{ 1,4\right\} ^{\times(m-2)},1,3,1,1,\left\{ 4,1\right\} ^{\times(m-1)}\right].
\]
Moreover, if $v_{j}:=\left(2H_{j}+P_{j}\right)H_{2m}-H_{j}P_{2m}$,
then
\[
\begin{aligned}E\left(\alpha_{2m-1}\right)= & \left(P_{2m-1}H_{2m},P_{2m-1}H_{2m};\right.\\
 & \left(\frac{1}{2}v_{2m-2}\right)^{\times5},v_{2m-3},\left(\frac{1}{2}v_{2m-4}\right)^{\times4},\ldots,v_{3},\left(\frac{1}{2}v_{2}\right)^{\times4},\\
 & \quad v_{1},\left(H_{2m}-\frac{1}{2}P_{2m}\right)^{\times3},\frac{1}{2}P_{2m},P_{2m-1},\\
 & \left.\left(\frac{1}{2}P_{2m-2}\right)^{\times4},P_{2m-3},\ldots,\left(\frac{1}{2}P_{2}\right)^{\times4},P_{1},1\right).
\end{aligned}
\]
 \end{lem}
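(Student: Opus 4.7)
The plan is to compute the weight expansion $w(\alpha_{2m-1})$ directly from the definition, then multiply through by $q_{2m-1}=2P_{2m-1}^{2}$ to obtain the claimed form of $W'(\alpha_{2m-1})$, which is the tail of $E(\alpha_{2m-1})$. The argument is entirely parallel to the proof just given for $\alpha_{2m}$, with the sequence $u_{j}$ replaced by $v_{j}$, so I would organize it in the same three stages: initial $v$-cascade, irregular middle segment, and final Pell tail. The continued fraction expansion is then read off from the resulting block lengths.

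First I would record two preliminary facts: the identity $2P_{2m-1}^{2}=\tfrac12 v_{2m-2}$, and the bounds $5\leqslant\alpha_{2m-1}<6$ for $m\geqslant 2$ (with $m=1$ checked by hand since the stated structure degenerates there), both immediate consequences of the Pell relation $P_{2m-k}=(-1)^{k+1}(P_{k}H_{2m}-H_{k}P_{2m})$ of Corollary~\ref{cor:properties of u_k(j), v_k(j)}\,(v). These give the first block $(\tfrac12 v_{2m-2})^{\times 5}$. Next I would establish the double inequalities $v_{2k+1}>\tfrac12 v_{2k}>v_{2k-1}$ for $1\leqslant k\leqslant m-2$ by the same term-by-term expansion using $H_{j}=P_{j}+P_{j-1}$ as in the even case, together with the subtraction identities $v_{2k+1}-4\cdot\tfrac12 v_{2k}=v_{2k-1}$ and $\tfrac12 v_{2k}-v_{2k-1}=\tfrac12 v_{2k-2}$; these show that blocks of lengths $4,1,4,1,\ldots$ alternate, producing the cascade $(\tfrac12 v_{2m-2})^{\times 5}, v_{2m-3},(\tfrac12 v_{2m-4})^{\times 4},\ldots,v_{3},(\tfrac12 v_{2})^{\times 4}, v_{1}$. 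The extra entry at the very start (length $5$ rather than $4$) reflects $\lfloor\alpha_{2m-1}\rfloor=5$, and the residual after subtracting $5\cdot\tfrac12 v_{2m-2}$ from $H_{2m}^{2}$ has to equal $v_{2m-3}$, which is again an instance of the Pell identity above.

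The middle segment $v_{1},(H_{2m}-\tfrac12 P_{2m})^{\times 3},\tfrac12 P_{2m},P_{2m-1}$ is where the structure changes character, and this is the step I expect to be the main obstacle: the regular $[1,4]$ pattern of the $v$-cascade has to transition into the $[1,3,1,1]$ block that is responsible for the anomalous middle of the continued fraction expansion. I would handle it by direct calculation, using $v_{1}=3H_{2m}-P_{2m}$, the identity $v_{1}-(H_{2m}-\tfrac12 P_{2m})=2H_{2m}-\tfrac12 P_{2m}$, and the comparisons $H_{2m}-\tfrac12 P_{2m}>\tfrac12 P_{2m}>P_{2m-1}$, verifying that exactly three copies of $H_{2m}-\tfrac12 P_{2m}$ occur before a single $\tfrac12 P_{2m}$ and a single $P_{2m-1}$. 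Once past this segment, the Pell tail $(\tfrac12 P_{2m-2})^{\times 4},P_{2m-3},\ldots,(\tfrac12 P_{2})^{\times 4},P_{1},1$ follows exactly as in the even case from the cascade $P_{2k+1}<\tfrac12 P_{2k}<P_{2k-1}$ and the subtractions $P_{2k+1}-4\cdot\tfrac12 P_{2k}=P_{2k-1}$, $\tfrac12 P_{2k}-P_{2k-1}=\tfrac12 P_{2k-2}$, with the final $1$ appended by definition of $W'$. Reading off the block lengths then produces the continued fraction $[5;\{1,4\}^{\times(m-2)},1,3,1,1,\{4,1\}^{\times(m-1)}]$, completing the lemma.
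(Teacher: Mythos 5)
Your proposal follows essentially the same route as the paper's proof: record $\tfrac12 v_{2m-2}=2P_{2m-1}^{2}$ and $5<\alpha_{2m-1}<6$ for $m\geqslant 2$, establish the $v$-cascade via $v_{2k+1}>\tfrac12 v_{2k}>v_{2k-1}$ together with the two subtraction identities, work the irregular middle segment by direct computation, and then append the Pell tail as in the even case. One small correction: the Pell-tail chain you quote as $P_{2k+1}<\tfrac12 P_{2k}<P_{2k-1}$ should have the inequalities reversed, $P_{2k+1}>\tfrac12 P_{2k}>P_{2k-1}$ (this is forced by the subtraction identities you state immediately afterward, so it is a typo rather than a conceptual gap), and the bound $5\leqslant\alpha_{2m-1}$ should be strict.
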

\begin{proof}
The first terms of $W'\left(\alpha_{2m-1}\right)$ are $\left(\frac{1}{2}v_{2m-2}\right)^{\times5}$
since $\frac{1}{2}v_{2m-2}=2P_{2m-1}^{2}$ and $5<\alpha_{2m-1}<6$
for $m\geqslant2$. Before determining the next terms, we prove that
for all $k\geqslant1$, $v_{2k+1}>\frac{1}{2}v_{2k}>v_{2k-1}$. Indeed
\[
\begin{aligned}v_{2k+1} & =\left(H_{2k+1}+P_{2k+1}\right)P_{2m}+\left(2H_{2k+1}+P_{2k+1}\right)P_{2m-1}\\
 & =\left(5P_{2k}+2P_{2k-1}\right)P_{2m}+\left(8P_{2k}+3P_{2k-1}\right)P_{2m-1}\\
 & >\left(P_{2k}+\frac{1}{2}P_{2k-1}\right)P_{2m}+\left(\frac{3}{2}P_{2k}+P_{2k-1}\right)P_{2m-1}\\
 & =\frac{1}{2}\left(H_{2k}+P_{2k}\right)P_{2m}+\frac{1}{2}\left(2H_{2k}+P_{2k}\right)P_{2m-1}\\
 & =\frac{1}{2}v_{2k}\\
 & =\left(\frac{5}{2}P_{2k-1}+P_{2k-2}\right)P_{2m}+\left(3P_{2k-1}+\frac{7}{2}P_{2k-2}+P_{2k-3}\right)P_{2m-1}\\
 & >\left(2P_{2k-1}+P_{2k-2}\right)P_{2m}+\left(3P_{2k-1}+2P_{2k-2}\right)P_{2m-1}\\
 & =\left(H_{2k-1}+P_{2k-1}\right)P_{2m}+\left(2H_{2k-1}+P_{2k-1}\right)P_{2m-1}\\
 & =v_{2k-1}.
\end{aligned}
\]
So the next term of $W'\left(\alpha_{2m-1}\right)$ is $H_{2m}^{2}-5\left(2P_{2m-1}^{2}\right)=v_{2m-3}<\frac{1}{2}v_{2m-2}$.
Moreover, for all $k\geqslant1$,
\[
\begin{aligned}v_{2k+1}-4\left(\frac{1}{2}v_{2k}\right) & =\left(2H_{2k+1}+P_{2k+1}-4H_{2k}-2P_{2k}\right)H_{2m}\\
 & -\left(H_{2k+1}-2H_{2k}\right)P_{2m}\\
 & =\left(2H_{2k-1}+P_{2k-1}\right)H_{2m}-H_{2k-1}P_{2m}=v_{2k-1}<\frac{1}{2}v_{2k};\\
\\
\frac{1}{2}v_{2k}-v_{2k-1} & =\left(H_{2k}+\frac{1}{2}P_{2k}-2H_{2k-1}-P_{2k-1}\right)H_{2m}\\
 & -\left(\frac{1}{2}H_{2k}-H_{2k-1}\right)P_{2m}\\
 & =\frac{1}{2}\left(2H_{2k-2}+P_{2k-2}\right)H_{2m}-\frac{1}{2}H_{2k-2}P_{2m}=\frac{1}{2}v_{2k-2}<v_{2k-1}.
\end{aligned}
\]
This proves that the first terms of $W'\left(\alpha_{2m-1}\right)$
are 
\[
\left(\frac{1}{2}v_{2m-2}\right)^{\times5},v_{2m-3},\left(\frac{1}{2}v_{2m-4}\right)^{\times4},\ldots,v_{3},\left(\frac{1}{2}v_{2}\right)^{\times4},v_{1}.
\]
The next three terms are $\left(H_{2m}-\frac{1}{2}P_{2m}\right)^{\times3},\frac{1}{2}P_{2m},P_{2m-1}$
since
\[
\begin{aligned}\frac{1}{2}v_{2}-v_{1} & =H_{2m}-\frac{1}{2}P_{2m}<3H_{2m}-P_{2m}=v_{1};\\
\\
v_{1}-3\left(H_{2m}-\frac{1}{2}P_{2m}\right) & =\frac{1}{2}P_{2m}<H_{2m}-\frac{1}{2}P_{2m};\\
\\
H_{2m}-\frac{1}{2}P_{2m}-\frac{1}{2}P_{2m} & =P_{2m-1}<\frac{1}{2}P_{2m}.
\end{aligned}
\]
Since the last terms are the same as those of $W'\left(\alpha_{2m}\right)$,
the lemma is proved.
\end{proof}
Let us introduce again some notations.
\begin{defn}
Set
\[
\begin{aligned}\hat{A}_{k}^{m}:= & \left(\left(\frac{1}{2}v_{2k-2}\right)^{\times4},v_{2k-3},\left(H_{2k-1}H_{2m}-H_{2k-1}P_{2m}\right)^{\times2},\left(\frac{1}{2}v_{2k-4}\right)^{\times4},v_{2k-5},\right.\\
 & \left.\ldots,\left(\frac{1}{2}v_{2}\right)^{\times4},v_{1},\left(H_{2m}-\frac{1}{2}P_{2m}\right)^{\times3},\frac{1}{2}P_{2m},P_{2m-1}\right),\\
\hat{B}_{k}^{m}:= & \left(\left(\frac{1}{2}P_{2m-2}\right)^{\times4},P_{2m-3},\ldots,\left(\frac{1}{2}P_{2m-2k+2}\right)^{\times4},P_{2m-2k+1},\right.\\
 & \left.\left(\frac{1}{2}P_{2m-2k}\right)^{\times8},\left(P_{2m-2k-1}\right)^{\times2},\ldots,\left(\frac{1}{2}P_{2}\right)^{\times8},\left(P_{1}\right)^{\times2},1\right),\\
\hat{V}_{k}^{m}:= & \left(P_{2k}H_{2m}+H_{2k-1}P_{2m};\hat{A}_{k}^{m},\hat{B}_{k}^{m}\right).
\end{aligned}
\]
Note that $\hat{B}_{k}^{m}$ is actually equal to the vector $B_{k}^{m}$
that we used in the reduction of the classes~$E\left(\alpha_{2m}\right)$.
Here, $\hat{A}_{k}^{m}$ has the structure $\left[4,1,2,\left\{ 4,1\right\} ^{\times(k-2)},3,1,1\right]$
and $\hat{B}_{k}^{m}$ has the structure $\left[\left\{ 4,1\right\} ^{\times(k-1)},\left\{ 8,2\right\} ^{\times(m-k)},1\right]$.
We use again the convention that if $k=m$, $\hat{B}_{m}^{m}$ has
the structure $\left[\left\{ 4,1\right\} ^{\times(m-1)},1\right]$
and that if $k=1$, $\hat{B}_{1}^{m}$ has the structure $\left[\left\{ 8,2\right\} ^{\times(m-1)},1\right]$.\end{defn}
\begin{lem}
The image of $E\left(\alpha_{2m-1}\right)$ by $\varphi_{*}$ is the
class 
\[
\varphi_{*}\left(E\left(\alpha_{2m-1}\right)\right)=\left(P_{2m}H_{2m}-H_{2m-1}P_{2m};\hat{A}_{m}^{m},\hat{B}_{m}^{m}\right)=\hat{V}_{m}^{m}.
\]
\end{lem}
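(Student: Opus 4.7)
The plan is to mimic the analogous computation for the even case (the preceding lemma, which shows $\varphi_*(E(\alpha_{2m})) = V_m^m$) by direct substitution into the explicit formula
\[
\varphi_*(d,e;m_1,m_2,\ldots,m_M) = (d+e-m_1;\; d-m_1,\; e-m_1,\; m_2,\ldots,m_M),
\]
applied to $E(\alpha_{2m-1}) = (P_{2m-1}H_{2m}, P_{2m-1}H_{2m}; W'(\alpha_{2m-1}))$. By the lemma on the continued fraction of $\alpha_{2m-1}$ just above, the leading entry $m_1$ of $W'(\alpha_{2m-1})$ is $\tfrac{1}{2} v_{2m-2}$; unpacking $v_j := (2H_j+P_j)H_{2m} - H_j P_{2m}$ together with $H_j = P_j + P_{j-1}$ and the Pell recurrences gives the convenient identity $\tfrac{1}{2} v_{2m-2} = 2P_{2m-1}^2$.

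Substituting $m_1 = 2P_{2m-1}^2$ into the formula, the new leading coefficient becomes
\[
2P_{2m-1}H_{2m} - 2P_{2m-1}^2 = 2P_{2m-1}(H_{2m}-P_{2m-1}) = 2P_{2m-1}P_{2m},
\]
which matches the asserted value $P_{2m}H_{2m} - H_{2m-1}P_{2m} = P_{2m}(H_{2m}-H_{2m-1})$ via $H_{2m}-H_{2m-1} = P_{2m}-P_{2m-2} = 2P_{2m-1}$. Each of the two new equal entries $d-m_1 = e-m_1$ equals
\[
P_{2m-1}H_{2m} - 2P_{2m-1}^2 = P_{2m-1}(P_{2m}-P_{2m-1}) = P_{2m-1}H_{2m-1},
\]
which is precisely the value $H_{2m-1}H_{2m}-H_{2m-1}P_{2m} = H_{2m-1}P_{2m-1}$ of the repeated block term at $k=m$ that appears in $\hat{A}_m^m$.

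After substitution, the resulting tuple consists of the two entries $P_{2m-1}H_{2m-1}$ together with the remaining four copies of $\tfrac{1}{2} v_{2m-2}$ and the unchanged tail $v_{2m-3}, (\tfrac{1}{2} v_{2m-4})^{\times 4}, v_{2m-5}, \ldots, P_1, 1$ of $W'(\alpha_{2m-1})$. Reordering in nonincreasing order, the new pair should land between $v_{2m-3}$ and $(\tfrac{1}{2} v_{2m-4})^{\times 4}$, producing the concatenation $(\hat{A}_m^m, \hat{B}_m^m)$; note that $\hat{B}_m^m$ coincides with $B_m^m$ and is inherited unchanged from the tail of the weight expansion. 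The main (and only) technical point is the chain of strict inequalities
\[
\tfrac{1}{2} v_{2m-2} > v_{2m-3} > P_{2m-1}H_{2m-1} > \tfrac{1}{2} v_{2m-4}
\]
that justifies this insertion: the outer two are already established in the continued-fraction computation above, and the middle one reduces, after substituting the definitions of $v_{2m-3}$ and $H_{2m-1}$, to a short Pell identity.
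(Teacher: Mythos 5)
Your proof is correct and takes essentially the same route as the paper's: apply the explicit formula $\varphi_*(d,e;m_1,\ldots,m_M)=(d+e-m_1;d-m_1,e-m_1,m_2,\ldots,m_M)$ using $m_1=\tfrac12 v_{2m-2}=2P_{2m-1}^2$ (the paper's displayed formula for $\varphi_*$ has a typo, $f-m_1$, which should read $e-m_1$ as you use), simplify the three new entries via the Pell relations, and observe the rest of $W'(\alpha_{2m-1})$ is carried along unchanged. Your intermediate checks, e.g.\ $P_{2m}H_{2m}-H_{2m-1}P_{2m}=2P_{2m-1}P_{2m}$ and $H_{2m-1}H_{2m}-H_{2m-1}P_{2m}=P_{2m-1}H_{2m-1}$, match the paper's two displayed equalities. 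One remark: the chain $\tfrac12 v_{2m-2}>v_{2m-3}>P_{2m-1}H_{2m-1}>\tfrac12 v_{2m-4}$ that you flag as the ``main technical point'' is in fact not required. As the paper explains in the remark preceding Lemma~\ref{lem:1st_red_of_E(alpha_2m)}, a Cremona move permits an arbitrary permutation of the $m$-vector, and the authors only arrange for the first three entries to be the three largest; the notation $\hat V_m^m$ is therefore bookkeeping for the multiset, not a claim that $\hat A_m^m$ is fully nonincreasing. So leaving that inequality as a ``short Pell identity'' is harmless, but proving it would not have closed any gap. (Also worth noting: the definition of $\hat V_k^m$ in the paper carries a sign typo, $+H_{2k-1}P_{2m}$ where the lemma and all subsequent reductions use $-H_{2k-1}P_{2m}$; your computed leading coefficient $2P_{2m-1}P_{2m}=P_{2m}H_{2m}-H_{2m-1}P_{2m}$ confirms the minus sign is the intended one.)
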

\begin{proof}
The first terms of $E\left(\alpha_{2m-1}\right)$ are $\left(P_{2m-1}H_{2m},P_{2m-1}H_{2m};\frac{1}{2}v_{2m-2},(*)\right)$.
Since $\frac{1}{2}v_{2m-2}=2P_{2m-1}^{2}$, we get
\[
\begin{aligned}\varphi_{*}\left(E\left(\alpha_{2m-1}\right)\right) & =\left(2P_{2m-1}H_{2m}-2P_{2m-1}^{2};\left(P_{2m-1}H_{2m}-2P_{2m-1}^{2}\right)^{\times2},(*)\right)\\
 & =\left(P_{2m}H_{2m}-H_{2m-1}P_{2m};\left(H_{2m-1}H_{2m}-H_{2m-1}P_{2m}\right)^{\times2},(*)\right).
\end{aligned}
\]
After reordering, this last class is $\left(P_{2m}H_{2m}-H_{2m-1}P_{2m};\hat{A}_{m}^{m},\hat{B}_{m}^{m}\right)=\hat{V}_{m}^{m}$
as required.\end{proof}
\begin{lem}
For all $3\leqslant k\leqslant m$, $\hat{V}_{k}^{m}$ reduces to
$\hat{V}_{k-1}^{m}$ in 4 Cremona moves.\end{lem}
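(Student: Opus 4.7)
The plan is to mimic the proof of Lemma \ref{lem:2nd_red_of_E(alpha_2m)} verbatim, just with $v_j$'s in place of $u_j$'s and the entries $P_{2k}H_{2m}$ replaced by $H_{2k-1}H_{2m}-H_{2k-1}P_{2m}$. Concretely, I will apply four successive Cremona transforms to $\hat{V}_k^m$, reordering between each so that the three largest entries land in the first three slots. Throughout the process only the leading block $(\tfrac12 v_{2k-2})^{\times 4},\, v_{2k-3},\, (H_{2k-1}H_{2m}-H_{2k-1}P_{2m})^{\times 2}$ of $\hat{A}_k^m$ is disturbed; all later entries of $\hat{A}_k^m$ and the whole of $\hat{B}_k^m$ (the block denoted $(*)$) stay inert, exactly as in the even case.

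For the first move, the three largest entries are $\tfrac12 v_{2k-2}$, so the Cremona transform subtracts $\tfrac{3}{2}v_{2k-2}$ from the degree $P_{2k}H_{2m}+H_{2k-1}P_{2m}$ and inserts three equal entries of size $(P_{2k}H_{2m}+H_{2k-1}P_{2m})-v_{2k-2}$. Using $v_{2k-2}=(2H_{2k-2}+P_{2k-2})H_{2m}-H_{2k-2}P_{2m}$ together with $P_{2k}=2P_{2k-1}+P_{2k-2}$ and $H_n=P_n+P_{n-1}$, these three new entries rewrite as $H_{2k-1}H_{2m}+H_{2k-2}P_{2m}-\tfrac12 v_{2k-2}$ (this is the direct analog of the quantity $H_{2k-1}H_{2m}$ appearing after the first move in the even case). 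Reordering then places the remaining $\tfrac12 v_{2k-2}$ and the two copies of $H_{2k-1}H_{2m}-H_{2k-1}P_{2m}$ at the top, setting up the second move.

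Moves two, three and four proceed in the same way as in Lemma \ref{lem:2nd_red_of_E(alpha_2m)}: each Cremona transform pairs off the current three leading entries, the coefficients collapse via the recursion $v_{2j}=2v_{2j-1}+v_{2j-2}$ (which follows from the Pell recursion for $H$ and $P$) and the identity $H_{2k-1}H_{2m}-H_{2k-1}P_{2m}=H_{2k-1}P_{2m-1}$, and after reordering the leading block shortens by one ``period'' of $\hat{A}_k^m$. After the fourth move the degree is $P_{2k-2}H_{2m}+H_{2k-3}P_{2m}$ and the leading block has become $(\tfrac12 v_{2k-4})^{\times 4},\, v_{2k-5},\, (H_{2k-3}H_{2m}-H_{2k-3}P_{2m})^{\times 2}$, so the class is exactly $\hat{V}_{k-1}^m$.

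The main obstacle is the arithmetic bookkeeping: one must verify at each intermediate step that the newly introduced entries are indeed larger than every entry in $(*)$, so that the subsequent Cremona moves still act on the right triple. This is precisely where the identity $P_{2m-k}=(-1)^{k+1}(P_kH_{2m}-H_kP_{2m})$ from Corollary \ref{cor:properties of u_k(j), v_k(j)}(v) is used, together with the inequalities $v_{2j+1}>\tfrac12 v_{2j}>v_{2j-1}$ and $P_{j+1}>\tfrac12 P_j>P_{j-1}$ established in the lemma describing the weight expansion of $\alpha_{2m-1}$. Once these inequalities are in hand, the four moves are routine and parallel those of the even-index reduction line for line.
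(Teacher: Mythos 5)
Your overall plan — reproduce the paper's explicit four-step Cremona computation, exploiting the same structural parallels with the even-index case — is indeed what the paper does. But the one step you check explicitly is wrong, and the remaining three moves are not actually verified, so as written this is not a proof.

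Concretely: after the first Cremona transform the three new entries are each $d - v_{2k-2}$, where $d = P_{2k}H_{2m}-H_{2k-1}P_{2m}$ (the definition of $\hat{V}_k^m$ in the paper has a sign typo; the proof and the preceding lemma both use the minus). Using $P_{2k}=2P_{2k-1}+P_{2k-2}$ and $H_{2k-1}-H_{2k-2}=2P_{2k-2}$ this simplifies to $2P_{2k-2}H_{2m}-2P_{2k-2}P_{2m}$, not to $H_{2k-1}H_{2m}+H_{2k-2}P_{2m}-\tfrac12 v_{2k-2}$ as you claim. Moreover, after the first reorder the three leading entries are $\tfrac12 v_{2k-2}$, $v_{2k-3}$, and $H_{2k-1}H_{2m}-H_{2k-1}P_{2m}$, in that order, whereas you put $v_{2k-3}$ outside the top three. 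This changes which triple the second move acts on.

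The larger gap is the claim that moves two through four ``proceed in the same way as in Lemma \ref{lem:2nd_red_of_E(alpha_2m)}.'' The $u_j$ and $v_j$ have opposite internal signs ($u_j=(2H_j-P_j)H_{2m}+H_jP_{2m}$ versus $v_j=(2H_j+P_j)H_{2m}-H_jP_{2m}$), and the indices in $\hat A_k^m$ are shifted by two relative to $A_k^m$, so the cancellations do not transfer by mechanical substitution; the degree sequences and intermediate multiplicities differ at every step. The paper proves this lemma by writing out all four transforms and reorders explicitly and checking the arithmetic; you need to do the same. The identities you cite ($v_{2j}=2v_{2j-1}+v_{2j-2}$, the inequalities $v_{2j+1}>\tfrac12 v_{2j}>v_{2j-1}$, and Corollary \ref{cor:properties of u_k(j), v_k(j)}\,(v)) are the right toolkit, but they must actually be applied step by step.
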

\begin{proof}
We have

\medskip{}

$\begin{aligned}\hat{V}_{k}^{m}= & \left(P_{2k}H_{2m}-H_{2k-1}P_{2m};\frac{1}{2}\left(\left(2H_{2k-2}+P_{2k-2}\right)H_{2m}-H_{2k-2}P_{2m}\right)^{\times4},\right.\\
 & \left.\left(2H_{2k-3}+P_{2k-3}\right)H_{2m}-H_{2k-3}P_{2m},\left(H_{2k-1}H_{2m}-H_{2k-1}P_{2m}\right)^{\times2},(*)\right)\quad\rightarrow
\end{aligned}
$

\medskip{}

$\left(\left(H_{2k-2}+\frac{9}{2}P_{2k-2}\right)H_{2m}-\left(\frac{1}{2}H_{2k-2}+4P_{2k-2}\right)P_{2m};\right.$

$\hphantom{\qquad}\left(2P_{2k-2}H_{2m}-2P_{2k-2}P_{2m}\right)^{\times3},\frac{1}{2}\left(\left(2H_{2k-2}+P_{2k-2}\right)H_{2m}-H_{2k-2}P_{2m}\right),$

$\hphantom{\qquad}\left.\left(2H_{2k-3}+P_{2k-3}\right)H_{2m}-H_{2k-3}P_{2m},\left(H_{2k-1}H_{2m}-H_{2k-1}P_{2m}\right)^{\times2},(*)\right)\quad\rightsquigarrow$

\medskip{}

$\left(\left(H_{2k-2}+\frac{9}{2}P_{2k-2}\right)H_{2m}-\left(\frac{1}{2}H_{2k-2}+4P_{2k-2}\right)P_{2m};\right.$

$\hphantom{\qquad}\frac{1}{2}\left(\left(2H_{2k-2}+P_{2k-2}\right)H_{2m}-H_{2k-2}P_{2m}\right),\left(2H_{2k-3}+P_{2k-3}\right)H_{2m}-H_{2k-3}P_{2m},$

$\hphantom{\qquad}\left.\left(H_{2k-1}H_{2m}-H_{2k-1}P_{2m}\right)^{\times2},\left(2P_{2k-2}H_{2m}-2P_{2k-2}P_{2m}\right)^{\times3},(*)\right)\quad\rightarrow$

\medskip{}

$\left(\left(H_{2k-2}+\frac{7}{2}P_{2k-2}\right)H_{2m}-\left(\frac{1}{2}H_{2k-2}+4P_{2k-2}\right)P_{2m};\right.$

$\hphantom{\qquad}\left(H_{2k-2}-\frac{1}{2}P_{2k-2}\right)H_{2m}-\frac{1}{2}H_{2k-2}P_{2m},H_{2k-3}H_{2m}-H_{2k-3}P_{2m},$

$\hphantom{\qquad}P_{2k-1}H_{2m}-H_{2k-1}P_{2m},H_{2k-1}H_{2m}-H_{2k-1}P_{2m},$

$\hphantom{\qquad}\left.\left(2P_{2k-2}H_{2m}-2P_{2k-2}P_{2m}\right)^{\times3},(*)\right)\quad\rightsquigarrow$

\medskip{}

$\left(\left(H_{2k-2}+\frac{7}{2}P_{2k-2}\right)H_{2m}-\left(\frac{1}{2}H_{2k-2}+4P_{2k-2}\right)P_{2m};\right.$

$\hphantom{\qquad}H_{2k-1}H_{2m}-H_{2k-1}P_{2m},\left(2P_{2k-2}H_{2m}-2P_{2k-2}P_{2m}\right)^{\times3},$

$\hphantom{\qquad}\left(H_{2k-2}-\frac{1}{2}P_{2k-2}\right)H_{2m}-\frac{1}{2}H_{2k-2}P_{2m},H_{2k-3}H_{2m}-H_{2k-3}P_{2m},$

$\hphantom{\qquad}\left.P_{2m-(2k-1)},(*)\right)\quad\rightarrow$

\medskip{}

$\left(P_{2k-1}H_{2m}-2P_{2k-2}P_{2m};\left(H_{2k-2}-\frac{1}{2}P_{2k-2}\right)H_{2m}-\frac{1}{2}H_{2k-2}P_{2m},\right.$

$\hphantom{\qquad}\left(-\frac{1}{2}P_{2k-2}H_{2m}+\frac{1}{2}H_{2k-2}P_{2m}\right)^{\times2},2P_{2k-2}H_{2m}-2P_{2k-2}P_{2m},$

$\hphantom{\qquad}\left(H_{2k-2}-\frac{1}{2}P_{2k-2}\right)H_{2m}-\frac{1}{2}H_{2k-2}P_{2m},H_{2k-3}H_{2m}-H_{2k-3}P_{2m},$

$\hphantom{\qquad}\left.P_{2m-(2k-1)},(*)\right)\quad\rightsquigarrow$

\medskip{}

$\left(P_{2k-1}H_{2m}-2P_{2k-2}P_{2m};2P_{2k-2}H_{2m}-2P_{2k-2}P_{2m},\right.$

$\hphantom{\qquad}\left(\left(H_{2k-2}-\frac{1}{2}P_{2k-2}\right)H_{2m}-\frac{1}{2}H_{2k-2}P_{2m}\right)^{\times2},H_{2k-3}H_{2m}-H_{2k-3}P_{2m},$

$\hphantom{\qquad}\left.\left(\frac{1}{2}P_{2m-(2k-2)}\right)^{\times2},P_{2m-(2k-1)},(*)\right)\quad\rightarrow$

\medskip{}

$\left(P_{2k-2}H_{2m}-H_{2k-3}P_{2m};H_{2k-3}H_{2m}-H_{2k-3}P_{2m},\right.$

$\hphantom{\qquad}\left(-\frac{1}{2}P_{2k-2}H_{2m}+\frac{1}{2}H_{2k-2}P_{2m}\right)^{\times2},H_{2k-3}H_{2m}-H_{2k-3}P_{2m},$

$\hphantom{\qquad}\left.\left(\frac{1}{2}P_{2m-(2k-2)}\right)^{\times2},P_{2m-(2k-1)},(*)\right)\quad\rightsquigarrow$

\medskip{}

$\left(P_{2k-2}H_{2m}-H_{2k-3}P_{2m};\left(H_{2k-3}H_{2m}-H_{2k-3}P_{2m}\right)^{\times2},\left(\frac{1}{2}P_{2m-(2k-2)}\right)^{\times4},\right.$

$\hphantom{\qquad}\left.P_{2m-(2k-1)},(*)\right).$

\medskip{}
Now, after reordering this last class, we obtain $\hat{V}_{k-1}^{m}$
as required.\end{proof}
\begin{lem}
$\hat{V}_{2}^{m}$ reduces in 5 Cremona moves to the class
\[
\hat{V}^{m}:=\left(H_{2m}-\frac{1}{2}P_{2m};\frac{1}{2}P_{2m},\left(P_{2m-1}\right)^{\times3},\hat{B}_{1}^{m}\right).
\]
\end{lem}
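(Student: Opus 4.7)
The plan is to imitate Lemma~\ref{lem:3rd_red_of_E(alpha_2m)}: write $\hat V_2^m$ explicitly using the Pell recurrences, then apply five rounds of (Cremona transform)+(reorder), tracking the three largest entries at each step, and verify that the resulting class is exactly $\hat V^m$.

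First I would unfold the definition. Since $v_2=(2H_2+P_2)H_{2m}-H_2P_{2m}=8H_{2m}-3P_{2m}$, $v_1=3H_{2m}-P_{2m}$, $H_3=7$, and $P_4 H_{2m}+H_3 P_{2m}=12H_{2m}+7P_{2m}$, the starting head of $\hat V_2^m$ is
$$\bigl(12H_{2m}+7P_{2m};\,(4H_{2m}-\tfrac{3}{2}P_{2m})^{\times 4},\,3H_{2m}-P_{2m},\,(7H_{2m}-7P_{2m})^{\times 2},\,(H_{2m}-\tfrac12 P_{2m})^{\times 3},\,\tfrac12 P_{2m},\,P_{2m-1},\,\hat B_2^m\bigr).$$
Note that $\hat B_2^m$ and $\hat B_1^m$ differ only by the prefix $((\tfrac12 P_{2m-2})^{\times 4},\,P_{2m-3})$; so the target $\hat V^m=(H_{2m}-\tfrac12 P_{2m};\tfrac12 P_{2m},(P_{2m-1})^{\times 3},\hat B_1^m)$ is obtained precisely when the Cremona process has shrunk $d$ from $12H_{2m}+7P_{2m}$ to $H_{2m}-\tfrac12 P_{2m}$ and expelled the $\hat B_1^m$-prefix.

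Next I would carry out the five Cremona moves. At each step I apply the transform $(d;m_1,m_2,m_3,\ldots)\mapsto(2d-m_1-m_2-m_3;\,d-m_2-m_3,\,d-m_1-m_3,\,d-m_1-m_2,\ldots)$, simplify using $P_n=2P_{n-1}+P_{n-2}$ and $H_n=P_n+P_{n-1}$, and reorder. The pattern should be: the first move consumes three copies of $4H_{2m}-\tfrac{3}{2}P_{2m}$ and brings down $d$; the second and third moves absorb the remaining $4H_{2m}-\tfrac32 P_{2m}$, the $v_1$, and the two $(7H_{2m}-7P_{2m})$ entries, successively generating copies of $P_{2m-1}$ and of $\tfrac12 P_{2m-2}$ by the identity $P_{2m-2k-1}=\cdots$ from Corollary~\ref{cor:properties of u_k(j), v_k(j)}(v); the fourth move clears the surviving medium-sized entries; the fifth move fixes the head to $(H_{2m}-\tfrac12 P_{2m};\tfrac12 P_{2m},(P_{2m-1})^{\times 3})$. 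At every step the reordering is justified by the inequalities already proved in the preceding two lemmas, namely $\tfrac12 v_2>v_1>H_{2m}-\tfrac12 P_{2m}>\tfrac12 P_{2m}>P_{2m-1}>\tfrac12 P_{2m-2}>\cdots$.

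The main obstacle is purely combinatorial bookkeeping: one must check, after each of the five transforms, that the three entries chosen are indeed the largest (so that the Cremona move is legitimate) and that the cancellations in $H_{2m}$ and $P_{2m}$ produce coefficients matching the claimed intermediate forms. No new idea is required beyond the mechanics already used in Lemmas~\ref{lem:2nd_red_of_E(alpha_2m)}--\ref{lem:5th_red_of_E(alpha_2m)}; the reduction is simply shorter (five moves rather than eight) because the terminal block of $\hat A_2^m$ has the pattern $[3,1,1]$ in place of the $[1,3,1]$ pattern of $A_2^m$, so the first Cremona move already aligns the head with the target.
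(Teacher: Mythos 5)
Your proposal never actually performs the reduction. You correctly identify that the proof is a direct, mechanical verification in the spirit of Lemmas~\ref{lem:2nd_red_of_E(alpha_2m)}--\ref{lem:5th_red_of_E(alpha_2m)}: unfold the definition, apply five Cremona transforms interleaved with reorderings, and match the result to $\hat V^m$. But a statement of the form \lq\lq class $X$ reduces to class $Y$ in five Cremona moves\rq\rq{} \emph{is} the computation; a description of what the moves ``should'' do, with phrases like ``the fourth move clears the surviving medium-sized entries'' and ``the fifth move fixes the head,'' is not a proof. You need to exhibit, for each of the five steps, the new degree $2d-m_1-m_2-m_3$ and the three new multiplicities $d-m_i-m_j$, simplify them using $P_n=2P_{n-1}+P_{n-2}$, $H_n=P_n+P_{n-1}$ and $P_{2m-k}=(-1)^{k+1}(P_kH_{2m}-H_kP_{2m})$, reorder, and verify at each stage that the three chosen entries really are the largest. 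None of that is done.

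There are also two concrete errors in the part you did write out. First, the degree of $\hat V_2^m$ is $P_4H_{2m}-H_3P_{2m}=12H_{2m}-7P_{2m}$, not $12H_{2m}+7P_{2m}$: the sign in the displayed definition of $\hat V_k^m$ is a typo (as the lemma computing $\varphi_*\left(E(\alpha_{2m-1})\right)$ and the proof of the preceding lemma both show, the degree is $P_{2k}H_{2m}-H_{2k-1}P_{2m}$). This is not a cosmetic slip: with $d=12H_{2m}+7P_{2m}$ and $m_1=m_2=m_3=4H_{2m}-\tfrac32P_{2m}$ one has $m_1+m_2+m_3<d$, so the first Cremona transform would \emph{increase} $d$ and the reduction could never terminate. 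Second, your chain of inequalities justifying the reorderings, $\tfrac12 v_2>v_1>H_{2m}-\tfrac12 P_{2m}>\tfrac12 P_{2m}>\cdots$, omits the entry $7H_{2m}-7P_{2m}=7P_{2m-1}$, which sits strictly between $v_1$ and $H_{2m}-\tfrac12 P_{2m}$ and is precisely one of the entries consumed in the middle of the reduction; without it the ordering claims at steps two and three are unsubstantiated. Finally, the closing heuristic about $[3,1,1]$ versus $[1,3,1]$ patterns explaining the count of five moves is not a substitute for doing the five moves, and the comparison is also misleading because the targets $V^m$ and $\hat V^m$ are different classes, so the two lemmas are genuinely separate computations rather than a long and a short version of the same one.
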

\begin{proof}
We have

\medskip{}

$\begin{aligned}\hat{V}_{2}^{m}= & \left(12H_{2m}-7P_{2m};\left(4H_{2m}-\frac{3}{2}P_{2m}\right)^{\times4},3H_{2m}-P_{2m},\left(7H_{2m}-7P_{2m}\right)^{\times2},\right.\\
 & \left.\left(H_{2m}-\frac{1}{2}P_{2m}\right)^{\times3},(*)\right);
\end{aligned}
$

\medskip{}

$\left(12H_{2m}-\frac{19}{2}P_{2m};4H_{2m}-\frac{3}{2}P_{2m},3H_{2m}-P_{2m},\left(7H_{2m}-7P_{2m}\right)^{\times2},\right.$

$\hphantom{\qquad}\left.\left(4H_{2m}-4P_{2m}\right)^{\times3},\left(H_{2m}-\frac{1}{2}P_{2m}\right)^{\times3},(*)\right);$

\medskip{}

$\left(10H_{2m}-\frac{19}{2}P_{2m};7H_{2m}-7P_{2m},\left(4H_{2m}-4P_{2m}\right)^{\times3},2H_{2m}-\frac{3}{2}P_{2m},\right.$

$\hphantom{\qquad}\left.\left(H_{2m}-\frac{1}{2}P_{2m}\right)^{\times3},P_{2m-1},P_{2m-3},(*)\right);$

\medskip{}

$\left(5H_{2m}-4P_{2m};4H_{2m}-4P_{2m},\left(2H_{2m}-\frac{3}{2}P_{2m}\right)^{\times2},\left(H_{2m}-\frac{1}{2}P_{2m}\right)^{\times3},\right.$

$\hphantom{\qquad}\left.P_{2m-1},\left(\frac{1}{2}P_{2m-2}\right)^{\times2},P_{2m-3},(*)\right);$

\medskip{}

$\left(2H_{2m}-P_{2m};\left(H_{2m}-\frac{1}{2}P_{2m}\right)^{\times3},\left(P_{2m-1}\right)^{\times2},\left(\frac{1}{2}P_{2m-2}\right)^{\times4},P_{2m-3},(*)\right);$

\medskip{}

$\left(H_{2m}-\frac{1}{2}P_{2m};\left(H_{2m}-\frac{1}{2}P_{2m}\right)^{\times3},\left(P_{2m-1}\right)^{\times2},\left(\frac{1}{2}P_{2m-2}\right)^{\times4},P_{2m-3},(*)\right).$

\medskip{}
After reordering this last class, we obtain $\hat{V}^{m}$ as required.\end{proof}
\begin{lem}
For all $m\geqslant3$, $\hat{V}^{m}$ reduces in 5 Cremona moves
to $\hat{V}^{m-1}$.\end{lem}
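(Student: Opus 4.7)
The plan is to mimic the explicit step-by-step computation carried out in Lemma \ref{lem:4th_red_of_E(alpha_2m)} (the analogous reduction for $V^{m}$), since both classes have the same essential structure: only the first few entries depend on $m$ in a nontrivial way, with the rest of the vector $\hat{B}_1^m$ being built from the same doubled Pell-pattern that appears further along $\hat{B}_1^{m-1}$. Accordingly, at each step I track only the first three entries (which are the largest and hence the ones affected by the Cremona transform), abbreviating the tail of the vector by $(*)$ and checking at the end that the tail has the required $\hat{B}_1^{m-1}$-shape.

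The crucial observation that makes the computation transparent is that $\hat{V}^{m}$ coincides with the class produced after the first Cremona move in the reduction of $V^{m}$ in Lemma \ref{lem:4th_red_of_E(alpha_2m)}. Indeed, writing $H_{2m}-\tfrac{1}{2}P_{2m}=\tfrac{1}{2}P_{2m}+P_{2m-1}$ and recalling that $\hat{B}_1^m = B_1^m$ has structure $[\{8,2\}^{\times(m-1)},1]$, one sees that
\[
\hat{V}^{m}=\bigl(H_{2m}-\tfrac{1}{2}P_{2m};\tfrac{1}{2}P_{2m},(P_{2m-1})^{\times 3},(\tfrac{1}{2}P_{2m-2})^{\times 8},(P_{2m-3})^{\times 2},\ldots\bigr)
\]
is exactly the class appearing as the second entry of the chain in Lemma \ref{lem:4th_red_of_E(alpha_2m)}. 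Hence, applying the Cremona moves 2, 3, 4, 5 of that earlier chain to $\hat{V}^{m}$ produces, in four moves, the class $V^{m-1}=(H_{2m-2};H_{2m-2}-\tfrac{1}{2}P_{2m-2},(\tfrac{1}{2}P_{2m-2})^{\times 3},(P_{2m-3})^{\times 2},B_1^{m-1})$.

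It remains to perform one additional Cremona move, namely the move that converts $V^{m-1}$ into $\hat{V}^{m-1}$: this is the same transform as move 1 of Lemma \ref{lem:4th_red_of_E(alpha_2m)} applied with $m$ replaced by $m-1$, turning the triple $\bigl(H_{2m-2}-\tfrac{1}{2}P_{2m-2},\tfrac{1}{2}P_{2m-2},\tfrac{1}{2}P_{2m-2}\bigr)$ into leading terms $\tfrac{1}{2}P_{2m-2},0,0$ and lowering the degree to $H_{2m-2}-\tfrac{1}{2}P_{2m-2}$. After the zero entries are dropped and the vector is reordered using $H_{2m-2}-\tfrac{1}{2}P_{2m-2}=\tfrac{1}{2}P_{2m-2}+P_{2m-3}$, we obtain precisely $\hat{V}^{m-1}$.

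I do not expect any serious obstacle: all the arithmetic needed consists of repeated applications of the identities $P_n=2P_{n-1}+P_{n-2}$, $H_n=P_n+P_{n-1}$, and the identity $P_{2m-k}=(-1)^{k+1}(P_kH_{2m}-H_kP_{2m})$ from Corollary \ref{cor:properties of u_k(j), v_k(j)}, all of which have already been used in the previous reductions. The only point requiring care is the reordering after each Cremona transform, since the new second and third entries may vanish and must be discarded, and the new first entry $H_{2m}-P_{2m}-P_{2m-1}=0$ phenomenon (or its analogues) must be verified to keep the class in valid form; this verification is routine once rewritten in terms of Pell and half-companion-Pell numbers.
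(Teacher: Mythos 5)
Your proposal is correct and takes essentially the same approach as the paper: the five explicit Cremona moves the paper writes for $\hat{V}^{m}\to\hat{V}^{m-1}$ are precisely moves $2$--$5$ of Lemma \ref{lem:4th_red_of_E(alpha_2m)} followed by move $1$ of that lemma with $m$ replaced by $m-1$, exactly as you observe. One small slip in your last step: the Cremona transform of the triple $\bigl(H_{2m-2}-\tfrac{1}{2}P_{2m-2},\tfrac{1}{2}P_{2m-2},\tfrac{1}{2}P_{2m-2}\bigr)$ with $d=H_{2m-2}$ yields $(P_{2m-3},0,0)$, not $(\tfrac{1}{2}P_{2m-2},0,0)$; the entry $\tfrac{1}{2}P_{2m-2}$ that leads the reordered vector is the unchanged fourth slot, and together with the new $P_{2m-3}$ and the two $P_{2m-3}$'s already present one indeed obtains $\hat{V}^{m-1}$.
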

\begin{proof}
We have

\medskip{}

$\hat{V}^{m}=\left(H_{2m}-\frac{1}{2}P_{2m};\frac{1}{2}P_{2m},\left(P_{2m-1}\right)^{\times3},\left(\frac{1}{2}P_{2m-2}\right)^{\times8},(*)\right);$

\medskip{}

$\left(\frac{1}{2}P_{2m};P_{2m-1},\left(\frac{1}{2}P_{2m-2}\right)^{\times9},(*)\right);$

\medskip{}

$\left(P_{2m-1};P_{2m-1}-\frac{1}{2}P_{2m-2},\left(\frac{1}{2}P_{2m-2}\right)^{\times7},(*)\right);$

\medskip{}

$\left(P_{2m-1}-\frac{1}{2}P_{2m-2};H_{2m-2},\left(\frac{1}{2}P_{2m-2}\right)^{\times5},(*)\right);$

\medskip{}

$\left(H_{2m-2};H_{2m-2}-\frac{1}{2}P_{2m-2},\left(\frac{1}{2}P_{2m-2}\right)^{\times3},(*)\right);$

\medskip{}

$\left(H_{2m-2}-\frac{1}{2}P_{2m-2};\frac{1}{2}P_{2m-2},P_{2m-3},(*)\right).$

\medskip{}
After reordering this class, we obtain $\hat{V}^{m-1}$ as required.\end{proof}
\begin{lem}
$\hat{V}^{2}$ reduces in 7 Cremona moves to $(0;-1)$.\end{lem}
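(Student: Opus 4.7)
The plan is a direct computation: after instantiating $\hat{V}^{2}$ explicitly, apply the Cremona transform iteratively, pruning the zero entries at each step until we reach $(0;-1)$. No conceptual obstacle is expected, only bookkeeping of the three largest entries.

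First, I would specialize the definition of $\hat V^m$ to $m=2$. With $P_2=2,\,P_3=5,\,P_4=12$ and $H_4=17$, the head is $H_{4}-\tfrac12P_{4}=11$ and the distinguished block is $\tfrac12P_{4}=6$ followed by three copies of $P_{3}=5$. The tail is $\hat B_{1}^{2}$, which by the structural convention $[\{8,2\}^{\times(m-1)},1]$ becomes $(\tfrac12P_{2})^{\times 8},(P_{1})^{\times 2},1 = 1^{\times 11}$. Hence
\[
\hat V^{2}=(11;\,6,\,5^{\times 3},\,1^{\times 11}).
\]

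Next, I would apply the Cremona transform (as in Definition \ref{def: Cremona move}) exactly seven times, always selecting the three largest entries, and reorder after each step. The expected chain is
\[
\begin{aligned}
(11;6,5^{\times3},1^{\times11}) &\rightsquigarrow (6;5,1^{\times12}) \rightsquigarrow (5;4,1^{\times10}) \rightsquigarrow (4;3,1^{\times8})\\
&\rightsquigarrow (3;2,1^{\times6}) \rightsquigarrow (2;1^{\times5}) \rightsquigarrow (1;1^{\times2}) \rightsquigarrow (0;-1).
\end{aligned}
\]
Each step is an immediate application of $(d;m_{1},m_{2},m_{3},\ldots)\mapsto(2d-m_{1}-m_{2}-m_{3};d-m_{2}-m_{3},d-m_{1}-m_{3},d-m_{1}-m_{2},\ldots)$; for instance the first move gives $d'=22-6-5-5=6$ with new entries $11-5-5=1$ and $11-6-5=0$ (twice), which after discarding zeros is $(6;5,1^{\times 12})$. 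The remaining moves are of the uniform shape $(k;k-1,1^{\times 2k-2})\rightsquigarrow (k-1;k-2,1^{\times 2k-4})$, terminating with the standard moves $(2;1^{\times 5})\to(1;1^{\times 2})\to(0;-1)$.

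The only mild subtlety is matching the count: this is essentially the tail of the reduction of $V^{2}$ carried out in Lemma \ref{lem:5th_red_of_E(alpha_2m)}, but one step shorter because $\hat V^{2}$ coincides with the image of $V^{2}$ after its initial Cremona move. Since there are no conceptual difficulties, the only thing to verify carefully is that at every step the three entries picked remain the three largest after reordering, which is immediate from the decreasing pattern $11>6>5>1$, then $6>5>1$, and then $k>k-1>1$ for $k=5,4,3,2$.
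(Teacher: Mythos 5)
Your chain of seven Cremona moves is correct and coincides exactly with the paper's verification, and your explicit unpacking of $\hat V^{2}=(11;6,5^{\times3},1^{\times11})$ from the definition is also right. The only blemish is a small off-by-two in your description of the ``uniform shape'': the intermediate steps follow the pattern $(k;k-1,1^{\times 2k})\to(k-1;k-2,1^{\times 2k-2})$ rather than $(k;k-1,1^{\times 2k-2})$, but the actual displayed chain is unaffected.
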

\begin{proof}
We have

\medskip{}

$\hat{V}^{2}=\left(11;6,5^{\times3},1^{\times11}\right);$

\medskip{}

$\left(6;5,1^{\times12}\right);$\medskip{}

$\left(5;4,1^{\times10}\right);$\medskip{}

$\left(4;3,1^{\times8}\right);$\medskip{}

$\left(3;2,1^{\times6}\right);$\medskip{}

$\left(2;1^{\times5}\right);$\medskip{}

$\left(1;1^{\times2}\right);$\medskip{}

$\left(0;-1\right).$
\end{proof}

\section{The interval $\left[\sigma^{2},6\right]$}

\noindent In this section we prove that $c(a)=\frac{a+1}{4}$ on the
interval $\left[\sigma^{2},6\right]$, which is a part of Theorem
\ref{thm:c(a) on [sigma^2,7+1/32]}. Notice that the class $\left(2,2;2,1^{\times5}\right)$
gives the constraint $\frac{a+1}{4}$ on $\left[\sigma^{2},6\right]$.
If suffices thus to show that no class gives a stronger constraint.
We begin by showing this on the interval $\left[5\frac{12}{13},6\right]$,
and will then spend some efforts to extend it to $\left[\sigma^{2},6\right]$.
\begin{prop}
\label{prop:c(a)=00003D(a+1)/4 on [5+12/13,6]}For $a\in\left[5\frac{12}{13},6\right]$,
we have $c(a)=\frac{a+1}{4}$.\end{prop}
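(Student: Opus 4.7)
The plan is to establish the lower bound from an explicit exceptional class and then show that no other class gives a stronger constraint. For the lower bound, by Lemma \ref{lem:finite list of elements in E_M for M leq 7} the class $E_0 := (2, 2; 2, 1^{\times 5})$ belongs to $\mathcal{E}$; for $a \in [5, 6]$ the weight expansion begins $w(a) = (1^{\times 5}, a - 5, \ldots)$, whence $\mu(E_0)(a) = (2+1+1+1+1+(a-5))/4 = (a + 1)/4$, and Proposition \ref{prop:characterization of c(a)} gives $c(a) \geq (a + 1)/4$.

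For the upper bound I must show $\mu(d, e; m)(a) \leq (a+1)/4$ for every $(d, e; m) \in \mathcal{E}$ different from $E_0$ and every $a \in [5\frac{12}{13}, 6]$; since $a \geq \sigma^2$, the volume term $\sqrt{a/2}$ is automatically dominated. By Lemma \ref{lem:e=00003Dd or e=00003Dd-1} only classes with $|d-e| \leq 1$ are obstructive, and using the Cauchy--Schwarz bound of Lemma \ref{lem:first properties of mu}(i) together with monotonicity of $(a+1)/(4\sqrt{a})$ on our interval, these are further restricted to a finite range of $(d,e)$, essentially $d \leq 6$ when $e = d$ and $d \leq 5$ when $e = d - 1$.

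First I would settle the endpoint $a = 6$. Since $w(6) = (1^{\times 6})$, $\mu(d,e;m)(6) = (d+e)^{-1}\sum_{i=1}^6 m_i$, and positivity of intersection with $E_0$ (Proposition \ref{prop:characterization of E_M}(ii)) yields $2m_1 + m_2 + \ldots + m_6 \leq 2(d+e)$, hence $\mu(d,e;m)(6) \leq 2 - m_1/(d+e)$. The bound $m_i \leq m_1$ combined with the Diophantine identities of Proposition \ref{prop:characterization of E_M}(i) gives $m_1 \geq (2de+1)/(2(d+e)-1)$, which is at least $(d+e)/4$ because the equivalent inequality $2(d-e)^2 \leq (d+e)+4$ always holds when $|d-e| \leq 1$. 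Thus $\mu(d,e;m)(6) \leq 7/4 = (6+1)/4$.

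To propagate to the interior, I would use that for $a \in (11/2, 6)$ the continued fraction of $a$ starts $[5; 1, l_2, \ldots]$, so $w_6(a) = a - 5$, $w_7(a) = 6 - a$, and $w_i(a) \leq 6 - a$ for $i \geq 7$. Subtracting the expressions of $\mu$ at $a$ and at $6$ gives
\[
\mu(d,e;m)(a) - \mu(d,e;m)(6) \;\leq\; \frac{6 - a}{d+e}\Bigl(\sum_{i \geq 7} m_i - m_6\Bigr),
\]
and after rewriting $\sum_{i \geq 7} m_i = 2(d+e) - 1 - \sum_{i=1}^6 m_i$ via Proposition \ref{prop:characterization of E_M}(i), the hypothesis $\mu(d,e;m)(a) > (a+1)/4$ becomes a linear inequality in $6 - a$ that, by reusing the endpoint bound, forces $6 - a > 1/13$, contradicting $a \geq 5\frac{12}{13}$. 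The hard part will be closing this numerical estimate uniformly: classes with small $m_6$ and long tail may evade the crude bound coming solely from positivity of intersection with $E_0$, so I anticipate supplementing it either by also invoking positivity of intersection with the shorter classes $(1, 1; 1^{\times 3})$ and $(2, 1; 1^{\times 5})$ from Lemma \ref{lem:finite list of elements in E_M for M leq 7} (yielding $m_1 + m_2 + m_3 \leq d+e$ and $m_1 + \ldots + m_5 \leq d + 2e$), or by enumerating the finitely many remaining $(d, e; m) \in \mathcal{E}$ via the Cremona criterion of Proposition \ref{prop:characterization of E_M}(iii) and checking the bound case by case.
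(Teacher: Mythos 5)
Your lower bound via $E_0=(2,2;2,1^{\times 5})$ is the same as the paper's. For the upper bound, however, the paper uses a different and decisive idea that your plan misses: \emph{full filling}. Writing $a=5+x$ with $x\geq\frac{12}{13}$, one has $1-x\leq\frac{x}{9}$, so the weight expansion of $a$ begins with at least nine equal entries after $w_6$; by Corollary 1.2.4 of \cite{MS}, the balls $B(w_7),\ldots,B(w_M)$ then fully fill a single ball $B(\lambda)$ with $\lambda^2=x-x^2$. This collapses the infinite family of candidate obstructions to the finite set $\mathcal{E}_M$, $M\leq 7$, explicitly listed in Lemma \ref{lem:finite list of elements in E_M for M leq 7}; the only nontrivial check is $(4,3;2^{\times 6},1)$, whose constraint is $\frac{10+2x+\lambda}{7}$, and this is $\leq\frac{a+1}{4}$ iff $\lambda\leq\frac{2-x}{4}$, i.e.\ iff $x-x^2\leq\frac{(2-x)^2}{16}$, which holds for $x\geq\frac{10+4\sqrt{2}}{17}\approx 0.921$; the threshold $\frac{12}{13}\approx 0.923$ is a convenient rational just above this root.

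Your propagation argument from the endpoint $a=6$ is correct as far as it goes, but — as you yourself acknowledge — the crude bound $\mu(d,e;m)(6)\leq 2-m_1/(d+e)$ from intersecting with $E_0$ does not obviously dominate the tail term $\frac{6-a}{d+e}\bigl(\sum_{i\geq 7}m_i-m_6\bigr)$ uniformly over all admissible $(d,e;m)$, so the chain stopping at ``forces $6-a>1/13$'' is not actually established. Your ``plan B'' of enumerating $\mathcal{E}$-classes with $d\leq 6$ via the Cremona criterion would work in principle, but note that $\sum m_i^2=2de+1\leq 73$ only bounds $l(m)$ by $73$, so that enumeration is substantially larger than the seven-ball list the paper actually has to examine. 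The full-filling reduction is precisely what prunes those long-tailed classes away from the start, and it is also where the specific value $5\frac{12}{13}$ arises naturally (from the quadratic $17x^2-20x+4=0$) rather than having to be reverse-engineered from a case analysis. As written, your proposal therefore leaves a genuine gap in the upper bound, and the route to closing it that you sketch would be considerably heavier than the paper's.
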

\begin{proof}
Write $a=5+x\in\left[5\frac{12}{13},6\right[$. Then $w(a)=\left(1^{\times5},x,w_{7},\ldots,w_{M}\right)$.
Since $x\geqslant\frac{12}{13}$, $1-x\leqslant\frac{x}{9}$, thus
at least the nine first of the weights $w_{7},\ldots,w_{M}$ are equal.
Then, by Corollary 1.2.4 in \cite{MS}, the $M-6$ balls of weights
$w_{7},\ldots,w_{M}$ fully fill the ball of weight $\lambda$, where
$a=5+x^{2}+\lambda^{2}$. Thus to prove that $c(a)=\frac{a+1}{4}$
on $\left[\sigma^{2},6\right]$, it suffices to check that the seven
balls of weights $1^{\times5},x,\lambda$ embed into the cube $C\left(\frac{(5+x)+1}{4}\right)$,
or in other words, that the finite number of classes of $\mathcal{E}_{M}$
with $M\leqslant7$ don't give any embedding constraint stronger than
$\frac{a+1}{4}$ for these seven balls.

This is clear for the classes belonging to $\mathcal{E}_{M}$ with
$M\leqslant6$. The strongest constraint of $\mathcal{E}_{7}$ comes
from the class $\left(4,3;2^{\times6},1\right)$ for which
\[
\mu\left(4,3;2^{\times6},1\right)(5+x)=\frac{10+2x+\lambda}{7}.
\]
Notice that $\frac{10+2x+\lambda}{7}\leqslant\frac{(5+x)+1}{4}$ if
and only if $\lambda\leqslant\frac{2-x}{4}$. Recall that $a=5+x=5+x^{2}+\lambda^{2}$,
whence $\lambda^{2}=x-x^{2}$. Since $x-x^{2}\leqslant\left(\frac{2-x}{4}\right)^{2}$
for $x\in\left[\frac{12}{13},1\right]$. Thus the class $\left(4,3;2^{\times6},1\right)$
indeed gives no stronger constraint than $\frac{a+1}{4}$. Finally,
by continuity of $c$, $c(6)=\frac{6+1}{4}=\frac{7}{4}$.\end{proof}
\begin{prop}
\label{prop:first estimates of d and lambda^2}\renewcommand{\labelenumi}{(\roman{enumi})}
Let $(d,e;m)\in\mathcal{E}$ be a class such that $\mu(d,e;m)(a)>\frac{a+1}{4}\geqslant\sqrt{\frac{a}{2}}$ for some $a\in\left[\sigma^{2},6\right]$. Then
\begin{enumerate}
\item $d<\frac{2\sqrt{a}}{\sqrt{a^{2}-6a+1}}$ for a class of the form $(d,d;m)$ and $d<\frac{\sqrt{2a}}{\sqrt{a^{2}-6a+1}}$ for a class of the form $(d+\frac{1}{2},d-\frac{1}{2};m)$.
\item Moreover, if we denote $\lambda^{2}:=1-\sum_{i=1}^{M}\varepsilon_{i}^{2}$ (respectively $\lambda^{2}:=\frac{1}{2}-\sum_{i=1}^{M}\varepsilon_{i}^{2}$) for a class of the form $(d,d;m)$ (respectively for a class of the form $(d+\frac{1}{2},d-\frac{1}{2};m)$), then $\lambda^{2}>d^{2}\sqrt{\frac{2}{a}}\, y(a)$.
\end{enumerate}%
\end{prop}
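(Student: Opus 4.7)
The plan is to derive both estimates directly from the hypothesis $\mu(d,e;m)(a) > \frac{a+1}{4}$ by combining it with the basic inequalities already established in Lemma \ref{lem:first properties of mu} and Proposition \ref{prop:characterization of E_M}. Note first that the assumption $\frac{a+1}{4} \geqslant \sqrt{a/2}$ is equivalent to $a^2 - 6a + 1 \geqslant 0$, i.e.\ $a \geqslant \sigma^2$, so the quantity under the square root in (i) is nonnegative.

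For (i), I would appeal to Lemma \ref{lem:first properties of mu}(i), which gives $\mu(d,d;m)(a) \leqslant \sqrt{1 + \frac{1}{2d^2}}\sqrt{a/2}$. Squaring the chain $\frac{a+1}{4} < \mu(d,d;m)(a) \leqslant \sqrt{1+\frac{1}{2d^2}}\sqrt{a/2}$ yields $\frac{a}{2} + \frac{a}{4d^2} > \frac{(a+1)^2}{16}$, i.e.\ $\frac{a}{4d^2} > \frac{a^2 - 6a + 1}{16}$, which rearranges to $d < \frac{2\sqrt{a}}{\sqrt{a^2 - 6a + 1}}$. The case $(d+\tfrac12, d-\tfrac12; m)$ is handled identically, using the analogous bound $\mu \leqslant \sqrt{1 + \frac{1}{4d^2}}\sqrt{a/2}$ from the same lemma, which produces the factor $\sqrt{2}$ instead of $2$ in the numerator.

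For (ii), I would compute $\sum m_i^2$ via the decomposition $m = \frac{d+e}{\sqrt{2a}} w(a) + \varepsilon$. In the case $(d,d;m)$, Proposition \ref{prop:characterization of E_M}(i) gives $\sum m_i^2 = 2d^2 + 1$, while expanding the scalar product and using $\|w(a)\|^2 = a$ (Lemma \ref{lem:weight expansion}(ii)) gives
\begin{equation*}
2d^2 + 1 \,=\, 2d^2 + \tfrac{4d}{\sqrt{2a}}\, \langle w(a), \varepsilon \rangle + \sum \varepsilon_i^2,
\end{equation*}
so that $\lambda^2 = 1 - \sum \varepsilon_i^2 = \frac{4d}{\sqrt{2a}}\, \langle w(a), \varepsilon \rangle$. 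Now the hypothesis $\mu(d,d;m)(a) > \frac{a+1}{4}$ reads $\langle m, w(a) \rangle > 2d \cdot \frac{a+1}{4}$, and since $\langle m, w(a) \rangle = d\sqrt{2a} + \langle \varepsilon, w(a) \rangle$, this gives
\begin{equation*}
\langle \varepsilon, w(a) \rangle \,>\, \tfrac{d}{2}\bigl(a + 1 - 2\sqrt{2a}\bigr) \,=\, \tfrac{d}{2}\, y(a).
\end{equation*}
Substituting back yields $\lambda^2 > \frac{4d}{\sqrt{2a}} \cdot \frac{d}{2} y(a) = d^2 \sqrt{2/a}\, y(a)$, which is the desired bound. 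For the case $(d+\tfrac12, d-\tfrac12; m)$ one simply replaces $2de + 1 = 2d^2 + 1$ by $2de + 1 = 2d^2 + \tfrac12$ in the above computation; the resulting identity is $\lambda^2 = \tfrac12 - \sum \varepsilon_i^2 = \frac{4d}{\sqrt{2a}}\langle w(a), \varepsilon\rangle$, and the bound on $\langle \varepsilon, w(a) \rangle$ is unchanged since $d + e = 2d$ in both cases.

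There is no substantial obstacle: the proof amounts to coupling the two standard identities $\|m\|^2 = 2de+1$ and $\|w(a)\|^2 = a$ with the strict lower bound $\mu > \frac{a+1}{4}$, which upgrades the generic obstructiveness bound $\langle \varepsilon, w(a) \rangle > 0$ of Lemma \ref{lem:first properties of mu}(ii) to the explicit quantitative bound $\langle \varepsilon, w(a) \rangle > \frac{d}{2}y(a)$. The only thing to keep in mind is that both bounds become trivial at $a = \sigma^2$ (where $a^2 - 6a + 1 = 0$ and $y(a) = 0$), consistent with the fact that $\frac{a+1}{4} = \sqrt{a/2}$ there.
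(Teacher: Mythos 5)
Your proposal is correct and follows essentially the same route as the paper: part (i) squares the bound $\mu(d,d;m)(a)\leqslant\sqrt{1+\tfrac{1}{2d^{2}}}\sqrt{a/2}$ from Lemma~\ref{lem:first properties of mu}\,(i), and part (ii) expands $\langle m,m\rangle$ using $m=\tfrac{d+e}{\sqrt{2a}}w(a)+\varepsilon$ to express $\lambda^{2}$ via $\langle w(a),\varepsilon\rangle$, then lower-bounds $\langle\varepsilon,w(a)\rangle$ from the hypothesis $\mu>\tfrac{a+1}{4}$. The only differences are notational, e.g.\ writing $\tfrac{4d}{\sqrt{2a}}$ where the paper writes $\tfrac{2\sqrt{2}d}{\sqrt{a}}$.
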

\begin{proof}
(i) Let us first prove this for a class of the form $(d,d;m)$. By
Lemma \ref{lem:first properties of mu}\,(i), we have
\[
\frac{a+1}{4}<\mu(d,d;m)(a)\leqslant\sqrt{1+\frac{1}{2d^{2}}}\,\sqrt{\frac{a}{2}}.
\]
Thus
\[
\frac{(a+1)^{2}}{8a}-1<\frac{1}{2d^{2}}
\]
and so
\[
d<\frac{2\sqrt{a}}{\sqrt{a^{2}-6a+1}}.
\]
Similarly, for a class of the form $(d+\frac{1}{2},d-\frac{1}{2};m)$,
we have
\[
\frac{a+1}{4}<\mu(d+\frac{1}{2},d-\frac{1}{2};m)(a)\leqslant\sqrt{1+\frac{1}{4d^{2}}}\sqrt{\frac{a}{2}},
\]
and thus
\[
d<\frac{\sqrt{2a}}{\sqrt{a^{2}-6a+1}}.
\]

(ii) For a class of the form $(d,d;m)$, we have by Proposition \ref{prop:characterization of E_M}\,(i),
\[
\begin{aligned}2d^{2}+1 & =\left\langle m,m\right\rangle =\left\langle \frac{\sqrt{2}d}{\sqrt{a}}w(a)+\varepsilon,\frac{\sqrt{2}d}{\sqrt{a}}w(a)+\varepsilon\right\rangle \\
 & =2d^{2}+\frac{2\sqrt{2}d}{\sqrt{a}}\left\langle w(a),\varepsilon\right\rangle +\left\langle \varepsilon,\varepsilon\right\rangle .
\end{aligned}
\]
Thus
\begin{equation}
\left\langle w(a),\varepsilon\right\rangle =\underset{{\scriptstyle =\lambda^{2}}}{\underbrace{\left(1-\left\langle \varepsilon,\varepsilon\right\rangle \right)}}\,\frac{1}{2d}\sqrt{\frac{a}{2}}.\label{eq:<w(a),epsilon>}
\end{equation}
On the other hand
\[
\begin{aligned}\frac{a+1}{4} & <\mu(d,d;m)(a)=\frac{\left\langle m,w(a)\right\rangle }{2d}=\frac{1}{2d}\left\langle \frac{\sqrt{2}d}{\sqrt{a}}w(a)+\varepsilon,w(a)\right\rangle \\
 & =\sqrt{\frac{a}{2}}+\frac{1}{2d}\left\langle \varepsilon,w(a)\right\rangle .
\end{aligned}
\]
Thus
\[
\left\langle \varepsilon,w(a)\right\rangle >2d\left(\frac{a+1}{4}-\sqrt{\frac{a}{2}}\right)=\frac{1}{2}d\:\underset{{\scriptstyle =y(a)}}{\underbrace{\left(a+1-2\sqrt{2a}\right)}}.
\]
Inserting (\ref{eq:<w(a),epsilon>}) in this inequality, we get
\[
\frac{\lambda^{2}}{2d}\sqrt{\frac{a}{2}}>\frac{d}{2}y(a),
\]
and finally
\[
\lambda^{2}>d^{2}\sqrt{\frac{2}{a}}\, y(a).
\]

Similarly, for a class $(d+\frac{1}{2},d-\frac{1}{2};m)$, we have
\[
2d^{2}+\frac{1}{2}=2d^{2}+\frac{2\sqrt{2}d}{\sqrt{a}}\left\langle w(a),\varepsilon\right\rangle +\left\langle \varepsilon,\varepsilon\right\rangle ,
\]
so
\[
\left\langle w(a),\varepsilon\right\rangle =\underset{{\scriptstyle =\lambda^{2}}}{\underbrace{\left(\frac{1}{2}-\left\langle \varepsilon,\varepsilon\right\rangle \right)}}\frac{1}{2d}\sqrt{\frac{a}{2}}.
\]
The rest of the proof is then identical to the case of a class $(d,d;m)$.
\end{proof}
Notice that the continued fraction of $\sigma^{2}=3+2\sqrt{2}$ is
$\left[5;1,4,1,4,\ldots\right]$. We will now define the so called
\emph{convergents} $c_{k}$ of $\sigma^{2}$ and some other numbers
$u_{k}(j)$ and $v_{k}(j)$ which will play a crucial role in the
proof of Theorem \ref{thm:c(a) on [sigma^2,6]}.
\begin{defn}
\label{def:pell numbers}For all $k,j\geqslant1$, set
\[
\begin{aligned}c_{2k-1} & :=\left[5;\left\{ 1,4\right\} ^{\times(k-1)},1\right]=\left[5;\left\{ 1,4\right\} ^{\times(k-2)},1,5\right],\\
c_{2k} & :=\left[5;\left\{ 1,4\right\} ^{\times k}\right],\\
u_{k}(j) & :=\left[5;\left\{ 1,4\right\} ^{\times(k-1)},1,5,j\right],\\
v_{k}(j) & :=\left[5;\left\{ 1,4\right\} ^{\times(k-1)},1,j\right].
\end{aligned}
\]
\end{defn}
\begin{lem}
\label{lem:formulae for c_k, u_k(j), v_k(j)}\renewcommand{\labelenumi}{(\roman{enumi})}
For all $k,j\geqslant1$, we have the following relations written in lowest terms
\begin{enumerate}
\item ${\displaystyle c_{2k-1}=\frac{\frac{1}{2}P_{2k+2}}{\frac{1}{2}P_{2k}},\quad c_{2k}=\frac{P_{2k+3}}{P_{2k+1}}}$, \item ${\displaystyle u_{k}(j)=\frac{\frac{1}{2}\left(jP_{2k+4}+P_{2k+2}\right)}{\frac{1}{2}\left(jP_{2k+2}+P_{2k}\right)}}$, \item ${\displaystyle v_{k}(j)=\frac{\frac{1}{2}jP_{2k+2}+P_{2k+1}}{\frac{1}{2}jP_{2k}+P_{2k-1}}}$.
\end{enumerate}%
\end{lem}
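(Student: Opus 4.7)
The plan is to prove (i) by induction on $k$ using the Pell recurrence $P_n = 2P_{n-1} + P_{n-2}$ and the convergent recurrence for continued fractions, then to obtain (ii) and (iii) as one-step extensions via the same recurrence. Recall that for a simple continued fraction $[a_0; a_1, \ldots, a_n]$ with convergents $p_i/q_i$, one has $p_i = a_i p_{i-1} + p_{i-2}$ and $q_i = a_i q_{i-1} + q_{i-2}$, and every such convergent is automatically in lowest terms.

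For (i) I would check the base cases $c_1 = 6 = (P_4/2)/(P_2/2)$ and $c_2 = 29/5 = P_5/P_3$ directly, and then perform the induction step in two half-steps. Appending the partial quotient $1$ to $c_{2k} = P_{2k+3}/P_{2k+1}$ produces $c_{2k+1}$, and after substituting the inductive formulas the Pell recurrence rewrites the numerator as $\tfrac{1}{2}(2P_{2k+3} + P_{2k+2}) = P_{2k+4}/2$ and the denominator as $P_{2k+2}/2$; appending $4$ next yields $c_{2k+2} = P_{2k+5}/P_{2k+3}$ by the analogous rewriting. This is bookkeeping, but one must be careful that the odd-indexed $c$'s carry an explicit factor $\tfrac{1}{2}$ on both numerator and denominator while the even-indexed $c$'s do not.

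For (ii), observe that $u_k(j) = [5; \{1,4\}^{\times(k-1)}, 1, 5, j]$ is obtained from $[5; \{1,4\}^{\times(k-1)}, 1, 5]$ by appending the partial quotient $j$, and the latter is the alternative representation of $c_{2k+1}$ given in Definition \ref{def:pell numbers}. The preceding convergent of $u_k(j)$ in this representation is $[5; \{1,4\}^{\times(k-1)}, 1] = c_{2k-1}$, so the convergent recurrence together with (i) gives
\[
u_k(j) = \frac{j\cdot P_{2k+4}/2 + P_{2k+2}/2}{j\cdot P_{2k+2}/2 + P_{2k}/2},
\]
which is the claimed formula. For (iii) I would argue identically: $v_k(j)$ extends $c_{2k-1}$ with partial quotient $j$, its preceding convergent is $c_{2k-2} = P_{2k+1}/P_{2k-1}$, and the recurrence delivers the stated expression.

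The lowest-terms assertion is then automatic: even-indexed Pell numbers are even by induction from $P_0 = 0$ together with the recurrence, so all halves in (i)--(iii) denote integers, and a finite simple continued fraction produced by the standard convergent recurrence always has coprime numerator and denominator. The only genuine subtlety of the argument is keeping straight which of the two continued-fraction representations of $c_{2k-1}$ is the one being extended at each step, so that the correct ``previous convergent'' is read off; once this indexing is fixed, the proof is mechanical.
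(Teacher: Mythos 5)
Your proposal is correct and follows essentially the same route as the paper: the paper also establishes (i) by induction on $k$ via the convergent recurrence together with the Pell recurrence $P_n=2P_{n-1}+P_{n-2}$, and then obtains (ii) and (iii) by one further application of the same recurrence (which the paper compresses into the phrase ``straightforward'' but which matches your explicit computations, including reading off the correct previous convergents $c_{2k-1}$ and $c_{2k-2}$). The only cosmetic difference is that the paper phrases the induction step as passing from $k-1$ to $k$ whereas you pass from $k$ to $k+1$.
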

\begin{proof}
We use the fact that if $\left[a_{0};a_{1},\ldots,a_{M}\right]$ is
a continued fraction and $\frac{p_{k}}{q_{k}}:=\left[a_{0};a_{1},\ldots,a_{k}\right]$
is its $k$-th convergent written in lowest terms, then for any real
number $x$,
\[
\left[a_{0};a_{1},\ldots,a_{k},x\right]=\frac{xp_{k}+p_{k-1}}{xq_{k}+q_{k-1}},
\]
written in lowest terms.

\medskip{}

(i) We argue by induction on $k$. Assertion (i) is clear for $k=1$.
Assume it holds for $k-1$. Then
\[
c_{2k-1}=\left[5;\left\{ 1,4\right\} ^{\times(k-1)},1\right]=\frac{\frac{1}{2}P_{2k}+P_{2k+1}}{\frac{1}{2}P_{2k-2}+P_{2k-1}}=\frac{\frac{1}{2}P_{2k+2}}{\frac{1}{2}P_{2k}},
\]
and
\[
c_{2k}=\left[5;\left\{ 1,4\right\} ^{\times k}\right]=\frac{P_{2k+1}+2P_{2k+2}}{P_{2k-1}+2P_{2k}}=\frac{P_{2k+3}}{P_{2k+1}}.
\]

The proofs of (ii) and (iii) are then straightforward.\end{proof}
\begin{cor}
\label{cor:c_2k+1 < ... < u_k(2) ...}\renewcommand{\labelenumi}{(\roman{enumi})}
We have
\begin{enumerate}
\item $c_{2}<c_{4}<\ldots<c_{2k}<\ldots<\sigma^{2}<\ldots<c_{2k+1}<\ldots<c_{3}<c_{1}$,
\item $c_{2k+1}<\ldots<u_{k}(2)<u_{k}(1)=v_{k}(6)<v_{k}(7)<\ldots<c_{2k-1}$.
\end{enumerate} %
\end{cor}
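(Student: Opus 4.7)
This corollary is a collection of elementary inequalities among convergents of the periodic continued fraction $\sigma^{2}=[5;1,4,1,4,\ldots]$. The plan is to deduce (i) from the general theory of continued fractions and (ii) from the monotonicity of a continued fraction as a M\"obius function of its last coefficient.

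For (i), I would observe that $c_{k}$ is by construction the $k$-th convergent of $\sigma^{2}$ (with $c_{0}=5$ omitted from the stated chain). The claim then follows from the classical fact that, for any simple continued fraction of an irrational number, the even-indexed convergents strictly increase to the limit and the odd-indexed convergents strictly decrease to it. Alternatively, one can plug the explicit formulas from Lemma~\ref{lem:formulae for c_k, u_k(j), v_k(j)} into each pairwise difference and reduce everything to the Pell identity $P_{n+1}P_{n-1}-P_{n}^{2}=(-1)^{n}$.

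For (ii) the key ingredient is the M\"obius representation
\[
[a_{0};a_{1},\ldots,a_{n},x]=\frac{xp_{n}+p_{n-1}}{xq_{n}+q_{n-1}},
\]
whose derivative in $x$ equals $(-1)^{n-1}/(xq_{n}+q_{n-1})^{2}$ by the standard identity $p_{n}q_{n-1}-p_{n-1}q_{n}=(-1)^{n-1}$. In $v_{k}(j)=[5;\{1,4\}^{\times(k-1)},1,j]$ the variable $j$ sits at position $2k$, so $n=2k-1$ is odd and $v_{k}$ is strictly increasing in $j$, with limit $[5;\{1,4\}^{\times(k-1)},1]=c_{2k-1}$ as $j\to\infty$. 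In $u_{k}(j)=[5;\{1,4\}^{\times(k-1)},1,5,j]$ the variable sits at position $2k+1$, so $n=2k$ is even and $u_{k}$ is strictly decreasing in $j$, with limit $[5;\{1,4\}^{\times(k-1)},1,5]=c_{2k+1}$, the latter equality being the alternative presentation of $c_{2k+1}$ recorded in Definition~\ref{def:pell numbers}. This produces the two monotone tails $c_{2k+1}<\cdots<u_{k}(2)<u_{k}(1)$ and $v_{k}(6)<v_{k}(7)<\cdots<c_{2k-1}$.

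All that remains is to join the tails at the asserted meeting point $u_{k}(1)=v_{k}(6)$. This is the standard absorption identity $[\ldots,a,1]=[\ldots,a+1]$ applied to the terminal block $5,1\leftrightarrow 6$:
\[
u_{k}(1)=[5;\{1,4\}^{\times(k-1)},1,5,1]=[5;\{1,4\}^{\times(k-1)},1,6]=v_{k}(6);
\]
if preferred, the identity can be verified directly from Lemma~\ref{lem:formulae for c_k, u_k(j), v_k(j)}, with both sides reducing to $H_{2k+3}/H_{2k+1}$ after using $H_{n+1}=P_{n+1}+P_{n}$. Concatenating the two monotone chains at this junction yields (ii). I do not foresee any serious obstacle; the only point requiring care is the parity bookkeeping that determines whether $v_{k}$ or $u_{k}$ is increasing or decreasing in $j$.
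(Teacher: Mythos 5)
Your proof is correct and takes essentially the same route as the paper's one-line argument, which likewise invokes the classical monotonicity of convergents for part (i) and reduces part (ii) to Lemma~\ref{lem:formulae for c_k, u_k(j), v_k(j)} together with the Pell identity $P_{2k+1}P_{2k-1}-P_{2k}^{2}=1$ (which is just $p_nq_{n-1}-p_{n-1}q_n=(-1)^{n-1}$ in disguise). Your packaging via the M\"obius form and the absorption identity $[\ldots,5,1]=[\ldots,6]$ is a clean way to unpack what the paper leaves implicit, and the parity bookkeeping ($j$ at position $2k$ for $v_k$, position $2k+1$ for $u_k$) is carried out correctly.
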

\begin{proof}
(i) It is a property of convergents that the even convergents (respectively
odd convergents) of a real number $a$ form an increasing sequence
(respectively decreasing sequence) converging to $a$.

(ii) This follows from Lemma \ref{lem:formulae for c_k, u_k(j), v_k(j)}
and the identity $P_{2k+1}P_{2k-1}-P_{2k}^{2}=1$ for all $k\geqslant1$.\end{proof}
\begin{lem}
\label{lem:check three values}To prove that a quadratic identity
of the form
\[
Q(s):=\sum_{i,j\geqslant0}a_{ij}P_{s+i}P_{s+j}+\sum_{j\geqslant0}b_{j}P_{2s+j}+(-1)^{s}c=0
\]
holds for all $s\geqslant0$, it suffices to check it for three distinct
values of $s$. Moreover, if $Q$ is homogeneous and linear (that
is, $c=0$ and $a_{ij}=0$ for all $i,j$), it suffices to check it
for two distinct values of $s$.\end{lem}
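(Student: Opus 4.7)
The strategy is to use a Binet-type formula to put $Q(s)$ into a closed three-term exponential form, then invoke linear algebra to conclude.

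With $\sigma := 1+\sqrt{2}$ and $\bar\sigma := 1-\sqrt{2}$, Binet's formula gives $P_s = (\sigma^s - \bar\sigma^s)/(2\sqrt{2})$, while the crucial relation $\sigma\bar\sigma = -1$ yields $(\sigma\bar\sigma)^s = (-1)^s$. A direct expansion gives
\[
P_{s+i}P_{s+j} = \tfrac{1}{8}\bigl[\sigma^{2s+i+j} + \bar\sigma^{2s+i+j} - (-1)^s(\sigma^i\bar\sigma^j + \bar\sigma^i\sigma^j)\bigr], \quad P_{2s+j} = \tfrac{1}{2\sqrt{2}}\bigl(\sigma^{2s+j} - \bar\sigma^{2s+j}\bigr).
\]
Substituting into $Q(s)$ and collecting by the three sequences $\sigma^{2s}$, $\bar\sigma^{2s}$, $(-1)^s$, I would find real constants $A, B, C$ (depending on the $a_{ij}, b_j, c$ but not on $s$) with
\[
Q(s) = A\,\sigma^{2s} + B\,\bar\sigma^{2s} + C\,(-1)^s.
\]
In the homogeneous linear case ($a_{ij} = 0$ and $c = 0$) only the middle sum contributes, so automatically $C = 0$ and $Q(s) = A\sigma^{2s} + B\bar\sigma^{2s}$.

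Now $Q \equiv 0$ is equivalent to $A = B = C = 0$. Evaluating at three distinct points $s_0 < s_1 < s_2$ gives the linear system $M\,(A, B, C)^\top = 0$, where
\[
M = \begin{pmatrix} \sigma^{2s_0} & \bar\sigma^{2s_0} & (-1)^{s_0} \\ \sigma^{2s_1} & \bar\sigma^{2s_1} & (-1)^{s_1} \\ \sigma^{2s_2} & \bar\sigma^{2s_2} & (-1)^{s_2} \end{pmatrix},
\]
and the task reduces to $\det M \neq 0$. Setting $p = s_1 - s_0 > 0$, $q = s_2 - s_1 > 0$ and using $\sigma^2\bar\sigma^2 = 1$, a straightforward expansion gives
\[
\det M = (-1)^{s_0}\bigl[(-1)^p\, T(p+q) - (-1)^{p+q}\, T(p) - T(q)\bigr],
\]
where $T(x) := \sigma^{2x} - \bar\sigma^{2x} = 2\sinh(x\ln\sigma^2)$ is strictly positive and strictly increasing for $x > 0$. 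The $\sinh$-addition formula yields $T(p+q) = T(p)\cosh(q\ln\sigma^2) + T(q)\cosh(p\ln\sigma^2) > T(p) + T(q)$, and a check of the four parity cases for $(p, q)$ shows that the bracket never vanishes. In the homogeneous linear case the analogous $2 \times 2$ determinant is $\sigma^{2s_0}\bar\sigma^{2s_1} - \bar\sigma^{2s_0}\sigma^{2s_1} = (-1)^{s_0}\bigl(\bar\sigma^{2(s_1-s_0)} - \sigma^{2(s_1-s_0)}\bigr)$, manifestly nonzero.

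The main obstacle is the last step, namely establishing $\det M \neq 0$ for \emph{arbitrary} three distinct $s_i$ rather than merely consecutive ones. Simply remarking that $\sigma^{2s}$, $\bar\sigma^{2s}$, $(-1)^s$ are linearly independent as sequences does not by itself guarantee invertibility of the evaluation matrix at three prescribed indices; one genuinely needs the strict super-additivity $T(p+q) > T(p) + T(q)$ (equivalently $\cosh - 1 > 0$) together with the parity case analysis to rule out accidental cancellations.
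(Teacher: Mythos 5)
Your proof is correct and, unlike the paper (which simply refers to McDuff--Schlenk's Proposition 3.2.3 and adapts its Lemma 3.2.2 into the subsequent linearization lemma $P_{s+i}P_s=\sum_j a_{ij}P_{2s+j}+(-1)^sc_i$), it is self-contained. Both routes boil down to the same structural fact, namely that $Q(s)$ lies in the $3$-dimensional space spanned by $\sigma^{2s}$, $\bar\sigma^{2s}$ and $(-1)^s$ (equivalently $P_{2s}$, $P_{2s+1}$, $(-1)^s$), but you get there directly from Binet's formula and $\sigma\bar\sigma=-1$ rather than through the integer-coefficient linearization identities. Where the paper is content to defer the crucial final step to MS, you address it head-on: you compute the evaluation determinant, arrive at
$(-1)^{s_0}\bigl[(-1)^pT(p+q)-(-1)^{p+q}T(p)-T(q)\bigr]$ with $T(x)=\sigma^{2x}-\bar\sigma^{2x}$, and rule out vanishing by monotonicity and the strict super-additivity $T(p+q)=T(p)\cosh(q\ln\sigma^2)+T(q)\cosh(p\ln\sigma^2)>T(p)+T(q)$. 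The four parity cases do check out (two give a strictly positive bracket, two a strictly negative one), and the $2\times2$ homogeneous case is handled correctly. Your closing remark is well-taken: for three \emph{consecutive} values the claim would follow trivially from the order-$3$ linear recurrence $Q(s+3)=5Q(s+2)+5Q(s+1)-Q(s)$, but the lemma as stated allows arbitrary distinct indices and therefore genuinely requires the nonvanishing determinant argument you supply; merely invoking linear independence of the three basis sequences would not suffice. What the paper's route buys is that all intermediate coefficients stay rational; what yours buys is a complete, short proof that does not rely on the reader chasing the MS reference.
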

\begin{proof}
The proof is similar to the proof of Proposition 3.2.3 in \cite{MS}.
The only part of the proof which is slightly different is the proof
of their Lemma 3.2.2, which we have adapted in the following Lemma
\ref{lem:lemma before check for three values}.\end{proof}
\begin{lem}
\label{lem:lemma before check for three values}For all $i\geqslant0$,
there exists a finite number of rational coefficients $a_{ij},c_{i}$
such that the identity
\[
P_{s+i}P_{s}=\sum_{j\geqslant0}a_{ij}P_{2s+j}+(-1)^{s}c_{i}
\]
holds for all $s\geqslant0$. Moreover, $c_{i}=-\sum_{j\geqslant0}a_{ij}P_{j}$.\end{lem}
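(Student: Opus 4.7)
The plan is to obtain a clean closed-form expression for $P_{s+i}P_s$ via the Binet representation, from which both conclusions of the lemma follow at once. Set $\alpha:=1+\sqrt{2}$ and $\beta:=1-\sqrt{2}$, so that $\alpha+\beta=2$, $\alpha\beta=-1$, and $P_n=(\alpha^n-\beta^n)/(2\sqrt{2})$ for all $n\geqslant 0$. Expanding the product gives
\[
P_{s+i}P_s \;=\; \frac{\alpha^{2s+i}+\beta^{2s+i}-(\alpha\beta)^s(\alpha^i+\beta^i)}{8} \;=\; \frac{H_{2s+i}-(-1)^s H_i}{4},
\]
where I use that $H_n=(\alpha^n+\beta^n)/2$ is the half-companion Pell number (one checks that this Binet form satisfies the recurrence and the initial values $H_0=H_1=1$). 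The crucial point is that the sign $(-1)^s$ appears precisely because $\alpha\beta=-1$, which is why the Pell setting produces the same shape of identity as the Fibonacci setting treated in \cite{MS}.

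To turn this into the required expansion in the $P_{2s+j}$, I would use the elementary relation $H_n=P_{n+1}-P_n$, which follows directly from $P_{n+1}=2P_n+P_{n-1}$. Applying it to $H_{2s+i}$ yields the explicit formula
\[
P_{s+i}P_s \;=\; \tfrac{1}{4}\,P_{2s+i+1} \,-\, \tfrac{1}{4}\,P_{2s+i} \,-\, (-1)^s\,\tfrac{H_i}{4},
\]
so one may take $a_{i,i+1}=\tfrac{1}{4}$, $a_{i,i}=-\tfrac{1}{4}$, all other $a_{ij}=0$, and $c_i=-H_i/4$, all rational, with only finitely many nonzero coefficients $a_{ij}$.

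For the moreover part, the observation is that substituting $s=0$ in the stated identity makes the left-hand side vanish (since $P_0=0$), which forces $c_i=-\sum_{j\geqslant 0}a_{ij}P_j$ for any valid choice of coefficients. In the explicit formula above this reduces to the trivial check $-H_i/4=\tfrac{1}{4}P_i-\tfrac{1}{4}P_{i+1}$, which is just $H_i=P_{i+1}-P_i$ once more.

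If one prefers to stay within the combinatorial framework of the paper, an alternative plan is induction on $i$ using $P_{s+i+1}=2P_{s+i}+P_{s+i-1}$: verify the base cases $i=0,1$ as quadratic-linear identities in $s$, which by Lemma~\ref{lem:check three values} need only be checked at three values of $s$; then the inductive step is a straightforward linear combination of the hypotheses for $i$ and $i-1$, preserving both finiteness and the $(-1)^s c$ normal form. The only mild bookkeeping is tracking the sign factor $(-1)^s$, which propagates automatically through the linear recurrence. I do not foresee a genuine obstacle in either route.
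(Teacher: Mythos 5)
Your proof is correct, and it takes a genuinely different route from the paper. The paper's proof reduces to two base cases ($i=0$ and $i=2$) via the three-term recurrence $P_k=2P_{k-1}+P_{k-2}$, and then establishes the two explicit quadratic identities $4P_s^2=P_{2s+1}-P_{2s}-(-1)^s$ and $8P_{s+2}P_s=P_{2s+1}+P_{2s+3}-6(-1)^s$ by hand, using Catalan-type Pell identities such as $P_k^2=P_{k+1}P_{k-1}-(-1)^k$; the claim about $c_i$ is then checked directly at $i=0,2$. Your Binet-formula computation instead produces a single closed form valid for all $i$ at once,
\[
P_{s+i}P_s=\tfrac14\bigl(P_{2s+i+1}-P_{2s+i}\bigr)-(-1)^s\,\tfrac{H_i}{4},
\]
which subsumes both of the paper's base identities (one recovers them by setting $i=0,2$ and using $P_{2s+3}=2P_{2s+2}+P_{2s+1}$). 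This is cleaner: it makes the origin of the $(-1)^s$ factor transparent (it is exactly $(\alpha\beta)^s$ with $\alpha\beta=-1$), and it exhibits the finitely supported rational coefficients explicitly rather than asserting existence via induction. Your observation that the ``moreover'' clause $c_i=-\sum_j a_{ij}P_j$ is just the identity evaluated at $s=0$ (using $P_0=0$) is also slicker than the paper's ``one easily checks.'' What the paper's approach buys is that it stays entirely within integer arithmetic and within the combinatorial toolkit (Lemma \ref{lem:check three values}) already developed for later use, so it fits the surrounding exposition, whereas the Binet route passes through $\sqrt{2}$; but mathematically both are complete, and yours is shorter. Your sketched induction alternative would essentially reproduce the paper's argument.
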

\begin{proof}
In view of the relation 
\begin{equation}
P_{k}=2P_{k-1}+P_{k-2}\label{eq:relation Pell numbers}
\end{equation}
it suffices to prove it for two distinct values of $i$. We claim
that for $i=0$ and $i=2$ we have the following relations for all
$s\geqslant0$:\foreignlanguage{french}{
\begin{eqnarray}
4P_{s}^{2} & = & P_{2s+1}-P_{2s}-(-1)^{s},\label{eq:eq for i=00003D0}\\
8P_{s+2}P_{s} & = & P_{2s+1}+P_{2s+3}-6(-1)^{s},\label{eq:eq for i=00003D2}
\end{eqnarray}
}To prove this, we use the two well-known identities for Pell numbers,\foreignlanguage{french}{
\begin{eqnarray}
P_{k}^{2} & = & P_{k+1}P_{k-1}-(-1)^{k},\label{eq:eq P_k^2}\\
P_{2k-1} & = & P_{k}^{2}+P_{k-1}^{2}.\label{eq:eq P_2k-1}
\end{eqnarray}
}

We start with $i=0$. The relation is true for $s=0$. Now, if $s\geqslant1$,
applying (\ref{eq:relation Pell numbers}) to (\ref{eq:eq for i=00003D0})
gives
\[
8P_{s}^{2}=P_{s+1}^{2}+2P_{s}^{2}+P_{s-1}^{2}-2(-1)^{s}.
\]
Using again (\ref{eq:relation Pell numbers}), we get
\begin{equation}
6P_{s}^{2}=P_{s+1}^{2}+P_{s-1}^{2}-2(-1)^{s}=4P_{s}^{2}+4P_{s}P_{s-1}+2P_{s-1}^{2}-2(-1)^{s}.\label{eq:formula_to_prove the_case i=00003D2}
\end{equation}
Finally, applying once more (\ref{eq:relation Pell numbers}), we
obtain
\[
2P_{s}^{2}=2P_{s-1}\left(2P_{s}+P_{s-1}\right)-2(-1)^{s}=2P_{s+1}P_{s-1}-2(-1)^{s},
\]
which is true by (\ref{eq:eq P_k^2}).

For $i=2$, applying (\ref{eq:eq P_k^2}) to the LHS of (\ref{eq:eq for i=00003D2}),
and (\ref{eq:eq P_2k-1}) to the RHS, we obtain
\[
8P_{s+1}^{2}+8(-1)^{s+1}=P_{s+2}^{2}+2P_{s+1}^{2}+P_{s}^{2}-6(-1)^{s},
\]
which is equivalent to
\[
6P_{s+1}^{2}=P_{s+2}^{2}+P_{s}^{2}-2(-1)^{s+1},
\]
which is true by (\ref{eq:formula_to_prove the_case i=00003D2}).

Finally, we easily check that the formula for $c_{0}$ and $c_{2}$
holds.\end{proof}
\begin{cor}
\label{cor:properties of u_k(j), v_k(j)}\renewcommand{\labelenumi}{(\roman{enumi})}
For all $k,j\geqslant1$, we have:
\begin{enumerate}
\item If we abbreviate $u:=u_{k}(j)=:\frac{p}{q}$, then $q^{2}\left(u^{2}-6u+1\right)=j^{2}+6j+1$, \item If $v:=v_{k}(j)=\frac{p}{q}$, then $q^{2}\left(v^{2}-6v+1\right)=j^{2}-4j-4$, \item If we denote $\frac{p}{q}:=u_{k}(2)$, then $p^{2}-6pq+q^{2}-16=1$, \\ and if $\frac{p}{q}:=u_{k}(3)$, then $p^{2}-6pq+q^{2}-12=16$, \item If we denote $\frac{p}{q}:=v_{k}(6)$, then $5p^{2}-30pq+5q^{2}-32=8$,\\ and if $\frac{p}{q}:=v_{k}(7)$, then $3p^{2}-18pq+3q^{2}-28=23$. \item $P_{2m-k}=(-1)^{k+1}\left(P_{k}H_{2m}-H_{k}P_{2m}\right)$ for all $m\geqslant0$ and $k\leqslant2m$.
\end{enumerate}%
\end{cor}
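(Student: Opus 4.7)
The plan is to derive (i) and (ii) from the explicit formulas in Lemma \ref{lem:formulae for c_k, u_k(j), v_k(j)}, reduce the resulting Pell-number identities to finite verifications using Lemma \ref{lem:check three values}, obtain (iii) and (iv) as specializations, and handle (v) separately via Binet's formula (or induction).

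For (i), I write $u:=u_k(j)=p/q$ with $p=\tfrac{1}{2}(jP_{2k+4}+P_{2k+2})$ and $q=\tfrac{1}{2}(jP_{2k+2}+P_{2k})$, so that $q^2(u^2-6u+1)=p^2-6pq+q^2$. Expanding, this is a polynomial in $j$ of degree two whose coefficients are integer combinations of products $P_{2k+i}P_{2k+j}$. Matching this against $j^2+6j+1$ leads to the three identities
\[
P_{2k+4}^2-6P_{2k+4}P_{2k+2}+P_{2k+2}^2=4,\qquad P_{2k+2}^2-6P_{2k+2}P_{2k}+P_{2k}^2=4,
\]
\[
P_{2k+4}P_{2k+2}+P_{2k+2}P_{2k}-3P_{2k+4}P_{2k}-3P_{2k+2}^2=12.
\]
Each of these is of the form covered by Lemma \ref{lem:check three values} (after restoring a $(-1)^s$ factor if necessary, with $s=2k$); it therefore suffices to verify each one for three small values of $k$, which is a direct numerical check. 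For (ii), the argument is completely parallel: writing $v=v_k(j)=p/q$ with $p=\tfrac{j}{2}P_{2k+2}+P_{2k+1}$ and $q=\tfrac{j}{2}P_{2k}+P_{2k-1}$, one expands $p^2-6pq+q^2$ as a quadratic polynomial in $j$ and matches it against $j^2-4j-4$, reducing the whole claim to three Pell identities, each verified by Lemma \ref{lem:check three values} on three samples.

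The assertions (iii) and (iv) are then mere specializations. Indeed, (i) at $j=2$ gives $p^2-6pq+q^2=4+12+1=17$, which is the first identity of (iii), and at $j=3$ gives $p^2-6pq+q^2=9+18+1=28$, which after subtracting $12$ is $16$, so the second identity of (iii). Similarly, (ii) at $j=6$ gives $p^2-6pq+q^2=36-24-4=8$, so $5(p^2-6pq+q^2)=40$, which yields $5p^2-30pq+5q^2-32=8$; at $j=7$, one gets $p^2-6pq+q^2=49-28-4=17$, so $3p^2-18pq+3q^2-28=23$. Thus (iii) and (iv) follow immediately.

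For (v), the cleanest approach uses Binet's formulas $P_n=(\alpha^n-\beta^n)/(\alpha-\beta)$ and $2H_n=\alpha^n+\beta^n$, with $\alpha=1+\sqrt{2}$, $\beta=1-\sqrt{2}$, so $\alpha\beta=-1$. A short computation gives the two product-to-sum identities $2P_kH_{2m}=P_{k+2m}+P_{k-2m}$ and $2H_kP_{2m}=P_{k+2m}-P_{k-2m}$, valid because $(-1)^{2m}=1$; subtracting yields $P_kH_{2m}-H_kP_{2m}=P_{k-2m}$. Since $P_{-n}=(-1)^{n+1}P_n$ (again by Binet, using $\alpha^{-1}=-\beta$), we have $P_{k-2m}=(-1)^{k+1}P_{2m-k}$, which is precisely (v). Alternatively, one can prove (v) by induction on $k\leqslant 2m$, using the recursions $P_n=2P_{n-1}+P_{n-2}$ and $H_n=2H_{n-1}+H_{n-2}$ together with the base cases $k=0,1$; this avoids irrationals but is slightly heavier in bookkeeping. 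The main obstacle throughout is purely clerical: keeping the coefficients of the $j$-expansions correct in (i) and (ii), so that the reductions to Lemma \ref{lem:check three values} are applied to the intended identities.
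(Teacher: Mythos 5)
Your proof is correct, and for parts (i)--(iv) it takes essentially the same route as the paper: express $u_k(j)$ and $v_k(j)$ via Lemma \ref{lem:formulae for c_k, u_k(j), v_k(j)}, translate the claim into $p^2-6pq+q^2 = j^2+6j+1$ (resp.\ $j^2-4j-4$), and reduce to finitely many verifications using Lemma \ref{lem:check three values}. The paper does this tersely (``check for three values of $k$'', presumably symbolically in $j$), whereas you make explicit the expansion in $j$ and the three resulting Pell identities per part; either reading is valid, and you correctly flag the $(-1)^s$ subtlety that arises when the identities are extended from $s=2k$ even to all $s$. A small improvement over the paper: you obtain (iii) and (iv) as immediate numerical specializations of (i) and (ii), while the paper treats them as four independent identities to verify by Lemma \ref{lem:check three values}.

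For (v), you deviate from the paper: the paper invokes Lemma \ref{lem:check three values} once more (``check three even and three odd values''), whereas you give a closed-form derivation via Binet's formulas $P_n = (\alpha^n-\beta^n)/(\alpha-\beta)$, $2H_n = \alpha^n+\beta^n$, $\alpha\beta = -1$, obtaining $P_kH_{2m}-H_kP_{2m} = P_{k-2m}$ and then $P_{-n} = (-1)^{n+1}P_n$. Your computation is correct, and it has the virtue of explaining \emph{why} the identity holds rather than merely confirming it on enough samples; the paper's approach avoids irrationals and stays within the already-established finite-check framework. Both are sound.
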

\begin{proof}
By Lemma \ref{lem:check three values} we only have to check the first
four identities for three values of~$k$. It is easy to see that
they are true for $k=1,2,3$. Similarly, it suffices to check the
last identity for three even and three odd values.\end{proof}
\begin{lem}
\label{lem:phi>psi}\renewcommand{\labelenumi}{(\roman{enumi})}
Set $\varphi(a):=\frac{a+1}{4}$ and $\psi(a)=\sqrt{\frac{a}{2}}$. Then
\begin{enumerate} \item $\varphi\left(u_{k}(j+1)\right)>\psi\left(u_{k}(j)\right)$ for all $k,j\geqslant1$,
\item $\varphi\left(v_{k}(j)\right)>\psi\left(v_{k}(j+1)\right)$ for all $k\geqslant1,j\geqslant6$.
\end{enumerate} %
\end{lem}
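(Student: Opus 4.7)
The plan is to reduce each inequality to an elementary polynomial statement in $j$. First, for $a > -1$ and $b > 0$, squaring shows that $\varphi(a) > \psi(b)$ is equivalent to
\[
a^2 - 6a + 1 > 8(b - a).
\]
Thus (i) becomes $u_k(j+1)^2 - 6u_k(j+1) + 1 > 8(u_k(j) - u_k(j+1))$ and (ii) becomes $v_k(j)^2 - 6v_k(j) + 1 > 8(v_k(j+1) - v_k(j))$. The left-hand sides are already made explicit by Corollary~\ref{cor:properties of u_k(j), v_k(j)}(i)--(ii), so I only need to compute the two differences in closed form from the formulas of Lemma~\ref{lem:formulae for c_k, u_k(j), v_k(j)}.

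For (i), I write $u_k(j) = (jP_{2k+4} + P_{2k+2})/(jP_{2k+2} + P_{2k})$. A direct expansion of $u_k(j) - u_k(j+1)$ produces the numerator $P_{2k+2}^2 - P_{2k+4}P_{2k}$, and the Pell identity $P_{s+2}^2 - P_{s+4}P_s = 4(-1)^s$ (verified at $s = 0, 1, 2$ and extended to all $s$ by Lemma~\ref{lem:check three values}) gives
\[
u_k(j) - u_k(j+1) = \frac{4}{(jP_{2k+2}+P_{2k})\bigl((j+1)P_{2k+2}+P_{2k}\bigr)}.
\]
Combining with Corollary~\ref{cor:properties of u_k(j), v_k(j)}(i) and clearing the common positive denominator reduces (i) to
\[
\bigl((j+1)^2 + 6(j+1) + 1\bigr)\bigl(jP_{2k+2}+P_{2k}\bigr) > 8\bigl((j+1)P_{2k+2}+P_{2k}\bigr).
\]
Expanding with $n := j+1$, the difference of the two sides equals $P_{2k+2}(n^3 + 5n^2 - 13n - 1) + P_{2k}(n-1)(n+7)$; both coefficients are nonnegative for $n \geq 2$ (the cubic equals $1$ at $n = 2$ and has positive derivative there), so the inequality holds for all $j \geq 1$.

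Part (ii) is entirely parallel: starting from $v_k(j) = (jP_{2k+2}+2P_{2k+1})/(jP_{2k}+2P_{2k-1})$ and using the Pell identity $P_{s+2}P_{s-1} - P_{s+1}P_s = 2(-1)^s$ (again checked at three small values of $s$), one finds $v_k(j+1) - v_k(j) = 4/[(jP_{2k}+2P_{2k-1})((j+1)P_{2k}+2P_{2k-1})]$. Combining this with Corollary~\ref{cor:properties of u_k(j), v_k(j)}(ii) reduces (ii) to
\[
(j^2 - 4j - 4)\bigl((j+1)P_{2k}+2P_{2k-1}\bigr) > 8\bigl(jP_{2k}+2P_{2k-1}\bigr),
\]
whose difference is $P_{2k}(j^3 - 3j^2 - 16j - 4) + 2P_{2k-1}(j-6)(j+2)$. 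For $j \geq 6$ the second summand is $\geq 0$, and the cubic equals $8$ at $j = 6$ with positive derivative there, so the sum is strictly positive. The only non-trivial ingredient is verifying the two Pell identities, both of which are immediate from the three-values criterion of Lemma~\ref{lem:check three values}.
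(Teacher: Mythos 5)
Your proof is correct and follows the same strategy as the paper: square $\varphi>\psi$, plug in the closed form from Corollary~\ref{cor:properties of u_k(j), v_k(j)} for $u^{2}-6u+1$ (resp.\ $v^{2}-6v+1$), compute the difference of consecutive $u_{k}$'s (resp.\ $v_{k}$'s) in closed form via Pell identities certified by Lemma~\ref{lem:check three values}, and reduce to a polynomial inequality in $j$ with Pell-number coefficients. Your final step---expanding and checking each coefficient---is more explicit than the paper's one-line bound in part~(i) (LHS $<16<17\leqslant$ RHS), but this extra care pays off in part~(ii): the paper's displayed computation labelled $v'-v$ (with $v:=v_{k}(j)$, $v':=v_{k}(j+1)$) in fact evaluates $v-v'$, producing the negative quantity $-1/\bigl[(\tfrac12 jP_{2k}+P_{2k-1})(\tfrac12(j+1)P_{2k}+P_{2k-1})\bigr]$ and then disposing of $8(v'-v)q^{2}<j^{2}-4j-4$ as ``negative $<$ positive.'' Since $v_{k}$ is increasing in $j$ (as Corollary~\ref{cor:c_2k+1 < ... < u_k(2) ...}\,(ii) records), $v'-v$ is actually positive, so the inequality is not a sign triviality; your computation gets the sign right and your explicit expansion $P_{2k}(j^{3}-3j^{2}-16j-4)+2P_{2k-1}(j-6)(j+2)>0$ handles the borderline case $j=6$, where the two sides of the reduced inequality are $8$ versus something strictly below $8$. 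In short, same method, but your version corrects a small sign slip in the paper's proof of~(ii) while reaching the same conclusion.
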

\begin{proof}
(i) Abbreviate $u:=u_{k}(j+1)$ and $u':=u_{k}(j)$. Due to Corollary~\ref{cor:properties of u_k(j), v_k(j)}\,(i)
we have
\[
u^{2}=\frac{(j+1)^{2}+6(j+1)+1}{q^{2}}+6u-1.
\]
We have to prove that $\frac{u+1}{4}>\sqrt{\frac{u'}{2}}$ which is
equivalent to
\[
u^{2}+2u+1>8u',
\]
which becomes
\[
\frac{(j+1)^{2}+6(j+1)+1}{q^{2}}+8u>8u',
\]
and finally
\begin{equation}
8(u'-u)q^{2}<(j+1)^{2}+6(j+1)+1.\label{eq:8(u'-u)q^2=00003D(j+1)^2+6(j+1)+1}
\end{equation}
By Lemma \ref{lem:formulae for c_k, u_k(j), v_k(j)}\,(ii)
\[
\begin{aligned}u'-u & =\frac{\frac{1}{2}\left(jP_{2k+4}+P_{2k+2}\right)}{\frac{1}{2}\left(jP_{2k+2}+P_{2k}\right)}-\frac{\frac{1}{2}\left((j+1)P_{2k+4}+P_{2k+2}\right)}{\frac{1}{2}\left((j+1)P_{2k+2}+P_{2k}\right)}\\
 & =\frac{(j+1)P_{2k+2}^{2}-(j+1)P_{2k}P_{2k+4}+jP_{2k+4}P_{2k}-jP_{2k+2}^{2}}{\left((j+1)P_{2k+2}+P_{2k}\right)\left(jP_{2k+2}+P_{2k}\right)}\\
 & =\frac{P_{2k+2}^{2}-P_{2k}P_{2k+4}}{\left((j+1)P_{2k+2}+P_{2k}\right)\left(jP_{2k+2}+P_{2k}\right)}\\
 & =\frac{4}{\left((j+1)P_{2k+2}+P_{2k}\right)\left(jP_{2k+2}+P_{2k}\right)}
\end{aligned}
\]
since $P_{2k+2}^{2}-P_{2k}P_{2k+4}=4$ for all $k\geqslant1$ by Lemma
\ref{lem:check three values}. Inserting this in (\ref{eq:8(u'-u)q^2=00003D(j+1)^2+6(j+1)+1})
gives 
\[
\frac{8\left((j+1)P_{2k+2}+P_{2k}\right)}{jP_{2k+2}+P_{2k}}<(j+1)^{2}+6(j+1)+1,
\]
since $q^{2}=\frac{1}{4}\left((j+1)P_{2k+2}+P_{2k}\right)^{2}$. This
inequality is now true for all $k,j\geqslant1$ since the left hand
side is smaller than 16, and the right hand side is bigger than 17.\medskip{}

(ii) Abbreviate $v:=v_{k}(j)$ and $v':=v_{k}(j+1)$. Due to Corollary
\ref{cor:properties of u_k(j), v_k(j)}\,(ii), we have
\[
v^{2}=\frac{j{}^{2}-4j-4}{q^{2}}+6v-1.
\]
We have to prove that $\frac{v+1}{4}>\sqrt{\frac{v'}{2}}$ which is
equivalent to
\[
v^{2}+2v+1>8v',
\]
which becomes
\[
\frac{j^{2}-4j-4}{q^{2}}+8v>8v',
\]
and finally
\begin{equation}
8(v'-v)q^{2}<j^{2}-4j-4.\label{eq:8(v'-v)q^2=00003Dj^2-4j-4}
\end{equation}
By Lemma \ref{lem:formulae for c_k, u_k(j), v_k(j)}\,(iii)
\[
\begin{aligned}v'-v & =\frac{\frac{1}{2}jP_{2k+2}+P_{2k+1}}{\frac{1}{2}jP_{2k}+P_{2k-1}}-\frac{\frac{1}{2}(j+1)P_{2k+2}+P_{2k+1}}{\frac{1}{2}(j+1)P_{2k}+P_{2k-1}}\\
 & =\frac{1}{2}\frac{jP_{2k-1}P_{2k+2}+(j+1)P_{2k}P_{2k+1}-jP_{2k}P_{2k+1}-(j+1)P_{2k-1}P_{2k+2}}{\left(\frac{1}{2}jP_{2k}+P_{2k-1}\right)\left(\frac{1}{2}(j+1)P_{2k}+P_{2k-1}\right)}\\
 & =\frac{1}{2}\frac{P_{2k}P_{2k+1}-P_{2k-1}P_{2k+2}}{\left(\frac{1}{2}jP_{2k}+P_{2k-1}\right)\left(\frac{1}{2}(j+1)P_{2k}+P_{2k-1}\right)}\\
 & =\frac{-1}{\left(\frac{1}{2}jP_{2k}+P_{2k-1}\right)\left(\frac{1}{2}(j+1)P_{2k}+P_{2k-1}\right)}
\end{aligned}
\]
since $P_{2k}P_{2k+1}-P_{2k-1}P_{2k+2}=-2$ for all $k\geqslant1$
by Lemma \ref{lem:check three values}. Inserting this into (\ref{eq:8(v'-v)q^2=00003Dj^2-4j-4})
gives 
\[
-\frac{8\left(\frac{1}{2}jP_{2k}+P_{2k-1}\right)}{\frac{1}{2}(j+1)P_{2k}+P_{2k-1}}<j^{2}-4j-4,
\]
since $q^{2}=\left(\frac{1}{2}jP_{2k}+P_{2k-1}\right)^{2}$. This
inequality is now true for all $k\geqslant1,j\geqslant6$ since the
left hand side is negative, and the right hand side is positive.\end{proof}
\begin{defn}
A point $a\in\left[\sigma^{2},6\right]$ is said to be \emph{regular}
if for all $(d,e;m)\in\mathcal{E}$ such that $l(m)=l(a)$, it holds
that $\mu(d,e;m)(a)\leqslant\frac{a+1}{4}$.\end{defn}
\begin{prop}
\label{prop:if points are regular then c(a)=00003D(a+1)/4}Assume
that the points\medskip{}

$c_{2k-1}$ for all $k\geqslant1$,

$u_{k}(j)$ for all $k\geqslant1,j\geqslant2$,

$v_{k}(j)$ for all $k\geqslant1,j\geqslant6$\medskip{}

\noindent are regular. Then $c(a)=\frac{a+1}{4}$ on $\left[\sigma^{2},6\right]$.\end{prop}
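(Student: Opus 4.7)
The approach is a proof by contradiction. Suppose there exists $a_\ast \in [\sigma^{2},6]$ with $c(a_\ast) > \frac{a_\ast + 1}{4}$. By Proposition~\ref{prop:characterization of c(a)} there is a class $(d,e;m) \in \mathcal{E}$ with $\mu(d,e;m)(a_\ast) > \frac{a_\ast+1}{4}$. Let $I$ be the maximal open interval containing $a_\ast$ on which $\mu(d,e;m)(z) > \sqrt{z/2}$; since $\sqrt{z/2} \leq \frac{z+1}{4}$ throughout $[\sigma^{2},6]$, the interval $I$ is contained in $[\sigma^{2},6]$. By Lemma~\ref{lem:there exists a unique a_0 center of the class} there is a unique center $a_0 \in I$ with $l(a_0) = l(m)$.

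I would first transfer the obstruction from $a_\ast$ to the center $a_0$. By Proposition~\ref{prop:structure of the obstructions mu}, $\mu(d,e;m)$ consists of exactly two linear pieces on $I$ joined at $a_0$, with the slope dropping by $m_M q/(d+e) > 0$ when crossing $a_0$. Since $\mu(d,e;m) = \sqrt{z/2} \leq \frac{z+1}{4}$ at each endpoint of $I$, a short case analysis on the signs of the two slopes of the piecewise linear function $\mu(d,e;m)(z) - \frac{z+1}{4}$ shows that, under the hypothesis that this function is strictly positive at some point of $I$, its left slope must be $\geq 0$ and its right slope $\leq 0$; hence its maximum on $\overline{I}$ is attained at $a_0$. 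Therefore $\mu(d,e;m)(a_0) > \frac{a_0+1}{4}$, so that $a_0 \in [\sigma^{2},6]$ is not a regular point.

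The main obstacle is to pin down $a_0$ as one of the points $c_{2k-1}$, $u_k(j)$, or $v_k(j)$; combined with the hypothesis of the proposition this will yield the sought contradiction. Here I would combine three ingredients: Proposition~\ref{prop:first estimates of d and lambda^2}, which gives $d^{2}(a_0^{2} - 6 a_0 + 1) < 4 a_0 \leq 24$ and so, writing $a_0 = p/q$ in lowest terms, forces $p^{2} - 6pq + q^{2} < 24 q^{2}/d^{2}$ and in particular requires $a_0$ to be very close to $\sigma^{2}$; the block-structure results (Lemma~\ref{lem:parallel blocks}, Corollary~\ref{cor:an obstructive class (d,d-1;m) has the same blcoks}, Lemma~\ref{lem:parallel blocks 2}) which tie the entries of $m$ tightly to the block lengths $l_0, l_1, \ldots, l_N$ of the continued fraction expansion of $a_0$; and the Diophantine identities of Corollary~\ref{cor:properties of u_k(j), v_k(j)}. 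Using that $\sigma^{2} = [5; 1, 4, 1, 4, \ldots]$, a block-by-block analysis tracking how $p^{2} - 6pq + q^{2}$ evolves as one extends the continued fraction, governed by Corollary~\ref{cor:c_2k+1 < ... < u_k(2) ...}, should leave only the three terminating shapes defining $c_{2k-1}$, $u_k(j)$, and $v_k(j)$ as admissible, with Lemma~\ref{lem:phi>psi} ruling out the ``gap'' intervals between consecutive listed points (by certifying there that $\sqrt{z/2}$ stays strictly below $\frac{z+1}{4}$ throughout, so no obstructive interval $I$ can be wedged inside such a gap without its center falling on a listed point).

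Having identified $a_0$ as one of $c_{2k-1}$, $u_k(j)$, or $v_k(j)$, the hypothesis of the proposition would give $\mu(d,e;m)(a_0) \leq \frac{a_0+1}{4}$, contradicting the strict inequality established in the second paragraph. Hence no obstructive class exists, proving $c(a) \leq \frac{a+1}{4}$ on $[\sigma^{2},6]$; the matching lower bound $c(a) \geq \frac{a+1}{4}$ follows from the class $(2,2;2,1^{\times 5}) \in \mathcal{E}$ of Lemma~\ref{lem:finite list of elements in E_M for M leq 7}, which realizes precisely the constraint $\frac{a+1}{4}$ on this interval.
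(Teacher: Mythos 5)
Your first two paragraphs are sound: it is correct that by maximality of $I$, the piecewise linear function $\mu(d,e;m)(z)-\frac{z+1}{4}$ is non-positive at the endpoints, and a two-piece piecewise linear function that is non-positive at both endpoints but positive somewhere must be positive at its unique breakpoint $a_0$. So you correctly transfer the obstruction to the center $a_0$, and conclude that $a_0$ is a non-regular point.

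The gap is in your third paragraph, and it is serious. You claim that the constraints from Proposition~\ref{prop:first estimates of d and lambda^2}, the block-structure lemmas, and the Diophantine identities force the center $a_0$ to be one of the listed points $c_{2k-1}$, $u_k(j)$, $v_k(j)$. This is not true, and no amount of block-by-block analysis will make it true: an arbitrary class $(d,e;m)\in\mathcal{E}$ can have its center at essentially any rational point in $(\sigma^2,6)$. What \emph{is} true is that for such classes the peak value $\mu(d,e;m)(a_0)$ cannot exceed $\frac{a_0+1}{4}$ — but that is precisely the conclusion, not a lemma you get to use. In other words you have reduced the proposition to ``$a_0$ is non-regular'' and are then trying to show ``$a_0$ is one of the finitely many points that were \emph{assumed} regular,'' which is a stronger (and false) claim than what is needed. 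Your invocation of Lemma~\ref{lem:phi>psi} to ``rule out the gaps'' also misreads that lemma: it compares $\varphi$ at one listed point with $\psi$ at the \emph{next} listed point; it does not say anything about the gap between $\sqrt{z/2}$ and $\frac{z+1}{4}$ for intermediate $z$.

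The paper's actual argument avoids this dead end with a clever choice of the contradiction witness: it takes $s_0$ to be the \emph{largest} point where $c$ has a concave kink. This specific choice guarantees that $c=\mu(d,e;m)$ on a right-neighborhood of $s_0$, so monotonicity of $c$ forces the \emph{right slope} of $\mu$ to be nonnegative — a fact your $a_0$ need not enjoy, since Proposition~\ref{prop:structure of the obstructions mu} allows $\mu$ to decrease to the right of its center. That nonnegative right slope, together with Lemma~\ref{lem:phi>psi} (which gives $\varphi(u_-)>\psi(u_+)$ for the two adjacent listed points bracketing $s_0$), implies $\mu(u_+)\geq\mu(s_0)>\sqrt{u_+/2}$, hence $u_+\in I$ and $l(u_+)>l(s_0)$. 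But the continued-fraction structure gives $l(z)>l(u_+)$ for every $z\in\left(u_-,u_+\right)$, including $z=s_0$ — a contradiction with no need to classify $s_0$ itself. Your outline omits this maximality device entirely and would have to be rebuilt around it.
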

\begin{proof}
Assume by contradiction that $c(a_{0})>\frac{a_{0}+1}{4}$ for some
$a_{0}\in\left[\sigma^{2},6\right]$. Since for all $a\in\left]\sigma^{2},6\right]$,
we have $c(a)\geqslant\frac{a+1}{4}>\sqrt{\frac{a}{2}}$, the function
$c(a)$ is piecewise linear on $\left]\sigma^{2},6\right]$ by Corollary
\ref{cor:set of mu st mu=00003Dc is finite}. Let $S\subset\left]\sigma^{2},6\right[$
be the set of non-smooth points of $c$ on $\left]\sigma^{2},6\right[$.
Decompose this set as $S=S_{+}\cup S_{-}$, where $S_{+}$ (respectively
$S_{-}$) consists of the points $s\in S$ near which $c$ is concave
(respectively convex). Since $c(s)>\frac{s+1}{4}$ for all for all
$s\in S_{+}$, the biggest point of $S$ is in $S_{-}$ because $c(a)=\frac{a+1}{4}$
for $a\in\left[5\frac{12}{13},6\right]$ by Proposition \ref{prop:c(a)=00003D(a+1)/4 on [5+12/13,6]}.
And since $c\left(\sigma^{2}\right)=\frac{\sigma^{2}+1}{4}$, it follows
that the set $S_{+}$ is non-empty. Let $s_{0}=\max S_{+}$. Then
$s_{0}\in\left]\sigma^{2},6\right[$. By Corollary \ref{cor:set of mu st mu=00003Dc is finite}\,(i)
there exists $(d,e;m)\in\mathcal{E}$ and $\varepsilon>0$ such that
\begin{equation}
c(z)=\mu(d,e;m)(z)\label{eq:c(z)=00003Dmu(d,e;m)(z)}
\end{equation}
on $\left[s_{0},s_{0}+\varepsilon\right[$. Abbreviate $\mu(z):=\mu(d,e;m)(z)$.
Then, $\mu\left(s_{0}\right)=c\left(s_{0}\right)>\frac{s_{0}+1}{4}>\sqrt{\frac{s_{0}}{2}}$.
Let $I$ be the maximal open interval containing $s_{0}$ on which
$\mu(z)>\sqrt{\frac{z}{2}}$ for all $z\in I$. By Lemma \ref{lem:there exists a unique a_0 center of the class},
there exists a unique $s'\in I$ with $l(m)=l(s')$, and $l(m)<l(z)$
for all other $z\in I$. Moreover, by Proposition~\ref{prop:structure of the obstructions mu},
the constraint $\mu(z)$ is given by two linear functions on $I$:
\[
\mu(z)=\left\{ \begin{array}{cl}
\alpha+\beta z & \textrm{if }z\leqslant s',z\in I,\\
\alpha'+\beta'z & \textrm{if }z\geqslant s',z\in I.
\end{array}\right.
\]
Thus, $s'$ is the only non-smooth point of $\mu$ on $I$. But since
$s_{0}\in S_{+}$ and $\mu\leqslant c$, $s_{0}$ is also a non-smooth
point of $\mu$, and so $s'=s_{0}$. Now, since $c$ is nondecreasing
and by (\ref{eq:c(z)=00003Dmu(d,e;m)(z)}), we see that $\beta'\geqslant0$. 

Let $k\geqslant1$ be such that $s_{0}\in\left[c_{2k+1},c_{2k-1}\right]$.
Since $c_{2k+1}$ and $c_{2k-1}$ are regular by assumption, we have
$s_{0}\in\left]c_{2k+1},c_{2k-1}\right[$. Notice that $u_{k}(j)\rightarrow c_{2k+1}$
and $v_{k}(j)\rightarrow c_{2k-1}$ as $j\rightarrow\infty$. Let
$u_{-},u_{+}$ be the two points from the sequence
\[
c_{2k+1}<\ldots<u_{k}(2)<u_{k}(1)=v_{k}(6)<v_{k}(7)<\ldots<c_{2k-1}
\]
of Corollary \ref{cor:c_2k+1 < ... < u_k(2) ...}\,(ii) such that
$s_{0}\in\left[u_{-},u_{+}\right]$. Since $u_{-}$ and $u_{+}$ are
regular by assumption, we must have $s_{0}\in\left]u_{-},u_{+}\right[$.
But then Lemma \ref{lem:phi>psi} shows that $\mu(s_{0})>\frac{s_{0}+1}{4}>\frac{u_{-}+1}{4}=\varphi\left(u_{-}\right)>\psi\left(u_{+}\right)=\sqrt{\frac{u_{+}}{2}}$
. And since $\beta'\geqslant0$, we find that $\mu\left(u_{+}\right)\geqslant\mu\left(s_{0}\right)>\sqrt{\frac{u_{+}}{2}}$,
and thus $u_{+}\in I$, and so $l\left(u_{+}\right)>l\left(s_{0}\right)$.
But, for all $z\in\left]u_{-},u_{+}\right[$ we have $l(z)>l\left(u_{-}\right)$
and $l(z)>l\left(u_{+}\right)$. In particular, $l\left(s_{0}\right)>l\left(u_{+}\right)$,
which is a contradiction.\end{proof}
\begin{lem}
\label{lem:u_k(j) regular}The points $u_{k}(j)$ with $k\geqslant1,j\geqslant2$
are regular.\end{lem}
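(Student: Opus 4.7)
The plan is to argue by contradiction: suppose there exists an obstructive class $(d,e;m)\in\mathcal{E}$ at $a:=u_k(j)=p/q$ with $l(m)=l(a)$ and $\mu(d,e;m)(a)>\frac{a+1}{4}$. By Lemma~\ref{lem:e=00003Dd or e=00003Dd-1} we may restrict to $e=d$ or $e=d-1$, and by Proposition~\ref{prop:first estimates of d and lambda^2}(i) combined with Corollary~\ref{cor:properties of u_k(j), v_k(j)}(i), the denominator of the bound becomes explicit:
\[
d \;<\; \frac{2\sqrt{a}}{\sqrt{a^{2}-6a+1}} \;=\; \frac{2q\sqrt{a}}{\sqrt{j^{2}+6j+1}}
\]
in the integer case (with the analogous bound involving $\sqrt{2a}$ in the half-integer case). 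Since $a\in[\sigma^{2},6]$ we obtain a clean numerical upper bound on $d$ depending essentially on $q/(j+3)$.

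Next I would extract the shape of $m$ using Lemma~\ref{lem:first properties of mu}: part (iii) gives $\sum\varepsilon_{i}^{2}<1$ (resp. $<\tfrac{1}{2}$), and part (iv) provides the exact value of $-\sum\varepsilon_{i}$ in terms of $y(a)-1/q$. Using $y(a)(a+1+2\sqrt{2a})=(j^{2}+6j+1)/q^{2}$, the quantity $y(a)-1/q$ is controlled, and Cauchy--Schwarz then forces
\[
l(m) \;\geqslant\; \bigl(\textstyle\sum\varepsilon_{i}\bigr)^{2} \cdot \bigl(\sum\varepsilon_{i}^{2}\bigr)^{-1}
\]
to be comparable to $d^{2}$. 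On the other hand, the weight expansion of $a=u_k(j)$ is read off from its continued fraction $[5;\{1,4\}^{\times(k-1)},1,5,j]$, so $l(a)$ equals the sum of the continued fraction coefficients, namely $5k+j+1$. Confronting this explicit length with the lower bound coming from Cauchy--Schwarz together with the upper bound on $d$ from Step 1 already rules out most $(d,e;m)$.

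The remaining borderline possibilities are handled in a third step by exploiting the block structure of $w(a)$ via Lemma~\ref{lem:parallel blocks} and Corollary~\ref{cor:an obstructive class (d,d-1;m) has the same blcoks}: within each block of $s\geqslant 2$ equal weights, the entries $m_{i}$ are either all equal or differ only by one at one position (the latter costing $\varepsilon$-energy $\geqslant (s-1)/s$), and at most one such non-constant block can occur. Combined with Lemma~\ref{lem:parallel blocks 2}, this forces the tail of $m$ to mirror the tail of $q\cdot w(a)$. Positivity of intersection (Proposition~\ref{prop:characterization of E_M}(ii)) against the perfect classes $E(\alpha_{n}), E(\beta_{n})$ constructed in the previous section — which lie in $\mathcal{E}$ and straddle $u_k(j)$ between the convergents $c_{2k-1}, c_{2k+1}$ — then pins down the head of $m$.

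The main obstacle is the finite window of small parameters $k$ and $j$ where the Cauchy--Schwarz bound is not yet strong enough; these cases must be dispatched by direct inspection, checking the Diophantine equations of Proposition~\ref{prop:characterization of E_M}(i) and the Cremona reducibility criterion~(iii) for each of the finitely many candidate tuples $(d,e;m)$ compatible with the $d$-bound. In every case one verifies either that no such class lies in $\mathcal{E}$, or that $\mu(d,e;m)(u_k(j))\leqslant\tfrac{u_k(j)+1}{4}$, contradicting the standing assumption and completing the proof.
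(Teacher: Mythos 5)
The first step (the upper bound on $d$ from Proposition~\ref{prop:first estimates of d and lambda^2}\,(i) and Corollary~\ref{cor:properties of u_k(j), v_k(j)}\,(i)) agrees with the paper. But the core of your Step~2 has a genuine gap. You propose to lower-bound $l(m)$ via Cauchy--Schwarz, $l(m)\geqslant(\sum\varepsilon_i)^2/\sum\varepsilon_i^2$, using the value $-\sum\varepsilon_i=\frac{d+e}{\sqrt{2a}}\bigl(y(a)-\tfrac{1}{q}\bigr)+1$ from Lemma~\ref{lem:first properties of mu}\,(iv), and then confront that with the explicit length $l\bigl(u_k(j)\bigr)$. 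This cannot work uniformly in $k$: as $k\to\infty$, $u_k(j)\to\sigma^2$ and $y\bigl(u_k(j)\bigr)\to y(\sigma^2)=0$, so $\delta:=y(a)-1/q$ degenerates and the quantity $\frac{d+e}{\sqrt{2a}}\delta$ stays bounded (in fact is $O(1)$) even though $d$ and $q$ grow. Consequently the Cauchy--Schwarz lower bound on $l(m)$ is $O(1)$, while $l\bigl(u_k(j)\bigr)$ grows linearly in $k$. There is no contradiction for large $k$, so the failure region is not the ``finite window of small parameters'' you describe; it is infinite. (Also, $l\bigl(u_k(j)\bigr)=5k+j+6$, not $5k+j+1$, though that slip is not what breaks the argument.)

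The paper sidesteps this entirely with a sharper observation: since $l(m)=l(u)$, every $m_i\geqslant 1$, in particular the last $j$ entries, whose corresponding weights are each $1/q$. Their ideal value is $\frac{\sqrt{2}d}{q\sqrt{u}}$, and by Corollary~\ref{cor:properties of u_k(j), v_k(j)}\,(i) this is bounded by $\frac{2\sqrt{2}}{\sqrt{j^2+6j+1}}$ (resp.\ $\frac{2}{\sqrt{j^2+6j+1}}$ for $(d+\tfrac12,d-\tfrac12;m)$) --- crucially a bound that depends only on $j$, not on $k$. For $j\geqslant 2$ this is $<1$, so each of the last $j$ entries contributes at least $\bigl(1-\tfrac{2\sqrt 2}{\sqrt{j^2+6j+1}}\bigr)^2$ to $\sum\varepsilon_i^2$, and $j$ copies exceed the allowed budget once $j\geqslant 4$ (resp.\ $j\geqslant 2$). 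The remaining cases $j=2,3$ for $(d,d;m)$ must be handled for all $k$ at once, which the paper does by a discriminant computation on a quadratic in $d$, reduced to the Pell identities of Corollary~\ref{cor:properties of u_k(j), v_k(j)}\,(iii) --- not by finite inspection of candidate tuples, and not via positivity of intersection against the perfect classes as you suggest.
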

\begin{proof}
Abbreviate $u:=u_{k}(j)=:\frac{p}{q}$. Let us first prove that a
class of the form $(d+\frac{1}{2},d-\frac{1}{2};m)$ with $l(m)=l(u)$
cannot give an obstruction bigger than~$\frac{u+1}{4}$. Suppose
by contradiction that there is such a class $(d+\frac{1}{2},d-\frac{1}{2};m)$.
By Proposition \ref{prop:first estimates of d and lambda^2}\,(i)
and Corollary \ref{cor:properties of u_k(j), v_k(j)}\,(i),
\[
\frac{\sqrt{2}d}{q\sqrt{u}}<\frac{2}{q\sqrt{u^{2}-6u+1}}=\frac{2}{\sqrt{j^{2}+6j+1}},
\]
which is smaller than $1$ for all $j\geqslant2$. Since $l(m)=l(u)$,
$m_{i}\geqslant1$ for all $i$. Thus
\[
\sum\varepsilon_{i}^{2}\geqslant j\left(1-\frac{\sqrt{2}d}{q\sqrt{u}}\right)^{2}>j\left(1-\frac{2}{\sqrt{j^{2}+6j+1}}\right)^{2}=:s(j).
\]
Now, since $s(j)$ is increasing for $j\geqslant2$ and $s(2)>\frac{1}{2}$,
we have $\mu(d+\frac{1}{2},d-\frac{1}{2};m)(u)\leqslant\sqrt{\frac{u}{2}}<\frac{u+1}{4}$
by Lemma \ref{lem:first properties of mu}\,(iii). The lemma is thus
proven for a class of the form $(d+\frac{1}{2},d-\frac{1}{2};m)$.

Let us now prove it for a class of the form $(d,d;m)$. Suppose that
there exists a class $(d,d;m)\in\mathcal{E}$ with $l(m)=l(u)$ such
that $\mu(d,d;m)(u)>\frac{u+1}{4}$. By Proposition \ref{prop:first estimates of d and lambda^2}\,(i)
and Corollary \ref{cor:properties of u_k(j), v_k(j)}\,(i),
\[
\frac{\sqrt{2}d}{q\sqrt{u}}<\frac{2\sqrt{2}}{q\sqrt{u^{2}-6u+1}}=\frac{2\sqrt{2}}{\sqrt{j^{2}+6j+1}},
\]
which is smaller than $1$ for all $j\geqslant2$. Since $l(m)=l(u)$,
$m_{i}\geqslant1$ for all $i$. Thus
\[
\sum\varepsilon_{i}^{2}\geqslant j\left(1-\frac{\sqrt{2}d}{q\sqrt{u}}\right)^{2}>j\left(1-\frac{2\sqrt{2}}{\sqrt{j^{2}+6j+1}}\right)^{2}=:s(j).
\]
Now, since $s(j)$ is increasing for $j\geqslant2$ and $s(4)>1$,
we have $\mu(d,d;m)(u)\leqslant\sqrt{\frac{u}{2}}<\frac{u+1}{4}$
by Lemma \ref{lem:first properties of mu}\,(iii). So the lemma is
proven for $j\geqslant4$.

It remains to show the lemma for $j=2,3$. By Proposition \ref{prop:first estimates of d and lambda^2}\negthinspace{}(ii),
\[
\sum\varepsilon_{i}^{2}=1-\lambda^{2}<1-d^{2}\sqrt{\frac{2}{u}}y(u).
\]
Thus
\[
\begin{aligned}0 & =\sum\varepsilon_{i}^{2}+\lambda^{2}-1>j\left(1-\frac{\sqrt{2}d}{q\sqrt{u}}\right)^{2}+d^{2}\sqrt{\frac{2}{u}}y(u)-1\\
 & =\left(\frac{2j}{q^{2}u}+\sqrt{\frac{2}{u}}y(u)\right)d^{2}-\frac{2\sqrt{2}j}{q\sqrt{u}}d+j-1=:f(d).
\end{aligned}
\]
To obtain a contradiction, we need to show that $f(d)\geqslant0$
for all $d\geqslant1$. Since $y(u)>0$ for $u>\sigma^{2}$, it is
sufficient to show that the discriminant of $f(d)$ is negative, that
is:
\[
\frac{8j^{2}}{q^{2}u}-4(j-1)\left(\frac{2j}{q^{2}u}+\sqrt{\frac{2}{u}}y(u)\right)\leqslant0,
\]
which is equivalent to
\[
\frac{j}{q^{2}}+2(j-1)u\leqslant(j-1)\sqrt{\frac{u}{2}}(u+1).
\]
Taking squares and using $u=\frac{p}{q}$, we get
\begin{equation}
0\leqslant-2j^{2}+pq\left((j-1)^{2}p^{2}-6(j-1)^{2}pq+(j-1)^{2}q^{2}-8j(j-1)\right).\label{eq:0<=00003D-2j^2+pq(...)}
\end{equation}
For $j=2$, this gives
\[
0\leqslant-8+pq\left(p^{2}-6pq+q^{2}-16\right),
\]
and this inequality is true since $pq\geqslant26$ and $p^{2}-6pq+q^{2}-16=1$
by Corollary~\ref{cor:properties of u_k(j), v_k(j)}\,(iii). For
$j=3$, we get
\[
0\leqslant-9+pq\left(2p^{2}-12pq+2q^{2}-24\right),
\]
and this inequality is also true since $pq\geqslant57$ and $2p^{2}-12pq+2q^{2}-24=32$
by Corollary \ref{cor:properties of u_k(j), v_k(j)}\,(iii). This
concludes the proof.\end{proof}
\begin{lem}
\label{lem:v_k(j) regular}The points $v_{k}(j)$ with $k\geqslant1,j\geqslant6$
are regular.\end{lem}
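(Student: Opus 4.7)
The plan is to mirror the proof of Lemma \ref{lem:u_k(j) regular}, replacing Corollary \ref{cor:properties of u_k(j), v_k(j)}(i) by (ii) and invoking the auxiliary identities in (iv). Abbreviate $v:=v_{k}(j)=p/q$ in lowest terms. By Lemma \ref{lem:e=00003Dd or e=00003Dd-1} an obstructive class has the form $(d,d;m)$ or $(d+\tfrac{1}{2},d-\tfrac{1}{2};m)$, and by the definition of regularity it suffices to rule out $\mu(d,e;m)(v)>\tfrac{v+1}{4}$ for every such class with $l(m)=l(v)$.

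For a class $(d+\tfrac{1}{2},d-\tfrac{1}{2};m)$, Proposition \ref{prop:first estimates of d and lambda^2}(i) together with Corollary \ref{cor:properties of u_k(j), v_k(j)}(ii) yields $\tfrac{\sqrt{2}\,d}{q\sqrt{v}}<\tfrac{2}{\sqrt{j^{2}-4j-4}}$. Since the last block of the weight expansion of $v$ has length $j$ with each entry equal to $1/q$ and every corresponding $m_{i}\geq 1$, this forces
\[
\sum\varepsilon_{i}^{2}\;\geq\;j\Bigl(1-\tfrac{2}{\sqrt{j^{2}-4j-4}}\Bigr)^{2}.
\]
At $j=6$ the right hand side equals $9-6\sqrt{2}>\tfrac{1}{2}$, and it is increasing in $j$, so Lemma \ref{lem:first properties of mu}(iii) delivers the required contradiction for all $j\geq 6$. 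For a class $(d,d;m)$ the analogous estimate reads $\tfrac{\sqrt{2}\,d}{q\sqrt{v}}<\tfrac{2\sqrt{2}}{\sqrt{j^{2}-4j-4}}$, which is below $1$ exactly when $j\geq 7$; for $j\geq 8$ one checks that the resulting lower bound on $\sum\varepsilon_{i}^{2}$ already exceeds $1$ and Lemma \ref{lem:first properties of mu}(iii) closes the case.

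The critical cases are the classes $(d,d;m)$ with $j=6$ or $j=7$. Combining the last-block lower bound with the upper bound $\sum\varepsilon_{i}^{2}<1-d^{2}\sqrt{2/v}\,y(v)$ of Proposition \ref{prop:first estimates of d and lambda^2}(ii) yields a quadratic inequality $f(d)<0$ in $d$, and one tries to prove $f(d)\geq 0$ for every $d$ by showing that the discriminant of $f$ is non-positive. For $j=7$, after clearing denominators and squaring, this reduces to a polynomial relation in $p,q$ which follows from the identity $3p^{2}-18pq+3q^{2}-28=23$ of Corollary \ref{cor:properties of u_k(j), v_k(j)}(iv) together with a trivial lower bound on $pq$. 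The main obstacle is the case $j=6$: here $v_{k}(6)=u_{k}(1)$ sits closest to $\sigma^{2}$, $y(v)$ is of order $1/q^{2}$, and the naive discriminant produced by the same method has the wrong sign. To close this case one must sharpen the lower bound on $\sum\varepsilon_{i}^{2}$ by also exploiting the first block of the weight expansion (five entries equal to $1$, contributing $5\,\|\sqrt{2/v}\,d\|^{2}$) and the middle blocks, using the near-constancy of $m$ on each block (Lemma \ref{lem:parallel blocks}) and the cross-block inequalities of Lemma \ref{lem:parallel blocks 2}; the Diophantine conditions $\sum m_{i}=4d-1$ and $\sum m_{i}^{2}=2d^{2}+1$ can then be shown to be incompatible with every integer $d$ in the narrow dangerous range left open by Proposition \ref{prop:first estimates of d and lambda^2}(i), the identity $5p^{2}-30pq+5q^{2}-32=8$ of Corollary \ref{cor:properties of u_k(j), v_k(j)}(iv) serving as the key algebraic input uniform in $k$.
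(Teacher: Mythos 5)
Your argument for classes $(d+\tfrac{1}{2},d-\tfrac{1}{2};m)$ with $j\geq 6$, and for classes $(d,d;m)$ with $j\geq 8$ (via the $s(j)$-bound) and $j=7$ (via the discriminant criterion together with $3p^{2}-18pq+3q^{2}-28=23$), correctly mirrors the paper's proof. You have also spotted a genuine defect in the printed argument. The paper handles $(d,d;m)$ at $j=6$ by the same discriminant criterion, writing the condition as $0\leqslant-72+pq\left(25p^{2}-150pq+25q^{2}-160\right)$ and invoking $5p^{2}-30pq+5q^{2}-32=8$; but formula~(\ref{eq:0<=00003D-2j^2+pq(...)}) has the term $8j(j-1)=240$ for $j=6$, not $160$. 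With $p^{2}-6pq+q^{2}=8$ the bracket is $25\cdot8-240=-40$, so the inequality $0\leqslant-72-40pq$ fails for every $pq>0$; the discriminant of $f(d)$ is positive (one verifies this directly at $v_{1}(6)=41/7$), and the appeal to $pq\geqslant287$ is vacuous. So your remark that the naive discriminant has the wrong sign at $j=6$ is correct.

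The gap is that your proposal does not actually close the $j=6$, $(d,d;m)$ case; it only names the ingredients one would use. You would have to bound the window of integers $d$ lying between the two roots of $f$ and also satisfying Proposition~\ref{prop:first estimates of d and lambda^2}(i), sharpen $\sum\varepsilon_{i}^{2}$ block by block using Lemmas~\ref{lem:parallel blocks} and~\ref{lem:parallel blocks 2} together with the Diophantine conditions $\sum m_{i}=4d-1$ and $\sum m_{i}^{2}=2d^{2}+1$, and then show the contradiction uniformly in $k$ via $p^{2}-6pq+q^{2}=8$. None of this is carried out. Until it is, Lemma~\ref{lem:v_k(j) regular} is unproved for $j=6$, and since Proposition~\ref{prop:if points are regular then c(a)=00003D(a+1)/4} requires the regularity of every $v_{k}(j)$ with $j\geq 6$, the gap propagates to Theorem~\ref{thm:c(a) on [sigma^2,6]}.
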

\begin{proof}
Abbreviate $v:=v_{k}(j)=:\frac{p}{q}$. Let us first prove that a
class of the form $(d+\frac{1}{2},d-\frac{1}{2};m)$ with $l(m)=l(v)$
cannot give an obstruction bigger than~$\frac{v+1}{4}$. Suppose
by contradiction that there is such a class $(d+\frac{1}{2},d-\frac{1}{2};m)$.
By Proposition \ref{prop:first estimates of d and lambda^2}\,(i)
and Corollary \ref{cor:properties of u_k(j), v_k(j)}\,(ii),
\[
\frac{\sqrt{2}d}{q\sqrt{u}}<\frac{2}{q\sqrt{v^{2}-6v+1}}=\frac{2}{\sqrt{j^{2}-4j-4}},
\]
which is smaller than $1$ for all $j\geqslant6$. Since $l(m)=l(v)$,
$m_{i}\geqslant1$ for all $i$. Thus
\[
\sum\varepsilon_{i}^{2}\geqslant j\left(1-\frac{\sqrt{2}d}{q\sqrt{u}}\right)^{2}>j\left(1-\frac{2}{\sqrt{j^{2}-4j-4}}\right)^{2}=:s(j).
\]
Now, since $s(j)$ is increasing for $j\geqslant6$ and $s(6)>\frac{1}{2}$,
we have $\mu(d+\frac{1}{2},d-\frac{1}{2};m)(v)\leqslant\sqrt{\frac{v}{2}}<\frac{v+1}{4}$
by Lemma \ref{lem:first properties of mu}\,(iii). The lemma is thus
proven for a class of the form $(d+\frac{1}{2},d-\frac{1}{2};m)$.

Let us now prove the lemma for a class of the form $(d,d;m)$. Suppose
that there exists a class $(d,d;m)\in\mathcal{E}$ with $l(m)=l(v)$
such that $\mu(d,d;m)(v)>\frac{v+1}{4}$. By Proposition \ref{prop:first estimates of d and lambda^2}\,(i)
and Corollary \ref{cor:properties of u_k(j), v_k(j)}\,(ii),
\[
\frac{\sqrt{2}d}{q\sqrt{u}}<\frac{2\sqrt{2}}{q\sqrt{v^{2}-6v+1}}=\frac{2\sqrt{2}}{\sqrt{j^{2}-4j-4}},
\]
which is smaller than $1$ for all $j\geqslant6$. Since $l(m)=l(u)$,
$m_{i}\geqslant1$ for all $i$. Thus
\[
\sum\varepsilon_{i}^{2}\geqslant j\left(1-\frac{\sqrt{2}d}{q\sqrt{u}}\right)^{2}>j\left(1-\frac{2\sqrt{2}}{\sqrt{j^{2}-4j-4}}\right)^{2}=:s(j).
\]
Now, since $s(j)$ is increasing for $j\geqslant6$ and $s(8)>1$,
we have $\mu(d,d;m)(v)\leqslant\sqrt{\frac{v}{2}}<\frac{v+1}{4}$
by Lemma \ref{lem:first properties of mu}\,(iii). So the lemma is
proven for $j\geqslant8$.

It remains to show it for $j=6,7$. If $j=6$, the same arguments
as in the proof of Lemma \ref{lem:u_k(j) regular} in the cases $j=2,3$
show that a point $v_{k}(6)=:\frac{p}{q}$ is regular if (\ref{eq:0<=00003D-2j^2+pq(...)})
is satisfied for $j=6$, that is if and only if
\[
0\leqslant-72+pq\left(25p^{2}-150pq+25q^{2}-160\right).
\]
This inequality is true since $pq\geqslant287$ and $25p^{2}-150pq+25q^{2}-160=40$
by Corollary \ref{cor:properties of u_k(j), v_k(j)}\,(iv). Similarly,
a point $v_{k}(7)=:\frac{p}{q}$ is regular if (\ref{eq:0<=00003D-2j^2+pq(...)})
is satisfied for $j=7$, that is if and only if

\[
0\leqslant-98+pq\left(36p^{2}-216pq+36q^{2}-336\right),
\]
which is true since $pq\geqslant376$ and $36p^{2}-216pq+36q^{2}-336=276$
by Corollary~\ref{cor:properties of u_k(j), v_k(j)}\,(iv). This
completes the proof.\end{proof}
\begin{defn}
\label{def:definition of b_k(i) and E(b_k(i))}Define for $k\geqslant1$
and $i\geqslant0$ the points
\[
b_{k}(i):=v_{k}(2+2i)=\left[5;\left\{ 1,4\right\} ^{\times(k-1)},1,2+2i\right].
\]
In particular, for all $k\geqslant1$, $u_{k}(1)=v_{k}(6)=b_{k}(2)$.
Let $\frac{p}{q}:=b_{k}(i)$ written in lowest terms. We will now
associate to every $b_{k}(i)$ a class $E\left(b_{k}(i)\right)\in\mathcal{E}$
for which we will prove that it gives the constraint at $b_{k}(i)$.
We distinguish the cases $i$ even and $i$ odd.

If $i=2j$, set 
\[
m_{k}(2j):=qw\left(b_{k}(2j)\right)
\]
 but with the last block $\left(1^{\times(4j+2)}\right)$ being replaced
by the block $\left(j+1,j,1^{\times(2j+1)}\right)$, and $d_{k}(2j):=q\frac{1+b_{k}(2j)}{4}=\frac{1}{4}(p+q)$.
Then define the class
\[
E\left(b_{k}(2j)\right):=\left(d_{k}(2j),d_{k}(2j);m_{k}(2j)\right).
\]

If $i=2j+1$, set 
\[
m_{k}(2j+1):=qw\left(b_{k}(2j+1)\right)
\]
but with the last block $\left(1^{\times(4j+4)}\right)$ being replaced
by the block $\left((j+1)^{\times2},1^{\times(2j+2)}\right)$, and
$d_{k}(2j+1):=q\frac{1+b_{k}(2j+1)}{4}=\frac{1}{4}(p+q)$. Then define
the class
\[
E\left(b_{k}(2j+1)\right):=\left(d_{k}(2j+1)+\frac{1}{2},d_{k}(2j+1)-\frac{1}{2};m_{k}(2j+1)\right).
\]

\end{defn}
We will now prove that the classes $E\left(b_{k}(i)\right)$ belong
to $\mathcal{E}$.
\begin{lem}
\label{lem:E(b_k(i)) satisfy Diophantine conditions}For all $k\geqslant1,i\geqslant0$
the classes $E\left(b_{k}(i)\right)$ satisfy the Diophantine conditions
of Proposition \ref{prop:characterization of E_M}\,(i)\emph{.}\end{lem}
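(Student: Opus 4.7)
Write $p/q := b_k(i)$ in lowest terms. The strategy is to reduce both Diophantine identities to an identity on $p^2 - 6pq + q^2$ that has already been recorded in Corollary~\ref{cor:properties of u_k(j), v_k(j)}\,(ii). The starting point is the general weight expansion identity (Lemma~\ref{lem:weight expansion}), which gives
\[
\sum_{i} q\,w_i(b_k(i)) \;=\; p+q-1,\qquad \sum_{i} \bigl(q\,w_i(b_k(i))\bigr)^{2} \;=\; pq.
\]
Since $b_k(i)$ has continued fraction $[5;\{1,4\}^{\times(k-1)},1,2+2i]$, the last block of $w(b_k(i))$ has length $2+2i$ and (after multiplication by $q$) consists entirely of $1$'s, matching the blocks $1^{\times(4j+2)}$ (for $i=2j$) and $1^{\times(4j+4)}$ (for $i=2j+1$) appearing in Definition~\ref{def:definition of b_k(i) and E(b_k(i))}. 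So the vector $m_k(i)$ is obtained from $qw(b_k(i))$ by a local substitution in this final block.

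Next I would compute the effect of each substitution on $\sum m_i$ and $\sum m_i^2$. In the even case $i=2j$, the replacement of $1^{\times(4j+2)}$ by $(j+1,j,1^{\times(2j+1)})$ preserves the sum, since $(j+1)+j+(2j+1)=4j+2$, and increases the sum of squares by $(j+1)^2+j^2+(2j+1)-(4j+2)=2j^2$. In the odd case $i=2j+1$, the replacement of $1^{\times(4j+4)}$ by $((j+1)^{\times 2},1^{\times(2j+2)})$ again preserves the sum, and increases the sum of squares by $2(j+1)^2+(2j+2)-(4j+4)=2j(j+1)$. Thus
\[
\sum (m_k(i))_r = p+q-1,\qquad \sum (m_k(i))_r^{2} = pq + \begin{cases} 2j^2 & \text{if } i=2j,\\ 2j(j+1) & \text{if } i=2j+1. \end{cases}
\]

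In the even case $d+e = 2d_k(2j) = (p+q)/2$, so $2(d+e)-1 = p+q-1$ gives the first identity; and $2de+1 = (p+q)^2/8 + 1$, so the second identity becomes $p^2 - 6pq + q^2 = 16j^2 - 8$. In the odd case $d+e = (p+q)/2$ and $2de = (p+q)^2/8 - 1/2$, so the first identity again gives $p+q-1$, and the second reduces to $p^2 - 6pq + q^2 = 16j^2 + 16j - 4$.

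The final step is to match these with Corollary~\ref{cor:properties of u_k(j), v_k(j)}\,(ii), applied with $j_{\mathrm{cor}} = 2+2i$: it yields $p^2 - 6pq + q^2 = (2+2i)^2 - 4(2+2i) - 4$. Setting $i=2j$ gives $16j^2-8$, and setting $i=2j+1$ gives $16j^2+16j-4$, exactly as required. The main obstacle is purely bookkeeping: verifying once and for all that the last block of $qw(b_k(i))$ has the claimed length and is constant equal to $1$, after which everything collapses to the two arithmetic substitutions above combined with the single identity from Corollary~\ref{cor:properties of u_k(j), v_k(j)}\,(ii).
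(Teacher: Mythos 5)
Your proof is correct, and the setup coincides with the paper's: apply Lemma~\ref{lem:weight expansion} to $q\,w(b_k(i))$, track the effect of the block substitution on $\sum m_r$ (no change) and $\sum m_r^2$ (increase by $2j^2$ or $2j(j+1)$), and reduce the two Diophantine equations to a single identity $p^2-6pq+q^2 = 16j^2-8$ (resp.\ $16j^2+16j-4$). The one genuine difference is the final verification: the paper closes the argument by invoking Lemma~\ref{lem:check three values} and checking a few small values, whereas you observe that the required identity is exactly Corollary~\ref{cor:properties of u_k(j), v_k(j)}\,(ii) evaluated at the parameter value $2+2i$, since $b_k(i)=v_k(2+2i)$. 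This is a cleaner finish: it recycles a statement already established earlier rather than re-running the three-values argument, and it also sidesteps the slight imprecision in the paper's phrase ``three small values of $j$'' (the quantity depends on both $k$ and $j$, so one must in principle control both). Both routes ultimately rest on the same Pell-number computation --- Corollary~\ref{cor:properties of u_k(j), v_k(j)}\,(ii) was itself proved via Lemma~\ref{lem:check three values} --- but yours is more economical and arguably more transparent.
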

\begin{proof}
We first treat the case $i=2j$. Abbreviate $b:=b_{k}(2j)=:\frac{p}{q}$,
$m:=m_{k}(2j)$, $w:=w\left(b_{k}(2j)\right)$ and $d:=d_{k}(2j)$.
By Lemma \ref{lem:weight expansion}\,(iii), 
\[
\sum_{l}m_{l}=q\sum_{l}w{}_{l}=q\left(b+1-\frac{1}{q}\right)=4\left(q\frac{1+b}{4}\right)-1=4d-1,
\]
which proves that the first equation holds. For the second equation,
we have
\[
\sum_{l}m_{l}^{2}=q^{2}\left(\sum_{l}w_{l}^{2}\right)-2j-1+(j+1)^{2}+j^{2}=q^{2}b+2j^{2}=pq+2j^{2}.
\]
On the other hand,
\[
2d^{2}+1=2\left(q^{2}\frac{(1+b)^{2}}{16}\right)+1=\frac{1}{8}(p+q)^{2}+1.
\]
By Lemma \ref{lem:check three values}, it suffices that this equals
$pq+2j^{2}$ for three small values of~$j$, which is the case.

Similarly in the case $i=2j+1$, abbreviate $b:=b_{k}(2j+1)=:\frac{p}{q}$,
$m:=m_{k}(2j+1)$, $w:=w\left(b_{k}(2j+1)\right)$ and $d:=d_{k}(2j+1)$.
We then have
\[
\sum_{l}m_{l}=q\sum_{l}w{}_{l}=q\left(b+1-\frac{1}{q}\right)=4\left(q\frac{1+b}{4}\right)-1=4d-1.
\]
For the second equation we have
\[
\sum_{l}m_{l}^{2}=q^{2}\left(\sum_{l}w_{l}^{2}\right)-2j-2+2(j+1)^{2}=q^{2}b+2j^{2}+2j=pq+2j^{2}+2j.
\]
On the other hand,
\[
2d^{2}+\frac{1}{2}=2\left(q^{2}\frac{(1+b)^{2}}{16}\right)+\frac{1}{2}=\frac{1}{8}(p+q)^{2}+\frac{1}{2}.
\]
Now use again Lemma \ref{lem:check three values} to check that this
equals $pq+2j^{2}+2j$ for all $j\geqslant0$.\end{proof}
\begin{lem}
\label{lem:E(b_k(2j)) reduce}The classes $E\left(b_{k}(2j)\right)$
reduce to $\left(0;-1\right)$ for all $k\geqslant1,j\geqslant0$.\end{lem}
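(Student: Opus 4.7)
The plan is to adapt, essentially verbatim, the reduction template used in Section~5 for the classes $E(\alpha_{2m})$: since $b_k(2j)$ has the continued fraction $[5;\{1,4\}^{\times(k-1)},1,2+4j]$, the class $E(b_k(2j))$ has the same $\{4,1\}$-repeat structure at the top that drove the reductions of the $V_k^m$. Using Lemma~\ref{lem:formulae for c_k, u_k(j), v_k(j)}\,(iii), I first write $b_k(2j)=p/q$ explicitly and expand $qw(b_k(2j))$ blockwise; after replacing the final block as prescribed in Definition~\ref{def:definition of b_k(i) and E(b_k(i))}, I obtain
\[
E(b_k(2j)) \;=\; \bigl(d_k(2j),d_k(2j);\,q^{\times 5},*,q^{\times 4},*,\ldots,j+1,j,1^{\times(2j+1)}\bigr),
\]
where the $*$ entries are explicit combinations of Pell numbers of the same type as the $u_l$ and $v_l$ appearing in Section~5.2.

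Next I apply $\varphi_*$. Since the first entry of $m_k(2j)$ equals $q$ and $d_k(2j)=(p+q)/4$, the computation is formally identical to Lemma~\ref{lem:1st_red_of_E(alpha_2m)} and produces, after reordering, a class $V_k(2j)$ whose leading block pattern is $[4,1,2,\{4,1\}^{\times(k-2)},\ldots]$, parallel to $V_k^m$. I then induct on $k$: for $k\geqslant 2$, I exhibit a block of four Cremona moves that peels off the top $\{4,1\}$ and produces $V_{k-1}(2j)$, mirroring exactly the moves of Lemma~\ref{lem:2nd_red_of_E(alpha_2m)}. The arithmetic at each step is verified using the recurrences $P_{n+1}=2P_n+P_{n-1}$ and $H_n=P_n+P_{n-1}$, together with the closed form $P_{2m-l}=(-1)^{l+1}(P_l H_{2m}-H_l P_{2m})$ of Corollary~\ref{cor:properties of u_k(j), v_k(j)}\,(v); the tail of the class, including the modified last block, stays invariant throughout, so it may be abbreviated by $(*)$ in the standard way.

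For the base case $V_1(2j)$, I reduce directly to $(0;-1)$ by an explicit finite sequence of Cremona moves whose length grows linearly in~$j$: the modified last block $(j+1,j,1^{\times(2j+1)})$ is peeled off in roughly $j+O(1)$ steps, after which the short leading part is handled by inspection (as in Lemma~\ref{lem:5th_red_of_E(alpha_2m)}).

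\textbf{Main obstacle.} The principal difficulty is the bookkeeping: after every Cremona transform one must verify that the new first three entries are indeed the three largest, so that the next move is admissible. This requires a chain of monotonicity inequalities among the Pell combinations, paralleling the verifications in Lemmas~\ref{lem:1st_red_of_E(alpha_2m)}--\ref{lem:5th_red_of_E(alpha_2m)}. By Lemma~\ref{lem:check three values}, together with the basic monotonicities $P_{k+1}>\frac{1}{2}P_k>P_{k-1}$ and $\frac{1}{2}u_{2k}>u_{2k-1}$ established in Section~5.2, these checks reduce uniformly in $k$ to finitely many small numerical cases, making the argument tedious but mechanical.
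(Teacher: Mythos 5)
Your proposal follows essentially the same strategy as the paper's proof: apply $\varphi_*$, then induct on $k$ by showing that $\varphi_*\bigl(E(b_k(2j))\bigr)$ reduces in four Cremona moves to $\varphi_*\bigl(E(b_{k-1}(2j))\bigr)$, and finally reduce the base case $k=1$ by an explicit chain of moves of length linear in $j$, with the tail $(j+1,j,1^{\times(2j+1)})$ absorbed via the standard reduction $(s+1;s,1^{\times t})\to(s;s-1,1^{\times(t-2)})$.
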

\begin{proof}
The proof is by induction over $k$. For the initial step $k=1$,
we have $b_{1}(2j)=\frac{24j+17}{4j+3}=:\frac{p}{q}$, $qw\left(b_{1}(2j)\right)=\left((4j+3)^{\times5},4j+2,1^{\times(4j+2)}\right)$,
and $d_{1}(2j)=7j+5$. Thus
\[
E\left(b_{1}(2j)\right)=\left(7j+5,7j+5;(4j+3)^{\times5},4j+2,j+1,j,1^{\times(2j+1)}\right).
\]
We now show that this class reduces to $\left(0;-1\right)$.

\medskip{}
$\left(7j+5,7j+5;(4j+3)^{\times5},4j+2,j+1,j,1^{\times(2j+1)}\right)\overset{{\scriptstyle \varphi_{*}}}{\longmapsto}$

\medskip{}
$\left(10j+7;(4j+3)^{\times4},4j+2,(3j+2)^{\times2},j+1,j,1^{\times(2j+1)}\right);$

\medskip{}
$\left(8j+5;4j+3,4j+2,(3j+2)^{\times2},(2j+1)^{\times3},j+1,j,1^{\times(2j+1)}\right);$

\medskip{}
$\left(5j+3;3j+2,(2j+1)^{\times3},(j+1)^{\times2},j^{\times2},1^{\times(2j+1)}\right);$

\medskip{}
$\left(3j+2;2j+1,(j+1)^{\times3},j^{\times2},1^{\times(2j+1)}\right);$

\medskip{}
$\left(2j+1;j+1,j^{\times3},1^{\times(2j+1)}\right);$

\medskip{}
$\left(j+1;j,1^{\times(2j+2)}\right).$

\medskip{}
This class reduces to $\left(0;-1\right)$ in $j+1$ steps since a
class of the type $\left(s+1;s,1^{\times t}\right)$ for $s\geqslant1,t\geqslant2$
reduces to $\left(s;s-1,1^{\times(t-2)}\right)$ by a standard Cremona
move.

We turn now to the general case. We will freely use the definitions
of the Pell numbers $P_{n}$ and the Half companion Pell numbers $H_{n}$
given in Definition \ref{def:pell numbers} and the fact that for
all $n\geqslant0$, $H_{n}=P_{n}+P_{n-1}$. Suppose that the class
$E\left(b_{k-1}(2j)\right)$ reduces to $(0;-1)$ and let us show
that the class $E\left(b_{k}(2j)\right)$ also reduces to $(0;-1)$.
We have
\[
\begin{aligned}b_{k}(2j) & =\frac{2jP_{2k+2}+H_{2k+2}}{2jP_{2k}+H_{2k}},\\
d_{k}(2j) & =jH_{2k+1}+P_{2k+1}.
\end{aligned}
\]
The first terms of the class $E\left(b_{k}(2j)\right)$ are

\medskip{}

$\left(jH_{2k+1}+P_{2k+1},jH_{2k+1}+P_{2k+1};\left(2jP_{2k}+H_{2k}\right)^{\times5},4jP_{2k-1}+H_{2k-1},\right.$

$\hphantom{\qquad}\left.\left(2jP_{2k-2}+H_{2k-2}\right)^{\times4},4jP_{2k-3}+2H_{2k-3},(*)\right),$

\medskip{}
where $(*)$ stands for all the next terms. The image of $E\left(b_{k}(2j)\right)$
under~$\varphi_{*}$ is\medskip{}

$\left(2jP_{2k+1}+H_{2k+1};\left(2jP_{2k}+H_{2k}\right)^{\times4},4jP_{2k-1}+2H_{2k-1},\left(jH_{2k}+P_{2k}\right)^{\times2}\right.$

$\hphantom{\qquad}\left.\left(2jP_{2k-2}+H_{2k-2}\right)^{\times4},4jP_{2k-3}+2H_{2k-3},(*)\right).$

\medskip{}
To finish the proof, we will show that $\varphi_{*}\left(E\left(b_{k}(2j)\right)\right)$
reduces to $\varphi_{*}\left(E\left(b_{k-1}(2j)\right)\right)$ in
four steps.

\medskip{}

$\left(2jP_{2k+1}+H_{2k+1};\left(2jP_{2k}+H_{2k}\right)^{\times4},4jP_{2k-1}+2H_{2k-1},\left(jH_{2k}+P_{2k}\right)^{\times2},\right.$

$\hphantom{\qquad}\left.\left(2jP_{2k-2}+H_{2k-2}\right)^{\times4},4jP_{2k-3}+2H_{2k-3},(*)\right);$

\medskip{}

$\left(2j\left(H_{2k}+P_{2k-1}\right)+3H_{2k-1}+2P_{2k-1};2jP_{2k}+H_{2k},4jP_{2k-1}+2H_{2k-1},\right.$

$\hphantom{\qquad}\left(jH_{2k}+P_{2k}\right)^{\times2},\left(2jP_{2k-1}+H_{2k-1}\right)^{\times3},\left(2jP_{2k-2}+H_{2k-2}\right)^{\times4},$

$\hphantom{\qquad}\left.4jP_{2k-3}+2H_{2k-3},(*)\right);$

\medskip{}

$\left(j\left(3H_{2k}-2P_{2k}\right)+2H_{2k-1}+P_{2k-1};jH_{2k}+P_{2k},\left(2jP_{2k-1}+H_{2k-1}\right)^{\times3},\right.$

$\hphantom{\qquad}jH_{2k-1}+P_{2k-1},\left(2jP_{2k-2}+H_{2k-2}\right)^{\times4},4jP_{2k-3}+2H_{2k-3},$

$\hphantom{\qquad}\left.jH_{2k-2}+P_{2k-2},(*)\right);$

\medskip{}

$\left(jH_{2k}+P_{2k};2jP_{2k-1}+H_{2k-1},\left(jH_{2k-1}+P_{2k-1}\right)^{\times2},\left(2jP_{2k-2}+H_{2k-2}\right)^{\times4},\right.$

$\hphantom{\qquad}\left.4jP_{2k-3}+2H_{2k-3},jH_{2k-2}+P_{2k-2},(*)\right);$

\medskip{}

$\left(2jP_{2k-1}+H_{2k-1};\left(2jP_{2k-2}+H_{2k-2}\right)^{\times4},4jP_{2k-3}+2H_{2k-3},\right.$

$\hphantom{\qquad}\left.\left(jH_{2k-2}+P_{2k-2}\right)^{\times2},(*)\right).$

\medskip{}
It is important to note that $(*)$ was left invariant during the
whole reduction process. So the last class is precisely $\varphi_{*}\left(E\left(b_{k-1}(2j)\right)\right)$.\end{proof}
\begin{lem}
\label{lem:E(b_k(2j+1)) reduce}The classes $E\left(b_{k}(2j+1)\right)$
reduce to $\left(0;-1\right)$ for all $k\geqslant1,j\geqslant0$.\end{lem}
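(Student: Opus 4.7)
The plan is to mirror the proof of Lemma~\ref{lem:E(b_k(2j)) reduce}: I would induct on $k$, with the base case $k=1$ handled by an explicit Cremona reduction and an inductive step that reduces $\varphi_{*}(E(b_{k}(2j+1)))$ to $\varphi_{*}(E(b_{k-1}(2j+1)))$ in a fixed number of Cremona moves. The only structural difference from the even case is that now $(d,e) = (d_{k}(2j+1)+\tfrac{1}{2},\,d_{k}(2j+1)-\tfrac{1}{2})$ rather than $(d_{k},d_{k})$, and the modified last block is $((j{+}1)^{\times 2},1^{\times(2j+2)})$ instead of $(j{+}1,j,1^{\times(2j+1)})$.

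For the base case $k=1$, Lemma~\ref{lem:formulae for c_k, u_k(j), v_k(j)}(iii) gives $b_{1}(2j+1) = \tfrac{24j+29}{4j+5}$ and $d_{1}(2j+1)+\tfrac{1}{2} = 7j+9$, so that
\[
E(b_{1}(2j+1)) = \bigl(7j+9,\,7j+8;\,(4j+5)^{\times 5},\,4j+4,\,(j+1)^{\times 2},\,1^{\times(2j+2)}\bigr).
\]
Applying $\varphi_{*}$ and then a sequence of Cremona moves of the same combinatorial shape as in the even case, the large entries $4j+5$ and $4j+4$ get peeled off step by step, leading to a tail class of shape $(j+1;j+1,j,1^{\times(2j+2)})$ (or similar), which then collapses to $(0;-1)$ via the standard cascade $(s+1;s,1^{\times t}) \mapsto (s;s-1,1^{\times(t-2)})$.

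For the inductive step, writing $b_{k}(2j+1) = \tfrac{(2j+2)P_{2k+2}+P_{2k+1}}{(2j+2)P_{2k}+P_{2k-1}}$, the weight expansion begins with the same periodic pattern as $b_{k-1}(2j+1)$ preceded by one extra block $\{1,4\}$. The leading entries of $q\,w(b_{k}(2j+1))$ can thus be written in terms of $P_{2k},P_{2k-1},H_{2k},H_{2k-1}$ by means of $H_{n}=P_{n}+P_{n-1}$ and Corollary~\ref{cor:properties of u_k(j), v_k(j)}(v). After applying $\varphi_{*}$, I would perform four Cremona moves paralleling those of Lemma~\ref{lem:E(b_k(2j)) reduce}; since the Cremona transform only modifies the first three entries of $m$ and the first coordinate of $(d,e;m)$, the $\tfrac{1}{2}$-shift in $(d,e)$ is transparent to the reordering arguments, and the same four-step pattern should output exactly $\varphi_{*}(E(b_{k-1}(2j+1)))$, which reduces to $(0;-1)$ by induction.

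The main obstacle is the bookkeeping: after each Cremona transform the new entries must be reordered so that the three largest ones come first, and one must check that the concrete values produced match those prescribed by the Pell/half-companion identities. Any algebraic relation among $P_{n}$ and $H_{n}$ that appears along the way can be reduced to verifying a small number of base cases via Lemma~\ref{lem:check three values}, so the argument should ultimately be as mechanical as the even case, just with slightly asymmetric $(d,e)$-bookkeeping.
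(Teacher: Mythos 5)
Your proposal matches the paper's proof exactly in structure: induction on $k$, an explicit base case at $k=1$, and an inductive step reducing $\varphi_{*}(E(b_{k}(2j+1)))$ to $\varphi_{*}(E(b_{k-1}(2j+1)))$ in four Cremona moves, with the final cascade $(s+1;s,1^{\times t})\mapsto(s;s-1,1^{\times(t-2)})$ closing the $k=1$ case. You correctly identified both deviations from the even case (the half-integer shift in $(d,e)$ and the different last block $((j+1)^{\times 2},1^{\times(2j+2)})$), and the assertion that the four-move pattern is insensitive to the $\tfrac12$-shift is indeed what the paper verifies by explicit computation.
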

\begin{proof}
The proof is again by induction over $k$. For the initial step $k=1$,
we have $b_{1}(2j+1)=\frac{24j+29}{4j+5}=:\frac{p}{q}$, $qw\left(b_{1}(2j+1)\right)=\left((4j+5)^{\times5},4j+4,1^{\times(4j+4)}\right)$,
$d_{1}(2j+1)=7j+\frac{17}{2}$. Thus
\[
E\left(b_{1}(2j+1)\right)=\left(7j+9,7j+8;(4j+5)^{\times5},4j+4,(j+1)^{2},1^{\times(2j+2)}\right).
\]
We show now that this class reduces to $\left(0;-1\right)$.

\medskip{}

$\left(7j+9,7j+8;(4j+5)^{\times5},4j+4,(j+1)^{2},1^{\times(2j+2)}\right)\overset{{\scriptstyle \varphi_{*}}}{\longmapsto}$

\medskip{}

$\left(10j+2;(4j+5)^{\times4},4j+4,3j+4,3j+3,(j+1)^{2},1^{\times(2j+2)}\right);$

\medskip{}

$\left(8j+9;4j+5,4j+4,3j+4,3j+3,(2j+2)^{\times3},(j+1)^{2},1^{\times(2j+2)}\right);$

\medskip{}

$\left(5j+5;3j+3,(2j+2)^{\times3},(j+1)^{\times3},j,1^{\times(2j+2)}\right);$

\medskip{}

$\left(3j+3;2j+2,(j+1)^{\times4},j,1^{\times(2j+2)}\right);$

\medskip{}

$\left(2j+2;(j+1)^{\times3},j,1^{\times(2j+2)}\right);$

\medskip{}

$\left(j+1;j,1^{\times(2j+2)}\right).$

\medskip{}
As seen before, this class reduces to $\left(0;-1\right)$ in $j+1$
steps.

We turn now to the general case. Suppose that the class $E\left(b_{k-1}(2j+1)\right)$
reduces to $(0;-1)$ and let us show that the class $E\left(b_{k}(2j+1)\right)$
also reduces to $(0;-1)$. We have
\[
b_{k}(2j+1)=\frac{2jP_{2k+2}+P_{2k+3}}{2jP_{2k}+P_{2k+1}},
\]
\[
d_{k}(2j+1)=jH_{2k+1}+\frac{1}{2}H_{2k+2}.
\]
The first terms of the class $E\left(b_{k}(2j+1)\right)$ are

\medskip{}

$\left(jH_{2k+1}+\frac{1}{2}H_{2k+2}+\frac{1}{2},jH_{2k+1}+\frac{1}{2}H_{2k+2}-\frac{1}{2};\left(2jP_{2k}+P_{2k+1}\right)^{\times5},\right.$

$\hphantom{\qquad}\left.4jP_{2k-1}+2P_{2k},\left(2jP_{2k-2}+P_{2k-1}\right)^{\times4},4jP_{2k-3}+2P_{2k-2},(*)\right).$

\medskip{}
The image of $E\left(b_{k}(2j+1)\right)$ under $\varphi_{*}$ is

\medskip{}

$\left(2jP_{2k+1}+P_{2k+2};\left(2jP_{2k}+P_{2k+1}\right)^{\times4},4jP_{2k-1}+2P_{2k},jH_{2k}+\frac{1}{2}H_{2k+1}+\frac{1}{2},\right.$

$\hphantom{\qquad}\left.jH_{2k}+\frac{1}{2}H_{2k+1}-\frac{1}{2},\left(2jP_{2k-2}+P_{2k-1}\right)^{\times4},4jP_{2k-3}+2P_{2k-2},(*)\right),$

\medskip{}
To finish the proof, we will show that $\varphi_{*}\left(E\left(b_{k}(2j+1)\right)\right)$
reduces to the vector $\varphi_{*}\left(E\left(b_{k-1}(2j+1)\right)\right)$
in four steps.

\medskip{}

$\left(2jP_{2k+1}+P_{2k+2};\left(2jP_{2k}+P_{2k+1}\right)^{\times4},4jP_{2k-1}+2P_{2k},jH_{2k}+\frac{1}{2}H_{2k+1}+\frac{1}{2},\right.$

$\hphantom{\qquad}\left.jH_{2k}+\frac{1}{2}H_{2k+1}-\frac{1}{2},\left(2jP_{2k-2}+P_{2k-1}\right)^{\times4},4jP_{2k-3}+2P_{2k-2},(*)\right);$

\medskip{}

$\left(j\left(2P_{2k}+4P_{2k-1}\right)+\left(P_{2k+1}+2P_{2k}\right);2jP_{2k}+P_{2k+1},4jP_{2k-1}+2P_{2k},\right.$

$\hphantom{\qquad}jH_{2k}+\frac{1}{2}H_{2k+1}+\frac{1}{2},jH_{2k}+\frac{1}{2}H_{2k+1}-\frac{1}{2},\left(2jP_{2k-1}+P_{2k}\right)^{\times3},$

$\hphantom{\qquad}\left.\left(2jP_{2k-2}+P_{2k-1}\right)^{\times4},4jP_{2k-3}+2P_{2k-2},(*)\right);$

\medskip{}

$\left(j\left(P_{2k}+3P_{2k-1}\right)+\left(\frac{1}{2}P_{2k+1}+\frac{3}{2}P_{2k}-\frac{1}{2}\right);jH_{2k}+\frac{1}{2}H_{2k+1}-\frac{1}{2},\right.$

$\hphantom{\qquad}\left(2jP_{2k-1}+P_{2k}\right)^{\times3},jH_{2k-1}+\frac{1}{2}H_{2k}-\frac{1}{2},\left(2jP_{2k-2}+P_{2k-1}\right)^{\times4},$

$\hphantom{\qquad}\left.4jP_{2k-3}+2P_{2k-2},jH_{2k-2}+\frac{1}{2}H_{2k-1}-\frac{1}{2},(*)\right);$

\medskip{}

$\left(jH_{2k}+\frac{1}{2}H_{2k+1}-\frac{1}{2};2jP_{2k-1}+P_{2k},\left(jH_{2k-1}+\frac{1}{2}H_{2k}-\frac{1}{2}\right)^{\times2},\right.$

$\hphantom{\qquad}\left.\left(2jP_{2k-2}+P_{2k-1}\right)^{\times4},4jP_{2k-3}+2P_{2k-2},jH_{2k-2}+\frac{1}{2}H_{2k-1}-\frac{1}{2},(*)\right);$

\medskip{}

$\left(2jP_{2k-1}-P_{2k};\left(2jP_{2k-2}+P_{2k-1}\right)^{\times4},4jP_{2k-3}+2P_{2k-2},\right.$

$\hphantom{\qquad}\left.jH_{2k-2}+\frac{1}{2}H_{2k-1}+\frac{1}{2},jH_{2k-2}+\frac{1}{2}H_{2k-1}-\frac{1}{2},(*)\right).$

\medskip{}
Since $(*)$ was left invariant during the whole reduction process,
the last class is precisely $\varphi_{*}\left(E\left(b_{k-1}(2j+1)\right)\right)$.\end{proof}
\begin{prop}
For all $k\geqslant1,i\geqslant0$, we have $E\left(b_{k}(i)\right)\in\mathcal{E}$.\end{prop}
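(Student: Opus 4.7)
The plan is to apply the characterization of $\mathcal{E}$ given by Proposition~\ref{prop:characterization of E_M}\,(iii): a tuple $(d,e;m)$ lies in $\mathcal{E}_M$ precisely when it satisfies the two Diophantine identities
\[
\sum m_i \;=\; 2(d+e)-1, \qquad \sum m_i^2 \;=\; 2de+1,
\]
and when its image under $\varphi_*$ reduces to $(0;-1,0,\ldots,0)$ by repeated Cremona moves. Thus the statement reduces to verifying these two properties for every class $E(b_k(i))$.

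The Diophantine conditions have already been checked in Lemma~\ref{lem:E(b_k(i)) satisfy Diophantine conditions}, uniformly in $k$ and $i$, by treating the parities of $i$ separately and using Lemma~\ref{lem:check three values} to reduce the verification of the quadratic identities to a finite check for small values of~$j$. It remains to handle the Cremona reduction, which is the technical heart of the argument.

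For this I would simply invoke Lemmas~\ref{lem:E(b_k(2j)) reduce} and~\ref{lem:E(b_k(2j+1)) reduce}, which together show that $\varphi_*(E(b_k(i)))$ reduces to $(0;-1)$ via a sequence of standard Cremona moves for all $k\geqslant 1$ and $i\geqslant 0$. The structure of those lemmas is an induction on $k$: the base case $k=1$ is a short explicit reduction ending in a class of the form $(j+1;j,1^{\times(2j+2)})$, which can then be peeled off by the standard move $(s+1;s,1^{\times t})\mapsto(s;s-1,1^{\times(t-2)})$; and the inductive step consists of a block of four Cremona moves that transforms $\varphi_*(E(b_k(i)))$ into $\varphi_*(E(b_{k-1}(i)))$, leaving the tail denoted by~$(*)$ untouched.

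Combining these three ingredients --- Proposition~\ref{prop:characterization of E_M}\,(iii), Lemma~\ref{lem:E(b_k(i)) satisfy Diophantine conditions}, and Lemmas~\ref{lem:E(b_k(2j)) reduce} and~\ref{lem:E(b_k(2j+1)) reduce} --- immediately yields $E(b_k(i))\in\mathcal{E}$ for all $k\geqslant 1$ and $i\geqslant 0$. The only real obstacle was finding and verifying the four-step reduction pattern from $\varphi_*(E(b_k(i)))$ to $\varphi_*(E(b_{k-1}(i)))$, but this has been done in the preceding two lemmas, so the proposition itself is merely the assembly of what has already been proved.
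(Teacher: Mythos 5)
Your proof is correct and takes exactly the same route as the paper: invoke the characterization of $\mathcal{E}$ from Proposition~\ref{prop:characterization of E_M}\,(iii), cite Lemma~\ref{lem:E(b_k(i)) satisfy Diophantine conditions} for the Diophantine conditions, and cite Lemmas~\ref{lem:E(b_k(2j)) reduce} and~\ref{lem:E(b_k(2j+1)) reduce} for the Cremona reduction. The paper's own proof is just a terser version of this assembly.
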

\begin{proof}
We have to show that the classes $E\left(b_{k}(i)\right)$ satisfy
the Diophantine conditions of Proposition \ref{prop:characterization of E_M},
which we have done in Lemma \ref{lem:E(b_k(i)) satisfy Diophantine conditions},
and that they reduce to $(0;-1)$ by Cremona moves, which we have
done for $i$ even in Lemma \ref{lem:E(b_k(2j)) reduce} and for $i$
odd in Lemma \ref{lem:E(b_k(2j+1)) reduce}. The proof is thus complete.\end{proof}
\begin{cor}
\label{cor:the classes E(beta_n) belong to E}For all $n\geqslant0$,
the classes $E\left(\beta_{n}\right)$ of Theorem \ref{thm:the classes E(alpha_n), E(beta_n) belong to E}
belong to $\mathcal{E}$.\end{cor}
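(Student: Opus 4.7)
The plan is to reduce this corollary to the results already obtained for the classes $E(b_k(i))$ introduced in Definition \ref{def:definition of b_k(i) and E(b_k(i))}. The key observation is that the points $\beta_n$ (which lie just below $\sigma^2$) coincide with specific members of the family $b_k(i)$ of points (which lie just above $\sigma^2$), with matching continued fraction expansions, and that this identification carries through to the classes themselves.

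First I would dispose of the base cases $n = 0$ and $n = 1$ by appealing to Lemma \ref{lem:finite list of elements in E_M for M leq 7}, where $E(\beta_0) = (1,1;1^{\times 3})$ and $E(\beta_1) = (2,1;1^{\times 5})$ appear explicitly in the list. Then for $n \geq 2$, I would split into two cases according to parity. For $n = 2m$ with $m \geq 1$, I would use Lemma \ref{lem:formulae for c_k, u_k(j), v_k(j)}(iii) with $j = 2$ together with the identity $H_k = P_k + P_{k-1}$ to identify
\[
\beta_{2m} \;=\; \frac{H_{2m+2}}{H_{2m}} \;=\; \frac{P_{2m+2}+P_{2m+1}}{P_{2m}+P_{2m-1}} \;=\; v_m(2) \;=\; b_m(0).
\]
For $n = 2m+1$ with $m \geq 1$, I would similarly apply Lemma \ref{lem:formulae for c_k, u_k(j), v_k(j)}(iii) with $j = 4$ and the Pell recurrence $P_{k+1} = 2P_k + P_{k-1}$ to get
\[
\beta_{2m+1} \;=\; \frac{P_{2m+3}}{P_{2m+1}} \;=\; \frac{2P_{2m+2}+P_{2m+1}}{2P_{2m}+P_{2m-1}} \;=\; v_m(4) \;=\; b_m(1).
\]

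Once the point identifications are established, it remains to check that the classes agree (modulo extraneous zero entries, which do not affect membership in $\mathcal{E}$). Comparing Definition \ref{def:definition of b_k(i) and E(b_k(i))} for $j = 0$ with the definition of $E(\beta_n)$ given in this section: for $i = 2j = 0$ the replacement of the tail block $(1^{\times 2})$ by $(j+1, j, 1^{\times(2j+1)}) = (1,0,1)$ produces the same vector of positive entries, and the $d$-coordinate $d_m(0) = \tfrac14(H_{2m}+H_{2m+2})$ matches the even-case $d$ of $E(\beta_{2m})$. For $i = 2j+1 = 1$ the tail $(1^{\times 4})$ is replaced by $((j+1)^{\times 2}, 1^{\times(2j+2)}) = (1^{\times 4})$, which is unchanged, and $d_m(1) = \tfrac14(P_{2m+1}+P_{2m+3})$ matches the odd-case $d$ of $E(\beta_{2m+1})$; the half-integer shifts $\pm \tfrac12$ in both definitions coincide.

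With these identifications, $E(\beta_{2m}) = E(b_m(0))$ and $E(\beta_{2m+1}) = E(b_m(1))$, and membership in $\mathcal{E}$ follows from Lemma \ref{lem:E(b_k(i)) satisfy Diophantine conditions} (for the Diophantine conditions) together with Lemma \ref{lem:E(b_k(2j)) reduce} and Lemma \ref{lem:E(b_k(2j+1)) reduce} (for the Cremona reduction to $(0;-1)$). The only real work is the arithmetic verification of the two continued-fraction/Pell identities above; there is no conceptual obstacle, since the entire technical machinery (Cremona reductions, Diophantine checks) has already been carried out in the preceding lemmas.
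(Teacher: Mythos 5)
Your proposal is correct and takes essentially the same route as the paper: the paper's own proof of this corollary also identifies $b_k(0)=v_k(2)=\beta_{2k}$ and $b_k(1)=v_k(4)=\beta_{2k+1}$ via Lemma \ref{lem:formulae for c_k, u_k(j), v_k(j)} and then observes that the corresponding classes coincide. Your explicit treatment of the base cases $n=0,1$ via Lemma \ref{lem:finite list of elements in E_M for M leq 7} is a small refinement, since Definition \ref{def:definition of b_k(i) and E(b_k(i))} only defines $b_k(i)$ for $k\geqslant1$ even though the paper informally writes ``for all $k\geqslant0$''.
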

\begin{proof}
Notice that by Lemma \ref{lem:formulae for c_k, u_k(j), v_k(j)},
for all $k\geqslant0$,
\[
\begin{aligned}b_{k}(0) & =v_{k}(2)=\frac{P_{2k+2}+P_{2k+1}}{P_{2k}+P_{2k-1}}=\frac{H_{2k+2}}{H_{2k}}=\beta_{2k},\\
b_{k}(1) & =v_{k}(4)=\frac{2P_{2k+2}+P_{2k+1}}{2P_{2k}+P_{2k-1}}=\frac{P_{2k+3}}{P_{2k+1}}=\beta_{2k+1}.
\end{aligned}
\]
Hence by Definition \ref{def:definition of b_k(i) and E(b_k(i))},
we see that for all $k\geqslant0$, $E\left(b_{k}(0)\right)=E\left(\beta_{2k}\right)$
and $E\left(b_{k}(1)\right)=E\left(\beta_{2k+1}\right)$. Thus all
the classes $E\left(\beta_{n}\right)$ belong indeed to $\mathcal{E}$.\end{proof}
\begin{cor}
\label{cor:c(b_k(i))=00003D(b_k(i)+1)/4}$c\left(b_{k}(i)\right)=\frac{b_{k}(i)+1}{4}$
for all $k\geqslant1$ and $i\geqslant2$.\end{cor}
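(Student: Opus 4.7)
The plan is to deduce this corollary as a special case of the master identity $c(a) = \frac{a+1}{4}$ on the interval $[\sigma^{2}, 6]$, which is the goal of the present section. The first step is to locate the points $b_k(i)$: since $b_k(i) = v_k(2+2i)$ with $2+2i \geqslant 6$ when $i \geqslant 2$, Corollary~\ref{cor:c_2k+1 < ... < u_k(2) ...}(ii) places $b_k(i)$ in the interval $[v_k(6), c_{2k-1})$, and part (i) of that corollary then locates this inside $(\sigma^{2}, c_1] = (\sigma^{2}, 6]$. The lower bound $c(b_k(i)) \geqslant \frac{b_k(i)+1}{4}$ is immediate from the constraint imposed by the class $(2,2;2,1^{\times 5}) \in \mathcal{E}$ of Lemma~\ref{lem:finite list of elements in E_M for M leq 7}: a direct computation of $\mu$ on $(\sigma^{2},6]$ gives exactly $\frac{a+1}{4}$, since the first five weights of $w(a)$ are $1$ and the sixth is $a-5$.

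The matching upper bound $c(b_k(i)) \leqslant \frac{b_k(i)+1}{4}$ is the content of Proposition~\ref{prop:if points are regular then c(a)=00003D(a+1)/4}, which reduces the identity $c(a) = \frac{a+1}{4}$ on $[\sigma^{2}, 6]$ to the regularity of the three families of rationals $c_{2k-1}$, $u_k(j)$ ($j\geqslant 2$), and $v_k(j)$ ($j\geqslant 6$). The regularity of the $u_k(j)$ and $v_k(j)$ points is already in hand by Lemmas~\ref{lem:u_k(j) regular} and~\ref{lem:v_k(j) regular}, so the remaining ingredient is the regularity of the convergents $c_{2k-1}$, which I would establish by exactly the same Diophantine strategy used for $u_k(j)$ and $v_k(j)$: produce an identity analogous to Corollary~\ref{cor:properties of u_k(j), v_k(j)}(i)-(ii) for the quantity $q^{2}(c_{2k-1}^{2} - 6 c_{2k-1} + 1)$ (again verified at a few small indices via Lemma~\ref{lem:check three values}), then combine with the error-vector bounds of Lemma~\ref{lem:first properties of mu}(i)-(iii) to force $\sum \varepsilon_i^{2}$ above the threshold that rules out any obstructive class with $l(m) = l(c_{2k-1})$ giving $\mu > \frac{a+1}{4}$.

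The main obstacle is this regularity statement for the convergents $c_{2k-1}$; once secured, Proposition~\ref{prop:if points are regular then c(a)=00003D(a+1)/4} yields $c(a) = \frac{a+1}{4}$ on all of $[\sigma^{2}, 6]$, and the corollary follows by evaluation at $a = b_k(i)$. I note in passing that the classes $E(b_k(i)) \in \mathcal{E}$ just constructed are \emph{not} themselves perfect at $b_k(i)$ when $i\geqslant 2$ (a direct check shows $\mu\bigl(E(b_k(i))\bigr)(b_k(i)) < \frac{b_k(i)+1}{4}$), so they do not realize the constraint $\frac{b_k(i)+1}{4}$ directly via Lemma~\ref{lem:perfect elements give the constraint}; instead they enter the broader proof of the master identity indirectly, through positivity of intersection (Proposition~\ref{prop:characterization of E_M}(ii)) with any hypothetical competitor class.
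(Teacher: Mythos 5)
Your plan is circular with the paper's actual logical structure: the master identity $c(a)=\frac{a+1}{4}$ on $[\sigma^2,6]$ (Theorem~\ref{thm:c(a) on [sigma^2,6]}) is \emph{proved using} this corollary, not the other way around. The paper establishes regularity of the points $c_{2k-1}$ precisely by observing that $b_k(i)\to c_{2k-1}$ as $i\to\infty$, invoking the corollary at the $b_k(i)$, and passing to the limit by continuity of~$c$. To break the circle you propose proving regularity of $c_{2k-1}$ directly by ``exactly the same Diophantine strategy'' as for $u_k(j)$ and $v_k(j)$, but this step would fail. The relevant quantity for $c_{2k-1}=\frac{p}{q}$ with $p=\tfrac12 P_{2k+2}$, $q=\tfrac12 P_{2k}$ is $q^2\bigl(c_{2k-1}^2-6c_{2k-1}+1\bigr)=p^2-6pq+q^2=P_{2k+1}P_{2k-1}-P_{2k}^2=1$, a constant. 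The arguments in Lemmas~\ref{lem:u_k(j) regular} and~\ref{lem:v_k(j) regular} hinge on $j^2+6j+1$ (resp.\ $j^2-4j-4$) being large: they feed into Proposition~\ref{prop:first estimates of d and lambda^2}(i) to bound $\frac{\sqrt{2}d}{q\sqrt{a}}$ strictly below~$1$, which in turn forces $\sum\varepsilon_i^2$ above the threshold of Lemma~\ref{lem:first properties of mu}(iii). With the constant value $1$, the same computation only yields $\frac{\sqrt{2}d}{q\sqrt{a}}<2$, which gives no contradiction, and the degree bound becomes $d<2q\sqrt{a}$, which grows with $q$ and is useless for enumeration. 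So the regularity of $c_{2k-1}$ cannot be secured this way.

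The paper's proof of the corollary is in fact self-contained and does not pass through the master identity. As you correctly observe, $E(b_k(i))$ is not a perfect class at $b_k(i)$ and does not itself achieve the value $\frac{b_k(i)+1}{4}$; but its role is different. It is used as a \emph{competitor eliminator}: for any other $(d',e';m')\in\mathcal E$, positivity of intersections (Proposition~\ref{prop:characterization of E_M}(ii)) gives $\langle m_k(i),m'\rangle\leqslant d_k(i)(d'+e')$. Since $m_k(i)$ agrees with $q\,w(b_k(i))$ outside the last block, where the modification is sorted and rearrangement gives $\langle m_k(i),m'\rangle\geqslant q\langle w(b_k(i)),m'\rangle$, one concludes $\mu(d',e';m')(b_k(i))\leqslant d_k(i)/q=\frac{b_k(i)+1}{4}$ for every competitor. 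Combined with the lower bound from $(2,2;2,1^{\times5})$ (which you do have), this completes the proof of the corollary directly. You mention this intersection-positivity mechanism in passing but relegate it to the ``broader proof of the master identity'' — in fact it is the whole proof of the corollary, and it is needed before, not after, the theorem.
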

\begin{proof}
Since $b_{k}(i)\in\left]\sigma^{2},6\right[$, we can write them as
$b_{k}(i)=5+x$ where $x\in\left[0,1\right[$. Now, $\left(2,2;2,1^{\times5}\right)\in\mathcal{E}$,
thus
\[
\mu\left(2,2;2,1^{\times5}\right)\left(b_{k}(i)\right)=\frac{6+x}{4}=\frac{b_{k}(i)+1}{4}.
\]
So $c\left(b_{k}(i)\right)\geqslant\mu\left(2,2;2,1^{\times5}\right)\left(b_{k}(i)\right)=\frac{b_{k}(i)+1}{4}$.

Let us show the converse inequality. Abbreviate $b:=b_{k}(i)=:\frac{p}{q}$
in lowest terms, $d:=d_{k}(i)$ and $m:=m_{k}(i)$. Then
\[
\mu\left(E(b)\right)(b)=\frac{\left\langle m,w(b)\right\rangle }{2d}=\frac{\left\langle qw(b),w(b)\right\rangle }{2d}=\frac{2b}{1+b}<\sqrt{\frac{b}{2}}<\frac{b+1}{4}
\]
since $b>\sigma^{2}$. Now if $(d',e';m')\in\mathcal{E}$ is a class
different from $E(b)$, we have by Proposition \ref{prop:characterization of E_M}\,(ii)
that $\left\langle m,m'\right\rangle \leqslant d(d'+e')$. Using the
definitions of $d$ and $m$ and the fact that $m$ is written in
decreasing order, we get
\[
q\frac{1+b}{4}(d'+e')\geqslant q\left\langle w(b),m'\right\rangle ,
\]
thus
\[
\mu(d',e';m')(b)=\frac{\left\langle m',w(b)\right\rangle }{d'+e'}\leqslant\frac{b+1}{4}.
\]
The proof is complete.\end{proof}
\begin{thm}
\label{thm:c(a) on [sigma^2,6]}$c(a)=\frac{a+1}{4}$ on $\left[\sigma^{2},6\right]$.\end{thm}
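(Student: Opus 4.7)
The plan is to apply Proposition~\ref{prop:if points are regular then c(a)=00003D(a+1)/4}, which reduces the theorem to verifying regularity at three families of points: the convergents $c_{2k-1}$ ($k\geq 1$), the points $u_k(j)$ ($k\geq 1,\ j\geq 2$), and the points $v_k(j)$ ($k\geq 1,\ j\geq 6$). Regularity of the last two families is already the content of Lemmas~\ref{lem:u_k(j) regular} and~\ref{lem:v_k(j) regular}, so the only remaining task is to establish regularity at each $c_{2k-1}$.

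For this I would exploit Corollary~\ref{cor:c(b_k(i))=00003D(b_k(i)+1)/4}, which pins down $c(b_k(i))=\frac{b_k(i)+1}{4}$ for every $i\geq 2$. Since $b_k(i)=v_k(2+2i)$ by Definition~\ref{def:definition of b_k(i) and E(b_k(i))}, the formulas of Lemma~\ref{lem:formulae for c_k, u_k(j), v_k(j)} combined with the ordering of Corollary~\ref{cor:c_2k+1 < ... < u_k(2) ...}(ii) show that $\bigl(b_k(i)\bigr)_{i\geq 2}$ strictly increases to $c_{2k-1}$ from below. Continuity of $c$, noted in the introduction, then yields
\[
c(c_{2k-1})\;=\;\lim_{i\to\infty}c(b_k(i))\;=\;\lim_{i\to\infty}\frac{b_k(i)+1}{4}\;=\;\frac{c_{2k-1}+1}{4}.
\]
By Proposition~\ref{prop:characterization of c(a)} we have $\mu(d,e;m)(c_{2k-1})\leq c(c_{2k-1})=\frac{c_{2k-1}+1}{4}$ for every $(d,e;m)\in\mathcal{E}$, and in particular for those with $l(m)=l(c_{2k-1})$, which is the required regularity.

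Combining these three regularity statements with Proposition~\ref{prop:if points are regular then c(a)=00003D(a+1)/4} would deliver the upper bound $c(a)\leq\frac{a+1}{4}$ on $[\sigma^2,6]$. The matching lower bound is immediate from the class $(2,2;2,1^{\times 5})\in\mathcal{E}$ listed in Lemma~\ref{lem:finite list of elements in E_M for M leq 7}: writing $a=5+x\in[\sigma^2,6]$ one has $w(a)=(1^{\times 5},x,\ldots)$, whence $\mu(2,2;2,1^{\times 5})(a)=\frac{6+x}{4}=\frac{a+1}{4}$, and the inequality $c(a)\geq\frac{a+1}{4}$ follows from Proposition~\ref{prop:characterization of c(a)}. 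The hard part will be ensuring regularity at the convergents $c_{2k-1}$: the quadratic-discriminant arguments of Lemmas~\ref{lem:u_k(j) regular} and~\ref{lem:v_k(j) regular} rely on the nonvanishing residues supplied by Corollary~\ref{cor:properties of u_k(j), v_k(j)}(i)--(ii), and these residues degenerate precisely at the convergents. The trick that resolves this is to observe that each $c_{2k-1}$ is a monotone limit of points $b_k(i)$ at which $c$ is already exactly pinned down by the perfect-class construction of Definition~\ref{def:definition of b_k(i) and E(b_k(i))}, so continuity of $c$ transfers the identity $c=\frac{a+1}{4}$ to the limit.
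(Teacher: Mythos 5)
Your argument is correct and coincides with the paper's own proof: both reduce the theorem to regularity of $c_{2k-1}$, $u_k(j)$, $v_k(j)$ via Proposition~\ref{prop:if points are regular then c(a)=00003D(a+1)/4}, invoke Lemmas~\ref{lem:u_k(j) regular} and~\ref{lem:v_k(j) regular} for the latter two families, and settle $c_{2k-1}$ by observing that Corollary~\ref{cor:c(b_k(i))=00003D(b_k(i)+1)/4} pins down $c$ at the points $b_k(i)=v_k(2+2i)$, which increase to $c_{2k-1}$, so continuity of $c$ forces $c(c_{2k-1})=\frac{c_{2k-1}+1}{4}$ and regularity follows.
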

\begin{proof}
By Proposition \ref{prop:if points are regular then c(a)=00003D(a+1)/4}
it suffices to show that the points $c_{2k-1}$ for all $k\geqslant1$,
$u_{k}(j)$ for all $k\geqslant1,j\geqslant2$, and $v_{k}(j)$ for
all $k\geqslant1,j\geqslant6$ are regular. By Lemma \ref{lem:u_k(j) regular}
and Lemma \ref{lem:v_k(j) regular}, the points $u_{k}(j)$ and $v_{k}(j)$
are regular. Moreover, for all $k\geqslant1$, $v_{k}(j)\underset{j\rightarrow\infty}{\longrightarrow}c_{2k-1}$.
But by Corollary \ref{cor:c(b_k(i))=00003D(b_k(i)+1)/4}, $c\left(v_{k}(2+2i)\right)=c\left(b_{k}(i)\right)=\frac{b_{k}(i)+1}{4}$
for $i\geqslant2$. So, by continuity of $c$, we get that $c\left(c_{2k-1}\right)=\frac{c_{2k-1}+1}{4}$,
and the points $c_{2k-1}$ are thus regular. This completes the proof.
\end{proof}

\section{The interval $\left[6,8\right]$}

\subsection{Preliminaries}

We will use the fact that if $\left[l_{0};l_{1},...,l_{N}\right]$
is a continued fraction of a rational number $\frac{p}{q}$ and $\frac{p_{k}}{q_{k}}:=\left[l_{0};l_{1},...,l_{k}\right]$
is its $k$-th convergent written in lowest terms, then for any real
number $x$,
\[
\left[l_{0};l_{1},...,l_{k},x\right]=\frac{xp_{k}+p_{k-1}}{xq_{k}+q_{k-1}},
\]
written in lowest terms. In particular, $q_{k}=l_{k}q_{k-1}+q_{k-2}$.
It is then easy to see that if $L:=\sum_{i}l_{i}$, then 
\begin{equation}
q=q_{N}\geqslant L.\label{eq:q great or eq than L}
\end{equation}
Recall also that we defined the \emph{error vector} of a class $(d,e;m)$
at a point $a$ as the vector $\varepsilon:=\varepsilon\left((d,e;m),a\right)$
defined by the equation
\[
m=\frac{d+e}{\sqrt{2a}}w(a)+\varepsilon.
\]
Now set then $M:=l(a)=L+l_{0}$,
\[
\sigma:=\sum_{i>l_{0}}\varepsilon_{i}^{2},
\]
and 
\[
\sigma':=\sum_{i=l_{0}+1}^{M-l_{N}}\varepsilon_{i}^{2}<\sigma.
\]
Then by Lemma \ref{lem:first properties of mu}.3, $\sigma<1$ for
a class of the form $(d,d;m)$ and $\sigma<\frac{1}{2}$ for a class
of the form $(d+\frac{1}{2},d-\frac{1}{2};m)$.
\begin{lem}
\label{lem:sup born on d}\renewcommand{\labelenumi}{(\roman{enumi})}
Let $(d,e;m)\in\mathcal{E}$ be a class such that there exists $a=:\frac{p}{q}\in\left]\sigma^{2},8\right[$ with $l(a)=l(m)$ and \[ \mu(d,e;m)(a)>\sqrt{\frac{a}{2}}. \] Assume that $y(a):=a+1-2\sqrt{2a}>\frac{1}{q}$, and set $v_{M}:=\frac{d+e}{q\sqrt{2a}}$. Then
\begin{enumerate}
\item $\left|\sum\varepsilon_{i}\right|\leqslant\sqrt{\sigma L}$,
\item \emph{If} $v_{M}<1$, then $\left|\sum\varepsilon_{i}\right|\leqslant\sqrt{\sigma'L}$,
\item If $v_{M}\leqslant\frac{1}{2}$, then $v_{M}>\frac{1}{3}$ and $\sigma'\leqslant\frac{1}{2}$. If $v_{M}\leqslant\frac{3}{4}$, then $\sigma'\leqslant\frac{7}{8}$,
\item Set $\delta:=y(a)-\frac{1}{q}>0$. Then for both types of classes $(d,d;m)$ and $(d+\frac{1}{2},d-\frac{1}{2};m)$ we have \[ d\leqslant\frac{\sqrt{a}}{\sqrt{2}\delta}\left(\sqrt{\sigma L}-1\right)\leqslant\frac{\sqrt{a}}{\sqrt{2}\delta}\left(\sqrt{\sigma q}-1\right)<\frac{\sqrt{a}}{\sqrt{2}\delta}\left(\frac{\sigma}{\delta v_{M}}-1\right). \] If $v_{M}<1$, $\sigma$ can be replaced by $\sigma'$.
\end{enumerate} %
\end{lem}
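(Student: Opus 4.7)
I would prove the four parts in order, with (iv) being essentially a reformulation of (i) via the explicit identity from Lemma~\ref{lem:first properties of mu}(iv), and (ii)--(iii) being refinements under additional hypotheses on $v_M$.

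For (i), the natural attack is Cauchy--Schwarz applied to the $L$-dimensional truncated error vector $(\varepsilon_{l_0+1},\dots,\varepsilon_M)$, which immediately gives $(\sum_{i>l_0}\varepsilon_i)^2 \leq L\,\sigma$. To extend this to the full sum $\sum_{i=1}^M \varepsilon_i$ (whose value is pinned down by Lemma~\ref{lem:first properties of mu}(iv)), I would use Lemma~\ref{lem:parallel blocks} on the first block, where the equal weights $w_i = 1$ force $\varepsilon_i$ to be constant there (or to differ by $1$ at a single index); the first-block contribution is then absorbed into the bound using the identity $-\sum_{i=1}^M \varepsilon_i = qv_M\delta + 1$.

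For (ii), the hypothesis $v_M < 1$ should be used to control the last block, whose weights are all $1/q$: since $\varepsilon_i = m_i - v_M$ there, the entries $m_i$ (which are nonnegative integers) lie in a very restricted range, which provides the rigidity needed to replace $\sigma$ by $\sigma'$ in the Cauchy--Schwarz bound. For (iii), the lower bound $v_M > 1/3$ should fall out of the Diophantine equations $\sum m_i = 2(d+e)-1$ and $\sum m_i^2 = 2de+1$ from Proposition~\ref{prop:characterization of E_M}(i): substituting the definition $v_M = (d+e)/(q\sqrt{2a})$ into a Cauchy--Schwarz comparison produces a quadratic inequality in $v_M$ incompatible with $v_M\leq 1/2$ unless $v_M > 1/3$. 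The bound $\sigma' \leq 1/2$ (resp.\ $\leq 7/8$) then follows from Lemma~\ref{lem:first properties of mu}(iii) combined with a lower bound on the last block's contribution to $\sigma - \sigma'$ forced by the smallness of $v_M$.

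For (iv), I would combine (i) with Lemma~\ref{lem:first properties of mu}(iv) to obtain
$$
\frac{(d+e)\,\delta}{\sqrt{2a}} \;=\; qv_M\delta \;=\; -\sum_{i=1}^M\varepsilon_i - 1 \;\leq\; \sqrt{\sigma L} - 1.
$$
Since $d = (d+e)/2$ in both class types (the half-shifts cancel in the sum $d+e = 2d$ for $(d+\tfrac12,d-\tfrac12;m)$), this yields $d \leq \tfrac{\sqrt{a}}{\sqrt{2}\,\delta}(\sqrt{\sigma L}-1)$. The middle inequality follows from $L\leq q$ via~(\ref{eq:q great or eq than L}). For the final strict inequality, squaring $qv_M\delta + 1 \leq \sqrt{\sigma q}$ gives $q v_M^2\delta^2 + 2v_M\delta + 1/q \leq \sigma$, hence $q < \sigma/(v_M\delta)^2$, so $\sqrt{\sigma q} < \sigma/(v_M\delta)$ and the stated bound follows. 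When $v_M < 1$, (ii) substitutes $\sigma'$ for $\sigma$ throughout.

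\textbf{Main obstacle.} The conceptually delicate step is (iii): coaxing $v_M > 1/3$ out of the \emph{weak} hypothesis $v_M \leq 1/2$ requires pushing the Diophantine equations in tandem with Lemma~\ref{lem:parallel blocks}, and the $\sigma'$-bounds there must be sharp enough to feed back into~(iv). Part~(i) is also subtler than it looks: while the Cauchy--Schwarz step on $i>l_0$ is routine, the clean statement for the full sum relies on the first-block rigidity from Lemma~\ref{lem:parallel blocks} dovetailing with the identity of Lemma~\ref{lem:first properties of mu}(iv).
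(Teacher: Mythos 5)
Your proof of (iv) is correct and follows exactly the paper's argument: from $y(a)>1/q$ one gets $\sum\varepsilon_i<0$, combines the exact identity of Lemma~\ref{lem:first properties of mu}(iv) with the Cauchy--Schwarz bound from (i), notes $d+e=2d$ in both admitted class types, and then uses $L\leqslant q$ and $\sqrt{\sigma q}\geqslant\delta q v_M+1$ to deduce $\sqrt{q}<\sqrt{\sigma}/(\delta v_M)$; the $v_M<1$ case just substitutes $\sigma'$ via (ii). (Your detour through squaring $qv_M\delta+1\leqslant\sqrt{\sigma q}$ is harmless but slightly longer than needed --- dividing $\sqrt{\sigma q}>\delta q v_M$ by $\sqrt{q}$ already does the job.)

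For (i)--(iii) the paper itself defers to Lemma~5.1.2 of \cite{MS} and gives no proof, so there is nothing to compare line-by-line; but your sketch for (i) contains a genuine gap. Lemma~\ref{lem:parallel blocks} constrains the $m_i$ (hence the $\varepsilon_i$) to be nearly constant on the first block, but it gives no smallness of the total contribution $\sum_{i\leqslant l_0}\varepsilon_i$: each $\varepsilon_i$ in the first block can be of order $1/2$ in magnitude and all of the same sign, so the first-block sum can be comparable to $l_0/2$ and cannot be ``absorbed'' by the Cauchy--Schwarz bound over $i>l_0$. The missing ingredient is the positivity $\left\langle\varepsilon,w(a)\right\rangle>0$ from Lemma~\ref{lem:first properties of mu}(ii), used together with $\sum_{i=1}^{M}\varepsilon_i<0$: since $w_i=1$ for $i\leqslant l_0$ and $0<w_i<1$ otherwise,
\[
-\sum_{i=1}^{M}\varepsilon_i \;=\; -\sum_{i\leqslant l_0}\varepsilon_i-\sum_{i>l_0}\varepsilon_i \;<\; \sum_{i>l_0}\varepsilon_i\,w_i-\sum_{i>l_0}\varepsilon_i \;=\; \sum_{i>l_0}(-\varepsilon_i)(1-w_i) \;\leqslant\; \sum_{i>l_0}\left|\varepsilon_i\right| \;\leqslant\; \sqrt{\sigma L},
\]
and this is the inequality (i) actually asserts. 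Part (ii) then follows by dropping the last-block terms from the middle sum (they are negative when $v_M<1$, since $\varepsilon_i=m_i-v_M>0$ there), giving $\sigma'$ in place of $\sigma$; your explanation gestures at the right location (the last block) but attributes the gain to integrality rather than to sign. Your sketch for (iii) is too vague to assess --- ``a quadratic inequality in $v_M$'' does not describe a concrete step --- and the actual argument runs through bounding $l_N$ and the last-block contribution to $\left\langle\varepsilon,\varepsilon\right\rangle$ against the constraint from Lemma~\ref{lem:first properties of mu}(iii).
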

\begin{proof}
The proofs of (i), (ii) and (iii) are the same as in the proof of
Lemma~5.1.2 in \cite{MS}. To prove (iv), we notice first that $\sum\varepsilon_{i}<0$.
Indeed, by Lemma~\ref{lem:first properties of mu}.4, $-\sum_{i=1}^{M}\varepsilon_{i}=\frac{d+e}{\sqrt{2a}}\left(y(a)-\frac{1}{q}\right)+1$.
Since $y(a)>\frac{1}{q}$ by assumption, we obtain the desired inequality.

\noindent Then, using (\ref{eq:q great or eq than L}) and (i), we
find
\[
\sqrt{\sigma q}\geqslant\sqrt{\sigma L}\geqslant\frac{d+e}{\sqrt{2a}}\left(y(a)-\frac{1}{q}\right)+1=\frac{d+e}{\sqrt{2a}}\delta+1=\delta qv_{M}+1>\delta qv_{M}.
\]
Thus
\[
\sqrt{q}<\frac{\sqrt{\sigma}}{\delta v_{M}}.
\]
For both types of classes $(d,d;m)$ and $(d+\frac{1}{2},d-\frac{1}{2};m)$,
we get
\[
d\leqslant\frac{\sqrt{a}}{\sqrt{2}\delta}\left(\sqrt{\sigma L}-1\right)\leqslant\frac{\sqrt{a}}{\sqrt{2}\delta}\left(\sqrt{\sigma q}-1\right)<\frac{\sqrt{a}}{\sqrt{2}\delta}\left(\frac{\sigma}{\delta v_{M}}-1\right).
\]
If $v_{M}<1$, the same arguments go through, when replacing $\sigma$
by $\sigma'$.
\end{proof}

\subsection{The interval $\left[6,7\right]$}

We start by stating a more precise version of part (ii) of Theorem
\ref{thm:statement of the result}.
\begin{thm}
\label{thm:c(a) on [sigma^2,7+1/32]}On the interval $\left[\sigma^{2},7\frac{1}{32}\right]$,
$c(a)=\sqrt{\frac{a}{2}}$ except on the seven disjoint intervals
$\left]u_{x},v_{x}\right[$ given in the following table. For each
of these intervals, there exist a class $(d,e;m)\in\mathcal{E}$ and
a rational number $x\in\left]u_{x},v_{x}\right[$ with $l(x)=l(m)$
such that $c(z)=\mu(d,e;m)(z)=\frac{1}{d+e}\left(A+Bz\right)$ on
$\left[u_{x},x\right]$, and $c(z)=\mu(d,e;m)(z)=\frac{1}{d+e}\left(A'+B'z\right)$
on $\left[x,v_{x}\right]$. We list all these informations in the
table below as well as the values of $c(x)$ and $\sqrt{\frac{x}{2}}$.\begin{center} \renewcommand{\arraystretch}{1.5} \begin{tabular}{|c|c|c|c|c|c|c|} \hline  $x$ & $(d,e;m)$ & $(A,B)$ & $(A',B')$ & $c(x)$ & $c(x)\cong$ & $\sqrt{\frac{x}{2}}\cong$\tabularnewline \hline  $6$ & $\left(2,2;2,1^{\times5}\right)$ & $(1,1)$ & $(7,0)$ & $\frac{7}{4}$ & $1.75$ & $1.73$\tabularnewline \hline  $6\frac{1}{7}$ & $\left(28,28;16^{\times6},3,2^{\times6}\right)$ & $(6,15)$ & $(92,1)$ & $\frac{687}{392}$ & $1.752551$ & $1.752549$\tabularnewline \hline  $6\frac{1}{6}$ & $\left(14,14;8^{\times6},2,1^{\times5}\right)$ & $(6,7)$ & $(43,1)$ & $\frac{295}{168}$ & $1.75595$ & $1.75594$\tabularnewline \hline  $6\frac{1}{5}$ & $\left(11,10;6^{\times6},1^{\times5}\right)$ & $(6,5)$ & $(37,0)$ & $\frac{37}{21}$ & $1.762$ & $1.761$\tabularnewline \hline  $6\frac{1}{3}$ & $\left(7,7;4^{\times6},1^{\times3}\right)$ & $(6,3)$ & $(25,0)$ & $\frac{25}{14}$ & $1.79$ & $1.78$\tabularnewline \hline  $6\frac{1}{2}$ & $\left(9,9;5^{\times6},3,2\right)$ & $(0,5)$ & $\left(26,1\right)$ & $\frac{65}{36}$ & $1.81$ & $1.80$\tabularnewline \hline  $7$ & $\left(4,4;3,2^{\times6}\right)$ & $(1,2)$ & $(15,0)$ & $\frac{15}{8}$ & $1.88$ & $1.87$\tabularnewline \hline  \end{tabular} \par\end{center}%
\begin{center} \renewcommand{\arraystretch}{1.5} \begin{tabular}{|c|c|c|c|c|} \hline  $x$ & $u_{x}$ & $u_{x}\cong$ & $v_{x}$ & $v_{x}\cong$\tabularnewline \hline  $6$ & $\sigma^{2}=3+2\sqrt{2}$ & $5.83$ & $6\frac{1}{8}=\frac{49}{8}$ & $6.13$\tabularnewline \hline  $6\frac{1}{7}$ & $\frac{2}{225}\left(347+28\sqrt{151}\right)$ & $6.142842$ & $4\left(173-70\sqrt{6}\right)$ & $6.142872$\tabularnewline \hline  $6\frac{1}{6}$ & $\frac{2}{7}\left(11+4\sqrt{7}\right)$ & $6.16657$ & $153-14\sqrt{110}$ & $6.16676$\tabularnewline \hline  $6\frac{1}{5}$ & $\frac{3}{100}\left(107+7\sqrt{201}\right)$ & $6.19$ & $\frac{2738}{441}$ & $6.21$\tabularnewline \hline  $6\frac{1}{3}$ & $\frac{1}{9}\left(31+7\sqrt{13}\right)$ & $6.2488$ & $\frac{625}{98}$ & $6.38$\tabularnewline \hline  $6\frac{1}{2}$ & $\frac{162}{25}$ & $6.48$ & $55-9\sqrt{29}$ & $6.53$\tabularnewline \hline  $7$ & $\frac{1}{2}\left(7+4\sqrt{3}\right)$ & $6.96$ & $7\frac{1}{32}=\frac{225}{32}$ & $7.03$\tabularnewline \hline  \end{tabular} \par\end{center}%
\end{thm}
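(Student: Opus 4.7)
The plan is to follow the template established in Section~6 for the interval $[\sigma^{2},6]$, adapted to the fact that there are now seven exceptional intervals instead of one. The proof splits naturally into three tasks: (a) producing the seven obstructing classes and verifying they lie in $\mathcal{E}$; (b) for each class, determining the maximal interval $]u_{x},v_{x}[$ on which it dominates the volume constraint $\sqrt{a/2}$ and the two linear pieces $\frac{1}{d+e}(A+Bz)$ and $\frac{1}{d+e}(A'+B'z)$ appearing in the table; (c) showing that for every rational $a\in[\sigma^{2},7\tfrac{1}{32}]$ outside the union of these seven open intervals, no class in $\mathcal{E}$ gives $\mu(d,e;m)(a)>\sqrt{a/2}$.

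For (a), I would check for each listed $(d,e;m)$ the two Diophantine identities of Proposition~\ref{prop:characterization of E_M}(i) by direct computation and then exhibit an explicit Cremona reduction of $\varphi_{*}(d,e;m)$ to $(0;-1)$, exactly in the style of Lemmas~\ref{lem:E(b_k(2j)) reduce} and \ref{lem:E(b_k(2j+1)) reduce}. Since each $m$ here has at most $13$ entries, these verifications are finite and short. For (b), Proposition~\ref{prop:structure of the obstructions mu} guarantees that around each rational $x$ with $l(m)=l(x)$ the function $(d+e)\mu(d,e;m)(z)$ is piecewise linear with exactly one break at $z=x$; the coefficients $(A,B)$ and $(A',B')$ are then read off from the weight expansion $w(x)$ at the two sides of $x$. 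The endpoints $u_{x}$ and $v_{x}$ are obtained by solving $A+Bz=(d+e)\sqrt{z/2}$ and $A'+B'z=(d+e)\sqrt{z/2}$, yielding the closed-form irrational (or rational) expressions in the second table. Finally, that $c(x)=\mu(d,e;m)(x)$ — and not something larger — follows by a positivity-of-intersection argument in the spirit of Lemma~\ref{lem:perfect elements give the constraint}, combined with Corollary~\ref{cor:c(b_k(i))=00003D(b_k(i)+1)/4} for the boundary value $x=6$.

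The main obstacle is (c), which requires ruling out an \emph{a~priori} infinite family of potentially obstructive classes on each of the eight complementary closed subintervals of $[\sigma^{2},7\tfrac{1}{32}]$. My plan is to adapt the regularity machinery of Section~6: declare $a$ \emph{regular} if every $(d,e;m)\in\mathcal{E}$ with $l(m)=l(a)$ satisfies $\mu(d,e;m)(a)\leqslant\sqrt{a/2}$, and use an analogue of Proposition~\ref{prop:if points are regular then c(a)=00003D(a+1)/4} to reduce checking regularity at every $a$ to checking regularity at a well-chosen dense sequence of witness points in each complementary subinterval (typically convergents of the continued-fraction expansion of the endpoint). At each witness point, the bounds of Lemma~\ref{lem:sup born on d} restrict $d$ to an explicit finite list, so regularity becomes a finite numerical check analogous to Lemmas~\ref{lem:u_k(j) regular} and~\ref{lem:v_k(j) regular}. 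The hard part is the bookkeeping: the eight subintervals (including $[v_{7},7\tfrac{1}{32}]$, whose right endpoint is dictated by the volume constraint meeting the last linear piece arising from $(4,4;3,2^{\times 6})$) each require their own family of witnesses and their own arithmetic inequalities of the form $p^{2}-6pq+q^{2}-c=c'$. Once regularity is established on the entire complement, the piecewise linearity of $c$ on $\{c>\sqrt{a/2}\}$ from Corollary~\ref{cor:set of mu st mu=00003Dc is finite}, together with the scaling property of Lemma~\ref{lem:scaling property of c} and continuity of $c$, forces $c(a)=\sqrt{a/2}$ off the seven intervals and the claimed linear formulas on them.
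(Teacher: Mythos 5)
Your plan for parts (a) and (b) matches the paper: the Diophantine conditions of Proposition~\ref{prop:characterization of E_M} together with explicit Cremona reductions establish membership in $\mathcal{E}$, and Proposition~\ref{prop:structure of the obstructions mu} pins down the two linear pieces and hence $u_{x}$, $v_{x}$ by solving $A+Bz=(d+e)\sqrt{z/2}$ and $A'+B'z=(d+e)\sqrt{z/2}$.

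The real gap is in part (c). You propose to import the regularity machinery of Section 6 (Proposition~\ref{prop:if points are regular then c(a)=00003D(a+1)/4} together with Lemma~\ref{lem:phi>psi} and witness points taken from continued-fraction convergents). But that argument is not portable to $[6,7\frac{1}{32}]$. Proposition~\ref{prop:if points are regular then c(a)=00003D(a+1)/4} is built on the fact that a single affine constraint $\frac{a+1}{4}$ dominates $\sqrt{a/2}$ on the \emph{whole} interval $[\sigma^{2},6]$: one can then assume $c(a)>\frac{a+1}{4}>\sqrt{a/2}$ everywhere near a hypothetical bad point, invoke Corollary~\ref{cor:set of mu st mu=00003Dc is finite} to get global piecewise linearity, locate the maximal concave break point $s_{0}$, and play off the interlocking $\varphi>\psi$ inequalities for the witnesses $u_{k}(j)$, $v_{k}(j)$ against Lemma~\ref{lem:there exists a unique a_0 center of the class}. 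On the complement of the seven intervals in $[6,7\frac{1}{32}]$, the function $c$ \emph{equals} $\sqrt{a/2}$, so there is no dominant affine floor, the $S_{+}/S_{-}$ bookkeeping collapses, and there is no natural convergent sequence whose arithmetic identities (à la Corollary~\ref{cor:properties of u_k(j), v_k(j)}) make the $\varphi>\psi$ inequalities go through. Choosing witnesses densely is also not enough on its own: an obstruction at a non-witness $a$ is detected only at its own rational center $a_{0}$, and those centers range over \emph{all} rationals of bounded denominator, not over a distinguished sequence.

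What the paper does instead is conceptually simpler but computationally heavier. Lemma~\ref{lem:there exists a unique a_0 center of the class} localizes each potential obstruction to a unique rational center $a_{0}=6\frac{p}{q}\in[6\frac{1}{8},7]$ with $l(a_{0})=l(m)$. Lemma~\ref{lem:sup born on d} then gives an explicit upper bound for $d$ in terms of $q$ (and a much stronger bound when $m_{1}\neq m_{6}$ via Lemma~\ref{lem:parallel blocks}). For small $q$ (here $q\leqslant 39$) the set of candidate $(d,e;m)$ with center $6\frac{p}{q}$ is finite and checked directly; for $q\geqslant 40$ a uniform $d$-bound over the subinterval $\left]6\frac{1}{k+1},6\frac{1}{k}\right[$ makes the list of candidates finite once and for all. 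The piece $[7,7\frac{1}{32}]$ is handled separately by Lemma~\ref{lem:sup born on d is 13} and Proposition~\ref{prop:c(a)=00003Dsqrt(a/2) for a>7+1/32} (forcing $m_{1}=\cdots=m_{7}$ and $d\leqslant 13$, then checking the short remaining list), with Corollary~\ref{cor:c(a) on [7,7+1/32]} then following from monotonicity of $c$. You should replace the regularity-style plan in (c) with this localization-plus-exhaustion argument; the regularity framework simply does not have the hypotheses it needs here, and you would be attempting to prove inequalities like those in Lemma~\ref{lem:phi>psi} for points with no useful arithmetic structure.

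One smaller inaccuracy: your claim that $c(x)=\mu(d,e;m)(x)$ follows from a positivity-of-intersection argument in the spirit of Lemma~\ref{lem:perfect elements give the constraint} is not quite right, because the table classes are not perfect (for instance $m=(2,1^{\times 5})$ is not a multiple of $w(6)=(1^{\times 6})$). The equality $c=\mu$ on each $[u_{x},v_{x}]$ comes rather from the exhaustion showing no \emph{other} class is obstructive there, together with monotonicity and continuity at the endpoints (with $c(6)=\frac{7}{4}$ inherited from Theorem~\ref{thm:c(a) on [sigma^2,6]} by continuity).
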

\begin{lem}
\label{lem:Obstructive classes at 6+1/k for k=00003D2,...,8}The classes
$(d,e;m)\in\mathcal{E}$ such that $\mu(d,e;m)\left(6\frac{1}{k}\right)>\sqrt{\frac{6\frac{1}{k}}{2}}$
and $l(m)=l\left(6\frac{1}{k}\right)$ for some $k=1,\ldots,8$ are
given in the following table.\begin{center} \renewcommand{\arraystretch}{1.5} \begin{tabular}{|c|c|} \hline  $k$ & $(d,e;m)$\tabularnewline \hline  $7$ & $\left(28,28;16^{\times6},3,2^{\times6}\right)$\tabularnewline \hline  $7$ & $\left(196,196;112^{\times5},111,16^{\times7}\right)$\tabularnewline \hline  $6$ & $\left(14,14;8^{\times6},2,1^{\times5}\right)$\tabularnewline \hline  $6$ & $\left(84,84;48^{\times5},47,8^{\times6}\right)$\tabularnewline \hline  $5$ & $\left(11,10;6^{\times6},1^{\times5}\right)$\tabularnewline \hline  $4$ & $\left(28,28;16^{\times5},15,4^{\times4}\right)$\tabularnewline \hline  $3$ & $\left(7,7;4^{\times6},1^{\times3}\right)$\tabularnewline \hline  $2$ & $\left(9,9;5^{\times6},3,2\right)$\tabularnewline \hline  $1$ & $\left(4,4;3,2^{\times6}\right)$\tabularnewline \hline  \end{tabular} \par\end{center}%
\end{lem}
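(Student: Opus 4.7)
The plan is to reduce the claim to a finite enumeration guided by the a priori bounds and block restrictions developed earlier in this section, and then to carry out that enumeration case by case. Fix $k\in\{1,\ldots,8\}$ and set $a=6+\tfrac1k$, with weight expansion $w(a)=(1^{\times 6},(1/k)^{\times k})$ for $k\geqslant 2$ and $w(7)=(1^{\times 7})$ for $k=1$. By Lemma~\ref{lem:e=00003Dd or e=00003Dd-1} any obstructive class has $e\in\{d,d-1\}$, and by Lemma~\ref{lem:first properties of mu}(iii) its error vector $\varepsilon$ satisfies $\sum\varepsilon_i^2<1$ (respectively $<1/2$) in the two cases. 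The goal is to show that only the ten explicit triples listed in the table survive these constraints.

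First I would bound $d$. Whenever $\delta:=y(a)-1/q>0$ (which holds for most but not all $k$ in the range), Lemma~\ref{lem:sup born on d}(iv) gives an explicit numerical upper bound on $d$ using $\sigma<1$ or $\sigma<1/2$. In the borderline cases where $\delta\leqslant 0$, I would instead exploit the $L^2$-bound on $\varepsilon$ directly: since each $m_i$ is a nonnegative integer and $m_i=\tfrac{d+e}{\sqrt{2a}}w_i+\varepsilon_i$, the entries of $m$ within each block of $w(a)$ are confined to a narrow window of two consecutive integers, and this together with the Diophantine condition $\sum m_i^2=2de+1$ still forces $d$ into a finite range. Combining with Lemma~\ref{lem:parallel blocks} and Corollary~\ref{cor:an obstructive class (d,d-1;m) has the same blcoks}, the admissible $m$ necessarily has the shape $m=(s^{\times a_1},(s-1)^{\times(6-a_1)},t^{\times b_1},(t-1)^{\times(k-b_1)})$ with at most one of $a_1,b_1$ strictly interior to its block (and for classes of the form $(d+\tfrac12,d-\tfrac12;m)$ neither block may have interior variation).

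With $d$ bounded and $m$ reduced to this four-parameter family, I would substitute into the system
\[
\sum m_i=2(d+e)-1,\qquad \sum m_i^2=2de+1
\]
of Proposition~\ref{prop:characterization of E_M}(i). For each candidate $d$ this reduces to a small linear--quadratic system in $(s,t,a_1,b_1,e)$ whose integer solutions can be listed by hand or by a short computer search. For every surviving tuple I would then run Cremona reduction in the sense of Proposition~\ref{prop:characterization of E_M}(iii) to confirm membership in $\mathcal{E}$, and finally compute $\mu(d,e;m)(a)=\langle m,w(a)\rangle/(d+e)$ and compare with $\sqrt{a/2}$ to verify obstructiveness. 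The ten classes in the statement are exactly those that pass all three tests.

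The main obstacle is the case analysis at $k=1$ and $k=7$, where the cleanest bound from Lemma~\ref{lem:sup born on d}(iv) either fails or is too loose, and where $d$ can reach into the hundreds (as in the class $(196,196;112^{\times 5},111,16^{\times 7})$). For these, one must argue via the blockwise $L^2$-estimate more carefully: the dominant contribution to $\sum\varepsilon_i^2$ comes from the block on which the quantity $\tfrac{d+e}{\sqrt{2a}}w_i$ is furthest from an integer, and a sharper estimate using both blocks simultaneously is required to rule out spurious candidates. Once these two cases are settled, the remaining $k$ are handled uniformly by the scheme above, and the table is obtained by inspection.
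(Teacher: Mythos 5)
Your primary tool, Lemma \ref{lem:sup born on d}\,(iv), does not apply at any of the points $6\frac{1}{k}$ with $k\leqslant 8$. The hypothesis there is $\delta:=y(a)-\frac{1}{q}>0$. Since $y(a)=a-6$ exactly at $a=\frac{49}{8}=6\frac{1}{8}$ and $y(a)<a-6$ for $a>\frac{49}{8}$, one has $\delta\leqslant 0$ for every $a=6\frac{1}{k}$ with $k=1,\ldots,8$ (with equality at $k=8$). So the ``whenever $\delta>0$'' branch of your argument is vacuous for this lemma, and what you describe as the fallback for ``borderline cases'' is in fact the whole proof.

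That fallback, moreover, is described too loosely to carry the enumeration. The paper never bounds $d$ for this lemma. Instead it uses Lemma \ref{lem:parallel blocks} to put $m$ into one of exactly five shapes (all entries equal within each of the two blocks, or a single block having a $\pm1$ at its first or last entry; note that the variation is confined to the end of a block, so your four-parameter family $\left(s^{\times a_1},(s-1)^{\times(6-a_1)},t^{\times b_1},(t-1)^{\times(k-b_1)}\right)$ is strictly wider than what the lemma allows). For each shape, the $L^2$-bound $\sum\varepsilon_i^2<1$ (resp.\ $<\frac{1}{2}$) confines the integer $s:=a-kb$ (or its analogues) to a small window, and then the Diophantine conditions of Proposition \ref{prop:characterization of E_M}\,(i) collapse to a single quadratic in $b$. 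The finiteness comes from this quadratic having at most two roots for each admissible $(k,s)$, not from any bound on $d$; this is why a class like $\left(196,196;112^{\times5},111,16^{\times7}\right)$ is found without needing the ``sharper estimate'' you invoke. Finally, the case $k=1$ is not hard at all: $\mathcal{E}_7$ is finite (Lemma \ref{lem:finite list of elements in E_M for M leq 7}) and one checks its ten elements directly. So your overall plan has the right ingredients, but the division into a primary $d$-bounding step and a fallback is backwards: the parametrize-and-solve-the-quadratic argument is the actual method, and it must be executed in each of the five block-shape cases, with the half-integer classes $(d+\frac12,d-\frac12;m)$ handled separately via Corollary \ref{cor:an obstructive class (d,d-1;m) has the same blcoks}.
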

\begin{proof}
In the case $k=1$, since $6\frac{1}{1}=7$, we only have to check
which elements of the finite set $\mathcal{E}_{7}$ are obstructive
at $7$. It turns out that the only obstructive one is $\left(4,4;3,2^{\times6}\right)$.

Let us now treat the cases $k=2,\ldots,8$. Suppose that there is
a class of the form $(d,d;m)$ which is obstructive at some $6\frac{1}{k}$.
Since $l\left(6\frac{1}{k}\right)=6+k$, by Lemma~\ref{lem:parallel blocks}
the vector $m$ has to be of one of the five forms
\[
\begin{array}{ccc}
\left(a^{\times6},b^{\times k}\right), & \left(a+1,a^{\times5},b^{\times k}\right), & \left(a^{\times5},a-1,b^{\times k}\right),\\
\left(a^{\times6},b+1,b^{\times(k-1)}\right), & \left(a^{\times6},b^{\times(k-1)},b-1\right).
\end{array}
\]
Define $\varepsilon_{a}$ and $\varepsilon_{b}$ by
\[
a=\frac{\sqrt{2}d}{\sqrt{6\frac{1}{k}}}+\varepsilon_{a}\quad\textrm{and}\quad b=\frac{\sqrt{2}d}{k\sqrt{6\frac{1}{k}}}+\varepsilon_{b}.
\]

If $m=\left(a^{\times6},b^{\times k}\right)$, then $\left|a-kb\right|=\left|\varepsilon_{a}-k\varepsilon_{b}\right|\leqslant\left|\varepsilon_{a}\right|+k\left|\varepsilon_{b}\right|$.
Since by Lemma \ref{lem:first properties of mu}\,(iii), $\sum\varepsilon_{i}^{2}<1$,
we find $\left|\varepsilon_{a}\right|+k\left|\varepsilon_{b}\right|<\sqrt{k+1}$,
and thus $\left|a-kb\right|\leqslant\left\lceil \sqrt{k+1}-1\right\rceil $.
Hence
\[
s:=a-kb\in\left\{ \begin{array}{ll}
\left\{ 0,\pm1\right\}  & \textrm{if }k\in\left\{ 2,3\right\} ,\\
\left\{ 0,\pm1,\pm2\right\}  & \textrm{if }k\in\left\{ 4,\ldots,8\right\} .
\end{array}\right.
\]
The Diophantine equations of Proposition \ref{prop:characterization of E_M}\,(i)
then become
\[
\begin{array}{rl}
4d= & 6a+kb+1,\\
2d^{2}= & 6a^{2}+kb^{2}-1.
\end{array}
\]
Thus $\left(6a+kb+1\right)^{2}=8\left(6a^{2}+kb^{2}-1\right)$. Replacing
$a$ by $kb+s$, we can solve this equation in $b$ for the values
of $k$ and $s$ given above. We find three solutions to the equation
with $b\geqslant1$, namely when $(k,s,b)$ is equal to $(3,0,3)$,
$(3,1,1)$ or $(3,-1,5)$. This leads to the vectors $\left(16,16;9^{\times6},3^{\times3}\right)$,
$\left(7,7;4^{\times6},1^{\times3}\right)$ and $\left(25,25;14^{\times6},5^{\times3}\right)$,
respectively. Since only $\left(7,7;4^{\times6},1^{\times3}\right)$
reduces to $(0;-1,0,\ldots,0)$ by Cremona moves, this is the only
class of the form $\left(d,d;a^{\times6},b^{\times k}\right)$ potentially
obstructive at some $6\frac{1}{k}$, and it indeed is obstructive
at $6\frac{1}{3}$.

In the case where $m=\left(a+1,a^{\times5},b^{\times k}\right)$,
$\sigma=k\left|\varepsilon_{b}\right|^{2}\leqslant\frac{1}{6}.$ Thus,
$\left|a-kb\right|\leqslant\left|\varepsilon_{a}\right|+k\left|\varepsilon_{b}\right|\leqslant1+\sqrt{\frac{k}{6}}$,
and thus
\[
s:=a-kb\in\left\{ \begin{array}{ll}
\left\{ 0,\pm1\right\}  & \textrm{if }k\in\left\{ 2,\ldots,5\right\} ,\\
\left\{ 0,\pm1,\pm2\right\}  & \textrm{if }k\in\left\{ 6,\ldots,8\right\} .
\end{array}\right.
\]
From the Diophantine equations we obtain $\left(6a+kb+2\right)^{2}=8\left(6a^{2}+2a+kb^{2}\right)$.
Replacing $a$ by $kb+s$, we obtain no solutions with $b\geqslant1$
for the accepted values of $k$ and $s$.

As in the previous case, when $m=\left(a^{\times5},a-1,b^{\times k}\right)$,
we have
\[
s:=a-kb\in\left\{ \begin{array}{ll}
\left\{ 0,\pm1\right\}  & \textrm{if }k\in\left\{ 2,\ldots,5\right\} ,\\
\left\{ 0,\pm1,\pm2\right\}  & \textrm{if }k\in\left\{ 6,\ldots,8\right\} .
\end{array}\right.
\]
The Diophantine equations become $\left(6a+kb\right)^{2}=8\left(6a^{2}-2a+kb^{2}\right)$,
which yields the four solutions with $b\geqslant1$, namely the tuples
$(k,s,b)$ equal to $(2,1,1)$, $(4,0,4)$, $(6,0,8)$ and $(7,0,16)$
which give the vectors $\left(5,5;3^{\times5},2,1^{\times2}\right)$,
$\left(28,28;16^{\times5},15,4^{\times4}\right)$, $\left(84,84;48^{\times5},47,8^{\times6}\right)$
and $\left(196,196;112^{\times5},111,16^{\times7}\right)$, respectively.
These vectors all reduce to $(0;-1,0,\ldots,0)$ by Cremona moves,
but the first one is not obstructive at $6\frac{1}{2}$. So we add
only the three last vectors to our table.

For the case $m=\left(a^{\times6},b+1,b^{\times(k-1)}\right)$ notice
that if $\varepsilon\in\mathbb{R}$ and $k\in\mathbb{N}$ are such
that $(k-1)\varepsilon^{2}+(\varepsilon+1)^{2}\leqslant1$, then $\varepsilon\in\left[-\frac{2}{k},0\right]$.
Thus, $\left|(k-1)\varepsilon+(\varepsilon+1)\right|=\left|k\varepsilon+1\right|\leqslant1$.
Since $\sigma\geqslant\frac{k-1}{k}$, we get
\[
\begin{array}{rl}
\left|a-kb-1\right| & =\left|a-(b+1)-(k-1)b\right|=\left|\varepsilon_{a}-\left(\varepsilon_{b}+1\right)-(k-1)\varepsilon_{b}\right|\\
 & \leqslant\left|\varepsilon_{a}\right|+\left|(k-1)\varepsilon_{b}+\varepsilon_{b}+1\right|\leqslant1+1=2.
\end{array}
\]
Thus
\[
s:=a-kb-1\in\left\{ 0,\pm1,\pm2\right\} .
\]
The Diophantine equations become $\left(6a+kb+2\right)^{2}=8\left(6a^{2}+kb^{2}+2b\right)$,
which when we replace $a$ by $kb+s+1$ gives the three tuples of
solutions $(k,s,b)$ equal to $(2,0,2)$, $(6,1,6)$ and $(7,1,2)$,
which yields the vectors $\left(9,9;5^{\times6},3,2\right)$, $\left(14,14;8^{\times6},2,1^{\times5}\right)$
and, again, $\left(28,28;16^{\times6},3,2^{\times6}\right)$, respectively.
All three vectors reduce to $(0;-1,0,\ldots,0)$ by Cremona moves,
and they are obstructive at $6\frac{1}{k}$ for $k=2,6,7$ respectively.

For the case $m=\left(a^{\times6},b^{\times(k-1)},b-1\right)$ we
find similarly as in the previous case that
\[
s:=a-kb+1\in\left\{ 0,\pm1,\pm2\right\} .
\]
The Diophantine equations become $\left(6a+kb\right)^{2}=8\left(6a^{2}+kb^{2}-2b\right)$,
which when we replace $a$ by $kb+s-1$ gives as only solution with
$b\geqslant2$ the tuple $(k,s,b)=(2,0,3)$. This gives again the
vector $\left(9,9;5^{\times6},3,2\right)$.

The last case we have to treat is the case of an obstructive class
of the form $(d+\frac{1}{2},d-\frac{1}{2};m)$. By Corollary \ref{cor:an obstructive class (d,d-1;m) has the same blcoks},
the only possibility for $m$ is to be of the form $\left(a^{\times6},b^{\times k}\right)$.
We saw earlier that in this case we have
\[
s:=a-kb\in\left\{ \begin{array}{ll}
\left\{ 0,\pm1\right\}  & \textrm{if }k\in\left\{ 2,3\right\} ,\\
\left\{ 0,\pm1,\pm2\right\}  & \textrm{if }k\in\left\{ 4,\ldots,8\right\} .
\end{array}\right.
\]
Now the Diophantine equations are
\[
\begin{array}{rl}
4d= & 6a+kb+1,\\
2d^{2}= & 6a^{2}+kb^{2}-\frac{1}{2}.
\end{array}
\]
This leads to the equation $\frac{1}{8}\left(6a+kb+3\right)^{2}-\frac{1}{2}\left(6a+kb+3\right)+1=6a^{2}+kb^{2}$.
When replacing $a$ by $kb+s$, we obtain as only solution with $b\geqslant1$
the tuple $(5,1,1)$ which gives the vector $\left(11,10;6^{\times6},1^{\times5}\right)$.
This vector reduces to $(0;-1,0,\ldots,0)$ by Cremona moves and is
obstructive at $6\frac{1}{5}$.\end{proof}
\begin{lem}
\label{lem:no other obstructive classes on [6+1/8,7]}The classes
given in Lemma \ref{lem:Obstructive classes at 6+1/k for k=00003D2,...,8}
are the only obstructive classes on the interval $\left[6\frac{1}{8},7\right]$.\end{lem}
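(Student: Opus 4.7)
The strategy is as follows: by Lemma~\ref{lem:there exists a unique a_0 center of the class}, every obstructive class $(d,e;m)\in\mathcal{E}$ whose obstruction interval $I$ meets $\left[6\frac{1}{8},7\right]$ admits a unique center $a_0 \in I$ at which $l(a_0) = l(m)$ and for $z\in I\setminus\{a_0\}$ one has $l(z) > l(a_0)$. Since $l$ attains local minima at rationals with short continued-fraction expansions, I would enumerate the candidate centers $a_0 = p/q$ in (a slight neighborhood of) $\left[6\frac{1}{8}, 7\right]$ and, for each, determine the admissible obstructive classes exactly as was done at the specific points $6\frac{1}{k}$ in Lemma~\ref{lem:Obstructive classes at 6+1/k for k=00003D2,...,8}.

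The key first step is to bound the complexity of the candidate centers. On $\left[6\frac{1}{8}, 7\right]$, the function $y(a) = a+1-2\sqrt{2a}$ is uniformly positive, so $\delta := y(a_0) - 1/q > 0$ once $q$ exceeds a small threshold (the finitely many smaller denominators can be enumerated directly). Lemma~\ref{lem:sup born on d}(iv) then gives $d \leq \frac{\sqrt{a_0}}{\sqrt{2}\,\delta}\bigl(\sqrt{\sigma L}-1\bigr)$ with $\sigma < 1$ (or $\sigma < \frac{1}{2}$ for the half-integer classes) and $L = \sum l_i$ the sum of the partial quotients of $a_0$ past its integer part. Combined with the lower bound $\sum m_i = 2(d+e)-1 \geq l(m) = l(a_0) = 6+L$ coming from Proposition~\ref{prop:characterization of E_M}(i), this traps $L$ in a finite range, so only finitely many continued-fraction profiles $[6;l_1,\ldots,l_N]$ occur; each profile yields finitely many candidate centers $p/q$.

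For each candidate $a_0 = p/q$, Lemma~\ref{lem:parallel blocks} and Corollary~\ref{cor:an obstructive class (d,d-1;m) has the same blcoks} restrict $m$ to be constant on each block of the weight expansion $w(a_0)$, with at most one block permitting a single shift of size $\pm 1$. Substituting the resulting parameterized shapes of $m$ into the two Diophantine equations of Proposition~\ref{prop:characterization of E_M}(i) reduces the search to a finite family of Pell-type equations, whose integer solutions can be listed; each surviving tuple is then tested for reducibility to $(0;-1)$ via Cremona moves, exactly as in the proof of Lemma~\ref{lem:Obstructive classes at 6+1/k for k=00003D2,...,8}. The expected outcome is that solutions occur only at the seven centers $a_0 = 6\frac{1}{k}$, $k = 1,\ldots,7$, where they recover precisely the classes already listed.

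The principal obstacle is organizational rather than conceptual: the denominator bound is weak enough that many candidate rationals $p/q$ must be examined, and for each there are several block shapes of $m$ to test, each leading to its own Pell-type equation and Cremona check. A subtler point is the treatment of classes whose center lies just outside $\left[6\frac{1}{8},7\right]$ but whose obstruction interval protrudes into it; these are handled by slightly enlarging the range of $a_0$ under consideration and invoking the formulas for $c$ already established on $\left[\sigma^{2},6\right]$ together with the analysis near $7\frac{1}{32}$ carried out in the next subsection, to rule out any such intrusion.
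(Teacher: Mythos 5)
The high-level skeleton of your argument is sound and matches the paper's: reduce to classes $(d,e;m)$ whose center $a_{0}$ (the unique point of $I$ with $l(a_{0})=l(m)$) lies in $\left[6\tfrac{1}{8},7\right]$, bound the data, enumerate candidates via Lemma~\ref{lem:parallel blocks} and the Diophantine equations, and test reducibility by Cremona moves. The $a_{0}$ reduction is exactly right; note that once you observe $l(m)>6$ (so $a_{0}>6$) and that $l(z)>l(6\tfrac{1}{8})$ for $z\in\left]6,6\tfrac{1}{8}\right[$ (and symmetrically near $7$), you get $a_{0}\in\left[6\tfrac{1}{8},7\right]$ outright --- no ``slight enlargement'' or appeal to the neighboring intervals is needed.

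The genuine gap is in the finiteness bound. You propose to trap $L:=\sum_{i\geqslant 1}l_{i}$ by combining $d\leqslant\frac{\sqrt{a_{0}}}{\sqrt{2}\,\delta}\bigl(\sqrt{\sigma L}-1\bigr)$ (with $\sigma<1$) against $d\geqslant\frac{l(m)+1}{4}\geqslant\frac{7+L}{4}$. This is logically correct but quantitatively far too weak: since $\delta=y(a_{0})-1/q$ can be as small as roughly $1/8$ near $6\tfrac{1}{8}$, the resulting bound is of the form $L/4\lesssim 15\sqrt{L}$, i.e.\ $L$ on the order of thousands. The set of continued-fraction profiles with $\sum l_{i}$ up to such an $L$ is astronomically large, so this route does not yield a tractable enumeration, and you have not actually carried it out. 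The paper instead uses the sharper estimate in Lemma~\ref{lem:sup born on d}\,(iv) involving $v_{M}=\frac{d+e}{q\sqrt{2a}}$, together with part~(iii) of that lemma (which bounds $v_{M}$ from below and $\sigma'$ from above), to deduce a small \emph{absolute} upper bound on $d$ once $q$ is large (here $q\geqslant 40$). Combined with Lemma~\ref{lem:parallel blocks}, this reduces the large-$q$ case to a finite, explicitly listable set of block shapes for $m$; the remaining denominators $3\leqslant q\leqslant 39$ are handled by direct enumeration of $a=6\tfrac{p}{q}$ and a bound on $d$ via Lemma~\ref{lem:sup born on d}\,(iv) with $\sqrt{\sigma q}$ in place of $\sqrt{\sigma L}$. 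Without the $v_{M}$ refinement, the problem does not reduce to a finite computation of reasonable size, and without the computation, the lemma is not proved.
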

\begin{proof}
We claim that it suffices to prove that for all $a\in\left[6\frac{1}{8},7\right]$,
there is no other class $(d,e;m)\in\mathcal{E}$ with $l(m)=l(a)$
that is obstructive at $a$. Indeed, suppose that $\mu(d,e;m)(a)>\sqrt{\frac{a}{2}}$
for some $a\in\left[6\frac{1}{8},7\right]$, and let $I$ be the maximal
nonempty interval containing $a$ on which $\mu(d,e;m)(z)>\sqrt{\frac{z}{2}}$.
Then, by Lemma~\ref{lem:there exists a unique a_0 center of the class},
there exists a unique $a_{0}\in I$ such that $l\left(a_{0}\right)=l(m)$
and $l\left(a_{0}\right)\leqslant l(a)$ for all $a\in I$. Since
for $M\leqslant6$, $\mathcal{E}_{M}$ is finite, explicit calculations
show that none of these classes is obstructive for $a\geqslant6\frac{1}{8}$.
Thus $l(m)>6$, and $a_{0}>6$. This implies that $a_{0}\geqslant6\frac{1}{8}$.
Indeed, $a_{0}<6\frac{1}{8}$ would contradict the fact that $l\left(a_{0}\right)\leqslant l(a)$
for all $a\in I$ since $l(a)>l\left(6\frac{1}{8}\right)$ for all
$a\in\left]6,6\frac{1}{8}\right[$. A similar argument also shows
that $a_{0}\leqslant7$. Thus, $a_{0}\in\left[6\frac{1}{8},7\right]$
and this proves the claim.

We will thus prove that for each $a=6\frac{p}{q}\in\left[6\frac{1}{8},7\right]$
there is no class $(d,e;m)\in\mathcal{E}$ with $l(m)=l(a)$ obstructive
at $a$, and different from those given in Lemma~\ref{lem:Obstructive classes at 6+1/k for k=00003D2,...,8}.
By Lemma \ref{lem:Obstructive classes at 6+1/k for k=00003D2,...,8},
we only have to prove for $\frac{p}{q}\neq\frac{1}{k}$ with $k=1,\ldots,8$.
We will separate the proof in three cases: $3\leqslant q\leqslant8$,
$9\leqslant q\leqslant39$, $q\geqslant40$.\medskip{}

\textbf{Case 1:} $3\leqslant q\leqslant8$: In this case, $2\leqslant p\leqslant q$.
Notice that for all these values of $p$ and $q$, $y\left(6\frac{p}{q}\right)>\frac{1}{q}$.
We can thus apply Lemma \ref{lem:sup born on d}\,(iv). We get that
if $(d,e;m)\in\mathcal{E}$ is obstructive at $6\frac{p}{q}$, then
\begin{equation}
d\leqslant\frac{\sqrt{6\frac{p}{q}}}{\sqrt{2}\left(y\left(6\frac{p}{q}\right)-\frac{1}{q}\right)}\left(\sqrt{q}-1\right)\label{eq:sup_born_d}
\end{equation}
since $\sigma<1$ for an obstructive class. We now use the computer
program $\mathtt{SolLess[a,D]}$ given in the Appendix which computes
for a rational number $a$ and a natural number $D$ all obstructive
classes $(d,e;m)$ at $a$ with $l(m)=l(a)$ and $d\leqslant D$.
The code shows that there are no such classes for $3\leqslant q\leqslant8$.\medskip{}

\textbf{Case 2:} $9\leqslant q\leqslant39$: Since $y\left(6\frac{1}{8}\right)=\frac{1}{8}$
and $y$ is increasing for $a>2$, we have
\[
y(a)-\frac{1}{q}\geqslant\frac{1}{8}-\frac{1}{9}>0
\]
for all $a\in\left[6\frac{1}{8},7\right]$. We can thus again apply
Lemma \ref{lem:sup born on d}\,(iv) and obtain again (\ref{eq:sup_born_d}),
but this time for $1\leqslant p\leqslant q$. Again, the code $\mathtt{SolLess[a,D]}$
shows that for $9\leqslant q\leqslant39$ there are no obstructive
classes $(d,e;m)$ at $a=6\frac{p}{q}$ with $l(m)=l(a)$.\medskip{}

\textbf{Case 3:} $q\geqslant40$: For all $a=6\frac{p}{q}\in\left[6\frac{1}{8},7\right]$,
we have $\delta:=y(a)-\frac{1}{q}\geqslant\frac{1}{8}-\frac{1}{40}=\frac{1}{10}$.
Suppose that $(d,e;m)\in\mathcal{E}$ is obstructive at some $a=6\frac{p}{q}$
with $q\geqslant40$. We distinguish two cases: (i) $m_{1}=m_{6}$,
(ii) $m_{1}\neq m_{6}$.\medskip{}

(i) Notice that by Lemma \ref{lem:sup born on d}\,(iii),

\[
\begin{array}{ll}
\textrm{if }v_{M}\in\left[\frac{1}{3},\frac{1}{2}\right], & \textrm{then }\frac{\sigma'}{v_{M}}\leqslant\frac{1/2}{1/3}=\frac{3}{2},\vphantom{{\displaystyle \frac{3}{2}}}\\
\textrm{if }v_{M}\in\left[\frac{1}{2},\frac{2}{3}\right], & \textrm{then }\frac{\sigma'}{v_{M}}\leqslant\frac{7/8}{1/2}=\frac{7}{4},\vphantom{{\displaystyle \frac{3}{2}}}\\
\textrm{if }v_{M}\geqslant\frac{2}{3}, & \textrm{then }\frac{\sigma}{v_{M}}\leqslant\frac{3}{2}.\vphantom{{\displaystyle \frac{3}{2}}}
\end{array}
\]
By Lemma \ref{lem:sup born on d}\,(iv) we get that if $a=6\frac{p}{q}\in\left]6\frac{1}{k+1},6\frac{1}{k}\right[$
for some $k=1,\ldots,7$ and $q\geqslant40$, then for all obstructive
classes $(d,e;m)$ at $a$ with $m_{1}=m_{6}$
\[
d\leqslant\frac{\sqrt{6\frac{1}{k}}}{\sqrt{2}\left(y\left(6\frac{1}{k+1}\right)-\frac{1}{40}\right)}\left(\frac{1}{y\left(6\frac{1}{k+1}\right)-\frac{1}{40}}\frac{7}{4}-1\right).
\]
Here we used the computer program $\mathtt{InterSolLess1[k,D]}$ given
in the Appendix which gives for $k\in\left\{ 1,\ldots,7\right\} $
and a natural number $D$ a finite list of classes $(d,e;m)$ with
$m_{1}=m_{6}$ and $d\leqslant D$ which can potentially be obstructive
at some $a=6\frac{p}{q}\in\left]6\frac{1}{k+1},6\frac{1}{k}\right[$
with $q\geqslant40$. Applied to our case, the code gives only one
class that reduces to $(0;-1,0,\ldots,0)$ by Cremona moves, namely
$(d,e;m)=\left(99,99;56^{\times6},14^{\times4},1^{\times3}\right)$.
By Lemma \ref{lem:parallel blocks}, the $a$ in question can be $\left[6;3,1,3\right]=6\frac{4}{15}$
or $\left[6,3,1,1,2\right]=6\frac{5}{18}$, and the class turns out
to give no obstruction at these two points.\medskip{}

(ii) Since $m_{1}\neq m_{6}$, we know by Lemma \ref{lem:parallel blocks}
that $\sigma\leqslant\frac{1}{6}$. This implies that $v_{M}\geqslant1-\frac{1}{2\sqrt{3}}$
because the last two weights of $w\left(\frac{p}{q}\right)$ are always
$\frac{1}{q}$. Then by Lemma \ref{lem:sup born on d}\,(iv) we get
that if $a=6\frac{p}{q}\in\left]6\frac{1}{k+1},6\frac{1}{k}\right[$
for some $k=1,\ldots,7$ and $q\geqslant40$, then for all obstructive
classes $(d,e;m)$ at $a$ with $m_{1}\neq m_{6}$ we have
\[
d\leqslant\frac{\sqrt{6\frac{1}{k}}}{\sqrt{2}\left(y\left(6\frac{1}{k+1}\right)-\frac{1}{40}\right)}\left(\frac{1}{y\left(6\frac{1}{k+1}\right)-\frac{1}{40}}\frac{\frac{1}{6}}{\left(1-\frac{1}{2\sqrt{3}}\right)}-1\right).
\]
Here we used the computer program $\mathtt{InterSolLess2[k,D]}$ which
gives for $k\in\left\{ 1,\ldots,7\right\} $ and a natural number
$D$ a finite list of classes $(d,e;m)$ with $m_{1}\neq m_{6}$ and
$d\leqslant D$ which can potentially be obstructive at some $a=6\frac{p}{q}\in\left]6\frac{1}{k+1},6\frac{1}{k}\right[$
with $q\geqslant40$. Applied to our case, the code gives no class
that reduces to $(0;-1,0,\ldots,0)$ by Cremona moves.\end{proof}
\begin{rem}
The programs $\mathtt{SolLess[a,D]}$, $\mathtt{InterSolLess1[k,D]}$
and $\mathtt{InterSolLess2[k,D]}$ give, for a natural number $D$,
solutions $(d,e;m)$ with $d\leqslant D$. But, in the case of classes
of the form $(d+\frac{1}{2},d-\frac{1}{2};m)$, we give estimates
for $d$ in Lemma~\ref{lem:no other obstructive classes on [6+1/8,7]}.
Thus for these classes, we have to add $\frac{1}{2}$ to our estimates
when using the programs.
\end{rem}
\emph{Proof of Theorem \ref{thm:c(a) on [sigma^2,7+1/32]}.} We have
already proven in Theorem \ref{thm:c(a) on [sigma^2,6]} that the
class $\left(2,2;2,1^{\times5}\right)$ gives the constraint $c(a)=\mu\left(2,2;2,1^{\times5}\right)(a)=\frac{a+1}{4}$
on $\left[\sigma^{2},6\right]$. We postpone the proof that $c(a)=\mu\left(4,4;3,2^{\times6}\right)(a)=\frac{15}{8}$
on $\left[7,7\frac{1}{32}\right]$ to Corollary \ref{cor:c(a) on [7,7+1/32]}.

Since by Lemma \ref{lem:no other obstructive classes on [6+1/8,7]},
the only obstructive classes on the interval $\left[6\frac{1}{8},7\right]$
are those of Lemma \ref{lem:Obstructive classes at 6+1/k for k=00003D2,...,8},
$c\left(6\frac{1}{8}\right)=\frac{7}{4}=\sqrt{\frac{6\frac{1}{8}}{2}}$
because an explicit computation shows that none of them is obstructive
at $6\frac{1}{8}$. Hence, $c(a)=\frac{7}{4}$ for all $a\in\left[6,6\frac{1}{8}\right]$
since $c$ is nondecreasing.

In order to determine $c$ on the interval $\left[6\frac{1}{8},7\right]$,
Lemma \ref{lem:no other obstructive classes on [6+1/8,7]} shows that
we only have to work out the constraints given by the classes of Lemma
\ref{lem:Obstructive classes at 6+1/k for k=00003D2,...,8}. Notice
that for $a\in\left]6\frac{1}{k+1},6\frac{1}{k}\right[$, the first
terms of the weight expansion of $a$ are $w(a)=\left(1^{\times6},(a-6)^{\times k},1-k(a-6),\ldots\right)$.
We can thus easily compute the constraints of all the classes. In
the next table, we write the constraints given by the classes of Lemma
\ref{lem:Obstructive classes at 6+1/k for k=00003D2,...,8} that do
not appear in Theorem \ref{thm:c(a) on [sigma^2,7+1/32]} and we then
simply verify that they indeed do not give new obstructions.\begin{center} \renewcommand{\arraystretch}{1.5} \begin{tabular}{|c|c|c|c|c|} \hline  $x$ & $(d,e;m)$ & $(A,B)$ & $(A',B')$ & $\mu(x)$\tabularnewline \hline  $6\frac{1}{7}$ & $\left(196,196;112^{\times5},111,16^{\times7}\right)$ & $(-1,112)$ & $(687,0)$ & $\frac{687}{392}$\tabularnewline \hline  $6\frac{1}{6}$ & $\left(84,84;48^{\times5},47,8^{\times6}\right)$ & $(-1,48)$ & $(295,0)$ & $\frac{295}{168}$\tabularnewline \hline  $6\frac{1}{4}$ & $\left(28,28;16^{\times5},15,4^{\times4}\right)$ & $(-1,16)$ & $(99,0)$ & $\frac{99}{56}$\tabularnewline \hline  \end{tabular} \par\end{center}

\begin{center}
\begin{center} \renewcommand{\arraystretch}{1.5} \begin{tabular}{|c|c|c|c|c|} \hline  $x$ & $u_{x}$ & $u_{x}\cong$ & $v_{x}$ & $v_{x}\cong$\tabularnewline \hline  $6\frac{1}{7}$ & $\frac{1}{112}\left(344+7\sqrt{2415}\right)$ & $6.142844$ & $\frac{471969}{76832}$ & $6.142870$\tabularnewline \hline  $6\frac{1}{6}$ & $\frac{1}{48}\left(148+7\sqrt{447}\right)$ & $6.16660$ & $\frac{87025}{14112}$ & $6.16674$\tabularnewline \hline  $6\frac{1}{4}$ & $\frac{1}{16}\left(50+7\sqrt{51}\right)$ & $6.2494$ & $\frac{9801}{1568}$ & $6.25$\tabularnewline \hline  \end{tabular} \par\end{center}
\par\end{center}

\begin{center}

\par\end{center}

\begin{flushleft}
The proof of Theorem \ref{thm:c(a) on [sigma^2,7+1/32]} (up to Corollary
\ref{cor:c(a) on [7,7+1/32]}) is complete.\hfill{}$\square$
\par\end{flushleft}

\subsection{The interval $\left[7,8\right]$}
\begin{lem}
\label{lem:sup born on d is 13}Assume that there exists a class $(d,e;m)\in\mathcal{E}$
such that $\mu(d,e;m)(a)>\sqrt{\frac{a}{2}}$ for some $a\in\left[7\frac{1}{32},8\right]$
with $l(a)=l(m)$. Then $m_{1}=\ldots=m_{7}$ and $d\leqslant13$.\end{lem}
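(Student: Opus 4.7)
The plan is to exploit the fact that for any rational $a\in[7\tfrac{1}{32},8)$, the weight expansion of $a$ begins with seven $1$'s, so $w_1=\cdots=w_7=1$ form the first block. Applying Lemma~\ref{lem:parallel blocks} to this block of length $s=7$ leaves only three possibilities for $(m_1,\ldots,m_7)$: the constant vector $(m^{\times 7})$, or $(m+1,m^{\times 6})$, or $(m^{\times 6},m-1)$; in the two non-constant sub-cases, the same lemma forces $\sum_{i=1}^7 \varepsilon_i^2\geqslant \tfrac{6}{7}$.

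To establish $m_1=\cdots=m_7$, I treat the two forms of obstructive classes allowed by Lemma~\ref{lem:e=00003Dd or e=00003Dd-1} separately. For $(d+\tfrac12,d-\tfrac12;m)$, the statement is immediate from Corollary~\ref{cor:an obstructive class (d,d-1;m) has the same blcoks}. For $(d,d;m)$, I argue by contradiction: if the first block is non-constant, then Lemma~\ref{lem:first properties of mu}\,(iii) together with the $\tfrac67$-bound forces $\sigma:=\sum_{i>7}\varepsilon_i^2<\tfrac{1}{7}$. I then plan to combine three inputs: the explicit value of $\sum_{i=1}^7\varepsilon_i$ (which equals $7m+1-7\tfrac{\sqrt 2 d}{\sqrt a}$ or $7m-1-7\tfrac{\sqrt 2 d}{\sqrt a}$ according to the sub-case); the identity $-\sum_{i=1}^M\varepsilon_i=\tfrac{d+e}{\sqrt{2a}}\bigl(y(a)-\tfrac1q\bigr)+1$ of Lemma~\ref{lem:first properties of mu}\,(iv); and the Cauchy--Schwarz bound $\bigl|\sum_{i>7}\varepsilon_i\bigr|\leqslant \sqrt{L\sigma}<\sqrt{L/7}$ on the tail. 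These three ingredients pin $(m,d)$ into a small finite window for each allowed $q$, after which a short Cremona-reduction check, in the spirit of Lemma~\ref{lem:Obstructive classes at 6+1/k for k=00003D2,...,8}, should exclude every remaining candidate.

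For the bound $d\leqslant 13$, once $m_1=\cdots=m_7=:m$ is known, the seven error terms coincide with some common value $\varepsilon^\ast$ satisfying $7(\varepsilon^\ast)^2<1$, so $|\varepsilon^\ast|<1/\sqrt 7$. I then apply Lemma~\ref{lem:sup born on d}\,(iv). On $[7\tfrac{1}{32},8)$, $y(a)\geqslant y(7\tfrac{1}{32})=\tfrac{17}{32}$, while the worst-case values of $\delta=y(a)-\tfrac{1}{q}$ occur at small denominators $q\in\{2,3,4\}$ (for instance at $a=\tfrac{15}{2}$ one finds $\delta\approx 0.254$). Using $L\leqslant q$ and $v_M\geqslant 1-|\varepsilon_M|$, the lemma yields an explicit upper bound on $d$; the finitely many small-$q$ cases are then handled by direct inspection, and give $d\leqslant 13$ in every case.

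The main obstacle, in my view, is the contradiction step for a non-constant first block in the $(d,d;m)$ case: the tail bound $\sigma<\tfrac{1}{7}$ is not strong enough by itself, and the argument seems to require the identity of Lemma~\ref{lem:first properties of mu}\,(iv) together with a finite but non-trivial enumeration, analogous to the one carried out in Lemma~\ref{lem:no other obstructive classes on [6+1/8,7]}. By contrast, once the first block has been shown to be constant, the numerical bound $d\leqslant 13$ should fall out comparatively cleanly from Lemma~\ref{lem:sup born on d}\,(iv) and a short case check.
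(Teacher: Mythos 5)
Your proposal correctly identifies the ingredients (Lemma~\ref{lem:parallel blocks} giving the three shapes for the first block, Corollary~\ref{cor:an obstructive class (d,d-1;m) has the same blcoks} dispatching the $(d+\tfrac12,d-\tfrac12;m)$ case, Lemma~\ref{lem:sup born on d}\,(iv) for the $d$-bound), but the step you flag as ``the main obstacle'' — ruling out a non-constant first block for a $(d,d;m)$ class — is in fact left unproved. You propose to ``pin $(m,d)$ into a small finite window'' and then run a Cremona-reduction check, but you never bound the window, and it is not clear that the three inequalities you list are enough to make it finite without already having a $d$-bound in hand. In other words, the argument is circular as described: the $d$-bound you derive in your last paragraph explicitly presupposes constancy, while the constancy argument implicitly needs a $d$-bound to make the enumeration finite. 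The paper avoids this by splitting on the denominator $q$ rather than on the class type, and by first bounding $d$ for all $q\leqslant 11$ \emph{before} addressing constancy in that range.

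More importantly, the enumeration/Cremona approach is not needed at all. The decisive observation you are missing is that once $m_1\neq m_7$ forces $\sigma:=\sum_{i>7}\varepsilon_i^2\leqslant\tfrac17$ (by Lemma~\ref{lem:parallel blocks} plus Lemma~\ref{lem:first properties of mu}\,(iii)), the last two weights of $w(p/q)$ are both $\tfrac1q$, so $\varepsilon_{M-1},\varepsilon_M\geqslant 1-v_M$ and hence $2(1-v_M)^2\leqslant\sigma\leqslant\tfrac17$, forcing $v_M$ to be close to $1$ (certainly $v_M\geqslant\tfrac12$). For $q\geqslant 12$ one has $\delta=y(a)-\tfrac1q\geqslant\tfrac{17}{32}-\tfrac1{12}=\tfrac{43}{96}$, so $\tfrac{\sigma}{v_M\delta}\leqslant\tfrac{192}{301}<1$, and Lemma~\ref{lem:sup born on d}\,(iv) yields $d<0$, a clean contradiction without any case enumeration. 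For $q\leqslant 11$ one first gets $d\leqslant 10$ from Lemma~\ref{lem:sup born on d}\,(iv) with $\sigma<1$ (no constancy assumption needed), and then rules out $m_1\neq m_7$ by noting that $\sqrt{\sigma q}-1\leqslant 0$ for $q\leqslant 7$, while for $8\leqslant q\leqslant 11$ the bound $v_M\leqslant\tfrac{\sqrt2\cdot 10}{8\sqrt7}$ forces $\langle\varepsilon,\varepsilon\rangle\geqslant\tfrac67+2(1-v_M)^2>1$, contradicting Lemma~\ref{lem:first properties of mu}\,(iii). The $d\leqslant 13$ bound then comes out of the $q\geqslant 12$ branch via the $\sigma'/v_M$ estimates of Lemma~\ref{lem:sup born on d}\,(iii), not from ``direct inspection of small-$q$ cases'' as you suggest.
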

\begin{proof}
Notice first that
\[
y(a)\geqslant y\left(7\frac{1}{32}\right)=\frac{17}{32}>\frac{1}{q}
\]
for all $q\geqslant2$. We distinguish two cases: $q\geqslant12$
and $q\leqslant11$.\medskip{}

If $q\geqslant12$, then $\delta=y(a)-\frac{1}{q}\geqslant\frac{17}{32}-\frac{1}{12}=\frac{43}{96}$.
Assume by contradiction that $m_{1}\neq m_{7}$. Then by Lemma \ref{lem:parallel blocks},
$\sigma\leqslant\frac{1}{7}$ and so $v_{M}\geqslant\frac{1}{2}$.
Thus,
\[
\frac{\sigma}{v_{M}\delta}\leqslant\frac{192}{301}<1.
\]
But this contradicts Lemma \ref{lem:sup born on d}\,(iv).

To prove that $d\leqslant13$, notice first that by Lemma \ref{lem:sup born on d}\,(iii),
\[
\begin{array}{ll}
\textrm{if }v_{M}\in\left[\frac{1}{3},\frac{1}{2}\right], & \textrm{then }\frac{\sigma'}{v_{M}}\leqslant\frac{1/2}{1/3}=\frac{3}{2},\vphantom{{\displaystyle \frac{3}{2}}}\\
\textrm{if }v_{M}\in\left[\frac{1}{2},\frac{2}{3}\right], & \textrm{then }\frac{\sigma'}{v_{M}}\leqslant\frac{7/8}{1/2}=\frac{7}{4},\vphantom{{\displaystyle \frac{3}{2}}}\\
\textrm{if }v_{M}\geqslant\frac{2}{3}, & \textrm{then }\frac{\sigma}{v_{M}}\leqslant\frac{3}{2}.\vphantom{{\displaystyle \frac{3}{2}}}
\end{array}
\]
Then, since $\sqrt{a}\leqslant2\sqrt{2}$, we get by Lemma \ref{lem:sup born on d}\,(iv)
that
\[
d\leqslant\frac{2\sqrt{2}}{43/96\sqrt{2}}\left(\frac{1}{43/96}\frac{7}{4}-1\right)+\frac{1}{2}<14.
\]
Thus $d\leqslant13$.\medskip{}

Let now $q\leqslant11$. Notice that $a\leqslant7\frac{q-1}{q}$ and
\[
\delta=y(a)-\frac{1}{q}\geqslant y\left(7\frac{1}{q}\right)-\frac{1}{q}.
\]
By Lemma \ref{lem:sup born on d}\,(iv), we have
\[
d\leqslant\frac{\sqrt{a}}{\sqrt{2}\delta}\left(\sqrt{\sigma q}-1\right)+\frac{1}{2}\leqslant\frac{\sqrt{7\frac{q-1}{q}}}{\sqrt{2}\left(y\left(7\frac{1}{q}\right)-\frac{1}{q}\right)}\left(\sqrt{q}-1\right)+\frac{1}{2}.
\]
Since the RHS is strictly smaller than $11$ for all $2\leqslant q\leqslant11$,
we see that $d\leqslant10$.

Assume now by contradiction that $m_{1}\neq m_{7}$. Then $\sigma\leqslant\frac{1}{7}$.
If $2\leqslant q\leqslant7$, then $\sqrt{\sigma q}-1\leqslant0$
which contradicts Lemma \ref{lem:sup born on d}\,(iv). If $8\leqslant q\leqslant11$,
then
\[
v_{M}=\frac{d+e}{q\sqrt{2a}}\leqslant\frac{\sqrt{2}d}{q\sqrt{a}}\leqslant\frac{\sqrt{2}10}{8\sqrt{7}},
\]
and so, by Lemma \ref{lem:parallel blocks},
\[
\left\langle \varepsilon,\varepsilon\right\rangle \geqslant\frac{6}{7}+2\left(1-v_{M}\right)^{2}>1
\]
which contradicts Lemma \ref{lem:first properties of mu}\,(iii).\end{proof}
\begin{prop}
\label{prop:c(a)=00003Dsqrt(a/2) for a>7+1/32}$c(a)=\sqrt{\frac{a}{2}}$
for all $a\in\left[7\frac{1}{32},8\right]$.\end{prop}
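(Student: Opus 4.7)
I prove the proposition by contradiction, following the template of Lemma \ref{lem:no other obstructive classes on [6+1/8,7]}. Suppose $c(a^*) > \sqrt{a^*/2}$ for some $a^* \in [7\frac{1}{32}, 8]$. By Proposition \ref{prop:characterization of c(a)}, there is a class $(d,e;m) \in \mathcal{E}$ with $\mu(d,e;m)(a^*) > \sqrt{a^*/2}$. Let $I$ be the maximal open interval containing $a^*$ on which $\mu(d,e;m)(z) > \sqrt{z/2}$, and let $a_0 \in I$ be the unique point with $l(a_0) = l(m)$ given by Lemma \ref{lem:there exists a unique a_0 center of the class}.

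The first step is to show $a_0 \in [7\frac{1}{32}, 8)$. The upper bound follows from Lemma \ref{lem:c(a)=00003Dsqrt(a/2) for a>8}, which forces $I \subseteq (-\infty,8)$. To exclude $a_0 < 7\frac{1}{32}$ I argue as in Lemma \ref{lem:no other obstructive classes on [6+1/8,7]}: the classification already established on $[\sigma^2, 7]$ (Theorem \ref{thm:c(a) on [sigma^2,6]} for $[\sigma^2, 6]$; Lemma \ref{lem:Obstructive classes at 6+1/k for k=00003D2,...,8} together with Lemma \ref{lem:no other obstructive classes on [6+1/8,7]} for $[6\frac{1}{8}, 7]$; and direct inspection for $[6, 6\frac{1}{8}]$) combined with the intervals $(u_x, v_x)$ listed in Theorem \ref{thm:c(a) on [sigma^2,7+1/32]} shows that every obstructive class with $a_0 \leq 7$ has $\sup I \leq 7\frac{1}{32}$, with equality attained only by $(4,4;3,2^{\times 6})$ at a point where $\mu=\sqrt{\cdot/2}$ and hence not in the open interval $I$; this excludes $a_0 \leq 7$. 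The remaining case $a_0 \in (7, 7\frac{1}{32})$ is ruled out by a length count: every rational in this range has $l(a_0) \geq 40$, whereas $l(m) = l(a_0) \leq l(7\frac{1}{32}) = 39$ since $7\frac{1}{32} \in \bar I$ and $l(a_0) \leq l(z)$ for all $z \in I$.

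With $a_0 \in [7\frac{1}{32}, 8)$ and $l(a_0) = l(m)$, Lemma \ref{lem:sup born on d is 13} now applies and forces $m_1 = \cdots = m_7$ together with $d \leq 13$. Because $a_0 > 7$, one has $w(a_0) = (1^{\times 7}, \ldots)$, so $k := m_1 = \cdots = m_7$ must be close to $(d+e)/\sqrt{2a_0}$; the Diophantine equations of Proposition \ref{prop:characterization of E_M}(i) fix $\sum m_i = 2(d+e)-1$ and $\sum m_i^2 = 2de+1$; Lemma \ref{lem:e=00003Dd or e=00003Dd-1} forces $e \in \{d, d-1\}$; and Lemma \ref{lem:parallel blocks} constrains the remaining entries. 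Exactly as in Case 1 of Lemma \ref{lem:no other obstructive classes on [6+1/8,7]}, I then run the computer search $\mathtt{SolLess}[a,D]$ (with $D = 13$) over the admissible rationals $a_0 \in [7\frac{1}{32}, 8)$ of bounded length and, for every candidate returned, verify either that it fails the Cremona-reduction test of Proposition \ref{prop:characterization of E_M}(iii) (and hence is not in $\mathcal{E}$) or that the constraint $\mu(d,e;m)(z)$ it defines is $\leq \sqrt{z/2}$ throughout $[7\frac{1}{32}, 8]$.

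The main obstacle lies in the lower-bound step $a_0 \geq 7\frac{1}{32}$: it requires carefully packaging the previously proved pieces of the classification on $[\sigma^2, 7]$ into the single statement that no maximal obstruction interval there reaches beyond $7\frac{1}{32}$, together with the length comparison on $(7, 7\frac{1}{32})$. Once this is secured, the finite enumeration is an almost mechanical application of the Diophantine bounds and Cremona reducibility, entirely parallel to the computer-assisted argument in Lemma \ref{lem:no other obstructive classes on [6+1/8,7]}; the expected outcome is that no surviving candidate obstructs anywhere in $[7\frac{1}{32}, 8]$, yielding the desired contradiction.
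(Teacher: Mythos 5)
Your overall architecture is correct and parallels the paper's: contradiction, locate $a_0$ with $l(a_0)=l(m)$ via Lemma \ref{lem:there exists a unique a_0 center of the class}, push $a_0$ into $[7\frac{1}{32},8)$, apply Lemma \ref{lem:sup born on d is 13} to force $m_1=\cdots=m_7$ and $d\leqslant 13$, then enumerate. The length computation $l\left(7\frac{1}{32}\right)=39$ and $l(a_0)\geqslant 40$ for $a_0\in\bigl]7,7\frac{1}{32}\bigr[$ is exactly the mechanism the paper uses (and one should note $7\frac{1}{32}\in I$, not merely $\bar I$, so the monotonicity of $l$ over $I$ applies without a closure argument). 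Two steps diverge from the paper and are where your proposal is weaker.

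For excluding $a_0\leqslant 7$, the paper does not route through the full classification on $[\sigma^2,7]$ at all: it simply verifies by hand that none of the finitely many classes of $\mathcal{E}_7$ listed in Lemma \ref{lem:finite list of elements in E_M for M leq 7} is obstructive at any $a\geqslant 7\frac{1}{32}$. Since our obstructive class is obstructive at such an $a$, this gives $l(m)>7$, hence $l(a_0)>7$, hence $a_0>7$ (if $a_0\leqslant 7$ then $7\in I$ and $l(7)=7<l(a_0)$, contradiction). Your detour through Theorem \ref{thm:c(a) on [sigma^2,6]}, Lemma \ref{lem:no other obstructive classes on [6+1/8,7]}, and the $(u_x,v_x)$ table in Theorem \ref{thm:c(a) on [sigma^2,7+1/32]} is much heavier, has the burden of handling $a_0<\sigma^2$ as well (where you would need Corollary \ref{cor:c(a) on [1,sigma^2]} separately), and brushes against a circularity: Theorem \ref{thm:c(a) on [sigma^2,7+1/32]} explicitly postpones its $[7,7\frac{1}{32}]$ segment to Corollary \ref{cor:c(a) on [7,7+1/32]}, which in turn depends on the very proposition you are proving. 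Your use (only the interval where $\mu\left(4,4;3,2^{\times 6}\right)$ exceeds the volume bound, which is a direct computation) is salvageable, but as written it looks circular and is far more work than the $\mathcal{E}_7$ check.

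For the final enumeration, the paper does not iterate $\mathtt{SolLess}$ over rationals $a_0\in[7\frac{1}{32},8)$. With $m_1=\cdots=m_7$, $e\in\{d,d-1\}$ by Lemma \ref{lem:e=00003Dd or e=00003Dd-1}, and $d\leqslant 13$, the two Diophantine equations of Proposition \ref{prop:characterization of E_M}\,(i) leave only a very small finite set of candidate classes $(d,e;m)$, which the paper enumerates directly, finding exactly one, $\left(8,7;4^{\times 7},1\right)$, and checking it is not obstructive for $a\geqslant 7\frac{1}{32}$. Your plan to sweep $\mathtt{SolLess}[a,D]$ over admissible $a_0$ needs an explicit a priori bound on $l(a_0)$ (it does follow from $d\leqslant 13$ via $\sum m_i\leqslant 4d-1$, but you never state it), and in any case is considerably less efficient than the direct class enumeration. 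The proposal would produce a correct proof if carried out, but both the $a_0>7$ step and the enumeration are noticeably more complicated than the paper's, and you should spell out the $\mathcal{E}_7$-based shortcut rather than lean on later results.
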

\begin{proof}
Suppose by contradiction that there exists $a\geqslant7\frac{1}{32}$
and $(d,e;m)\in\mathcal{E}$ with $\mu(d,e;m)(a)>\sqrt{\frac{a}{2}}$.
Let $I$ be the maximal open interval containing $a$ on which $(d,e;m)$
is obstructive. Then, by Lemma \ref{lem:there exists a unique a_0 center of the class},
there exists $a_{0}\in I$ with $l\left(a_{0}\right)=l(m)$ and $l(a)\geqslant l\left(a_{0}\right)$
for all $a\in I$.

Using the finite list of $\mathcal{E}_{7}$ in Lemma \ref{lem:finite list of elements in E_M for M leq 7}
we check by hand that no class in $\mathcal{E}_{7}$ is obstructive
for $a\geqslant7\frac{1}{32}$. Thus $l(m)>7$ and so $a_{0}>7$.
But then $a_{0}\geqslant7\frac{1}{32}$. Indeed, assume by contradiction
that $a_{0}<7\frac{1}{32}$. Then since $a_{0},a\in I$, $7\frac{1}{32}$
will also belong to $I$. But, for all $z\in\left]7,7\frac{1}{32}\right[$,
$l(z)>l\left(7\frac{1}{32}\right)$, and this contradicts the fact
that $l(a)\geqslant l\left(a_{0}\right)$ for all $a\in I$.

Now by Lemma \ref{lem:sup born on d is 13}, we find that $d\leqslant13$
and $m_{1}=\ldots=m_{7}$. Since there are only finitely many classes
satisfying these conditions, we can compute them explicitly. We find
that there is only one class satisfying the conditions, namely $\left(8,7;4^{\times7},1\right)$,
but this class is not obstructive for $a\geqslant7\frac{1}{32}$.\end{proof}
\begin{cor}
\label{cor:c(a) on [7,7+1/32]}$c(a)=\frac{15}{8}$ for all $a\in\left[7,7\frac{1}{32}\right]$.\end{cor}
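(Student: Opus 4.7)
The plan is to pin down $c$ at the two endpoints of the interval and then invoke monotonicity.

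First, I will evaluate $c$ at the right endpoint. By Proposition~\ref{prop:c(a)=00003Dsqrt(a/2) for a>7+1/32}, $c(7\tfrac{1}{32})=\sqrt{7\tfrac{1}{32}/2}=\sqrt{225/64}=\tfrac{15}{8}$.

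Next, I will bound $c(7)$ from below using the exceptional class $(4,4;3,2^{\times 6})\in\mathcal{E}_7$ from Lemma~\ref{lem:finite list of elements in E_M for M leq 7}. Since $w(7)=(1^{\times 7})$, Proposition~\ref{prop:characterization of c(a)} gives
\[
c(7)\;\geqslant\;\mu(4,4;3,2^{\times 6})(7)\;=\;\frac{3+6\cdot 2}{4+4}\;=\;\frac{15}{8}.
\]

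Finally, since $c$ is nondecreasing (as noted just after the definition of $c$ in the introduction), for every $a\in[7,7\tfrac{1}{32}]$ we have
\[
\frac{15}{8}\;=\;c(7)\;\leqslant\;c(a)\;\leqslant\;c\bigl(7\tfrac{1}{32}\bigr)\;=\;\frac{15}{8},
\]
which yields the desired equality. There is no real obstacle here; the corollary is a short consequence of the previous proposition and the explicit volume-matching at $7\tfrac{1}{32}$, combined with the one explicit obstruction at $a=7$ coming from a class already listed in $\mathcal{E}_7$.
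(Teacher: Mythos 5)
Your proof is correct and follows essentially the same argument as the paper: both establish the lower bound $c(7)\geqslant \frac{15}{8}$ via the exceptional class $(4,4;3,2^{\times 6})$ and the upper bound $c(7\tfrac{1}{32})=\frac{15}{8}$ from Proposition~\ref{prop:c(a)=00003Dsqrt(a/2) for a>7+1/32}, then squeeze using monotonicity of $c$.
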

\begin{proof}
Since the class $(d,e;m)=\left(4,4;3,2^{\times6}\right)$ gives the
constraint $\mu(d,e;m)(a)=\frac{15}{8}=\sqrt{\frac{7\frac{1}{32}}{2}}$
for all $a\geqslant7$, we see that $c(a)=\frac{15}{8}$ on $\left[7,7\frac{1}{32}\right]$
because $c$ is nondecreasing by Lemma \ref{lem:scaling property of c}.
\end{proof}
\appendix

\section{Computer programs}

\subsection{Computing $c$ at a point $a\in\left[6\frac{1}{8},7\right]$}

We used the computer in Lemma \ref{lem:no other obstructive classes on [6+1/8,7]}
to compute $c$ at points $\frac{p}{q}\in\left[6\frac{1}{8},7\right]$
with $q\leqslant39$. In this section, we explain the code $\mathtt{SolLess[a,D]}$
which computes for a rational number $a$ and a natural number $D$
all classes $(d,e;m)$ obstructive at $a$, with $l(m)=l(a)$ and
$d\leqslant D$. We have just adapted the program $\mathtt{SolLess[a,D]}$
given in the Appendix of \cite{MS} to our case. The modules $\mathtt{W[a]}$,
$\mathtt{P[k]}$, $\mathtt{Difference[M]}$ are exactly the same as
in \cite{MS}. \medskip{}

\texttt{W{[}a\_{]} := Module{[}\{aa = a, M, i = 2, L, u, v\},}

\texttt{\quad{}M = ContinuedFraction{[}aa{]};}

\texttt{\quad{}L = Table{[}1, \{j, M{[}{[}1{]}{]}\}{]};}

\texttt{\quad{}\{u, v\} = \{1, aa - Floor{[}aa{]}\};}

\texttt{\quad{}While{[}i <= Length{[}M{]},}

\texttt{\quad{}\quad{}L = Join{[}L, Table{[}v, \{j, M{[}{[}i{]}{]}\}{]}{]};}

\texttt{\quad{}\quad{}\{u, v\} = \{v, u - M{[}{[}i{]}{]} v\};}

\texttt{\quad{}\quad{}i++{]};}

\texttt{\quad{}Return{[}L{]}{]}\medskip{}
}

\texttt{P{[}k\_{]} := Module{[}\{kk = k, PP, T0, i\},}

\texttt{\quad{}T0 = Table{[}0, \{u, 1, k\}{]};}

\texttt{\quad{}T0p = ReplacePart{[}T0, 1, 1{]};}

\texttt{\quad{}T1 = Table{[}1, \{u, 1, k\}{]};}

\texttt{\quad{}T1m = ReplacePart{[}T1, 0, -1{]};}

\texttt{\quad{}PP = \{T0, T0p, T1, T1m\};}

\texttt{\quad{}Return{[}PP{]}{]}\medskip{}
}

\texttt{Difference{[}M\_{]} := Module{[}\{V = M, vN, V1, l, L = \{\},
D, PP, i,}

\texttt{\quad{}\quad{}j, N\},}

\texttt{\quad{}l = Length{[}V{]};}

\texttt{\quad{}If{[}l == 1,}

\texttt{\quad{}\quad{}L = P{[}V{[}{[}1{]}{]}{]}}

\texttt{\quad{}\quad{}{]};}

\texttt{\quad{}If{[}l > 1,}

\texttt{\quad{}\quad{}vN = V{[}{[}-1{]}{]};}

\texttt{\quad{}\quad{}V1 = Delete{[}V, -1{]};}

\texttt{\quad{}\quad{}D = Difference{[}V1{]};}

\texttt{\quad{}\quad{}PP = P{[}vN{]};}

\texttt{\quad{}\quad{}i = 1;}

\texttt{\quad{}\quad{}While{[}i <= Length{[}D{]},}

\texttt{\quad{}\quad{}\quad{}j = 1;}

\texttt{\quad{}\quad{}\quad{}While{[}j <= Length{[}PP{]},}

\texttt{\quad{}\quad{}\quad{}\quad{}N = Join{[}D{[}{[}i{]}{]},
PP{[}{[}j{]}{]}{]};}

\texttt{\quad{}\quad{}\quad{}\quad{}L = Append{[}L, N{]};}

\texttt{\quad{}\quad{}\quad{}\quad{}j++{]};}

\texttt{\quad{}\quad{}\quad{}i++{]}}

\texttt{\quad{}\quad{}{]};}

\texttt{\quad{}Return{[}L{]}{]}\medskip{}
}

The following module $\mathtt{Sol0[a,d]}$ gives for a rational number
$a$ all vectors of the form $(d,d;m)$ with $l(m)=l(a)$ which satisfy
the Diophantine equations of Proposition \ref{prop:characterization of E_M}\,(i)
and such that $\mu(d,d;m)(a)>\sqrt{\frac{a}{2}}$. The code $\mathtt{Sol1[a,d]}$
does the same thing for a class of the form $(d,d-1;m)$. Note that
both modules do not verify whether the vectors reduce to $(0;-1)$
by repeated Cremona moves. We have just adapted the code $\mathtt{Sol[a,d]}$
of \cite{MS}, using that in our case, the volume constraint is $\sqrt{\frac{a}{2}}$
instead of $\sqrt{a}$ and that for a class of the form $(d,d;m)$
the Diophantine equations become
\begin{flalign*}
\sum m_{i}=4d-1, & \qquad\sum m_{i}^{2}=2d^{2}+1,
\end{flalign*}
and for a class of the form $(d,d-1;m)$, they become
\begin{flalign*}
\sum m_{i}=4d-3, & \qquad\sum m_{i}^{2}=2d^{2}-2d+1.
\end{flalign*}

\texttt{\medskip{}
}

\texttt{Sol0{[}a\_, d\_{]} := Module{[}\{aa = a, dd = d, M, F, D,
i, V, L = \{\}\},}

\texttt{\quad{}M = ContinuedFraction{[}aa{]};}

\texttt{\quad{}F = Floor{[}((2{*}dd)/Sqrt{[}2{*}aa{]}) W{[}aa{]}{]};}

\texttt{\quad{}D = Difference{[}M{]};}

\texttt{\quad{}i = 1;}

\texttt{\quad{}While{[}i <= Length{[}D{]},}

\texttt{\quad{}\quad{}V = Sort{[}F + D{[}{[}i{]}{]}, Greater{]};}

\texttt{\quad{}\quad{}SV = Sum{[}V{[}{[}j{]}{]}, \{j, 1, Length{[}V{]}\}{]};}

\texttt{\quad{}\quad{}If{[}\{SV, V.V\} == \{4{*}dd - 1, 2{*}dd\textasciicircum{}2
+ 1\} \&\& V{[}{[}-1{]}{]} > 0 \&\& }

\texttt{\quad{}\quad{}\quad{}W{[}aa{]}.V/(2{*}dd) >= Sqrt{[}aa/2{]},}

\texttt{\quad{}\quad{}L = Append{[}L, V{]}}

\texttt{\quad{}\quad{}{]};}

\texttt{\quad{}i++{]};}

\texttt{\quad{}Return{[}\{\{dd, dd\}, Union{[}L{]}\}{]}{]}\medskip{}
}

\texttt{Sol1{[}a\_, d\_{]} := Module{[}\{aa = a, dd = d, M, F, D,
i, V, L = \{\}\},}

\texttt{\quad{}M = ContinuedFraction{[}aa{]};}

\texttt{\quad{}F = Floor{[}((2{*}dd - 1)/Sqrt{[}2{*}aa{]}) W{[}aa{]}{]};}

\texttt{\quad{}D = Difference{[}M{]};}

\texttt{\quad{}i = 1;}

\texttt{\quad{}While{[}i <= Length{[}D{]},}

\texttt{\quad{}\quad{}V = Sort{[}F + D{[}{[}i{]}{]}, Greater{]};}

\texttt{\quad{}\quad{}SV = Sum{[}V{[}{[}j{]}{]}, \{j, 1, Length{[}V{]}\}{]};}

\texttt{\quad{}\quad{}If{[}\{SV, V.V\} == \{4{*}dd - 3, 2{*}dd\textasciicircum{}2
- 2{*}d + 1\}}

\texttt{\quad{}\quad{}\quad{}\&\& V{[}{[}-1{]}{]} > 0 \&\& W{[}aa{]}.V/(2{*}dd
- 1) > Sqrt{[}aa/2{]},}

\texttt{\quad{}\quad{}L = Append{[}L, V{]}}

\texttt{\quad{}\quad{}{]};}

\texttt{\quad{}i++{]};}

\texttt{\quad{}Return{[}\{\{dd, dd - 1\}, Union{[}L{]}\}{]}{]}\medskip{}
}

Finally, we collect in the code $\mathtt{SolLess[a,D]}$ the vectors
$(d,e;m)$ with $l(m)=l(a)$ that are obstructive at $a$ and such
that $d\leqslant D$.

\texttt{\medskip{}
}

\texttt{SolLess{[}a\_, D\_{]} := Module{[}\{aa = a, DD = D, d = 1,
Ld, L = \{\}\},}

\texttt{\quad{}While{[}d <= D,}

\texttt{\quad{}\quad{}Ld = Sol0{[}aa, d{]};}

\texttt{\quad{}\quad{}If{[}Length{[}Ld{[}{[}2{]}{]}{]} > 0,}

\texttt{\quad{}\quad{}\quad{}L = Append{[}L, Ld{]}}

\texttt{\quad{}\quad{}\quad{}{]};}

\texttt{\quad{}\quad{}Ld = Sol1{[}aa, d{]};}

\texttt{\quad{}\quad{}If{[}Length{[}Ld{[}{[}2{]}{]}{]} > 0,}

\texttt{\quad{}\quad{}\quad{}L = Append{[}L, Ld{]}}

\texttt{\quad{}\quad{}\quad{}{]};}

\texttt{\quad{}\quad{}d++{]};}

\texttt{\quad{}Return{[}L{]}{]}}

\subsection{Computing $c$ on an interval $\left]6\frac{1}{k+1},6\frac{1}{k}\right[$
with $k\in\left\{ 1,\ldots7\right\} $}

In Lemma \ref{lem:no other obstructive classes on [6+1/8,7]} we used
the programs $\mathtt{InterSolLess1[k,D]}$ and $\mathtt{InterSolLess2[k,D]}$
which give for $k\in\left\{ 1,\ldots,7\right\} $ and a natural number
$D$, a finite list of vectors $(d,e;m)$ with $d\leqslant D$ which
can potentially be obstructive at some $a\in\left]6\frac{1}{k+1},6\frac{1}{k}\right[$.
By Lemma \ref{lem:parallel blocks}, if a class $(d,e;m)\in\mathcal{E}$
is obstructive at some point $a\in\left[6\frac{1}{8},7\right]$, then
we have three possibilities:

\renewcommand{\labelenumi}{(\roman{enumi})}
\begin{enumerate}
\item $m_{1}=\ldots=m_{6},$
\item $m_{1}-1=m_{2}=\ldots=m_{6},$
\item $m_{1}=\ldots=m_{5}=m_{6}+1.$
\end{enumerate} %

The code $\mathtt{InterSolLess1[k,D]}$ treats the case (i) while
the cases (ii) and~(iii) are covered by $\mathtt{InterSolLess2[k,D]}$.
We used the codes $\mathtt{Solutions[a,b]}$ and $\mathtt{sum[L]}$
exactly as they were in \cite{MS}. $\mathtt{Solutions[a,b]}$ gives
for $a,b\in\mathbb{N}$ all vectors $m$ which are solution of the
equations
\begin{flalign*}
\sum m_{i}=a, & \qquad\sum m_{i}^{2}=b,
\end{flalign*}
and $\mathtt{sum[L]}$ computes the sum of the entries of a vector
$L$. \texttt{\medskip{}
}

\texttt{Solutions{[}a\_, b\_{]} := Solutions{[}a, b, Min{[}a, Floor{[}Sqrt{[}b{]}{]}{]}{]}\medskip{}
}

\texttt{Solutions{[}a\_, b\_, c\_{]} := }

\texttt{\quad{}Module{[}\{A = a, B = b, C = c, i, m, K, j, V, L =
\{\}\},}

\texttt{\quad{}\quad{}If{[}A\textasciicircum{}2 < B,}

\texttt{\quad{}\quad{}\quad{}L = \{\}}

\texttt{\quad{}\quad{}\quad{}{]};}

\texttt{\quad{}\quad{}If{[}A\textasciicircum{}2 == B,}

\texttt{\quad{}\quad{}\quad{}If{[}A > C,}

\texttt{\quad{}\quad{}\quad{}\quad{}L = \{\},}

\texttt{\quad{}\quad{}\quad{}\quad{}L = \{\{A\}\}}

\texttt{\quad{}\quad{}\quad{}\quad{}{]}}

\texttt{\quad{}\quad{}\quad{}{]};}

\texttt{\quad{}\quad{}If{[}A\textasciicircum{}2 > B,}

\texttt{\quad{}\quad{}\quad{}i = 1;}

\texttt{\quad{}\quad{}\quad{}m = Min{[}Floor{[}Sqrt{[}B{]}{]},
C{]};}

\texttt{\quad{}\quad{}\quad{}While{[}i <= m,}

\texttt{\quad{}\quad{}\quad{}\quad{}K = Solutions{[}A - i, B -
i\textasciicircum{}2, i{]};}

\texttt{\quad{}\quad{}\quad{}\quad{}j = 1;}

\texttt{\quad{}\quad{}\quad{}\quad{}While{[}j <= Length{[}K{]},}

\texttt{\quad{}\quad{}\quad{}\quad{}\quad{}V = Prepend{[}K{[}{[}j{]}{]},
i{]};}

\texttt{\quad{}\quad{}\quad{}\quad{}\quad{}L = Append{[}L, V{]};}

\texttt{\quad{}\quad{}\quad{}\quad{}\quad{}j++}

\texttt{\quad{}\quad{}\quad{}\quad{}\quad{}{]};}

\texttt{\quad{}\quad{}\quad{}\quad{}i++{]}}

\texttt{\quad{}\quad{}{]};}

\texttt{\quad{}Return{[}Union{[}L{]}{]}{]}\medskip{}
}

\texttt{sum{[}L\_{]} := Sum{[}L{[}{[}j{]}{]}, \{j, 1, Length{[}L{]}\}{]}\medskip{}
}

\subsubsection{Finding obstructive classes $(d,e;m)$ with $m_{1}=\ldots=m_{6}$}

We have adapted the modules $\mathtt{P[k]}$, $\mathtt{Prelist[k,d]}$
from \cite{MS} to the fact that the first six entries of $m$ have
to be equal instead of the first seven entries as it was the case
in \cite{MS}. The module $\mathtt{Prelist[k,d]}$ becomes $\mathtt{Prelist[k,d,c]}$
where $c=0$ in the case of a class of the form $(d,d;m)$ and $c=1$
when the class is of the form $(d,d-1;m)$. As before, we have adapted
the code to take into account that we have another volume constraint
and other Diophantine equations. Note that \cite{MS} used their Lemma
2.1.7 and Lemma 2.1.8 which are also true in our case as stated in
Lemma \ref{lem:parallel blocks} and Lemma \ref{lem:parallel blocks 2}.

\texttt{\medskip{}
}

\texttt{P{[}k\_{]} := Module{[}\{kk = k, PP, T0, i\},}

\texttt{\quad{}T0 = Table{[}0, \{i, 6 + kk\}{]};}

\texttt{\quad{}Tm = ReplacePart{[}T0, -1, -1{]};}

\texttt{\quad{}Tp = ReplacePart{[}T0, 1, 7{]};}

\texttt{\quad{}PP = \{Tm, T0, Tp\};}

\texttt{\quad{}Return{[}PP{]}}

\texttt{\quad{}{]}\medskip{}
}

\texttt{Prelist{[}k\_, d\_, c\_{]} := }

\texttt{\quad{}Module{[}\{kk = k, dd = d, case = c, u, v, m1, M1,
mx, Mx, f, t,}

\texttt{\quad{}\quad{}PP, M, MM,i = 0, j = 0, s = 1, S, T, K, l,
L = \{\}\},}

\texttt{\quad{}u = 1/(kk + 1);}

\texttt{\quad{}v = 1/kk;}

\texttt{\quad{}m1 = Round{[}(Sqrt{[}2{]}{*}dd)/Sqrt{[}6 + v{]}{]};}

\texttt{\quad{}M1 = Round{[}(Sqrt{[}2{]}{*}dd)/Sqrt{[}6 + u{]}{]};}

\texttt{\quad{}mx = Floor{[}(Sqrt{[}2{]}{*}dd)/Sqrt{[}6 + v{]} u{]}
- 1;}

\texttt{\quad{}Mx = Ceiling{[}(Sqrt{[}2{]}{*}dd)/Sqrt{[}6 + u{]}
v{]} + 1;}

\texttt{\quad{}f = Ceiling{[}Sqrt{[}kk + 2{]} - 1{]};}

\texttt{\quad{}t = -f;}

\texttt{\quad{}PP = P{[}kk{]};}

\texttt{\quad{}While{[}i <= M1 - m1,}

\texttt{\quad{}\quad{}While{[}j <= Mx - mx,}

\texttt{\quad{}\quad{}\quad{}While{[}s <= 3,}

\texttt{\quad{}\quad{}\quad{}\quad{}While{[}t <= f,}

\texttt{\quad{}\quad{}\quad{}\quad{}M = Join{[}Table{[}m1 + i,
\{u, 6\}{]}, Table{[}mx + j, \{u, kk\}{]}{]};}

\texttt{\quad{}\quad{}\quad{}\quad{}M = M + PP{[}{[}s{]}{]};}

\texttt{\quad{}\quad{}\quad{}\quad{}S = Sum{[}M{[}{[}u{]}{]},
\{u, 7, 7 + kk - 1\}{]};}

\texttt{\quad{}\quad{}\quad{}\quad{}M = Append{[}M, M{[}{[}6{]}{]}
- S + t{]};}

\texttt{\quad{}\quad{}\quad{}\quad{}T = 1;}

\texttt{\quad{}\quad{}\quad{}\quad{}If{[}M == Sort{[}M, Greater{]}
\&\& M{[}{[}-1{]}{]} > 0,}

\texttt{\quad{}\quad{}\quad{}\quad{}\quad{}T = 1, T = 0{]};}

\texttt{\quad{}\quad{}\quad{}\quad{}S = sum{[}M{]};}

\texttt{\quad{}\quad{}\quad{}\quad{}If{[}case == 0,}

\texttt{\quad{}\quad{}\quad{}\quad{}\quad{}A = 4{*}dd - 1 - S;}

\texttt{\quad{}\quad{}\quad{}\quad{}\quad{}B = 2{*}dd\textasciicircum{}2
+ 1 - M.M;}

\texttt{\quad{}\quad{}\quad{}\quad{}{]};}

\texttt{\quad{}\quad{}\quad{}\quad{}If{[}case == 1,}

\texttt{\quad{}\quad{}\quad{}\quad{}\quad{}A = 4{*}dd - 3 - S;}

\texttt{\quad{}\quad{}\quad{}\quad{}\quad{}B = 2{*}dd{*}(dd -
1) + 1 - M.M;}

\texttt{\quad{}\quad{}\quad{}\quad{}{]};}

\texttt{\quad{}\quad{}\quad{}\quad{}B = 2{*}dd\textasciicircum{}2
+ 1 - M.M;}

\texttt{\quad{}\quad{}\quad{}\quad{}If{[}Min{[}A, B{]} < 0,}

\texttt{\quad{}\quad{}\quad{}\quad{}\quad{}T = 0{]};}

\texttt{\quad{}\quad{}\quad{}\quad{}If{[}T == 1,}

\texttt{\quad{}\quad{}\quad{}\quad{}\quad{}K = Solutions{[}A,
B, M{[}{[}-1{]}{]}{]};}

\texttt{\quad{}\quad{}\quad{}\quad{}\quad{}l = 1;}

\texttt{\quad{}\quad{}\quad{}\quad{}\quad{}While{[}l <= Length{[}K{]},}

\texttt{\quad{}\quad{}\quad{}\quad{}\quad{}\quad{}MM = Join{[}M,
K{[}{[}l{]}{]}{]};}

\texttt{\quad{}\quad{}\quad{}\quad{}\quad{}\quad{}While{[}MM{[}{[}-1{]}{]}
== 0,}

\texttt{\quad{}\quad{}\quad{}\quad{}\quad{}\quad{}\quad{}MM
= Drop{[}MM, -1{]}{]};}

\texttt{\quad{}\quad{}\quad{}\quad{}\quad{}\quad{}L = Append{[}L,
MM{]};}

\texttt{\quad{}\quad{}\quad{}\quad{}\quad{}\quad{}l++{]}}

\texttt{\quad{}\quad{}\quad{}\quad{}\quad{}{]};}

\texttt{\quad{}\quad{}\quad{}\quad{}t++{]};}

\texttt{\quad{}\quad{}\quad{}t = -f;}

\texttt{\quad{}\quad{}\quad{}s++{]};}

\texttt{\quad{}\quad{}s = 1;}

\texttt{\quad{}\quad{}j++{]};}

\texttt{\quad{}j = 0;}

\texttt{\quad{}i++{]};}

\texttt{\quad{}Return{[}\{\{dd, dd - case\}, Union{[}L{]}\}{]}{]}\medskip{}
}

As in \cite{MS}, the module $\mathtt{InterSol[k,d,c]}$ reduces the
number of candidates given by the code $\mathtt{Prelist[k,d,c]}$.
As before, $c=0$ in the case of a class of the form $(d,d;m)$ and
$c=1$ for a class of the form $(d,d-1;m)$.

\texttt{\medskip{}
}

\texttt{InterSol{[}k\_, d\_, c\_{]} := }

\texttt{\quad{}Module{[}\{kk = k, dd = d, case = c, L, M, T, K =
\{\}, i = 1, l,}

\texttt{\quad{}\quad{}rest\},}

\texttt{\quad{}L = Prelist{[}kk, dd, case{]}{[}{[}2{]}{]};}

\texttt{\quad{}While{[}i <= Length{[}L{]},}

\texttt{\quad{}\quad{}M = L{[}{[}i{]}{]};}

\texttt{\quad{}\quad{}l = Length{[}M{]};}

\texttt{\quad{}\quad{}T = 1;}

\texttt{\quad{}\quad{}If{[}l <= 6 + kk + 2, T = 0{]};}

\texttt{\quad{}\quad{}If{[}M{[}{[}-2{]}{]} - M{[}{[}-1{]}{]} > 1,
T = 0{]};}

\texttt{\quad{}\quad{}If{[}M{[}{[}-3{]}{]} > M{[}{[}-2{]}{]} + 1
\&\& Abs{[}M{[}{[}-3{]}{]} - M{[}{[}-2{]}{]} - M{[}{[}-1{]}{]}{]}}

\texttt{\quad{}\quad{}\quad{}> 1,}

\texttt{\quad{}\quad{}\quad{}T = 0{]};}

\texttt{\quad{}\quad{}If{[}kk == 1 \&\& l >= 9, }

\texttt{\quad{}\quad{}\quad{}If{[}M{[}{[}8{]}{]} - M{[}{[}9{]}{]}
> 1 \&\& Abs{[}M{[}{[}7{]}{]} - (M{[}{[}8{]}{]} + M{[}{[}9{]}{]}){]}}

\texttt{\quad{}\quad{}\quad{}> 1,}

\texttt{\quad{}\quad{}\quad{}\quad{}T = 0{]}{]};}

\texttt{\quad{}\quad{}rest = Sum{[}M{[}{[}j{]}{]}, \{j, 7 + kk,
l\}{]};}

\texttt{\quad{}\quad{}If{[}M{[}{[}6 + kk{]}{]} - rest >= Sqrt{[}l
- kk - 5{]}, T = 0{]};}

\texttt{\quad{}\quad{}If{[}T == 1, K = Append{[}K, M{]}{]};}

\texttt{\quad{}\quad{}i++{]};}

\texttt{\quad{}Return{[}\{\{dd, dd - case\}, K\}{]}{]}\medskip{}
}

Finally, we collect all the solutions for $d\leqslant D$ in the module
$\mathtt{InterSolLess1[k,D]}$.

\texttt{\medskip{}
}

\texttt{InterSolLess1{[}k\_, D\_{]} := Module{[}\{kk = k, DD = D,
LL = \{\}, Q,}

\texttt{\quad{}\quad{}d = 1\},}

\texttt{\quad{}While{[}d <= DD,}

\texttt{\quad{}\quad{}Q = InterSol{[}kk, d, 0{]};}

\texttt{\quad{}\quad{}If{[}Length{[}Q{[}{[}2{]}{]}{]} > 0,}

\texttt{\quad{}\quad{}\quad{}LL = Append{[}LL, Q{]}{]};}

\texttt{\quad{}\quad{}Q = InterSol{[}kk, d, 1{]};}

\texttt{\quad{}\quad{}If{[}Length{[}Q{[}{[}2{]}{]}{]} > 0,}

\texttt{\quad{}\quad{}\quad{}LL = Append{[}LL, Q{]}{]};}

\texttt{\quad{}\quad{}d++{]};}

\texttt{\quad{}Return{[}LL{]}{]}}

\texttt{\medskip{}
}

\subsubsection{Finding obstructive classes $(d,e;m)$ with $m_{1}\neq m_{6}$}

The code $\mathtt{InterSolLess2[k,D]}$ gives for $k\in\left\{ 1,\ldots,7\right\} $
and a natural number $D$, a finite list of vectors $(d,e;m)$ with
$d\leqslant D$ and $m_{1}\neq m_{6}$ which can potentially be obstructive
at some $a\in\left]6\frac{1}{k+1},6\frac{1}{k}\right[$. By Lemma
\ref{cor:an obstructive class (d,d-1;m) has the same blcoks}, if
a class $(d,e;m)\in\mathcal{E}$ with $m_{1}\neq m_{6}$ is obstructive
at some $a\in\left[6,7\right[$, then necessarily $d=e$. Moreover,
Lemma~\ref{lem:parallel blocks} shows that either $m_{1}-1=m_{2}=\ldots=m_{6}$
or $m_{1}=\ldots=m_{5}=m_{6}+1$. Notice that the first terms of the
weight expansion of some $a\in\left]6\frac{1}{k+1},6\frac{1}{k}\right[$
are $\left(1^{\times6};(a-6)^{\times k},\ldots\right)$. Thus the
vector $m$ is either of the form $\left(M+1,M^{\times5},m^{\times k},\ldots\right)$
or of the form $\left(M,(M-1)^{\times5},m^{\times k},\ldots\right)$.
To find the vectors $m$ of the form $\left(M+1,M^{\times5},m^{\times k},\ldots\right)$,
we vary $M$ and $m\leqslant M$ as long as $(M+1)+5M+km\leqslant4d-1$
and $(M+1)^{2}+5M^{2}+km^{2}\leqslant2d^{2}+1$ and then use the code
$\mathtt{Solutions[a,b]}$ from \cite{MS} to find the solutions of
the equations
\[
\begin{aligned}\sum m_{i} & =4d-1-\left((M+1)+5M+km\right),\\
\sum m_{i}^{2} & =2d^{2}-1-\left(M+1)^{2}+5M^{2}+km^{2}\right).
\end{aligned}
\]
The case of a solution vector $m$ of the form $\left(M,(M-1)^{\times5},m^{\times k},\ldots\right)$
is then treated similarly.

\medskip{}

\texttt{InterSolLess2{[}kk\_, DD\_{]} := Module{[}\{k = kk, D = DD,
d, M, m,}

\texttt{\quad{}\quad{}Sol,i, j\},}

\texttt{\quad{}For{[}d = 1, d <= D, d++,}

\texttt{\quad{}\quad{}M = 1;}

\texttt{\quad{}\quad{}While{[}6{*}M + 1 <= 4{*}d - 1 \&\& 6{*}M\textasciicircum{}2
+ 2{*}M + 1 <= 2{*}d\textasciicircum{}2 + 1,}

\texttt{\quad{}\quad{}\quad{}m = 1;}

\texttt{\quad{}\quad{}\quad{}While{[}}

\texttt{\quad{}\quad{}\quad{}\quad{}6{*}M + 1 + k{*}m <= 4{*}d
- 1 \&\& }

\texttt{\quad{}\quad{}\quad{}\quad{}\quad{}6{*}M\textasciicircum{}2
+ 2{*}M + 1 + k{*}m\textasciicircum{}2 <= 2{*}d\textasciicircum{}2
+ 1 \&\& m <= M,}

\texttt{\quad{}\quad{}\quad{}\quad{}Sol = }

\texttt{\quad{}\quad{}\quad{}\quad{}\quad{}Solutions{[}4{*}d
- 1 - (6{*}M + 1 + k{*}m), }

\texttt{\quad{}\quad{}\quad{}\quad{}\quad{}\quad{}2{*}d\textasciicircum{}2
+ 1 - (6{*}M\textasciicircum{}2 + 2{*}M + 1 + k{*}m\textasciicircum{}2){]};}

\texttt{\quad{}\quad{}\quad{}\quad{}If {[}Length{[}Sol{]} > 0,}

\texttt{\quad{}\quad{}\quad{}\quad{}\quad{}For{[}i = 1, i <=
Length{[}Sol{]}, i++,}

\texttt{\quad{}\quad{}\quad{}\quad{}\quad{}\quad{}If{[}Sol{[}{[}i{]}{]}{[}{[}1{]}{]}
<= m,}

\texttt{\quad{}\quad{}\quad{}\quad{}\quad{}\quad{}\quad{}For{[}j
= 1, j <= k, j++,}

\texttt{\quad{}\quad{}\quad{}\quad{}\quad{}\quad{}\quad{}\quad{}Sol{[}{[}i{]}{]}
= Prepend{[}Sol{[}{[}i{]}{]}, m{]};}

\texttt{\quad{}\quad{}\quad{}\quad{}\quad{}\quad{}\quad{}\quad{}{]};}

\texttt{\quad{}\quad{}\quad{}\quad{}\quad{}\quad{}\quad{}For{[}j
= 1, j <= 5, j++,}

\texttt{\quad{}\quad{}\quad{}\quad{}\quad{}\quad{}\quad{}\quad{}Sol{[}{[}i{]}{]}
= Prepend{[}Sol{[}{[}i{]}{]}, M{]};}

\texttt{\quad{}\quad{}\quad{}\quad{}\quad{}\quad{}\quad{}\quad{}{]};}

\texttt{\quad{}\quad{}\quad{}\quad{}\quad{}\quad{}\quad{}Sol{[}{[}i{]}{]}
= Prepend{[}Sol{[}{[}i{]}{]}, M + 1{]};}

\texttt{\quad{}\quad{}\quad{}\quad{}\quad{}\quad{}\quad{}Print{[}\{\{d,
d\}, Sol{[}{[}i{]}{]}\}{]};}

\texttt{\quad{}\quad{}\quad{}\quad{}\quad{}\quad{}\quad{}{]}}

\texttt{\quad{}\quad{}\quad{}\quad{}\quad{}\quad{}{]}}

\texttt{\quad{}\quad{}\quad{}\quad{}\quad{}{]};}

\texttt{\quad{}\quad{}\quad{}\quad{}m++;}

\texttt{\quad{}\quad{}\quad{}\quad{}{]};}

\texttt{\quad{}\quad{}\quad{}M++;}

\texttt{\quad{}\quad{}\quad{}{]};}

\texttt{\quad{}\quad{}M = 1;}

\texttt{\quad{}\quad{}While{[}6{*}M - 1 <= 4{*}d - 1 \&\& 6{*}M\textasciicircum{}2
- 2{*}M + 1 <= 2{*}d\textasciicircum{}2 + 1,}

\texttt{\quad{}\quad{}\quad{}m = 1;}

\texttt{\quad{}\quad{}\quad{}While{[}}

\texttt{\quad{}\quad{}\quad{}\quad{}6{*}M - 1 + k{*}m <= 4{*}d
- 1 \&\& }

\texttt{\quad{}\quad{}\quad{}\quad{}\quad{}6{*}M\textasciicircum{}2
- 2{*}M + 1 + k{*}m\textasciicircum{}2 <= 2{*}d\textasciicircum{}2
+ 1 \&\& m <= M,}

\texttt{\quad{}\quad{}\quad{}\quad{}Sol = }

\texttt{\quad{}\quad{}\quad{}\quad{}\quad{}Solutions{[}4{*}d
- 1 - (6{*}M - 1 + k{*}m), }

\texttt{\quad{}\quad{}\quad{}\quad{}\quad{}\quad{}2{*}d\textasciicircum{}2
+ 1 - (6{*}M\textasciicircum{}2 - 2{*}M + 1 + k{*}m\textasciicircum{}2){]};}

\texttt{\quad{}\quad{}\quad{}\quad{}If {[}Length{[}Sol{]} > 0,}

\texttt{\quad{}\quad{}\quad{}\quad{}\quad{}For{[}i = 1, i <=
Length{[}Sol{]}, i++,}

\texttt{\quad{}\quad{}\quad{}\quad{}\quad{}\quad{}If{[}Sol{[}{[}i{]}{]}{[}{[}1{]}{]}
<= m,}

\texttt{\quad{}\quad{}\quad{}\quad{}\quad{}\quad{}\quad{}For{[}j
= 1, j <= k, j++,}

\texttt{\quad{}\quad{}\quad{}\quad{}\quad{}\quad{}\quad{}\quad{}Sol{[}{[}i{]}{]}
= Prepend{[}Sol{[}{[}i{]}{]}, m{]};}

\texttt{\quad{}\quad{}\quad{}\quad{}\quad{}\quad{}\quad{}\quad{}{]};}

\texttt{\quad{}\quad{}\quad{}\quad{}\quad{}\quad{}\quad{}Sol{[}{[}i{]}{]}
= Prepend{[}Sol{[}{[}i{]}{]}, M - 1{]};}

\texttt{\quad{}\quad{}\quad{}\quad{}\quad{}\quad{}\quad{}For{[}j
= 1, j <= 5, j++,}

\texttt{\quad{}\quad{}\quad{}\quad{}\quad{}\quad{}\quad{}\quad{}Sol{[}{[}i{]}{]}
= Prepend{[}Sol{[}{[}i{]}{]}, M{]};}

\texttt{\quad{}\quad{}\quad{}\quad{}\quad{}\quad{}\quad{}\quad{}{]};}

\texttt{\quad{}\quad{}\quad{}\quad{}\quad{}\quad{}\quad{}Print{[}\{\{d,
d\}, Sol{[}{[}i{]}{]}\}{]};}

\texttt{\quad{}\quad{}\quad{}\quad{}\quad{}\quad{}\quad{}{]}}

\texttt{\quad{}\quad{}\quad{}\quad{}\quad{}\quad{}{]}}

\texttt{\quad{}\quad{}\quad{}\quad{}\quad{}{]};}

\texttt{\quad{}\quad{}\quad{}\quad{}m++;}

\texttt{\quad{}\quad{}\quad{}\quad{}{]};}

\texttt{\quad{}\quad{}\quad{}M++;}

\texttt{\quad{}\quad{}\quad{}{]};}

\texttt{\quad{}\quad{}{]}{]}}

\end{document}